\newcommand\A{\mathbf{A}}
\newcommand\B{\mathcal{B}}
\newcommand\C{\mathcal{C}}
\newcommand\D{\mathcal{D}}
\newcommand\Db{\mathbf{D}}
\newcommand\E{\mathcal{E}}
\newcommand\F{\mathcal{F}}
\newcommand\Fb{\mathbf{F}}
\newcommand\G{\mathcal{G}}
\renewcommand\H{\mathcal{H}}
\newcommand\K{\mathcal{K}}
\newcommand\M{\mathrm{M}}
\newcommand\N{\mathbb{N}}
\newcommand\NN{\mathcal{N}}
\newcommand\R{\mathbb{R}}
\renewcommand\S{\mathbb{S}}
\newcommand\V{\mathbf{V}}
\newcommand\W{\mathbf{W}}
\newcommand\Y{\mathcal{Y}}
\newcommand\Z{\mathbb{Z}}
\renewcommand\d{\delta}
\newcommand\e{\varepsilon}
\newcommand\eps{\epsilon}
\newcommand\g{\gamma}
\newcommand\Gab{\boldsymbol{\Gamma}}
\newcommand\ph{\varphi}
\renewcommand\P{\mathbf{\Psi}}
\newcommand\XI{\boldsymbol{\xi}}
\newcommand\ZETA{\boldsymbol{\zeta}}
\newcommand\Cinfini{\C^{\infty}(\R,\R)}
\newcommand\Gt{\widetilde{G}}
\newcommand\St{\tilde{S}}
\newcommand\pa{\partial}
\renewcommand\leq{\leqslant}
\renewcommand\geq{\geqslant}
\newcommand\m{\mbox}
\newcommand\ds{\displaystyle}
\newcommand\be{\begin{equation}}
\newcommand\ee{\end{equation}}
\theoremstyle{plain}
\newtheorem{theorem}{Theorem}[section]
\newtheorem{lemma}[theorem]{Lemma}
\newtheorem{proposition}[theorem]{Proposition}
\newtheorem{corollary}[theorem]{Corollary}
\theoremstyle{definition}
\newtheorem{remark}[theorem]{Remark}
\numberwithin{equation}{section}
\title[Multi-bubbles for critical gKdV]{Construction of multi-bubble solutions \\ for the critical gKdV equation}
\author[V.~Combet]{Vianney Combet}
\address{Univ.~Lille, CNRS, UMR 8524 -- Laboratoire Paul Painlev\'e, F-59000 Lille, France}
\email{vianney.combet@math.univ-lille1.fr}
\author[Y.~Martel]{Yvan Martel}
\address{CMLS, \'Ecole Polytechnique, CNRS, Universit\'e Paris-Saclay, 91128 Palaiseau, France}
\email{yvan.martel@polytechnique.edu}
\begin{document}

\begin{abstract}
We prove the existence of solutions of the mass critical
generalized Korteweg--de~Vries equation $\pa_t u + \pa_x(\pa_{xx} u + u^5) = 0$
containing an arbitrary number $K\geq 2$ of blow up bubbles, for any choice of sign and scaling parameters:
for any $\ell_1>\ell_2>\cdots>\ell_K>0$ and $\eps_1,\ldots,\eps_K\in\{\pm1\}$,
there exists an $H^1$ solution $u$ of the equation such that
\[
u(t) - \sum_{k=1}^K \frac {\eps_k}{\lambda_k^\frac12(t)}
Q\left( \frac {\cdot - x_k(t)}{\lambda_k(t)} \right)
\longrightarrow 0 \quad\m{ in }\ H^1 \m{ as }\ t\downarrow 0,
\]
with $\lambda_k(t)\sim \ell_k t$ and $x_k(t)\sim -\ell_k^{-2}t^{-1}$ as $t\downarrow 0$.
The construction uses and extends techniques developed mainly in~\cite{MMjmpa} and~\cite{MMR1,MMR2,MMR3}.
Due to strong interactions between the bubbles, it also relies decisively on the sharp properties
of the minimal mass blow up solution (single bubble case) proved in~\cite{CM1}.
\end{abstract}

\maketitle

\section{Introduction}

\subsection{Main result}

We consider the mass critical generalized Korteweg--de Vries equation
\be \label{kdv}
\mathrm{(gKdV)} \quad \left\{
\begin{alignedat}{2}
&\pa_t u + \pa_x(\pa_{xx} u + u^5) = 0, \quad && (t,x) \in [0,T)\times\R, \\
&u(0,x) = u_0(x), && x\in\R.
\end{alignedat}
\right.
\ee
We first recall a few well-known facts about this equation.
The Cauchy problem is locally well-posed in the energy space $H^1$ from~\cite{Kato,KPV,KPV2}:
for a given $u_0 \in H^1$, there exists a unique (in a suitable functional space)
maximal solution $u$ of~\eqref{kdv} in $\C([0,T),H^1)$ and
\be \label{blowupcriterion}
T<+\infty \quad \m{implies} \quad \lim_{t\uparrow T} \|\pa_x u(t)\|_{L^2} = +\infty.
\ee
Moreover, such $H^1$ solutions satisfy the conservation of mass and energy
\[
\|u(t)\|_{L^2}^2 = \int u^2(t) = \|u_0\|_{L^2}^2, \quad
E(u(t)) = \frac12 \int (\pa_x u)^2(t) - \frac16 \int u^6(t) = E(u_0).
\]
Equation~\eqref{kdv} is invariant under scaling and translation:
if $u$ is a solution of~\eqref{kdv} then $u_{\lambda,x_0}$, defined by
\[
u_{\lambda,x_0}(t,x) = \lambda^{\frac12} u(\lambda^3 t,\lambda x + x_0), \quad \lambda>0, \ x_0\in \R,
\]
is also a solution of~\eqref{kdv}.

Recall that equation~\eqref{kdv} admits solutions of the form $u(t,x)=Q(x-t)$,
called \emph{solitons}, where $Q$ is the ground state
\[
Q(x) = \left( \frac 3{\cosh^2(2x)} \right)^{\frac14},\quad Q'' + Q^5 = Q,
\]
which attains the best constant in the following sharp Gagliardo--Nirenberg inequality, as proved in~\cite{W1983}:
\[
\forall v\in H^1,\quad \int v^6 \leq 3 \int (\pa_x v)^2 \left( \frac {\int v^2}{\int Q^2} \right)^2.
\]
It is well-known that the conservation of mass and energy, the above inequality,
and the blow up criterion~\eqref{blowupcriterion} ensure that $H^1$ initial data with \emph{subcritical mass},
\emph{i.e.}~$\|u_0\|_{L^2} < \|Q\|_{L^2}$, generate global in time and bounded solutions.

Concerning the mass threshold, it was proved in~\cite{MMR2} that there exists a unique (up to invariances)
blow up solution $S$ of~\eqref{kdv} with $\|S(t)\|_{L^2}=\|Q\|_{L^2}$, called the \emph{minimal mass blow up solution}.
Fixing by convention the blow up time as $t\downarrow 0$, $S(t)$ has the form of a single blow up bubble satisfying,
for some universal constant $c_0$, for all $t>0$ close to $0$,
\[
\left\| S(t) - \frac 1{t^{\frac12}} Q\left( \frac {\cdot + \frac 1t}t + c_0 \right) \right\|_{H^1} \lesssim t.
\]
In particular, it follows that $\|\pa_x S(t)\|_{L^2} \sim C t^{-1}$ as $t\downarrow 0$ for some $C>0$.
We refer to~\cite{CM1,MMR2} and to the Theorems~\ref{thm:previous}, \ref{thm:maintimeold}
and~\ref{thm:mainspaceold} below for detailed properties of $S(t)$ near the blow up time.
For blow up and classification results for initial data in a neighborhood of $Q$,
\emph{i.e.}~for $u_0$ satisfying $\|u_0-Q\|_{H^1}\leq \alpha$ with $\alpha>0$ small,
we refer to~\cite{MMjmpa,MMNR,MMR1,MMR2,MMR3,Mjams} and to the references therein.

\medskip

In this paper, we use and extend results and techniques developed in the above mentioned articles
to prove the following finite time blow up result with an arbitrary number of bubbles.

\begin{theorem} \label{thm:main}
Let $K\geq2$, $\ell_1>\ell_2>\cdots>\ell_K>0$ and $\eps_1,\ldots,\eps_K\in\{\pm1\}$.
Then there exist $T_0>0$ and a solution $u\in \C((0,T_0],H^1)$ to~\eqref{kdv} such that, for all $t\in (0,T_0]$,
\be \label{maintime}
\left\| u(t) - \sum_{k=1}^K \frac {\eps_k}{\lambda_k^\frac12(t)}
Q\left( \frac {\cdot - x_k(t)}{\lambda_k(t)} \right) \right\|_{H^1} \leq t^{\frac 1{22}},
\ee
where $\lambda_k(t)$ and $x_k(t)$ satisfy, for all $1\leq k\leq K$,
\[
\left| \lambda_k(t) - \ell_k t \right| \leq t^{\frac {23}{22}},\quad
\left| x_k(t) + \ell_k^{-2} t^{-1} \right| \leq t^{-\frac {21}{22}}.
\]
Moreover, the values of the mass and the energy of $u(t)$ are $\|u(t)\|_{L^2}^2 = K\|Q\|_{L^2}^2$ and
\[
E(u(t)) = \frac 1{16} \|Q\|_{L^1}^2 \sum_{k=1}^K \ell_k
\left( 1+2\sum_{j=1}^{k-1} \eps_k\eps_j \sqrt{\frac {\ell_k}{\ell_j}} \right) > 0.
\]
\end{theorem}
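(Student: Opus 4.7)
The natural strategy is to construct $u$ as a weak $H^1$ limit of a sequence of approximate solutions. For a sequence $t_n\downarrow 0$, let $u_n$ be the solution of \eqref{kdv} with data $u_n(t_n)=V(t_n)$, where the approximate $K$-bubble profile is
\[
V(t,x) = \sum_{k=1}^K \frac{\eps_k}{\lambda_k^{1/2}(t)}\,\widetilde Q_{b_k(t)}\!\left(\frac{x-x_k(t)}{\lambda_k(t)}\right),
\]
$\widetilde Q_b$ is a refined soliton-like profile (of the type used in \cite{MMR1,MMR2,CM1}) and the modulation parameters $(\lambda_k,x_k,b_k)$ are chosen so that the leading-order ODE system they formally satisfy admits the asymptotic solution $\lambda_k(t)\sim \ell_k t$, $x_k(t)\sim -\ell_k^{-2}t^{-1}$. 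The asymptotics and the precise shape of $\widetilde Q_b$ are dictated by the sharp description of the minimal mass blow up $S$ proved in \cite{MMR2,CM1}, applied at each scale $\ell_k$. The goal is to show that, for some $T_0>0$ independent of $n$, each $u_n$ exists on $[t_n,T_0]$ and satisfies $\|u_n(t)-V(t)\|_{H^1}\leq t^{1/22}$ there; weak $H^1$ compactness of $\{u_n(T_0)\}$ combined with local well-posedness will then yield a limit $u\in\C((0,T_0],H^1)$ satisfying \eqref{maintime}.

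Writing $\eta_n = u_n - V$, I would run a bootstrap on the maximal $T_n^\star\in [t_n,T_0]$ on which
\[
\|\eta_n(t)\|_{H^1}\leq t^{1/22},\qquad |\lambda_k(t)-\ell_k t|\leq t^{23/22},\qquad |x_k(t)+\ell_k^{-2}t^{-1}|\leq t^{-21/22}
\]
hold, and improve each constant strictly, forcing $T_n^\star=T_0$ by continuity. The parameters $(\lambda_k,x_k,b_k)$ are fixed by imposing, at each bubble, orthogonality of $\eta_n$ to the three natural directions (scaling, translation and the refined $b$-direction), converting the equation for $\eta_n$ into a PDE with small source and a controlled ODE system for the parameters. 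The heart of the argument is a Lyapunov functional of mixed $L^2/H^1$ type, built by summing, against cut-offs $\psi_k$ localised near each bubble at scale $\lambda_k$, the single-bubble monotonicity functional of \cite{MMjmpa,MMR1,MMR2,MMR3}; it is coercive on $\eta_n$ modulo the orthogonality directions thanks to Weinstein's spectral analysis of $-\pa_x^2+1-5Q^4$, and its time derivative has a favourable sign up to residual and interaction terms. Integrating the monotonicity estimate and injecting the result into the modulation identities improves the three bootstrap bounds simultaneously. The mass identity $\|u(t)\|_{L^2}^2 = K\|Q\|_{L^2}^2$ is then immediate since the cross $L^2$ terms vanish as $t\downarrow 0$, while the stated energy comes from expanding $E(V(t))$ and isolating the only surviving cross contributions, which come from the kinetic energy and produce the factor $\sqrt{\ell_k/\ell_j}$ after an explicit change of variables.

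The principal obstacle is the interaction between bubbles. With $\lambda_k\sim \ell_k t$ and $x_k\sim -\ell_k^{-2}t^{-1}$ the relative gap $|x_k-x_j|/\lambda_k$ is of order $t^{-2}$, so bubbles are well-separated in their own scales, but the refined profile $\widetilde Q_{b_k}$ has only \emph{polynomial} tails in $b_k$: the cross terms generated by $V^5$, and by the commutators appearing in the time-derivative of the Lyapunov functional, are therefore only power-small in $t$, with a narrow margin. Reconciling them with the target rate $t^{1/22}$ requires the sharp pointwise bounds on $\widetilde Q_b$, its $b$-derivatives and the associated corrections that are precisely the content of the single-bubble analysis \cite{CM1}, and are not available from the earlier, less precise construction of $S$ in \cite{MMR2}; this is what is meant by the ``decisive'' role of \cite{CM1} in the abstract. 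The ordering $\ell_1>\cdots>\ell_K>0$ is also used essentially to sign the leading interaction in the direction compatible with the monotonicity of the Lyapunov functional. Handling these interactions while preserving the delicate time-monotonicity is the technical core of the construction.
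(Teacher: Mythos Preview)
Your outline captures the overall compactness scheme and the role of a mixed energy--virial functional, but it misses two structural ingredients that are essential in the paper, and it misidentifies the mechanism behind the interactions and the energy.

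First, the building brick in the paper is not a refined profile $\widetilde Q_b$ but the minimal mass solution $S$ itself: each $W_k$ is a rescaled, modulated copy of $S$, and the strong interactions come from the \emph{spatial} tail $S(t,x)\sim -\tfrac12\|Q\|_{L^1}|x|^{-3/2}$ for $x\le -\tfrac1t-1$ (Theorem~\ref{thm:mainspaceold}), which is fixed in space and does not translate with the bubble. This tail hits the bubbles on its left with strength $|s|^{-1}$, and the ansatz must be corrected by explicit terms $r_kR_k+a_kP_k$ (with $r_k$ encoding the value of the neighbouring tails at the center $y_k$) in order to bring the error down to a closable level; see Lemma~\ref{lem:V}. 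Your description of the interactions as ``polynomial tails in $b_k$'' of $\widetilde Q_b$ does not match this picture. Relatedly, the cross terms in the energy do \emph{not} come from the kinetic part: in Lemma~\ref{lem:Vmassenergy} one has $\int \partial_yW_k\,\partial_yW_j=O(|s|^{-3/2})$, and the surviving contribution $\sqrt{\ell_k/\ell_j}$ arises from the nonlinear term $\int W_k^5 W_j$, computed via~\eqref{eq:Wk5Wj} and the definition of $r_k$.

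Second, and more seriously, the bootstrap cannot be closed by ``improving each constant strictly''. Several parameter directions are genuinely unstable in forward rescaled time: $\bar\tau_k$ for every $k$, and $f_k=\bar\mu_k+\bar y_k$ for those $k$ with $\tfrac12(1+3\theta_k)>\tfrac1{43}$ (the set $\mathcal K^+$ in Section~\ref{sec:discussion}). For these directions the differential inequalities \eqref{surg} and the analogue for $\bar\tau_k$ integrate the wrong way, and no choice of initial data will make the estimates self-improve. The paper handles this by parametrising the initial data by $(\boldsymbol\xi,\boldsymbol\zeta)\in\B_{d+K}$, improving only the stable estimates~\eqref{eq:BS}, and then using a Brouwer-type topological obstruction (Section~\ref{sec:Brouwer}) on the exit map associated with the unstable bootstrap~\eqref{eq:BS2} to find at least one admissible choice. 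Without this shooting argument your continuity step fails, and this is the main gap in the proposal.
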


Note that the blow up points $x_k(t)$ go to infinity as $t\downarrow 0$, as in all previous blow up results
for the mass critical (gKdV) equation (see~\cite{MMR1,MMR2,MMR3} and references therein).
The assumption that the values of $\ell_k$ are all different implies some decoupling between the bubbles.
In particular, the solution $u(t)$ given by Theorem~\ref{thm:main} satisfies $\|u(t)\|_{L^2}^2 = K \|Q\|_{L^2}^2$,
which is expected to be the minimal amount of mass so that blow up occurs at $K$ different points.

However, because of the slow decay of $S(t,x)$ for $x \leq -\frac 1t - 1$ (see~\eqref{th:ptwise0} below),
the bubbles strongly interact close to the blow up time,
as shown by the value of the energy of $u(t)$ given in Theorem~\ref{thm:main}.
Indeed, if on the one hand the first part of the energy $\frac 1{16} \|Q\|_{L^1}^2 \sum_k \ell_k$
is due to the bubbles themselves (recall that $E(S(t)) = \frac 1{16} \|Q\|_{L^1}^2$ from~\cite{CM1}),
on the other hand the additional terms $\sum_{j<k} \eps_k\eps_j \sqrt{\frac {\ell_k}{\ell_j}}$ are due to interactions.

These interactions need to be carefully computed to construct the solution $u(t)$
and this is why the present paper relies decisively on the sharp properties
of the minimal mass blow up solution $S$ derived in~\cite{CM1},
as well as on some additional technical arguments from~\cite{MMR3}.
We recall relevant previous results in the next section
before commenting on the proof of Theorem~\ref{thm:main} in Section~\ref{sec:outline}.
We also briefly review some results on multi-bubble blow up
for other nonlinear models in Section~\ref{sec:othermodels}.

\subsection{Previous results on minimal mass blow up for critical (gKdV)}

We first recall the main result in~\cite{MMR2}.

\begin{theorem}[Existence and uniqueness of the minimal mass blow up solution~\cite{MMR2}] \label{thm:previous}
\leavevmode
\begin{enumerate}[label=\emph{(\roman*)}]
\item \label{existsS} \emph{Existence.}
There exist a solution $S\in \C((0,+\infty),H^1)$ to~\eqref{kdv}
and universal constants $c_0\in\R,C_0>0$ such that $\|S(t)\|_{L^2} = \|Q\|_{L^2}$ for all $t>0$ and
\begin{gather}
\|\pa_x S(t)\|_{L^2} \sim \frac {C_0}t \quad \m{ as }\ t\downarrow 0, \nonumber \\
S(t) - \frac 1{t^{\frac12}} Q\left( \frac {\cdot + \frac1t}t + c_0 \right) \longrightarrow 0
\quad \m{ in }\ L^2 \m{ as }\ t\downarrow 0. \label{d:S}
\end{gather}
\item \label{uniqueS} \emph{Uniqueness.}
Let $u_0\in H^1$ with $\|u_0\|_{L^2} = \|Q\|_{L^2}$
and assume that the corresponding solution $u(t)$ to~\eqref{kdv} blows up in finite or infinite time.
Then, $u\equiv S$ up to the symmetries of the flow.
\end{enumerate}
\end{theorem}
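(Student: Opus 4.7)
The statement has two parts, existence of $S$ and uniqueness up to the symmetries of the flow, and I would address them separately using the standard modulation/compactness toolbox for critical (gKdV) together with the spectral theory of the linearized operator $L=-\pa_{yy}+1-5Q^4$.

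\emph{Existence.} The plan is to produce $S$ as a backward limit of regular solutions. First, I would build an approximate blow up profile of the form
\[
P(t,x) = \frac{1}{\lambda^{1/2}(t)}\bigl(Q + \lambda^2(t) P_1 + \lambda^4(t) P_2 + \cdots\bigr)\!\left(\frac{x-x(t)}{\lambda(t)}\right)
\]
with $\lambda(t)\sim t$ and $x(t)\sim -1/t$, choosing each $P_j$ by inverting $L$ on the orthogonal complement of its kernel. Fredholm solvability at each order prescribes the modulation laws $\lambda'/\lambda$ and $x'/\lambda^2$ and ultimately fixes the universal phase $c_0$ by matching the slow decay of the profile to the left. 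Taking a sequence $t_n\downarrow 0$, I would let $u_n$ be the solution of~\eqref{kdv} with $u_n(t_n)=P(t_n)$ and decompose backwards as
\[
u_n(t,\cdot) = \frac{1}{\lambda_n^{1/2}(t)}(Q+\e_n)\!\left(\frac{\cdot-x_n(t)}{\lambda_n(t)}\right),
\]
with orthogonality conditions on $\e_n$ that fix $\lambda_n$ and $x_n$. Uniform control of $\e_n$ in $H^1$ on $[t_n,T_0]$, with an explicit power decay in $t$, would come from a mixed energy/virial Lyapunov functional $\F(\e_n)$ that is coercive modulo the symmetry directions and whose time derivative is integrable along the flow. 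A diagonal compactness extraction then yields $u_n\to S$ in $C_{\m{loc}}((0,T_0],H^1)$; mass conservation, the explicit form of $P$, and the scaling $\lambda(t)\sim t$ pass to the limit and give $\|S(t)\|_{L^2}=\|Q\|_{L^2}$, the $L^2$ asymptotic~\eqref{d:S}, and $\|\pa_x S(t)\|_{L^2}\sim C_0/t$. Global extension to $(T_0,+\infty)$ follows from the $H^1$ Cauchy theory together with the fact that the threshold mass is preserved.

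\emph{Uniqueness.} Given a minimal mass blow up solution $u$, the sharp Gagliardo--Nirenberg inequality, which is saturated only by modulated copies of $Q$, forces along a sequence approaching the blow up time the $H^1$ convergence of suitably rescaled and translated copies of $u$ to $\pm Q$. This gives a modulated decomposition $u=\lambda^{-1/2}(Q+\e)((\cdot-x)/\lambda)$ with $\|\e\|_{H^1}\to 0$. A rigidity argument, again based on the same energy/virial functional, would pin down $\lambda(t)\sim \ell t$ and $x(t)\sim -\ell^{-2}/t$ with precisely the expansion carried by $P$; after using the scaling/translation symmetry to normalize $\ell=1$ and adjust the phase to $c_0$, the difference $u-S$ (suitably modulated) satisfies a monotonicity estimate that forces $u\equiv S$.

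\emph{Main obstacle.} Both parts hinge on controlling $\e_n$ (resp.\ $\e$) in $H^1$ despite the scaling-invariant degeneracy of the linearized Hamiltonian at $Q$: neither coercivity nor monotonicity is available from the energy alone. The construction of the Lyapunov functional $\F$ that combines the linearized energy with a sharply localized virial, and the algebraic cancellations that make it nonincreasing along the renormalized flow, is the technical core of the argument and is exactly the machinery developed in~\cite{MMjmpa,MMR1,MMR2}.
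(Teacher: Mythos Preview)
The present paper does not prove Theorem~\ref{thm:previous}: it is quoted verbatim from~\cite{MMR2} as a known result (``We first recall the main result in~\cite{MMR2}''), and is used as input for the construction of multi-bubbles. There is therefore no proof in this paper to compare your proposal against.

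That said, your sketch is broadly aligned with the strategy of~\cite{MMR2}. The existence part there indeed proceeds by constructing a refined approximate profile (the key nonlocalized correction is the function $P$ with $(LP)'=\Lambda Q$, recalled here in Lemma~\ref{lem:L}), launching backward solutions from well-prepared data, and closing uniform bootstrap estimates via the mixed energy--virial functional of~\cite{MMR1}; the solution $S$ is then obtained by weak $H^1$ compactness. One point where your outline is slightly off: the universal constant $c_0$ is not determined by ``matching the slow decay of the profile to the left'' but rather emerges from the modulation system and the normalization of the translation parameter in the profile expansion. For uniqueness, the argument in~\cite{MMR2} is more delicate than a single monotonicity estimate on $u-S$: it first shows that any minimal-mass blow up solution must satisfy the same sharp modulation laws and $H^1$ decay of the remainder as $S$, and then proves rigidity by a contraction/backward-uniqueness argument on the difference of two such solutions in weighted spaces. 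Your ``main obstacle'' paragraph correctly identifies the degeneracy and the role of the energy--virial functional, which is indeed the technical heart of~\cite{MMR1,MMR2}.
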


In~\cite{MMR2}, it is also proved that $S(t,x)$ has exponential decay in space for $x \geq -\frac 1t$,
but the behavior of $S(t,x)$ for $x \leq -\frac 1t$ is not studied,
and the convergence result~\eqref{d:S} is limited to the $L^2$ norm.
In view of their objective to eventually prove Theorem~\ref{thm:main}, the authors of the present paper
established in~\cite{CM1} the following sharp time and space asymptotics for $S(t)$ close to the blow up time.

\begin{theorem}[Time asymptotics~\cite{CM1}] \label{thm:maintimeold}
Let $S$ and $c_0$ be defined as in~\ref{existsS} of Theorem~\ref{thm:previous}.
Then there exist functions $\{Q_k\}_{k\geq 0} \subset \mathcal{S}(\R)$ (with $Q_0=Q$)
such that the following holds.

For all $m\geq 0$, there exists $T_0>0$ such that, for all $t\in (0,T_0]$,
\be \label{th:timem}
\left\| \pa_x^m S(t) - \sum_{k=0}^m \frac 1{t^{\frac 12 + m - 2k}}
Q_k^{(m-k)}\left( \frac {\cdot + \frac 1t}t + c_0 \right) \right\|_{L^2} \lesssim t^{1+m}.
\ee
\end{theorem}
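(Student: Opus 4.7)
The plan is to construct the profiles $Q_k$ by a formal asymptotic expansion in self-similar variables, and then to identify the resulting approximation with $S$ using the uniqueness statement of Theorem~\ref{thm:previous} together with a modulation-and-monotonicity argument in the spirit of~\cite{MMR1,MMR2,MMR3}.

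First I would pass to the variables $y = (x+1/t)/t + c_0$ and $s$ defined by $ds/dt = t^{-3}$ (so that $s \sim -1/(2t^2)$ and $t \downarrow 0$ corresponds to $s \to -\infty$), and write $S(t,x) = t^{-1/2} V(s,y)$. Equation~\eqref{kdv} turns into an evolution for $V$ whose spatial part combines the linearised gKdV operator with a drift $y V$ and a rescaling term $\tfrac 12 V + y V_y$ produced by the change of variables. By~\eqref{d:S}, $V(s,\cdot) \to Q$ as $s \to -\infty$, so I would posit $V(s,y) \sim Q(y) + \sum_{k \ge 1} t^{2k} P_k(y)$ --- the natural small parameter here being $1/|s| \sim t^2$ --- substitute, and match powers of $t$. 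At each order $k \ge 1$, this yields a linear equation $L P_k = F_k[Q, P_1, \ldots, P_{k-1}]$, where $L = -\pa_{yy} + 1 - 5 Q^4$ and $F_k$ is a source built from the lower-order profiles.

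The inductive step is to solve each such equation with $P_k \in \mathcal{S}(\R)$. Since $L$ is self-adjoint on $L^2$ with $\ker L = \m{span}(Q')$, solvability requires $\langle F_k, Q' \rangle = 0$; if this fails at some order I would introduce a free sub-leading correction to the scaling and translation laws of $S$ (whose leading parts read $\lambda(t) \sim t$ and $-x(t) \sim 1/t$) and tune it to absorb the obstruction. Once solvability is secured, $L^{-1}$ returns a Schwartz profile orthogonal to $Q'$, so the induction closes in $\mathcal{S}(\R)$. The $Q_k$ of the statement are then recovered as $Q_k = P_k^{(k)}$, which is exactly the combinatorial arrangement that reproduces the $m$-th $x$-derivative of $\sum_k t^{2k - 1/2} P_k((x+1/t)/t + c_0)$ in the form~\eqref{th:timem}.

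To upgrade the formal expansion to the genuine bound~\eqref{th:timem}, I would fix a large $N$, define $S_N^{\m{app}}(t,x) = \sum_{k=0}^N t^{2k - 1/2} P_k((x+1/t)/t + c_0)$, check that it satisfies~\eqref{kdv} with a residual as small as wanted in rescaled norms, and write $S = S_N^{\m{app}} + W$. A modulated decomposition of $W$ orthogonalising it against the neutral and unstable directions of $L$, combined with the monotonicity and virial functionals of~\cite{MMR1,MMR2,MMR3}, should propagate smallness to $\|W(t)\|_{H^N} \lesssim t^{N+1}$ in suitable weighted norms; the uniqueness part of Theorem~\ref{thm:previous} identifies the modulation parameters of $S$ with those prescribed by the formal expansion and thereby fixes the constant $c_0$. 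Passing from this bound to~\eqref{th:timem} is then a matter of scaling: each $x$-derivative costs one factor of $1/t$, so taking $N$ large enough covers every $m$.

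The main obstacle, already flagged after Theorem~\ref{thm:previous}, is the slow decay of $S(t,x)$ for $x \le -1/t$: in the rescaled picture, iterating $L^{-1}$ against the drift $y V$ has a tendency to produce polynomial rather than exponential growth as $y \to -\infty$, which threatens the Schwartz-class induction. Arranging the cascade of solvability conditions so as to kill these growing tails at every order --- and, along the way, producing explicit sub-leading corrections to the modulation laws of $S$ --- is what I expect to be the most delicate part of the argument.
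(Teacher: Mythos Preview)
This theorem is quoted from \cite{CM1} and not proved in the present paper, but enough of its structure is recorded here (Lemma~\ref{lem:L}(iv) and formula~\eqref{def:Q1}) to locate a genuine gap in your outline. Your central claim --- that each $P_k$ can be taken in $\mathcal{S}(\R)$, with the tails at $-\infty$ ``killed'' by tuning modulation parameters --- is false. At first order the hierarchy reads $(LP_1)' = \Lambda Q$ rather than $LP_1 = F_1$: the extra $\partial_y$ comes from the conservative structure $\partial_x(\partial_{xx}u+u^5)$ of gKdV and cannot be dropped. Its solution is, up to sign and Schwartz corrections, the function $P$ of Lemma~\ref{lem:L}(iv), which by~\eqref{est:P} tends to the nonzero constant $\tfrac12\|Q\|_{L^1}$ as $y\to -\infty$ and is therefore not even in $L^2$. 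This tail is not an artefact to be cancelled: it is precisely the self-similar trace of the genuine $|x|^{-3/2}$ asymptotic of $S$ in~\eqref{th:ptwise0}, and no choice of modulation removes it.

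What is true --- and what the formulation of the theorem exploits --- is that \emph{derivatives} of the profiles are Schwartz: $P'\in\Y\subset\mathcal{S}$, and~\eqref{def:Q1} gives $Q_1=-P'-\lambda_0(\Lambda Q)'+c_1Q''\in\mathcal{S}$. This is why the sum in~\eqref{th:timem} stops at $k=m$: the would-be next term involves an antiderivative $Q_{m+1}^{(-1)}$, typically not in $L^2$ (indeed $Q_1^{(-1)}=-P-\lambda_0\Lambda Q+c_1Q'$ appears explicitly in the proof of Lemma~\ref{lem:Wk}(ii)). Consequently the construction in \cite{CM1} cannot run in global $H^N$; it must work in a class allowing bounded behaviour on the left, and the error estimates use localized or one-sided weighted norms --- cf.\ Corollary~\ref{cor:weighted} here, which controls $S$ minus the expansion only against the one-sided weight $e^{(x+1/t)/(Bt)}$. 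Your plan to close the profile induction in $\mathcal{S}$ and then propagate a global $\|W\|_{H^N}$ bound would not go through as stated.
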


\begin{theorem}[Space asymptotics~\cite{CM1}] \label{thm:mainspaceold}
Let $S$ be defined as in~\ref{existsS} of Theorem~\ref{thm:previous} and $m\geq 0$.
Then there exists $T_0>0$ such that the following hold.
\begin{enumerate}[label=\emph{(\roman*)}]
\item \label{item:ptleft} \emph{Pointwise asymptotics on the left.}
For all $t\in (0,T_0]$, for all $x \leq -\frac 1t - 1$,
\begin{gather}
\left| S(t,x) + \frac 12 \|Q\|_{L^1} |x|^{-\frac 32} \right| \lesssim |x|^{-\frac 32 - \frac 1{21}}, \label{th:ptwise0} \\
\left| \pa_x^m S(t,x) \right| \lesssim |x|^{-\frac 32 - m}. \label{th:ptwisem}
\end{gather}
\item \emph{Pointwise bounds on the right.}
There exists $\g_m>0$ such that, for all $t\in (0,T_0]$, for all $x\in\R$,
\be \label{th:ptwiser}
|\pa_x^m S(t,x)| \lesssim \frac 1{t^{\frac 12 + m}} \exp\left( -\g_m \frac {x + \frac 1t}t \right).
\ee
\end{enumerate}
\end{theorem}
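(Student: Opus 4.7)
The plan is to upgrade the sharp $L^2$ time-asymptotics of Theorem~\ref{thm:maintimeold} to pointwise control by means of weighted estimates, splitting the analysis according to the two regimes: on the right (i.e.\ $x\geq -\tfrac1t-1$) where exponential decay is expected, and on the left (i.e.\ $x\leq -\tfrac1t-1$) where only polynomial decay can hold because the soliton profile has already decayed.

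For the pointwise exponential bounds \eqref{th:ptwiser}, I would work in the moving variable $y=(x+\tfrac1t)/t+c_0$ and introduce a smoothed exponential weight $\ph_\g$ that agrees with $e^{\g y}$ for $y\geq 1$ and is bounded for $y$ negative. Multiplying the equation satisfied by $\pa_x^m S$ by $\ph_\g^2 \pa_x^m S$ and integrating by parts produces a Kato-type favorable commutator, $\tfrac32\int (\ph_\g^2)'(\pa_x^{m+1}S)^2$, coming from the $\pa_x^3$ term. Combined with the $L^2$ control of $\pa_x^m S$ provided by Theorem~\ref{thm:maintimeold} and with the exponential decay of $Q$ and of all the correctors $Q_k$ (which are Schwartz), a Gronwall argument in time closes a uniform bound on $\|\ph_\g \pa_x^m S(t)\|_{L^2}$. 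Since the weight $\ph_\g$ grows exponentially in $y$, Sobolev embedding in this weighted space delivers \eqref{th:ptwiser} for each fixed $m$ and a suitable $\g_m>0$.

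For the left-side asymptotics the approach is different, because the profile $t^{-\frac12}Q(y)$ and all its correctors from Theorem~\ref{thm:maintimeold} are exponentially small when $y\leq -1$, so the tail $-\tfrac12\|Q\|_{L^1}|x|^{-\frac32}$ must be extracted from the equation itself. I would use the evolution $\pa_x^3 S=-\pa_t S-\pa_x(S^5)$ integrated three times in $x$ from $-\infty$: from $L^2$ control of $\pa_t S$ and $S$, the linear contribution gives $|\pa_x^m S|\lesssim |x|^{-\frac32-m}$, which already yields \eqref{th:ptwisem}; the nonlinear term is then of order $|x|^{-\frac{15}{2}}$ on this region and hence genuinely subdominant. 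To pin down the coefficient in \eqref{th:ptwise0}, I would represent $S(t)$ by a backward Duhamel formula with the Airy propagator, starting from the modulated soliton profile near $y=0$, and analyze the resulting oscillatory integral for $|x|\gg \tfrac1t$. In this regime a stationary phase/explicit Airy kernel calculation produces the universal factor $\int Q=\|Q\|_{L^1}$, with the prefactor $\tfrac12$ determined by the normalization of the Airy fundamental solution.

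The main obstacle is closing the identification of the leading constant in~\eqref{th:ptwise0} with the sharp loss $|x|^{-\frac32-\frac1{21}}$: the residual terms in the backward Duhamel expansion have to be estimated in polynomially weighted norms that match the self-similar $|x|^{-\frac32}$ rate, and the exponent $\tfrac1{21}$ points to a bootstrap in which each iteration loses a small polynomial power. Making such a bootstrap compatible with the weighted energy estimates used for~\eqref{th:ptwiser} (so that both decay regimes hold simultaneously on an overlap region around $x\sim -\tfrac1t$) is the delicate technical point of the proof.
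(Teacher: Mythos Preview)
This theorem is not proved in the present paper: it is quoted verbatim from~\cite{CM1} as a prerequisite result, so there is no proof here to compare your proposal against. The only related argument the paper supplies is Corollary~\ref{cor:weighted} in Appendix~\ref{appendix:corollary}, which \emph{combines} Theorems~\ref{thm:maintimeold} and~\ref{thm:mainspaceold} (both taken as input) into an exponentially weighted $L^2$ estimate.

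That said, a few remarks on your sketch. Your approach to~\eqref{th:ptwiser} via Kato-type monotonicity with an exponential weight is the standard mechanism and is almost certainly what~\cite{CM1} does. For the left tail~\eqref{th:ptwise0}, however, the clues in this paper point to a different route than your Duhamel/Airy-kernel idea. The function $P$ of Lemma~\ref{lem:L}(iv) satisfies $(LP)'=\Lambda Q$ and, by~\eqref{est:P}, has the nonzero limit $P(y)\to\tfrac12\|Q\|_{L^1}$ as $y\to-\infty$; moreover~\eqref{def:Q1} gives $Q_1^{(-1)}=-P+(\text{Schwartz})$. Thus the constant $\tfrac12\|Q\|_{L^1}$ is already encoded in the first corrector of the profile expansion in Theorem~\ref{thm:maintimeold}, and the tail $-\tfrac12\|Q\|_{L^1}|x|^{-3/2}$ should emerge by following this non-decaying piece of the refined profile through the self-similar scaling, together with polynomially weighted energy estimates on the remainder. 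That profile-based route explains the coefficient structurally and sidesteps the stationary-phase bootstrap you identify as the main obstacle; your Airy approach is not wrong in spirit, but it reinvents machinery that the modulation/profile construction already provides.
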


As a corollary of Theorems~\ref{thm:maintimeold} and~\ref{thm:mainspaceold}, we also obtain time estimates
in exponential weighted spaces for $S(t)$, stated and proved in Appendix~\ref{appendix:corollary}.

\smallskip

Let us now comment on the main consequences of the above results on the construction of multi-bubbles.
First, Theorem~\ref{thm:maintimeold} means that $S(t)$ is smooth
and that its behavior as $t\downarrow 0$ is completely understood in any Sobolev norm,
which makes it a good candidate as the building brick to construct multi-bubble solutions.

Second, while the decay of $S(t,x)$ for $x \geq -\frac 1t$ is exponential,
which means weak interactions on the other bubbles on its right,
the asymptotic behavior of $S(t,x)$ for $x \leq -\frac 1t - 1$ is like~$|x|^{- \frac 32}$.
Surprisingly, this asymptotic behavior does not translate like the bubble,
but it rather describes a fixed explicit power-like tail,
which interacts with the other bubbles on the left of $S(t,x)$.
Such a power-like perturbation can be considered as a strong interaction
compared to the usual exponential interactions between solitons.

Finally, also observe that the convergence of $u(t)$ to the sum of bubbles in $H^1$
in~\eqref{maintime} is of order $t^{\frac 1{22}}$ as $t\downarrow 0$.
The reason why we cannot improve this convergence result
are the possible fluctuations of order $|x|^{-\frac 32 -\frac 1{21}}$
of the tail of $S(t,x)$ around the explicit tail
$-\frac 12 \|Q\|_{L^1} |x|^{-\frac 32}$ given by~\eqref{th:ptwise0}.
Of course, the exponent $\frac 1{21}$ is not optimal in~\cite{CM1},
but it seems difficult to obtain a significantly better estimate.
Another observation to complement Theorem~\ref{thm:main} is that $u(t)$ exhibits an explicit tail
made of the sum of the tails of each modulated version of~$S(t)$
(see Remark~\ref{rk:final} at the end of this paper).

\subsection{Outline of the proof of Theorem~\ref{thm:main}} \label{sec:outline}

Since we anticipate an explicit blow up rate, it is natural to use rescaled variables
--- see~\eqref{changevar} and~\eqref{rescaledkdv}.
Though not absolutely necessary, this change of variables allows us to reduce
to the case of bounded solitons in large time at the cost of an additional scaling term in the equation.

Next, we introduce an approximate solution to the multi-bubble problem whose main term
is a sum of $K$ rescaled and modulated versions of $S(t)$.
As discussed above, the interactions between the bubbles are strong
because of the tail on the left of the building brick $S(t,x)$.
However, continuing the formal discussion of Section~1.4 in~\cite{MMR3},
we notice that a tail of the form $c |x|^{-\frac 32}$,
with any $c \in\R$, is compatible with the blow up rate $t^{-1}$.
Thanks to the precise space asymptotics in Theorem~\ref{thm:mainspaceold}
and following the technique developed in~\cite{MMR3},
we improve the ansatz by adding terms taking into account the leading order of
the interactions of the bubbles with such fixed tails.
Constructed in this way, the approximate solution is sharp enough
to compute the energy of the multi-bubble solution given in Theorem~\ref{thm:main}.

The full ansatz $\V$ is introduced in Section~\ref{sec:V} and studied in Lemmas~\ref{lem:V} and~\ref{lem:Vmassenergy}.
Then, it only remains to construct a solution close to $\V$ using the by now standard (see references in the next section)
strategy of defining a sequence of backwards solutions satisfying uniform estimates close to $\V$.
To derive such uniform estimates, we study both the evolution of the modulation parameters
and the remaining infinite dimensional part (denoted $\e$ in the standard decomposition result Lemma~\ref{lem:syst_diff}).
To control $\e$, we use a modification of the mixed energy--virial functional introduced in~\cite{MMR1} for one bubble.
Due to the presence of $K$ bubbles, we need to run it recursively on each soliton.
At this point, the argument is reminiscent from the construction of bounded multi-solitons in~\cite{C,CMM,Ma1,MMT}.
Finally, some instability directions have to be controlled by
adjusting the initial parameters of the approximate solution,
\emph{e.g.}~as in~\cite{C,CMM} for the supercritical (gKdV) equation.

\subsection{Previous results for related models} \label{sec:othermodels}

We start by recalling early results on minimal mass blow up
for the mass critical nonlinear Schr\"odinger equation (in dimension $N\geq 1$),
\[
\mathrm{(NLS)} \quad i \pa_t u + \Delta u + |u|^{\frac 4N} u = 0, \quad (t,x)\in [0,T)\times \R^N.
\]
For (NLS), it is well-known (see \emph{e.g.}~\cite{Ca03}) that the pseudo conformal symmetry
generates an \emph{explicit} minimal mass blow up solution
\[
S_\mathrm{NLS}(t,x) = \frac 1{t^{\frac N2}} e^{- i\frac {|x|^2}{4t} - \frac it}
Q_\mathrm{NLS}\left( \frac xt \right),
\]
defined for all $t>0$ and blowing up as $t\downarrow 0$,
where $Q_\mathrm{NLS}$ is the unique radial ground state of~(NLS), solution to
\[
\Delta Q_\mathrm{NLS} - Q_\mathrm{NLS} + |Q_\mathrm{NLS}|^{\frac 4N} Q_\mathrm{NLS} = 0,
\quad Q_\mathrm{NLS}>0, \quad Q_\mathrm{NLS}\in H^1(\R^N).
\]
Using the pseudo conformal symmetry, it was proved by Merle~\cite{Mduke} that $S_\mathrm{NLS}$ is the unique
(up to the symmetries of the equation) minimal mass blow up solution of (NLS) in the energy space.
The situation is thus similar to the one for (gKdV),
though much more explicit and simple due to the pseudo conformal symmetry.

The first result on the construction of solutions of a dispersive PDE blowing up at $K$ given points in $\R^N$
is given in another pioneering paper of Merle~\cite{Mcmp} for the mass critical~(NLS).
In this paper, the solution is obtained as the limit of backwards solutions
containing $K$~focusing bubbles $S_\mathrm{NLS}$ and satisfying uniform estimates.
This strategy has been widely used and extended to other constructions of multi-bubble solutions,
both in regular or singular regimes --- see~\cite{C,CMM,Ma1,MMR1,RS} notably.
Since the blow up solution is obtained by gluing (rescaled and translated) solutions $S_\mathrm{NLS}(t)$,
it has the so-called \emph{conformal blow up rate} $t^{-1}$.
As a consequence of the exponential decay of $S_\mathrm{NLS}(t)$,
the interactions between the bubbles are exponentially small in $t^{-1}$.
This construction has been extended to (NLS) in bounded sets with Dirichlet boundary conditions in~\cite{G}.

Recall also that, for (NLS), the stable blow up is not the conformal one,
but the so-called \emph{log-log blow up}, whose rate is $\sqrt{\frac 1t \log|\log t|}$.
This blow up behavior has been studied thoughtfully by Merle and Rapha\"el~\cite{MeRa03,MeRa05,MeRa06}
(see also references therein) in a neighborhood of $Q_\mathrm{NLS}$.
Multiple point log-log blow up solutions and log-log blow up in a bounded set were studied in~\cite{F} and~\cite{PR}.

For the mass critical (NLS), we also mention the construction in~\cite{MR} of
a solution blowing up strictly faster than the conformal blow up rate
using strong interactions between several colliding blow up bubbles.

\smallskip

For the (gKdV) equation in the \emph{slightly supercritical case}, which reads
\[
\pa_t u + \pa_x(\pa_{xx} u + u^p) = 0, \quad (t,x)\in [0,T)\times\R,
\]
with $5<p<5+\alpha$ and $\alpha>0$ small,
Lan~\cite{L1,L2} constructed solutions blowing up at $K$ given points in the self-similar regime,
\emph{i.e.}~with blow up rate $t^{-\frac 13}$.
The idea to construct explicit self-similar blow up for slightly supercritical models
originates from~\cite{MRS} for the (NLS) model.
In contrast to~\cite{MRS}, where self-similar solutions are built using the soliton,
\cite{L1,L2} take advantage of the exact self-similar profiles constructed by Koch~\cite{Ko} for $p>5$ close to $5$.
As for the present paper, some technical tools in~\cite{L1,L2},
\emph{e.g.}~the mixed energy--virial functional, are taken from~\cite{MMjmpa,MMR1}.
However, the context and the difficulties of the construction are different in the self-similar blow up regime, since
the blow up points are finite and the interactions between the solitons are controlled as perturbation terms
(see the outline of the proof in~\cite{L2}).

For a numerical study of blow up for the critical and supercritical (gKdV) equations,
we refer to~\cite{KP} and to the references therein.

\smallskip

Note that, for the $L^2$ critical modified Benjamin--Ono equation,
a minimal mass blow up solution was recently constructed in~\cite{MP},
following~\cite{CM1} and~\cite{MMR2} as well as more specific previous works
on Benjamin--Ono type equations (see references in~\cite{MP}).

\smallskip

For the semilinear wave equation in the energy subcritical case, we refer to~\cite{CZ,MZ1,MZ2},
where multi-soliton profiles are relevant in the refined study of the behavior of solutions at a blow up point.
We also refer to the construction by Jendrej~\cite{J}
of radial two-bubbles for the energy critical wave equation in large dimensions.
In this work, the solution is global in one direction of time and one bubble stays bounded.
On the top of this standing bubble, a second bubble concentrates with a specific rate as time goes to infinity.

\smallskip

In the parabolic context, a similar result of blow up at $K$ points
for the energy subcritical nonlinear heat equation was proved in the early work~\cite{Mcpam},
using crucially continuity properties of blow up by perturbation of the data for this equation.
We also refer the reader to the recent result~\cite{CDM} where,
in the case of the energy critical heat equation in a bounded set with zero Dirichlet condition,
blow up in infinite time at $K$ given points in the domain is obtained,
provided a particular property related to the Green's function is satisfied.
This construction is another example of strong interactions,
both between the bubbles and the boundary condition and between the bubbles themselves.

\subsection{Notation}

For $f,g\in L^2(\R)$, their $L^2$ scalar product is denoted as
\[
\langle f,g \rangle = \int_\R f(x) g(x) \,dx.
\]

The Schwartz space $\mathcal{S}$ is classically defined as
\[
\mathcal{S} = \mathcal{S}(\R) = \{ f\in \Cinfini\ |\ \forall j\in\N, \forall \alpha\in\N, \exists C_{j,\alpha}\geq 0,
\forall y\in\R, |y^\alpha f^{(j)}(y)| \leq C_{j,\alpha} \}.
\]

For $f\in\mathcal{S}$ and $k\geq 0$, we use the notation $f^{(-k)}$ defined by induction as $f^{(0)} = f$ and
\be \label{antideriv}
f^{(-k-1)}(x) = -\int_x^{+\infty} f^{(-k)}(y) \,dy.
\ee

Let the generator of $L^2$ scaling be
\[
\Lambda f = \frac12 f + y \pa_y f.
\]

For brevity, $\sum_j$ and $\sum_k$ denote $\sum_{j=1}^K$ and $\sum_{k=1}^K$ respectively.
For a given $k\geq 1$, $\sum_{j<k}$ denotes $\sum_{j=1}^{k-1}$ when $k\geq 2$, and $0$ when $k=1$.

All numbers $C$ appearing in inequalities are real positive constants (with respect to the context),
which may change in each step of an inequality.

Finally, for $N\geq 1$, we denote by $\B_N$ (resp.~$\S_N$) the closed unit ball
(resp.~the unit sphere) of $\R^N$ for the euclidian norm.

\subsection*{Acknowledgements}

This work was partly supported by the Labex CEMPI (ANR-11-LABX-0007-01) and by the project ERC 291214 BLOWDISOL.

\section{Decomposition of the solution} \label{sec:decomposition}

\subsection{Properties of the linearized operator}

Let the functional space $\Y$ be defined by
\[
\Y = \{ f\in \Cinfini\ |\ \forall j\in\N, \exists C_j,r_j\geq 0,
\forall y\in\R, |f^{(j)}(y)| \leq C_j (1+|y|)^{r_j} e^{-|y|} \}
\]
and $L$ be the linearized operator close to $Q$ given by
\[
L f = -\pa_{yy} f + f - 5 Q^4 f.
\]

We recall without proof the following properties of $L$.
The properties (i)--(iii) are taken \emph{e.g.}~from~\cite{MMgafa,W1985},
while (iv) is proved in~\cite{MMR1} and (v) in~\cite{MMR3}.

\begin{lemma}[Properties of $L$] \label{lem:L}
The self-adjoint operator $L$ defined on $L^2$ satisfies:
\begin{enumerate}[label=\emph{(\roman*)}]
\item Kernel: $\ker L = \{aQ' \ ; \, a\in\R\}$.
\item Scaling: $L(\Lambda Q)=-2Q$.
\item Coercivity: there exists $\kappa_0>0$ such that, for all $f\in H^1$,
\be \label{coercivity}
\langle L f,f \rangle \geq \kappa_0 \|f\|_{H^1}^2 - \frac 1{\kappa_0}
\left( \langle f,Q \rangle^2 + \langle f,\Lambda Q \rangle^2 + \langle f,y\Lambda Q \rangle^2 \right).
\ee
\item There exists a unique function $P$ such that $P'\in\Y$ and
\begin{gather}
(LP)' = \Lambda Q, \quad \langle P,Q \rangle = \frac 1{16} \|Q\|_{L^1}^2 > 0, \quad \langle P,Q' \rangle = 0, \label{PQ} \\
\forall y>0, \quad |P(y)| + \left| P(-y) - \frac 12 \|Q\|_{L^1} \right| \lesssim e^{-y/2}. \label{est:P}
\end{gather}
\item There exists a unique even function $R\in\Y$ such that
\be \label{def:R}
L R = 5Q^4,\quad \langle Q,R \rangle = -\frac 34 \|Q\|_{L^1}.
\ee
\end{enumerate}
\end{lemma}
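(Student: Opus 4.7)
The plan is to dispatch items (i)--(iii) by direct computation and classical spectral theory, and to treat (iv)--(v) together as Fredholm-type inversions of $L$ on $\{Q'\}^\perp$, with the scalar product normalizations computed by pairing the equation with $\Lambda Q$ and invoking (ii).

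Differentiating $Q''+Q^5=Q$ in $y$ gives $LQ'=0$, so $\R Q'\subset\ker L$; a Wronskian/reduction-of-order argument shows that any independent solution of $Lf=0$ grows exponentially at one infinity, yielding (i). For (ii), one writes $Q_\lambda(y)=\lambda^{1/2}Q(\lambda y)$, verifies $Q_\lambda''+Q_\lambda^5=\lambda^2 Q_\lambda$, and differentiates at $\lambda=1$ to read off $L(\Lambda Q)=-2Q$. For (iii), one uses the standard spectral picture of $L$ on $L^2$: essential spectrum $[1,+\infty)$, kernel $\R Q'$ (odd), and a unique simple negative eigenvalue with even eigenfunction $\chi_0$. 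This yields $\langle Lf,f\rangle\geq\kappa\|f\|_{H^1}^2$ whenever $f\perp\chi_0$ and $f\perp Q'$. Transferring this inequality to the three explicit directions $Q$, $\Lambda Q$, $y\Lambda Q$ reduces to showing that a $2\times 3$ Gram-type matrix has full rank; this follows from parity considerations ($Q$, $\Lambda Q$ even, $y\Lambda Q$ odd), from $\langle\chi_0,Q\rangle\neq 0$ (since $LQ=-4Q^5$ gives $\langle LQ,Q\rangle=-4\int Q^6<0$), and from a direct computation of $\langle Q',y\Lambda Q\rangle$.

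For (iv), integrate $(LP)'=\Lambda Q$ from $+\infty$: requiring $LP\to 0$ at $+\infty$ forces $LP=(\Lambda Q)^{(-1)}$ in the convention \eqref{antideriv}. The solvability condition $\langle(\Lambda Q)^{(-1)},Q'\rangle=0$ reduces after one integration by parts to $\langle\Lambda Q,Q\rangle=0$, which holds by mass-scaling invariance. Inverting $L$ on $\{Q'\}^\perp$ then produces a smooth $P$, made unique by the normalization $\langle P,Q'\rangle=0$. Its asymptotics are inherited from $(\Lambda Q)^{(-1)}$, which equals $0$ at $+\infty$ and $\tfrac12\|Q\|_{L^1}$ at $-\infty$ (using $\int\Lambda Q=-\tfrac12\|Q\|_{L^1}$); far from the origin $L$ reduces to $-\pa_{yy}+1$ up to an exponentially small perturbation, and a variation-of-parameters analysis then delivers $P'\in\Y$ together with the pointwise bound \eqref{est:P}. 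Finally, pairing $LP$ with $\Lambda Q$ and invoking (ii) gives
\[
-2\langle P,Q\rangle = \langle(\Lambda Q)^{(-1)},\Lambda Q\rangle = \tfrac12\bigl[((\Lambda Q)^{(-1)})^2\bigr]_{-\infty}^{+\infty} = -\tfrac18\|Q\|_{L^1}^2,
\]
whence $\langle P,Q\rangle=\tfrac1{16}\|Q\|_{L^1}^2$.

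Item (v) is analogous and simpler: $\langle 5Q^4,Q'\rangle=[Q^5]_{-\infty}^{+\infty}=0$, so $L$ is invertible on $\{Q'\}^\perp$, and since $5Q^4\in\Y$ is even one extracts an even $R\in\Y$, uniquely determined. Pairing $LR=5Q^4$ with $\Lambda Q$, using (ii) together with the identity $\int Q^5=\int Q$ (obtained by integrating $Q''+Q^5=Q$), gives $-2\langle R,Q\rangle=\tfrac52\int Q^5-\int Q^5=\tfrac32\|Q\|_{L^1}$. The principal obstacle in this program is the asymptotic control of $P$ at $-\infty$ in (iv): unlike $R$ and the other objects arising here, $P$ does not vanish but stabilizes to $\tfrac12\|Q\|_{L^1}$, and justifying that the residual decays at the exponential rate \eqref{est:P} requires the ODE argument sketched above. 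Everything else is symmetry, Fredholm inversion on $\{Q'\}^\perp$, and a few integrations by parts powered by (ii).
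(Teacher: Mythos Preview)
The paper does not actually prove this lemma: it states the properties ``without proof'' and simply cites \cite{MMgafa,W1985} for (i)--(iii), \cite{MMR1} for (iv), and \cite{MMR3} for (v). Your sketch is correct and is essentially the argument one finds in those references; in particular, your computations of $\langle P,Q\rangle$ and $\langle Q,R\rangle$ via pairing with $\Lambda Q$ and using $L(\Lambda Q)=-2Q$ are exactly the standard route, and the ODE/variation-of-parameters analysis for the asymptotics of $P$ at $-\infty$ is how \eqref{est:P} is established in \cite{MMR1}.
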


We recall that the function $P$ plays a crucial role in~\cite{CM1,MMR1,MMR2,MMR3} for understanding the blow up dynamics.
In particular, it appears naturally in the definition of $Q_1$, exhibited in Theorem~\ref{thm:maintimeold}.
Indeed, from~(3.47) in~\cite{CM1}, there exist $\lambda_0,c_1\in\R$ such that
\be \label{def:Q1}
Q_1 = -P' - \lambda_0(\Lambda Q)' + c_1Q''.
\ee

\subsection{Approximate multi-soliton of the rescaled flow} \label{sec:V}

Let $u(t,x)$ be any solution of~\eqref{kdv} defined for $t>0$ and $x\in\R$.
For $s<0$ and $y\in \R$, we consider the rescaled version of $u$ defined by
\be \label{changevar}
\tilde u(s,y) = \frac 1{(-2s)^{\frac 14}} u(t,x)\quad \m{with}\quad
t = \frac 1{\sqrt{-2s}}\quad \m{and}\quad x = \frac y{\sqrt{-2s}},
\ee
or, equivalently,
\[
u(t,x) = \frac 1{t^{\frac 12}} \tilde u(s,y)\quad \m{with}\quad
s = -\frac 1{2t^2}\quad \m{and}\quad y = \frac xt.
\]
In these new variables, $\tilde u$ is continuous with values in $H^1$ and is solution of the equation
\be \label{rescaledkdv}
\pa_s v + \frac 1{2s} \Lambda v + \pa_y (\pa_{yy} v + v^5) = 0.
\ee
Conversely, by a solution $v$ of~\eqref{rescaledkdv}, we mean $v = \tilde u$ where $u$ is
an $H^1$ solution of~\eqref{kdv} in the sense of~\cite{KPV,KPV2}.

Let $S$ be the minimal mass solution of~\eqref{kdv} defined in Theorem~\ref{thm:previous}.
Let $t_0>0$ be the minimum of the $T_0$ given by Theorems~\ref{thm:maintimeold} and~\ref{thm:mainspaceold}
applied with any $m$ such that $0\leq m\leq 20$, and let $s_0 = -\frac 1{2t_0^2}$.
We define $\St(s,y)$ from $S(t,x)$ as above, so that $\St$ satisfies~\eqref{rescaledkdv} on~$(-\infty,s_0]$.

As in Theorem~\ref{thm:main}, let $K\geq 2$, $\ell_1>\ell_2>\cdots>\ell_K>0$ and $\eps_1,\ldots,\eps_K\in\{\pm1\}$.
We look for an approximate solution $\V$ of~\eqref{rescaledkdv} under the form of
a sum of $K$ rescaled bubbles $\St$ plus correction terms and modulation.

Let $s_1 < 2\ell_1^3 s_0 < 0$ with $|s_1|\gg 1$ to be fixed later,
and let $I\subset (-\infty,s_1)$ be a compact interval.
For all $1\leq k\leq K$, we consider $\C^1$ functions $\mu_k$ and $y_k$
defined on $I$ to be determined later, and define $\tau_k$ by
\be \label{def:tauk}
\frac {d\tau_k}{d s} = \mu_k^{-3}.
\ee
In view of proving Proposition~\ref{prop:v},
which implies Theorem~\ref{thm:main} by rescaling (see Section~\ref{sec:conclusion}),
we assume that $\mu_k$, $\tau_k$ and $y_k$ satisfy, for some $0<\alpha\ll 1$, for all $s\in I$,
\be \label{apriori:mutauy}
\begin{gathered}
|\bar\mu_k(s)| \leq \alpha,\quad
|\bar\tau_k(s)| \leq \alpha,\quad
|\bar y_k(s)| \leq \alpha, \\
\m{with}\qquad
\bar\mu_k(s) = \frac {\mu_k(s)}{\ell_k} - 1,\quad
\bar\tau_k(s) = \frac {\tau_k(s)}{s\ell_k^{-3}} - 1,\quad
\bar y_k(s) = \frac {y_k(s)}{2s\ell_k^{-2}} - 1.
\end{gathered}
\ee

For all $1\leq k\leq K$, for all $s\in I$ and all $y\in\R$, we set
\[
W_k(s,y) = \eps_k \mu_k^{-\frac 12}(s) \St\left( \tau_k(s),\frac {y-z_k(s)}{\mu_k(s)} \right)
\quad\m{with}\quad z_k = y_k + \mu_k\left( -2\tau_k + c_0 - \frac {c_1}{2\tau_k} \right),
\]
and similarly, letting $\tilde\mu_k = \mu_k {\left(1+\frac {\lambda_0}{2\tau_k} \right)}^{-1}$,
\begin{gather*}
Q_k(s,y) = \eps_k \tilde\mu_k^{-\frac 12}(s) Q\left( \frac {y-y_k(s)}{\tilde\mu_k(s)} \right),\quad
R_k(s,y) = \eps_k \tilde\mu_k^{-\frac 12}(s) R\left( \frac {y-y_k(s)}{\tilde\mu_k(s)} \right), \\
\tilde P_k(s,y) = \eps_k \tilde\mu_k^{-\frac 12}(s) P\left( \frac {y-y_k(s)}{\tilde\mu_k(s)} \right),
\end{gather*}
where $R$ and $P$ are defined in Lemma~\ref{lem:L}, $\lambda_0$ and $c_1$ are defined in~\eqref{def:Q1},
and $c_0$ is defined in Theorem~\ref{thm:previous}.
As a consequence of~\eqref{apriori:mutauy}, we observe that
\be \label{apriori:z}
|\bar z_k(s)| \lesssim |\bar \mu_k(s)| + |\bar \tau_k(s)| + |\bar y_k(s)| + |s|^{-1} \lesssim \alpha,
\quad\m{with}\quad \bar z_k(s) = \frac {z_k(s)}s,
\ee
since, from the definition of $z_k$,
\begin{align*}
\bar z_k = \frac {z_k}s &= 2\ell_k^{-2}(1+\bar y_k) - 2\ell_k^{-2}(1+\bar \mu_k)(1+\bar \tau_k) + O(|s|^{-1}) \\
&= O(|\bar y_k| + |\bar \mu_k| + |\bar \tau_k| + |s|^{-1}).
\end{align*}
Similarly, directly from the definition of $\tilde\mu_k$ and~\eqref{apriori:mutauy}, we find
\be \label{apriori:mutilde}
|\tilde\mu_k(s) - \mu_k(s)| \lesssim |s|^{-1} \quad\m{and}\quad
\left| \frac {\dot{\tilde\mu}_k(s)}{\tilde\mu_k(s)} - \frac {\dot \mu_k(s)}{\mu_k(s)} \right| \lesssim |s|^{-2}.
\ee

As in~\cite{CM1,MMR1}, we proceed to a simple localization of $\tilde P_k$ to avoid some artificial growth at~$-\infty$.
Let
\[
\g = \min_{1\leq k\leq K-1} \left\{ \frac 1{4\ell_k} \left( \frac 1{\ell_{k+1}^2} - \frac 1{\ell_k^2} \right) \right\} > 0.
\]
Let $\chi\in\Cinfini$ be such that $0\leq \chi \leq 1$, $\chi'\geq 0$ on $\R$,
$\chi\equiv 1$ on $[-\g,+\infty)$ and $\chi\equiv 0$ on $(-\infty,-2\g]$.
We define
\[
P_k(s,y) = \tilde P_k(s,y) \chi\left( \frac {y-y_k(s)}{\tilde\mu_k(s)} |s|^{-1} \right).
\]
Setting for notational purposes $y_{K+1}(s) = y_K(s) - 6\g\ell_K |s|$,
we prove the following result.

\begin{lemma}[Estimates on $P_k$] \label{lem:Pk}
There exists $\alpha > 0$ small such that the following hold.

For all $m\geq 1$, for all $1\leq k\leq K$, for all $s\in I$ and for all $y\in \R$,
\be \label{b:Pky}
\left\{
\begin{aligned}
|P_k(y)| + |(y - y_k)\pa_y^m P_k(y)| &\lesssim e^{-\frac {|y - y_k|}{2\tilde\mu_k}}
+ \mathbf{1}_{\frac 12 (y_{k+1} + y_k) < y < y_k}(y), \\
|\pa_y^m P_k(y)| &\lesssim e^{-\frac {|y - y_k|}{2\tilde\mu_k}}
+ |s|^{-m} \mathbf{1}_{\frac 12 (y_{k+1} + y_k) < y < y_k - \frac {\ell_k\g}2 |s|}(y).
\end{aligned}
\right.
\ee
In particular, for all $m\geq 1$, for all $1\leq k\leq K$ and for all $s\in I$,
\be \label{eq:Pknorms}
\|(\cdot - y_k) P_k\|_{\dot H^m} + \|P_k\|_{L^2} \lesssim |s|^{\frac 12} \quad\m{and}\quad
\|\pa_y^m P_k\|_{L^2} \lesssim 1.
\ee
\end{lemma}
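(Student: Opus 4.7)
The plan is to reduce every bound to the decay properties of $P$ collected in Lemma~\ref{lem:L}(iv), via the substitution $\tilde y = (y - y_k)/\tilde\mu_k$, while tracking the cutoff carefully. Setting $\chi_s(\tilde y) := \chi(\tilde y |s|^{-1})$, Leibniz' rule and the chain rule give, for $m\geq 1$,
\[
\pa_y^m P_k(y) = \eps_k\, \tilde\mu_k^{-m-\frac12}\Bigl[\, P^{(m)}(\tilde y)\,\chi_s(\tilde y) + \sum_{j=1}^m \binom{m}{j}\, |s|^{-j}\, P^{(m-j)}(\tilde y)\, \chi^{(j)}(\tilde y |s|^{-1})\Bigr].
\]
The inputs from Lemma~\ref{lem:L}(iv) are that $P^{(m)}\in\Y$ for every $m\geq 1$, so after absorbing polynomial weights into the exponential one has $|\tilde y|^N |P^{(m)}(\tilde y)| \lesssim e^{-|\tilde y|/2}$ for every $N\geq 0$, and that by~\eqref{est:P} $P$ itself is bounded on $\R$ with exponential decay at $+\infty$.

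I would then analyse the two pieces of the expansion separately. The main term $P^{(m)}(\tilde y)\chi_s(\tilde y)$ is pointwise controlled by $e^{-|\tilde y|/2} = e^{-|y-y_k|/(2\tilde\mu_k)}$. Each commutator term is supported in $\tilde y \in [-2\g|s|,-\g|s|]$, i.e., in the window $W_k := [y_k-2\g\tilde\mu_k|s|,\, y_k-\g\tilde\mu_k|s|]$; on $W_k$ the factor $P^{(m-j)}(\tilde y)$ is either bounded (for $j=m$) or exponentially small in $|s|$ (for $j<m$, using $P^{(m-j)}\in\Y$), while each derivative of $\chi$ costs $|s|^{-1}$, so the commutator sum is $\lesssim |s|^{-m}\mathbf{1}_{W_k}(y)$. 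For $|P_k(y)|$ itself I split at $\tilde y=0$: for $\tilde y\geq 0$ the exponential decay of $P$ gives $|P_k|\lesssim e^{-(y-y_k)/(2\tilde\mu_k)}$, while for $-2\g|s|<\tilde y<0$ the boundedness of $P$ gives $|P_k|\lesssim\mathbf{1}_{[y_k-2\g\tilde\mu_k|s|,\,y_k]}$.

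It then remains to check that these supports sit inside the advertised indicator intervals. The inclusion $y_k-\g\tilde\mu_k|s|<y_k-\frac{\ell_k\g}{2}|s|$ reduces to $\tilde\mu_k>\ell_k/2$, which holds for $\alpha$ small by~\eqref{apriori:mutauy}--\eqref{apriori:mutilde}. For the lower endpoint, if $k<K$ the a priori estimates give $y_k-y_{k+1}\geq |s|(\ell_{k+1}^{-2}-\ell_k^{-2})(1-O(\alpha))$, and the definition $\g\leq\frac{1}{4\ell_k}(\ell_{k+1}^{-2}-\ell_k^{-2})$ yields $2\g\tilde\mu_k|s|\leq\frac12(y_k-y_{k+1})(1+O(\alpha))$; for $\alpha$ small this gives $y_k-2\g\tilde\mu_k|s|>\frac12(y_k+y_{k+1})$. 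The case $k=K$ is immediate from the convention $y_{K+1}=y_K-6\g\ell_K|s|$. To handle $|(y-y_k)\pa_y^m P_k|$, I would absorb the extra factor $\tilde\mu_k|\tilde y|$ into the exponential (via $|\tilde y|\, e^{-|\tilde y|}\lesssim e^{-|\tilde y|/2}$) on the exponential part, and on $W_k$ use $|y-y_k|\lesssim|s|$ together with $m\geq 1$ to get $|y-y_k|\cdot|s|^{-m}\lesssim 1$.

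The $L^2$ estimates~\eqref{eq:Pknorms} then follow by integrating the pointwise bounds in~\eqref{b:Pky}: the exponential contributes $O(1)$ in $L^2$, the indicator on a set of length $\lesssim|s|$ contributes $\lesssim|s|^{1/2}$, and $|s|^{-m}\mathbf{1}_{W_k}$ contributes $\lesssim|s|^{\frac12-m}\lesssim 1$. For $\|(\cdot-y_k)P_k\|_{\dot H^m}$ I would expand $\pa_y^m[(y-y_k)P_k]=m\pa_y^{m-1}P_k+(y-y_k)\pa_y^m P_k$ and apply~\eqref{b:Pky}. The main obstacle is not any single computation but the careful geometry of the cutoff window $W_k$: one must verify simultaneously that $W_k$ sits strictly to the left of $y_k$ (so that $P^{(m-j)}$ enjoys exponential smallness in $|s|$), strictly to the right of $\frac12(y_{k+1}+y_k)$ (to fit the wider indicator in the first bound), and narrow enough to absorb the factor $|y-y_k|$. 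This is precisely what the numerical choices of $\g$ and of the auxiliary $y_{K+1}$ were tuned to guarantee.
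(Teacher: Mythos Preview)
Your proposal is correct and follows essentially the same approach as the paper: the paper first isolates the geometric claim that the cutoff vanishes for $y\leq\frac12(y_{k+1}+y_k)$ and is identically $1$ for $y\geq y_k-\frac{\ell_k\gamma}{2}|s|$ (their equation~\eqref{ref:gamma}), then reads off~\eqref{b:Pky} from the decay of $P$ and $P'$, whereas you first write the Leibniz expansion and then verify the same geometric inclusions for the window $W_k$. The only cosmetic difference is the order of presentation; the ingredients (decay of $P^{(m)}$ for $m\geq1$, boundedness of $P$, support of $\chi'$ in $[-2\gamma,-\gamma]$, and the a~priori bounds on $\tilde\mu_k$ and $y_k$) and the way they are combined are identical.
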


\begin{proof}
From~\eqref{apriori:mutauy} and the definitions of $\g$ and $\chi$,
we first claim that, for all $1\leq k\leq K$,
\be \label{ref:gamma}
\chi\left( \frac {y-y_k(s)}{\tilde\mu_k(s)} |s|^{-1} \right) =
\begin{cases}
0 &\m{ for all } y\leq \frac 12 \Bigl[y_{k+1}(s) + y_k(s)\Bigr], \\
1 &\m{ for all } y\geq y_k(s) - \frac {\ell_k\g}2 |s|.
\end{cases}
\ee
Indeed, if $\chi\left( \frac {y-y_k(s)}{\tilde\mu_k(s)} |s|^{-1} \right) < 1$
then $\frac {y-y_k(s)}{\tilde\mu_k(s)} |s|^{-1} < -\g$ and so,
since $\tilde\mu_k(s) \geq \frac {\ell_k}2$ from~\eqref{apriori:mutauy}
and~\eqref{apriori:mutilde} by taking $\alpha \leq \frac 12$,
\[
y < -\g |s|\tilde\mu_k(s) + y_k(s) \leq y_k(s) - \frac {\ell_k\g}2 |s|.
\]
Similarly, if $\chi\left( \frac {y-y_k(s)}{\tilde\mu_k(s)} |s|^{-1} \right) > 0$ then
$\frac {y-y_k(s)}{\tilde\mu_k(s)} |s|^{-1} > -2\g$, and so
\[
y > -2\g |s|\tilde\mu_k(s) + y_k(s) \geq -2(1+\alpha)|s| \left( \g \ell_k + \ell_k^{-2} \right).
\]
Thus, for $1\leq k\leq K-1$,
\[
y > -\frac 12 (1+\alpha)|s| \left( \ell_{k+1}^{-2} + 3\ell_k^{-2} \right)
\geq -\frac 12 (1+\alpha)|s| \left( 2\ell_{k+1}^{-2} + 2\ell_k^{-2} - 4\ell_K\g \right).
\]
But, from~\eqref{apriori:mutauy}, we have
\[
-4(1+\alpha) |s| \ell_K^{-2} \leq y_{k+1}(s) + y_k(s) \leq -2(1-\alpha) |s| (\ell_{k+1}^{-2} + \ell_k^{-2}),
\]
and so
\[
y > \frac 12 \Bigl[ y_{k+1}(s) + y_k(s) \Bigr] \left( \frac {1+\alpha}{1-\alpha} - \ell_K^3\g \right)
\geq \frac 12 \Bigl[ y_{k+1}(s) + y_k(s) \Bigr],
\]
by taking $\alpha>0$ small enough, namely $\alpha \leq \frac {\rho}{\rho+2}$ with $\rho = \ell_K^3\g > 0$.
For $k=K$, we find similarly, from~\eqref{apriori:mutauy} and the definition of $y_{K+1}$ above,
\[
y > -2(1+\alpha) \ell_K^{-2} |s| (1+\rho) \geq \ell_K^{-2} |s| (2\alpha - 2 - 3\rho)
\geq y_K(s) - 3\g\ell_K |s| = \frac 12 \Bigl[ y_{K+1}(s) + y_K(s) \Bigr]
\]
by taking again $\alpha > 0$ small enough, namely $\alpha \leq \frac {\rho}{2(\rho+2)}$.

Thus, the claim~\eqref{ref:gamma} is proved, and we deduce directly~\eqref{b:Pky}
from this claim, the definition of $P_k$ and the properties of $P$ as recalled in Lemma~\ref{lem:L}.
Finally, estimates~\eqref{eq:Pknorms} are obtained as direct consequences of~\eqref{b:Pky},
which concludes the proof of Lemma~\ref{lem:Pk}.
\end{proof}

In order to handle the growth of $\tilde P_k$ at $-\infty$, we also introduce the local norms
\[
\|f\|_{L^\infty_k} = \sup_{y\in \R} \left| f(y) e^{-\frac {|y-y_k|}{10\tilde\mu_k}} \right| \quad\m{and}\quad
\|f\|_{L^2_k}^2 = \int f^2(y) e^{-\frac {|y-y_k|}{10\tilde\mu_k}} \,dy.
\]

We may now prove the following key result for our analysis, which translates in terms of~$W_k$
the properties satisfied by $S$ given in Theorems~\ref{thm:maintimeold} and~\ref{thm:mainspaceold}
and in Corollary~\ref{cor:weighted}.

\begin{lemma}[Estimates on $W_k$] \label{lem:Wk}
Let $1\leq k\leq K$ and $1\leq m\leq 20$.
Then the function $W_k$ satisfies the following, for all $s\in I$.
\begin{enumerate}[label=\emph{(\roman*)}]
\item \emph{Sobolev norms estimates:}
\be \label{eq:WkHm}
\|W_k - Q_k\|_{L^2} \lesssim |s|^{-\frac 12},\quad
\|W_k - Q_k\|_{\dot H^m} \lesssim |s|^{-1},
\ee
and, in particular,
\be \label{eq:WkLinfty}
\|W_k - Q_k\|_{L^\infty} \lesssim |s|^{-\frac 34}.
\ee
More precisely,
\be \label{eq:WkHmbis}
\left\| W_k - Q_k - \frac 1{2\tau_k} \tilde P_k \right\|_{\dot H^m} \lesssim s^{-\frac 32}.
\ee
\item \emph{Exponential weighted estimate:}
\be \label{eq:Wkweighted}
\left\| W_k - Q_k - \frac 1{2\tau_k} \tilde P_k \right\|_{L^\infty_k} \lesssim |s|^{-2}.
\ee
\item \emph{Pointwise asymptotics on the left:}
for all $y \leq y_k - |s|^{\frac 34}$,
\begin{gather}
\left| W_k(s,y) + \frac {\eps_k}2 \|Q\|_{L^1} \mu_k\sqrt{-2\tau_k} |y-z_k|^{-\frac 32} \right|
\lesssim |s|^{\frac 12 + \frac 1{42}} |y-z_k|^{-\frac 32 - \frac 1{21}}, \label{eq:Wkleft0} \\
|\pa_y^m W_k(s,y)| \lesssim |s|^{\frac 12} |y-z_k|^{-\frac 32 - m}. \label{eq:Wkleftm}
\end{gather}
\item \emph{Pointwise bounds on the right:}
there exists $\rho_{m-1} > 0$ such that, for all $y\in\R$,
\be \label{eq:Wkright}
|\pa_y^{m-1} W_k(s,y)| \lesssim \exp\left[ -\rho_{m-1} \left( \frac {y-y_k}{\tilde\mu_k} \right) \right].
\ee
\item \emph{Polynomial weighted estimates:}
\be \label{eq:LkWk}
\| (\cdot-y_k) (W_k - Q_k) \|_{\dot H^m} \lesssim |s|^{-\frac 12}.
\ee
\item \emph{Equation:} \label{item:eqWk}
\be \label{eqWk}
\pa_s W_k + \frac 1{2s} \Lambda W_k + \pa_y\left( \pa_{yy} W_k + W_k^5 \right) = \vec m_k^0 \cdot \vec \M_k W_k,
\ee
with $\vec \M_k = \begin{pmatrix} -\Lambda_k \\ -\pa_y \end{pmatrix}$,
$\Lambda_k = \frac 12 + (y-y_k)\pa_y$ and
\[
\vec m_k^0 = \begin{pmatrix} \ds \frac {\dot \mu_k}{\mu_k} + \frac 1{2\mu_k^3 \tau_k} - \frac 1{2s} \\[12pt]
\ds \dot y_k - \frac {y_k}{2s} - \frac 1{\mu_k^2} -\frac {c_0}{2\mu_k^2\tau_k} + \frac {3c_1}{4\mu_k^2\tau_k^2} \end{pmatrix}.
\]
\end{enumerate}
\end{lemma}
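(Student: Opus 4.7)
The plan is to transfer the six statements from the corresponding sharp properties of $S(t,x)$ provided by Theorems~\ref{thm:maintimeold}, \ref{thm:mainspaceold} and Corollary~\ref{cor:weighted} to $W_k$ through the two-stage change of variables $S\rightsquigarrow\St\rightsquigarrow W_k$: first the rescaling~\eqref{changevar}, which turns $t$-estimates into $s$-estimates, then the affine rescaling by $(\mu_k,z_k)$ together with the sign $\eps_k$ and the $L^2$-weight $\mu_k^{-1/2}$. I would dispose of~\ref{item:eqWk} first, as it requires only a chain-rule computation: since $\St$ solves~\eqref{rescaledkdv} at time $\tau_k$, plugging $W_k$ into~\eqref{rescaledkdv} at time $s$ and using $\dot\tau_k=\mu_k^{-3}$ surfaces two residuals. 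The $\Lambda$-scaling coefficient, after pulling back from $\tau_k$ to $s$ through the rescaling, does not match $\tfrac{1}{2s}$; combined with the drift $\dot\mu_k/\mu_k$, this produces the first component of $\vec m_k^0$. The translation mismatch between $\dot y_k$ and the intrinsic velocity of the $\St$-bubble, adjusted by the $c_0$ and $c_1/(2\tau_k)$ terms built into $z_k$, then produces the second.

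The technical heart is the refined bound~\eqref{eq:WkHmbis}. Theorem~\ref{thm:maintimeold}, by a short summation over consecutive indices, yields a two-term expansion of the form $S(t)=t^{-1/2}Q(\xi)+t^{1/2}Q_1(\xi)+O_{\dot H^m}(t^{3/2})$ with $\xi=(x+1/t)/t+c_0$, valid for $0\leq m\leq 20$. Substituting~\eqref{def:Q1}, the $Q_1$ correction splits as $-P'-\lambda_0(\Lambda Q)'+c_1Q''$. Each non-$P'$ piece is then absorbed as a first-order perturbation of the $Q$-profile: the $(\Lambda Q)'$ piece by the modified scale $\tilde\mu_k=\mu_k/(1+\lambda_0/(2\tau_k))$, and the $c_1Q''$ piece by the translation shift $-c_1\mu_k/(2\tau_k)$ built into $z_k$. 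Only $-P'$ remains, and after the two rescalings it becomes exactly $\tfrac{1}{2\tau_k}\tilde P_k$, yielding~\eqref{eq:WkHmbis} at order $|s|^{-3/2}$. The weaker Sobolev bounds~\eqref{eq:WkHm}--\eqref{eq:WkLinfty} then follow by triangle inequality, using $\|\tilde P_k\|_{L^2}\lesssim|s|^{1/2}$ and $\|\tilde P_k\|_{\dot H^m}\lesssim 1$ (both obtained by rescaling~\eqref{est:P}), together with $H^1\hookrightarrow L^\infty$.

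Parts~\eqref{eq:Wkweighted}--\eqref{eq:Wkright} are then direct rescalings of the corresponding estimates for $S$: Corollary~\ref{cor:weighted} gives~\eqref{eq:Wkweighted}; the pointwise left bounds~\eqref{th:ptwise0}--\eqref{th:ptwisem} give~\eqref{eq:Wkleft0}--\eqref{eq:Wkleftm}; the pointwise right bound~\eqref{th:ptwiser} gives~\eqref{eq:Wkright}. The validity range $y\leq y_k-|s|^{3/4}$ in~\eqref{eq:Wkleft0} matches $x\leq -1/t_k-1$ for $S$ (with $t_k=1/\sqrt{-2\tau_k}$) under the affine change of variable, using the a priori estimates~\eqref{apriori:mutauy}--\eqref{apriori:mutilde} and the inequality $\mu_k\sqrt{-2\tau_k}\lesssim|s|^{1/2}\ll|s|^{3/4}$. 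Finally~\eqref{eq:LkWk} is obtained by splitting into regions: on the bulk $\{y\gtrsim y_k\}$ the polynomial weight $(y-y_k)$ is tamed by the exponential decay from~\eqref{eq:Wkright} and~\eqref{eq:Wkweighted}; on the far-left tail $\{y\leq y_k-|s|^{3/4}\}$ one combines~\eqref{eq:Wkleftm} with $|y-y_k|\lesssim|y-z_k|$, valid because $z_k\approx 0$ lies at distance $\sim|s|$ to the right of $y_k\sim 2s\ell_k^{-2}$ (from the definition of $z_k$ and~\eqref{apriori:mutauy}).

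The main obstacle I anticipate is the algebraic matching underlying~\eqref{eq:WkHmbis}: verifying that the chosen $\tilde\mu_k$ and $c_0,c_1$-corrections in $z_k$ exactly cancel the $(\Lambda Q)'$ and $Q''$ components of $Q_1$ modulo $O(|s|^{-3/2})$ in $\dot H^m$. This requires Taylor-expanding the rescaled and translated $Q$ in the small parameters $\lambda_0/(2\tau_k)$ and $c_1/(2\tau_k)$ and cross-checking with the $\dot H^m$-expansion of $S$ from Theorem~\ref{thm:maintimeold}. Once this identification is settled, what remains is careful bookkeeping of powers of $|s|$ through the two rescalings.
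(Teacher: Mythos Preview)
Your proposal is correct and matches the paper's approach: the Taylor-matching you anticipate for~\eqref{eq:WkHmbis} is precisely the identity the paper records as~\eqref{Taylorscaling}, and the remaining parts are handled by the same rescalings and chain-rule computation. Two minor points to tighten: the $L^2$ bound in~\eqref{eq:WkHm} should come directly from~\eqref{th:timem} with $m=0$ (your two-term $\dot H^m$ expansion does not extend to $m=0$ from that theorem alone, since the $Q_1$ term is absent there); and in part~(v) your two-region split omits the intermediate zone, roughly $2y_k\leq y\leq 0$, where $|y-y_k|$ can reach order~$|s|$ but the pointwise left bounds~\eqref{eq:Wkleftm} do not yet apply and the right-side exponential decay is unavailable --- the paper covers this zone via~\eqref{eq:WkHmbis} together with the exponential decay of~$P'\in\Y$.
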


\begin{proof}
(i) First note that~\eqref{apriori:mutauy} and~\eqref{eq:WkHmbis} imply directly
the second estimate in~\eqref{eq:WkHm} since $P'\in\Y$ from Lemma~\ref{lem:L}.
Note also that~\eqref{eq:WkLinfty} is a direct consequence of~\eqref{eq:WkHm} from the standard inequality
$\|f\|_{L^\infty}^2 \lesssim \|f\|_{L^2} \|\pa_y f\|_{L^2}$, valid for any $f\in H^1(\R)$.
Thus, we just have to prove the first estimate in~\eqref{eq:WkHm} and~\eqref{eq:WkHmbis}.

To prove the first estimate in~\eqref{eq:WkHm},
we apply~\eqref{th:timem} with $m=0$ and obtain, for all $t\in (0,t_0]$,
\[
\left\| S(t) - \frac 1{t^{\frac 12}} Q\left( \frac {\cdot + \frac 1t}t + c_0 \right) \right\|_{L^2} \lesssim t.
\]
Thus, applying the change of variables~\eqref{changevar}, we get, for all $s\leq s_0$,
\[
\left\| \St(s) - Q\left( \cdot - 2s + c_0 \right) \right\|_{L^2} \lesssim |s|^{-\frac 12}
\]
and so, from the definition of $W_k$, for all $s\in I$,
\[
\left\| W_k -\eps_k \mu_k^{-\frac 12} Q\left( \frac {\cdot-y_k}{\mu_k}
+ \frac {c_1}{2\tau_k} \right) \right\|_{L^2} \lesssim |s|^{-\frac 12}.
\]
But, from~\eqref{apriori:mutilde} and the \emph{a priori} estimate~\eqref{apriori:mutauy} on $\tau_k$, we have
\[
\left\| \mu_k^{-\frac 12} Q\left( \frac {\cdot}{\mu_k} + \frac {c_1}{2\tau_k} \right)
- \tilde\mu_k^{-\frac 12} Q\left( \frac {\cdot}{\tilde\mu_k} \right) \right\|_{L^2} \lesssim |s|^{-1}.
\]
We deduce the first estimate in~\eqref{eq:WkHm} from the two above estimates and the definition of $Q_k$.

To prove~\eqref{eq:WkHmbis}, we proceed similarly and first get from~\eqref{th:timem},
applied with any $1\leq m\leq 20$, for all $t\in (0,t_0]$,
\[
\left\| \pa_x^m S(t) - \frac 1{t^{\frac 12 + m}} Q^{(m)}\left( \frac {\cdot + \frac 1t}t + c_0 \right)
- \frac 1{t^{\frac 12 + m - 2}} Q_1^{(m-1)}\left( \frac {\cdot + \frac 1t}t + c_0 \right) \right\|_{L^2} \lesssim t^{3-m}.
\]
Note that we may obtain a sharper estimate in the case $m\geq 2$, but the above one will be enough for our purpose.
Thus, applying the change of variables~\eqref{changevar}, we get, for all $s\leq s_0$,
\[
\left\| \pa_y^m \St(s) - Q^{(m)}\left( \cdot - 2s + c_0 \right)
+ \frac 1{2s} Q_1^{(m-1)}\left( \cdot - 2s + c_0 \right) \right\|_{L^2} \lesssim |s|^{-\frac 32}
\]
or equivalently, from the expression~\eqref{def:Q1} of $Q_1$,
\[
\left\| \St(s) - \left[ Q + \frac 1{2s} P + \frac {\lambda_0}{2s} \Lambda Q
- \frac {c_1}{2s} Q' \right] (\cdot - 2s + c_0) \right\|_{\dot H^m} \lesssim |s|^{-\frac 32}.
\]
Noticing that, from a Taylor expansion with remainder of integral form, for all $j\geq 0$,
\be \label{Taylorscaling}
\left\| \left( 1 + \frac {\lambda_0}{2s} \right)^{\frac 12}
Q\left[ \left( 1 + \frac {\lambda_0}{2s} \right) \left( \cdot - \frac {c_1}{2s} \right) \right]
- \left( Q + \frac {\lambda_0}{2s} \Lambda Q - \frac {c_1}{2s} Q' \right) \right\|_{\dot H^j} \lesssim |s|^{-2},
\ee
we obtain
\[
\left\| \St(s) - \left( 1 + \frac {\lambda_0}{2s} \right)^{\frac 12} \left( Q + \frac 1{2s} P \right)
\left[ \left( 1 + \frac {\lambda_0}{2s} \right) \left( \cdot - 2s + c_0
- \frac {c_1}{2s} \right) \right] \right\|_{\dot H^m} \lesssim |s|^{-\frac 32}.
\]
From the definitions of $\tilde\mu_k$ and $W_k$, it gives, for all $s\in I$,
\[
\left\| W_k -\eps_k \tilde\mu_k^{-\frac 12} \left( Q + \frac 1{2\tau_k} P \right)
\left( \frac {\cdot-y_k}{\tilde\mu_k} \right) \right\|_{\dot H^m} \lesssim |s|^{-\frac 32},
\]
then~\eqref{eq:WkHmbis} from the definitions of $Q_k$ and $\tilde P_k$.

\medskip

(ii) The proof of~\eqref{eq:Wkweighted} follows closely the one of~\eqref{eq:WkHmbis} above.
From~\eqref{eq:timeweightedweak} applied with $B=10$, $M=1$ and $m=0,1$, we first obtain, for all $(0,t_0]$,
\[
\left\| \left[ S(t) - \frac 1{t^{\frac 12}} \left( Q + t^2 Q_1^{(-1)} \right)
\left( \frac {\cdot + \frac1t}t + c_0 \right) \right]
e^{-\frac {|\cdot + \frac 1t|}{10t}} \right\|_{L^\infty} \lesssim t^{\frac 72},
\]
where $Q_1^{(-1)} = -P - \lambda_0\Lambda Q + c_1 Q'$ from~\eqref{antideriv}, \eqref{est:P} and~\eqref{def:Q1},
and so, by~\eqref{changevar},
\[
\left\| \left[ \St(s) - \Bigl( Q + \frac 1{2s} P + \frac {\lambda_0}{2s} \Lambda Q
- \frac {c_1}{2s} Q' \Bigr) (\cdot - 2s + c_0) \right]
e^{-\frac {|\cdot - 2s|}{10}} \right\|_{L^\infty} \lesssim |s|^{-2}.
\]
Using~\eqref{Taylorscaling} for $j=0,1$, we get, for all $s\leq s_0$,
\[
\left\| \left[ \St(s) - \left( 1 +\frac {\lambda_0}{2s} \right)^{\frac 12} \left( Q + \frac 1{2s} P \right)
\left( \Bigl( 1 + \frac {\lambda_0}{2s} \Bigr) \Bigl( \cdot - 2s + c_0 - \frac {c_1}{2s} \Bigr) \right) \right]
e^{-\frac {|\cdot - 2s|}{10}} \right\|_{L^\infty} \lesssim |s|^{-2}.
\]
From the definitions of $\tilde\mu_k$ and $W_k$, it gives, for all $s\in I$,
\[
\left\| \left[ W_k -\eps_k \tilde\mu_k^{-\frac 12} \left( Q + \frac 1{2\tau_k} P \right)
\left( \frac {\cdot - y_k}{\tilde\mu_k} \right) \right]
e^{-\frac {|\cdot - y_k|}{10\tilde\mu_k}} \right\|_{L^\infty} \lesssim |s|^{-2},
\]
then~\eqref{eq:Wkweighted} from the definitions of $Q_k$, $\tilde P_k$ and $L^\infty_k$.

\medskip

(iii) From~\eqref{th:ptwise0}, \eqref{th:ptwisem} and~\eqref{changevar},
we obtain, for all $y\leq 2s - \sqrt{-2s}$,
\[
\left| \St(s,y) + \frac 12 \|Q\|_{L^1} \sqrt{-2s} |y|^{-\frac 32} \right|
\lesssim |s|^{\frac 12 + \frac 1{42}} |y|^{-\frac 32 - \frac 1{21}} \quad\m{ and }\quad
|\pa_y^m \St(s,y)| \lesssim |s|^{\frac 12} |y|^{-\frac 32 - m}.
\]
We deduce~\eqref{eq:Wkleft0} and~\eqref{eq:Wkleftm} from these two estimates,
the \emph{a priori} estimates~\eqref{apriori:mutauy} and the definition of $W_k$.

\medskip

(iv) From~\eqref{th:ptwiser} and~\eqref{changevar}, we obtain, for all $s\leq s_0$ and all $y\in\R$,
\[
|\pa_y^{m-1} \St(s,y)| \lesssim \exp\Bigl[ -\g_{m-1} (y - 2s) \Bigr],
\]
which gives, from the definition of $W_k$ and~\eqref{apriori:mutilde}, for all $s\in I$ and all $y\geq y_k$,
\[
|\pa_y^{m-1} W_k(s,y)| \lesssim \exp\left[ -\g_{m-1} \left( \frac {y-y_k}{\mu_k} \right) \right]
\lesssim \exp\left[ -\frac 23 \g_{m-1} \left( \frac {y-y_k}{\tilde\mu_k} \right) \right],
\]
then~\eqref{eq:Wkright} by letting $\rho_{m-1} = \frac 23 \g_{m-1} > 0$.
Note that~\eqref{eq:Wkright} holds also obviously for $y\leq y_k$
since $\|\pa_y^{m-1} W_k\|_{L^\infty} \lesssim 1$ from~\eqref{eq:WkHm}.

\medskip

(v) To prove~\eqref{eq:LkWk}, we first notice that
$\| \pa_y^{m-1} (W_k - Q_k)\|_{L^2} \lesssim |s|^{-\frac 12}$ from~\eqref{eq:WkHm}.
Then we decompose $\| (\cdot - y_k) \pa_y^m (W_k - Q_k)\|_{L^2}$
on the three regions $y < 2y_k$, $2y_k \leq y \leq 0$ and $y > 0$.
Indeed, using first~\eqref{eq:Wkleftm} and the exponential decay of $Q$, we obtain
\begin{align*}
\| (\cdot-y_k) \pa_y^m &(W_k - Q_k) \|_{L^2(y < 2y_k)}^2 \\
&\lesssim |s| \int_{y < 2y_k} (y_k-y)^2 (z_k-y)^{-3 - 2m} \,dy
+ \int_{y < 2y_k} (y_k-y)^2 e^{\frac {2(y-y_k)}{\tilde\mu_k}} \,dy \\
&\lesssim |s| \int_{y < 2y_k} (y_k-y)^{-3} \,dy + \int_{y < 2y_k} e^{\frac {y-y_k}{\tilde\mu_k}} \,dy
\lesssim |s| |y_k|^{-2} + e^{y_k/\tilde\mu_k} \lesssim |s|^{-1},
\end{align*}
from~\eqref{apriori:mutauy}, \eqref{apriori:z} and the fact that we assume $m\geq 1$.
Next, from~\eqref{eq:WkHmbis}, we get
\[
\| (\cdot-y_k) \pa_y^m (W_k - Q_k) \|_{L^2(2y_k\leq y\leq 0)}
\lesssim |s| \left\|W_k - Q_k -\frac 1{2\tau_k} \tilde P_k \right\|_{\dot H^m} \!
+ |s|^{-1} \| (\cdot-y_k) \pa_y^m \tilde P_k \|_{L^2} \lesssim |s|^{-\frac 12},
\]
using also the exponential decay of $P'\in\Y$.
Finally, from~\eqref{eq:Wkright} and the exponential decay of $Q$, we obtain,
with $\rho'_m = \min(\rho_m,1) > 0$,
\[
\| (\cdot - y_k) \pa_y^m (W_k - Q_k) \|_{L^2(y > 0)}^2
\lesssim \int_{y > 0} e^{-\rho'_m \left( \frac {y-y_k}{\tilde\mu_k} \right)} \,dy
\lesssim e^{\rho'_m y_k/\tilde\mu_k} \lesssim |s|^{-10}.
\]
Gathering the above estimates, we obtain~\eqref{eq:LkWk}.

\medskip

(vi) First, note that $\eps_k \St$ satisfies~\eqref{rescaledkdv} and
\[
\eps_k \St(\tau_k(s),y) = \mu_k^{\frac 12}(s) W_k(s,\mu_k(s) y + z_k(s)).
\]
Using~\eqref{def:tauk}, we compute
\begin{align*}
\eps_k \pa_s \St(\tau_k,y)
= \mu_k^3 \pa_s \left[ \eps_k \St(\tau_k,y) \right]
&= \mu_k^{\frac 72} \left[ \frac {\dot\mu_k}{\mu_k} \frac {W_k}2 + \pa_s W_k
+ \dot\mu_k y \pa_y W_k + \dot z_k \pa_y W_k \right] (s,\mu_k y + z_k) \\
&= \mu_k^{\frac 72} \left[ \pa_s W_k + \frac {\dot\mu_k}{\mu_k} \Lambda W_k
+ \left( \dot z_k - \frac {\dot \mu_k}{\mu_k} z_k \right) \pa_y W_k \right] (s,\mu_k y + z_k),
\end{align*}
then
\begin{align*}
\frac {\eps_k}{2\tau_k} \Lambda \St(\tau_k,y)
&= \mu_k^{\frac 12} \frac 1{2\tau_k} \left[ \frac {W_k}2 + \mu_k y \pa_y W_k \right] (s,\mu_k y + z_k)\\
&= \mu_k^{\frac 72} \frac 1{2\mu_k^3 \tau_k} \Bigl[ \Lambda W_k - z_k \pa_y W_k \Bigr] (s,\mu_k y + z_k),
\end{align*}
and
\[
\eps_k \pa_y (\pa_{yy}\St + \St^5) (\tau_k,y)
= \mu_k^{\frac 72} \left[ \pa_y \left( \pa_{yy} W_k + W_k^5 \right) \right] (s,\mu_k y + z_k).
\]
Thus, summing the above terms and using the definition of $z_k$, $W_k$ satisfies the equation~\eqref{eqWk},
which concludes the proof of Lemma~\ref{lem:Wk}.
\end{proof}

For $1\leq k\leq K$, we consider additional $\C^1$ functions $r_k$, $d_k$ and $a_k$
defined on $I$ by
\be \label{def:rkdk}
r_k(s) = \eps_k \tilde\mu_k^{\frac 12}(s) \sum_{j\neq k} W_j(s,y_k(s)),\quad
d_k(s) = \eps_k \tilde\mu_k^{\frac 32}(s) \sum_{j\neq k} \pa_y W_j(s,y_k(s)),
\ee
and $a_k$ to be determined later.
We assume that $a_k$ satisfies, for all $s\in I$,
\be \label{apriori:a}
|a_k(s)| \leq |s|^{-1},
\ee
and we observe that, from~\eqref{apriori:mutauy} and~\eqref{eq:Wkleft0}--\eqref{eq:Wkright},
\be \label{apriori:rd}
|r_k(s)| \lesssim |s|^{-1},\quad |d_k(s)| \lesssim |s|^{-2}.
\ee
We will prove more precise asymptotics on $r_k$ and $d_k$ in Lemma~\ref{lem:rk} below.

Finally, for all $s\in I$ and $y\in\R$, let
\[
V_k(s,y) = W_k(s,y) + r_k(s) R_k(s,y) + a_k(s) P_k(s,y)
\]
and define
\[
\W = \sum_{k=1}^K W_k \quad\m{and}\quad
\V = \sum_{k=1}^K V_k = \W +\sum_{k=1}^K (r_k R_k + a_k P_k).
\]

Note that, by~\eqref{eq:Pknorms}, \eqref{eq:WkHm}--\eqref{eq:WkLinfty},
\eqref{eq:LkWk} and~\eqref{apriori:a}--\eqref{apriori:rd}, we have,
for all $1\leq k\leq K$, all $1\leq m\leq 19$ and all $s\in I$,
\be \label{simpleVk}
\left\{
\begin{alignedat}{2}
\|(\cdot - y_k)(V_k-Q_k)\|_{\dot H^m} + \|V_k - Q_k\|_{L^2} &\lesssim |s|^{-\frac 12},
&\|V_k - Q_k\|_{L^\infty} &\lesssim |s|^{-\frac 34}, \\
\|V_k - Q_k\|_{\dot H^m} &\lesssim |s|^{-1},\quad
&\|\pa_y^m (V_k - Q_k)\|_{L^\infty} &\lesssim |s|^{-1},
\end{alignedat}
\right.
\ee
and, in particular, $\|\pa_y^{m-1}\V\|_{L^2} + \|\pa_y^{m-1}\V\|_{L^\infty} \lesssim 1$.

In the next lemma, we prove that such an ansatz $\V$ is indeed an approximate solution
of~\eqref{rescaledkdv}, in a precise sense.
In Lemma~\ref{lem:Vmassenergy} below, we also estimate the mass and the energy of $\V$,
relying on the sharp estimates of Lemma~\ref{lem:rk}.

\begin{lemma}[Approximate rescaled multi-soliton] \label{lem:V}
The error of the flow~\eqref{rescaledkdv} at $\V$, defined as
\[
\E_\V = \pa_s \V + \frac 1{2s} \Lambda \V + \pa_y(\pa_{yy} \V + \V^5),
\]
decomposes as
\be \label{decompEV}
\E_\V = \sum_j \vec m_j \cdot \vec \M_j V_j + \sum_j (\dot r_j R_j + \dot a_j P_j) + \P
\ee
where, for all $1\leq j\leq K$,
\be \label{def:mod}
\vec m_j = \begin{pmatrix} m_{j,1} \\ m_{j,2} \end{pmatrix}
= \begin{pmatrix} \ds \frac {\dot \mu_j}{\mu_j} + \frac 1{2\mu_j^3 \tau_j} - \frac 1{2s} + \frac {a_j}{\mu_j^3} \\[12pt]
\ds \dot y_j - \frac {y_j}{2s} - \frac 1{\mu_j^2} - \frac {c_0}{2\mu_j^2\tau_j} + \frac {3c_1}{4\mu_j^2\tau_j^2} \end{pmatrix}
= \vec m_j^0 + \begin{pmatrix} \dfrac {a_j}{\mu_j^3} \\[12pt] 0 \end{pmatrix},
\ee
and $\vec m_j^0$ and $\vec \M_j$ are defined in~\ref{item:eqWk} of Lemma~\ref{lem:Wk}.
Moreover, for all $s\in I$,
\be \label{est:Psi}
\|\P\|_{H^2} \lesssim |s|^{-\frac 74} + |s|^{-\frac 12} \sum_j |a_j|,
\ee
and, for any $1\leq k\leq K$,
\begin{gather}
\|\P\|_{H^2(y > y_k - |s|^{\frac 14})} \lesssim |s|^{-\frac {13}8}
+ |s|^{-\frac 12} \sum_{j<k} |a_j|,\qquad
\|\P\|_{L^\infty_k} \lesssim |s|^{-2}, \label{est:Psik} \\
|\langle \P,Q_k \rangle - \Omega_k|\lesssim |s|^{-\frac 52}, \label{est:PsiQ}
\end{gather}
with
\[
\Omega_k(s) = \frac {\|Q\|_{L^1}^2}{8\mu_k^3(s)} \left[ \frac {a_k(s)}{\tau_k(s)}
+ a_k^2(s) \right] + \frac {\|Q\|_{L^1}}{\mu_k^3(s)} d_k(s).
\]
\end{lemma}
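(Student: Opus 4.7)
The plan is to compute $\E_\V$ term by term, recognize which contributions build up the modulation part of~\eqref{decompEV}, and collect the remainder in $\P$. Since $\V=\sum_k V_k$ with $V_k=W_k+r_k R_k+a_k P_k$, I would first substitute the equation of Lemma~\ref{lem:Wk}\ref{item:eqWk} for each $W_k$; this handles $\pa_s W_k$, the scaling $\frac1{2s}\Lambda W_k$, the linear dispersive term $\pa_y\pa_{yy}W_k$ and the self-interaction $\pa_y W_k^5$, producing the pure $\vec m_k^0\cdot\vec\M_k W_k$ piece. Next, $\pa_s(r_k R_k)=\dot r_k R_k+r_k\pa_s R_k$ and similarly for $a_k P_k$ yield the explicit $\dot r_k R_k+\dot a_k P_k$; the residual $r_k\pa_s R_k$ and $a_k\pa_s P_k$ only carry the slow derivatives $\dot{\tilde\mu}_k,\dot y_k$, hence are $O(|s|^{-2})$ in $H^2$, and together with $\frac1{2s}\Lambda(r_k R_k+a_k P_k)$ they fall into $\P$.

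The nonlinearity $\pa_y\V^5$ is expanded as $\sum_k\pa_y W_k^5+5\sum_k\pa_y\bigl(W_k^4(r_k R_k+a_k P_k)\bigr)+\text{higher powers}$, plus cross terms between distinct bubbles. For the self-linearized part at bubble $k$, I combine $\pa_y\pa_{yy}(a_k P_k)+5\pa_y(Q_k^4 a_k P_k)$ using $(LP)'=\Lambda Q$, which gives $a_k[\tilde\mu_k^{-2}\pa_y P_k-\tilde\mu_k^{-3}\Lambda_k Q_k]$; the second piece matches $-a_k\mu_k^{-3}\Lambda_k V_k$ modulo $O(|s|^{-2})$, which is exactly what the extra entry $a_k/\mu_k^3$ in $m_{k,1}$ from~\eqref{def:mod} is designed to cancel. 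Likewise $LR=5Q^4$ allows me to rewrite $\pa_y\pa_{yy}(r_k R_k)+5\pa_y(Q_k^4 r_k R_k)$ and match it against the leading Taylor expansion of $5\pa_y\bigl(Q_k^4\sum_{j\neq k}W_j\bigr)$ at $y_k$ through the precise definitions~\eqref{def:rkdk} of $r_k$ and $d_k$, so that after subtraction only higher-order residuals remain.

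The term $\P$ then consists of: the cross-bubble terms in $(\sum_k V_k)^5$ past leading order, the quadratic and higher contributions in $V_k^5-W_k^5-5W_k^4(r_k R_k+a_k P_k)$, the $O(|s|^{-1})$ factor $a_k\tilde\mu_k^{-2}\pa_y P_k$, the scaling errors on $r_k R_k$ and $a_k P_k$, the gap between $\mu_k$ and $\tilde\mu_k$, and the localization error from the cutoff defining $P_k$ (controlled by~\eqref{b:Pky}). Summing gives~\eqref{est:Psi}: the worst $|s|^{-7/4}$ bound arises from left-tail interactions between $Q_k^4$ and $W_j$ for $j<k$, estimated via~\eqref{eq:Wkleft0}--\eqref{eq:Wkleftm}. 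The localized variant~\eqref{est:Psik} on $\{y>y_k-|s|^{1/4}\}$ retains only those $j\leq k$ that sit on the right of $y_k$ (giving the restricted sum over $j<k$); the $L^\infty_k$ bound exploits the exponential weight, the fact that $P_k,R_k\in L^\infty_k$ via Lemma~\ref{lem:Pk}, and the weighted estimate~\eqref{eq:Wkweighted}.

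The main obstacle is~\eqref{est:PsiQ}. Since $Q_k$ concentrates near $y_k$, testing against it picks out only the contributions living in a neighbourhood of $y_k$, but they must be identified up to $O(|s|^{-5/2})$. Three leading pieces survive: the cubic self-interaction $10\pa_y(Q_k^3(a_k P_k)^2)$ paired with $Q_k$, which after integration by parts and the identity $\langle P,Q\rangle=\tfrac1{16}\|Q\|_{L^1}^2$ gives the $a_k^2$ contribution to $\Omega_k$; the combination of $\pa_s(a_k P_k)$ and $\frac1{2s}\Lambda(a_k P_k)$ with the factor $a_k\tilde\mu_k^{-2}\pa_y P_k$, producing through $\langle P_k,Q_k\rangle=\tfrac{\tilde\mu_k}{16}\|Q\|_{L^1}^2$ the $a_k/\tau_k$ term; and the quadratic Taylor residual of $5Q_k^4\sum_{j\neq k}W_j$ at $y_k$, which, thanks to $\langle R,Q\rangle=-\tfrac34\|Q\|_{L^1}$ from~\eqref{def:R} together with the definitions of $r_k$ and $d_k$, yields the $\|Q\|_{L^1}\mu_k^{-3}d_k$ term. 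All remaining contributions are kept below $|s|^{-5/2}$ using the sharp pointwise and exponential asymptotics provided by Lemma~\ref{lem:Wk}.
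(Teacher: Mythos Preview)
Your overall strategy matches the paper's: compute $\E_\V$ by substituting the equation of each $W_k$, separate the $\dot r_k R_k+\dot a_k P_k$ pieces, upgrade $\vec m_k^0$ to $\vec m_k$ via the $(LP)'=\Lambda Q$ mechanism, and dump everything else into $\P$. The estimates~\eqref{est:Psi}--\eqref{est:Psik} are indeed obtained along the lines you sketch, though the $|s|^{-7/4}$ actually comes from terms of the form $(W_j^4-Q_j^4)\sum_{j_1\neq j}W_{j_1}$ and $(W_j^4-Q_j^4)(r_jR_j+a_jP_j)$, not from $Q_k^4W_j$ directly.

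The genuine gap is in~\eqref{est:PsiQ}. Your attribution of the three terms in $\Omega_k$ is wrong, and the identities you invoke ($\langle P,Q\rangle=\tfrac1{16}\|Q\|_{L^1}^2$ and $\langle R,Q\rangle=-\tfrac34\|Q\|_{L^1}$) are not the ones that do the job. When you collect all contributions of $\langle\P,Q_k\rangle$ at order $|s|^{-2}$, what survives is
\[
\Bigl(\tfrac{r_k}{2\mu_k^3\tau_k}+\tfrac{a_kr_k}{\mu_k^3}\Bigr)\Bigl(\langle\Lambda R,Q\rangle-20\langle Q^3PR,Q'\rangle-20\langle PQ^3,Q'\rangle\Bigr)
+\Bigl(\tfrac{a_k}{\mu_k^3\tau_k}+\tfrac{a_k^2}{\mu_k^3}\Bigr)\Bigl(\langle\Lambda P,Q\rangle-10\langle Q^3P^2,Q'\rangle\Bigr)
+\tfrac{d_k}{\mu_k^3}\|Q\|_{L^1}.
\]
The $a_k/\tau_k$ and $a_k^2$ terms combine contributions from $\Lambda_k(a_kP_k)$, from $a_k\mu_k^{-3}\Lambda_k(W_k-Q_k)$, and from the quadratic nonlinear piece $10Q_k^3(a_kP_k)^2$; the $d_k$ term comes from the \emph{first-order} (not quadratic) Taylor expansion of $W_j(\cdot)$ at $y_k$ inside $5Q_k^4\sum_{j\neq k}W_j$, via $\int yQ^4Q'=-\tfrac15\int Q^5$ and $\int Q^5=\|Q\|_{L^1}$. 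To reduce this to the stated $\Omega_k$ you need two nontrivial algebraic identities recalled from~\cite{MMR1,MMR3}:
\[
\langle\Lambda P,Q\rangle-10\langle Q^3P^2,Q'\rangle=\tfrac18\|Q\|_{L^1}^2,
\qquad
\langle\Lambda R,Q\rangle-20\langle Q^3PR,Q'\rangle-20\langle PQ^3,Q'\rangle=0.
\]
The second identity is what kills all the $r_k$-dependent terms; without it $\Omega_k$ would not have the claimed form. Your sketch misses both identities and the cancellation of the $r_k$ contribution entirely, so as written the argument for~\eqref{est:PsiQ} does not close.
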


\begin{proof}
\textbf{Equation of $\V$.}
We insert the definition of $\V = \sum_j (W_j + r_j R_j + a_j P_j)$ in $\E_\V$ and,
using the equation~\eqref{eqWk} satisfied by $W_j$, we obtain
\begin{align*}
\E_\V & = \sum_j \vec m_j^0 \cdot \vec \M_j W_j + \sum_j \left( \dot r_j R_j + \dot a_j P_j \right) + |s|^{-1} \sum_j a_j Z_j \\
& \quad -\sum_j \frac {\dot{\tilde\mu}_j}{\tilde\mu_j} \Lambda_j \left( r_j R_j + a_j P_j \right)
- \sum_j \dot y_j \pa_y\left( r_j R_j + a_j P_j \right) + \frac 1{2s} \sum_j \Lambda \left( r_j R_j + a_j P_j \right) \\
& \quad + \sum_j \pa_{yyy}(r_j R_j + a_j P_j) + \pa_y\left( \V^5 - \sum_j W_j^5 \right),
\end{align*}
where we have denoted
\be \label{def:Zj}
Z_j (s,y) = \left( \frac {y-y_j(s)}{\tilde\mu_j(s)} |s|^{-1} \right)
\eps_j \tilde\mu_j^{-\frac 12}(s) P\left( \frac {y-y_j(s)}{\tilde\mu_j(s)} \right)
\chi'\left( \frac {y-y_j(s)}{\tilde\mu_j(s)} |s|^{-1} \right).
\ee
Since $\ds \frac {\dot{\tilde\mu}_j}{\tilde\mu_j} = \frac {\dot\mu_j}{\mu_j}
+ \frac {\lambda_0}{2\mu_j^3\tau_j^2} \left( 1 + \frac {\lambda_0}{2\tau_j} \right)^{-1}$
from the definition of $\tilde\mu_j$, we may decompose $\E_\V$ as
\[
\E_\V = \sum_j \vec m_j \cdot \vec \M_j V_j + \sum_j \left( \dot r_j R_j + \dot a_j P_j \right) + \P,
\]
where we have set
\begin{align*}
\P &= |s|^{-1} \sum_j a_j Z_j + \sum_j \left[ \frac 1{2\mu_j^3\tau_j} + \frac {a_j}{\mu_j^3}
- \frac {\lambda_0}{2\mu_j^3\tau_j^2} \left( 1 + \frac {\lambda_0}{2\tau_j} \right)^{-1} \right] \Lambda_j(r_j R_j + a_j P_j) \\
&\quad + \sum_j \left[ -\frac {c_0}{2\mu_j^2\tau_j} + \frac {3c_1}{4\mu_j^2\tau_j^2} \right] \pa_y(r_j R_j + a_j P_j) \\
&\quad + \sum_j a_j \mu_j^{-3} \Lambda_j W_j + \sum_j a_j \pa_y\left( \pa_{yy} P_j - \mu_j^{-2} P_j + 5Q_j^4 P_j \right) \\
&\quad + \sum_j r_j \pa_y\left( \pa_{yy} R_j - \mu_j^{-2} R_j + 5Q_j^4 R_j \right)
+ \pa_y\left[ \V^5 - \sum_j W_j^5 - 5\sum_j Q_j^4 (r_j R_j + a_j P_j)\right] \\
&= (\P^\mathrm{I} + \P^\mathrm{II}) + \P^\mathrm{III} + \P^\mathrm{IV} + \P^\mathrm{V}.
\end{align*}

\medskip

\textbf{Estimate of $\P^\mathrm{I}$.}
We rely on~\eqref{est:P} and~\eqref{ref:gamma} to estimate $Z_j$.
For instance, we first note that $\|Z_j\|_{H^2} \lesssim |s|^{\frac 12}$.
Thus,
\[
\|\P^\mathrm{I}\|_{H^2} \lesssim |s|^{-\frac 12} \sum_j |a_j|.
\]
Moreover, we observe that, for $j \geq k$, $Z_j(y) = 0$ for $y > y_k - |s|^{\frac 14}$, and so
\[
\|\P^\mathrm{I}\|_{H^2(y > y_k - |s|^{\frac 14})} \lesssim |s|^{-\frac 12} \sum_{j<k} |a_j|.
\]
Finally, we note that $\|Z_j\|_{L^\infty_k} \lesssim |s|^{-10}$ for all $1\leq j\leq K$.
Thus, using also the exponential decay of $Q$, we get
\[
|\langle \P^\mathrm{I},Q_k \rangle| \lesssim \|\P^\mathrm{I}\|_{L^\infty_k} \lesssim |s|^{-12}.
\]

\medskip

\textbf{Estimate of $\P^\mathrm{II}$.}
First, from~\eqref{apriori:mutauy} and~\eqref{apriori:a}, we notice that
\[
\left| \frac 1{2\mu_j^3\tau_j} + \frac {a_j}{\mu_j^3} \right| \lesssim |s|^{-1} \quad\m{and}\quad
\left| \frac {\lambda_0}{2\mu_j^3\tau_j^2} \left( 1 + \frac {\lambda_0}{2\tau_j} \right)^{-1} \right| \lesssim |s|^{-2}.
\]
Second, from the exponential decay of $R\in\Y$ and~\eqref{eq:Pknorms},
we have $\|\Lambda_j R_j\|_{H^2} \lesssim 1$ and $\|\Lambda_j P_j\|_{H^2} \lesssim |s|^{\frac 12}$.
Thus, using also~\eqref{apriori:rd}, we find
\[
\|\P^\mathrm{II}\|_{H^2} \lesssim |s|^{-2} + |s|^{-\frac 12} \sum_j |a_j|.
\]
Moreover, since $\|\Lambda_j P_j\|_{H^2(y > y_k - |s|^{\frac 14})} \lesssim |s|^{\frac 18}$ for $j\geq k$ from~\eqref{b:Pky},
we have
\[
\|\P^\mathrm{II}\|_{H^2(y > y_k - |s|^{\frac 14})} \lesssim |s|^{-2} + |s|^{-2+\frac 18} + |s|^{-\frac 12} \sum_{j<k} |a_j|
\lesssim |s|^{-\frac {15}8} + |s|^{-\frac 12} \sum_{j<k} |a_j|.
\]
Similarly, since $\|\Lambda_j P_j\|_{L^\infty_k} \lesssim |s|^{-10}$ for $j\neq k$
and $\|\Lambda_k P_k\|_{L^\infty_k} \lesssim 1$, we find
\[
\|\P^\mathrm{II}\|_{L^\infty_k} \lesssim |s|^{-2}.
\]
Finally, projecting on $Q_k$, observing that $\langle \Lambda_k R_k,Q_k \rangle = \langle \Lambda R,Q \rangle$
and, from the properties of~$\chi$,
$\langle \Lambda_k P_k,Q_k \rangle = \langle \Lambda P,Q \rangle + O(|s|^{-10})$,
we obtain, using also~\eqref{ref:gamma},
\[
\left| \langle \P^\mathrm{II},Q_k \rangle - \left( \frac {r_k}{2\mu_k^3\tau_k} + \frac{a_k r_k}{\mu_k^3} \right) \langle \Lambda R,Q \rangle
- \left( \frac {a_k}{2\mu_k^3\tau_k} + \frac {a_k^2}{\mu_k^3} \right) \langle \Lambda P,Q \rangle \right| \lesssim |s|^{-3}.
\]

\medskip

\textbf{Estimate of $\P^\mathrm{III}$.}
We proceed as in the estimate of $\P^\mathrm{II}$ except that, from Lemma~\ref{lem:L},
$\langle R_k,\pa_y Q_k \rangle = \tilde\mu_k^{-1} \langle R,Q' \rangle = 0$ by parity,
and
\[
\langle P_k,\pa_y Q_k \rangle = \langle \tilde P_k,\pa_y Q_k \rangle + O(|s|^{-10})
= \tilde\mu_k^{-1} \langle P,Q' \rangle + O(|s|^{-10}) = O(|s|^{-10}).
\]
Since we also have $\|\pa_y P_j\|_{H^2} + \|\pa_y R_j\|_{H^2} \lesssim 1$ from~\eqref{eq:Pknorms},
we obtain
\[
\|\P^\mathrm{III}\|_{H^2}\lesssim |s|^{-2} \quad\m{and}\quad
|\langle \P^\mathrm{III},Q_k \rangle| \lesssim |s|^{-12}.
\]

\medskip

\textbf{Estimate of $\P^\mathrm{IV}$.}
For $(l,m)\in\N^2$, set
\[
P_j^{(l,m)}(s,y) = \eps_j \tilde\mu_j^{-(\frac 12+l+m)}(s) P^{(l)}\left( \frac {y-y_j(s)}{\tilde\mu_j(s)} \right)
\chi^{(m)}\left( \frac {y-y_j(s)}{\tilde\mu_j(s)} |s|^{-1} \right).
\]
Note for instance that $P_j^{(0,0)} = P_j$.
From the relation $(LP)' = \Lambda Q$ in~\eqref{PQ}, we find
\[
\pa_y\left( \pa_{yy}\tilde P_j - \tilde\mu_j^{-2}\tilde P_j
+ 5Q_j^4 \tilde P_j \right) = -\tilde\mu_j^{-3} \Lambda_j Q_j,
\]
and so
\begin{align*}
&\pa_y \left(\pa_{yy} P_j - \mu_j^{-2} P_j +5Q_j^4 P_j \right) \\
&= -\tilde\mu_j^{-3} (\Lambda_j Q_j) \chi\left( \frac {\cdot - y_j(s)}{\tilde \mu_j(s)} |s|^{-1} \right)
+ \tilde\mu_j (\tilde\mu_j^{-2} - \mu_j^{-2}) P_j^{(1,0)} - \mu_j^{-2} |s|^{-1} P_j^{(0,1)} \\
&\quad + 5|s|^{-1} Q_j^4 P_j^{(0,1)} + 3|s|^{-1} P_j^{(2,1)} + 3|s|^{-2} P_j^{(1,2)} + |s|^{-3} P_j^{(0,3)}.
\end{align*}
Thus,
\begin{align*}
\P^\mathrm{IV} &= \sum_j a_j\mu_j^{-3} \Lambda_j(W_j - Q_j)
+ \sum_j a_j\tilde\mu_j^{-3} \Lambda_j Q_j \left[ 1 - \chi\left( \frac {\cdot - y_j(s)}{\tilde\mu_j(s)} |s|^{-1} \right) \right] \\
&\quad + \sum_j a_j (\mu_j^{-3} - \tilde\mu_j^{-3}) \Lambda_j Q_j + \sum_j a_j \tilde\mu_j (\tilde\mu_j^{-2} - \mu_j^{-2}) P_j^{(1,0)}
- \sum_j a_j \mu_j^{-2} |s|^{-1} P_j^{(0,1)} \\
&\quad + \sum_j a_j |s|^{-1} \left[ 5 Q_j^4 P_j^{(0,1)}
+ 3P_j^{(2,1)} + 3|s|^{-1} P_j^{(1,2)} + |s|^{-2} P_j^{(0,3)} \right] \\
&= \Sigma_1 +\Sigma_2 +\Sigma_3 +\Sigma_4 +\Sigma_5 +\Sigma_6.
\end{align*}

By the properties of $P$ and $\chi$, we have
\begin{gather*}
\|Q_j^4 P_j^{(0,1)}\|_{H^2} + \|P_j^{(2,1)}\|_{H^2} + \|P_j^{(1,2)}\|_{H^2} \lesssim |s|^{-10}, \\
\|\Lambda_j Q_j\|_{H^2} + \|P_j^{(1,0)}\|_{H^2} \lesssim 1, \quad
\|P_j^{(0,1)}\|_{H^2} + \|P_j^{(0,3)}\|_{H^2} \lesssim |s|^{\frac 12}.
\end{gather*}
Thus, using also~\eqref{apriori:mutilde} and~\eqref{apriori:a}, we obtain
\[
\|\Sigma_3\|_{H^2} + \|\Sigma_4\|_{H^2} \lesssim |s|^{-2},\quad
\|\Sigma_5\|_{H^2} \lesssim |s|^{-\frac 12} \sum_j |a_j| \quad\m{and}\quad
\|\Sigma_6\|_{H^2} \lesssim |s|^{-\frac 72}.
\]
By~\eqref{ref:gamma} and the exponential decay of $Q$, we also have $\|\Sigma_2\|_{H^2}\lesssim |s|^{-10}$.
Now, from~\eqref{eq:WkHm} and~\eqref{eq:LkWk}, we notice that
\[
\| \Lambda_j (W_j - Q_j) \|_{H^2} \lesssim |s|^{-\frac 12}, \quad\m{and so}\quad
\|\Sigma_1\|_{H^2} \lesssim |s|^{-\frac 12} \sum_j |a_j|.
\]
Thus, gathering the previous estimates, we have obtained
\[
\|\P^\mathrm{IV}\|_{H^2} \lesssim |s|^{-2} + |s|^{-\frac 12} \sum_j |a_j|.
\]

Moreover, we observe as before from~\eqref{ref:gamma} that, for $j\geq k$,
$P_j^{(0,1)}(y) = 0$ for $y > y_k - |s|^{\frac 14}$, and so
\[
\|\Sigma_5\|_{H^2(y > y_k - |s|^{\frac 14})} \lesssim |s|^{-\frac 12} \sum_{j<k} |a_j|.
\]
Next, we claim that, for all $j\geq k$,
\[
\| \Lambda_j (W_j-Q_j) \|_{H^2(y > y_k - |s|^{\frac 14})} \lesssim |s|^{-\frac 58}.
\]
Note that it is enough to prove the claim for $j=k$ since $y_j \leq y_k$.
To do so, we first notice that
\[
\| W_k-Q_k \|_{\dot H^m(y > y_k - |s|^{\frac 14})} \lesssim |s|^{-1}
\]
for $m=1,2$ from~\eqref{eq:WkHm}.
Second, using~\eqref{eq:WkLinfty} and~\eqref{eq:Wkright}, we estimate
\begin{align*}
\| W_k - Q_k \|_{L^2(y > y_k - |s|^{\frac 14})}^2
&= \| W_k - Q_k \|_{L^2(y_k - |s|^{\frac 14} < y < y_k + |s|^{\frac 14})}^2
+ \| W_k - Q_k \|_{L^2(y > y_k + |s|^{\frac 14})}^2 \\
&\lesssim \| W_k - Q_k \|_{L^\infty}^2 |s|^{\frac 14} + e^{-2\rho_0 |s|^{\frac 14}/\tilde\mu_k}
+ e^{-2|s|^{\frac 14}/\tilde\mu_k} \lesssim |s|^{-\frac 54}.
\end{align*}
Next, to estimate $\|(\cdot - y_k)\pa_y^m(W_k-Q_k)\|_{L^2(y > y_k - |s|^{\frac 14})}$ for $m=1,2,3$,
we proceed similarly and find, using~\eqref{eq:WkHm} and again~\eqref{eq:Wkright},
\[
\|(\cdot - y_k)\pa_y^m(W_k-Q_k)\|_{L^2(y > y_k - |s|^{\frac 14})}^2
\lesssim |s|^{\frac 12} \|W_k - Q_k\|_{\dot H^m}^2 + e^{-\rho_0 |s|^{\frac 14}/\tilde\mu_k}
+ e^{-|s|^{\frac 14}/\tilde\mu_k} \lesssim |s|^{-\frac 32}.
\]
Thus the claim is proved and, together with the above estimates, it gives
\[
\|\P^\mathrm{IV}\|_{H^2(y > y_k - |s|^{\frac 14})} \lesssim |s|^{-\frac {13}8} + |s|^{-\frac 12} \sum_{j<k} |a_j|.
\]

Now, we control $\|\Sigma_1\|_{L^\infty_k}$ and $\|\Sigma_5\|_{L^\infty_k}$.
First, by~\eqref{ref:gamma}, we find $\|P_j^{(0,1)}\|_{L^\infty_k} \lesssim |s|^{-10}$
for all $1\leq j\leq K$, and so
\[
\|\Sigma_5\|_{L^\infty_k} \lesssim |s|^{-12}.
\]
Next, by~\eqref{eq:WkHm} and~\eqref{eq:Wkweighted}, we find $\|\Lambda_k(W_k - Q_k)\|_{L^\infty_k} \lesssim |s|^{-1}$.
But, for $j\neq k$, by~\eqref{apriori:mutauy}, \eqref{apriori:z} and~\eqref{eq:Wkleft0}--\eqref{eq:Wkright},
we have $\|\Lambda_j(W_j - Q_j)\|_{L^\infty_k} \lesssim |s|^{-10}$.
Thus, using~\eqref{apriori:a}, we obtain $\|\Sigma_1\|_{L^\infty_k} \lesssim |s|^{-2}$,
which proves, together with the above estimates,
\[
\|\P^\mathrm{IV}\|_{L^\infty_k} \lesssim |s|^{-2}.
\]

Finally, we look at the projection on $Q_k$.
Note that, from~\eqref{eq:Wkweighted},
\begin{align*}
\langle \Lambda_k (W_k - Q_k),Q_k \rangle
&= - \langle W_k - Q_k, \Lambda_k Q_k \rangle
= - \frac 1{2\tau_k} \langle \tilde P_k,\Lambda_k Q_k \rangle + O(|s|^{-2}) \\
&= - \frac 1{2\tau_k} \langle P,\Lambda Q \rangle + O(|s|^{-2})
= \frac 1{2\tau_k} \langle \Lambda P, Q \rangle + O(|s|^{-2}).
\end{align*}
Moreover, as before, we have
$|\langle \Lambda_j(W_j - Q_j),Q_k \rangle| \lesssim \|\Lambda_j(W_j - Q_j)\|_{L_k^\infty} \lesssim |s|^{-10}$
for $j\neq k$, and similarly
$|\langle \Sigma_5,Q_k \rangle| \lesssim \|\Sigma_5\|_{L^\infty_k} \lesssim |s|^{-12}$.
Also, since $\langle \Lambda_k Q_k,Q_k \rangle = \langle \Lambda Q,Q \rangle = 0$
and $\langle P_k^{(1,0)},Q_k \rangle = -\tilde\mu_k^{-1} \langle P,Q' \rangle + O(|s|^{-10}) = O(|s|^{-10})$
from~\eqref{PQ}, we find $|\langle \Sigma_3 + \Sigma_4,Q_k \rangle| \lesssim |s|^{-12}$.
To conclude for this term, we have obtained, using also~\eqref{apriori:a},
\[
\left| \langle \P^\mathrm{IV},Q_k \rangle - \frac {a_k}{2\mu_k^3\tau_k}
\langle \Lambda P,Q \rangle \right| \lesssim |s|^{-3}.
\]

\medskip

\textbf{Decomposition of $\P^\mathrm{V}$.}
From the relation $LR = 5Q^4$ in~\eqref{def:R}, we find
\[
\pa_{yy} R_j - \tilde\mu_j^{-2} R_j + 5Q_j^4 R_j = -5\eps_j \tilde\mu_j^{-\frac 12} Q_j^4,
\]
and so
\[
\P^\mathrm{V} = \pa_y\left[ \V^5 - \sum_j W_j^5 - 5 \sum_j Q_j^4 (r_j R_j + a_j P_j)
+ \sum_j r_j (\tilde\mu_j^{-2} - \mu_j^{-2}) R_j - 5 \sum_j r_j \eps_j \tilde\mu_j^{-\frac 12} Q_j^4 \right].
\]
Thus, we may further decompose $\P^\mathrm{V}$ as
\[
\P^\mathrm{V} = \P^\mathrm{VI} + \P^\mathrm{VII} + \P^\mathrm{VIII},
\]
with
\begin{align*}
\P^\mathrm{VI} &= \pa_y\left[ \V^5 - \W^5 - 5 \sum_j Q_j^4 (r_j R_j + a_j P_j) \right], \\
\P^\mathrm{VII} &= \pa_y\left[ \W^5 - \sum_j W_j^5 - 5 \sum_j r_j \eps_j \tilde\mu_j^{-\frac 12} Q_j^4 \right], \\
\P^\mathrm{VIII} &= \pa_y\left[ \sum_j r_j (\tilde\mu_j^{-2} - \mu_j^{-2}) R_j \right].
\end{align*}

\medskip

\textbf{Estimate of $\P^\mathrm{VI}$.}
A binomial expansion first gives
\[
\V^5 - \W^5 = \Bigl[ \W + (\V-\W) \Bigr]^5 - \W^5 = \sum_{i=1}^5 \binom 5i \W^{5-i} (\V - \W)^i.
\]
We estimate each term of the sum separately, and we recall that $\V - \W = \sum_j (r_j R_j + a_j P_j)$.
First, for $i=2$, by~\eqref{eq:WkHm} and~\eqref{apriori:a}--\eqref{apriori:rd},
\[
\|\W^3 (\V - \W)^2\|_{H^3} \lesssim \|\W\|_{H^3}^3 \sum_{p=0}^3 \|\pa_y^p(\V - \W)\|_{L^\infty}^2
\lesssim \sum_j \left( r_j^2 + a_j^2 \right) \lesssim |s|^{-2}.
\]
Similarly, for $i=3,4$,
\[
\|\W^{5-i} (\V - \W)^i\|_{H^3} \lesssim \|\W\|_{H^3}^{5-i} \sum_{p=0}^3 \|\pa_y^p(\V - \W)\|_{L^\infty}^i
\lesssim |s|^{-i}.
\]
And, for $i=5$, using also~\eqref{eq:Pknorms},
\[
\|(\V - \W)^5\|_{H^3} \lesssim \|\V - \W\|_{H^3} \sum_{p=0}^3 \|\pa_y^p(\V - \W)\|_{L^\infty}^4
\lesssim |s|^{-\frac 12} |s|^{-4} = |s|^{-\frac 92}.
\]
To estimate the term corresponding to $i=1$,
\[
\W^4(\V - \W) = \left( \sum_j W_j \right)^4 \left( \sum_j (r_j R_j + a_j P_j) \right)
= \sum_{j_1,\ldots,j_5} (r_{j_1} R_{j_1} + a_{j_1} P_{j_1}) \prod_{l=2}^5 W_{j_l},
\]
we rely on the following claim.
Let $2\leq p\leq 5$. Let $j_1,\ldots,j_p\in\{1,\ldots,K\}$ with $j_1 \neq j_l$ for $2\leq l\leq p$.
Then, from the decay properties of $W_j$, $R_j$ and $P_j$, we have
\begin{gather*}
\left\| R_{j_1} W_{j_1}^{5-p} \prod_{l=2}^p W_{j_l} \right\|_{H^3} \lesssim |s|^{-p + 1} \quad\m{and}\quad
\left\| P_{j_1} W_{j_1}^{5-p} \prod_{l=2}^p W_{j_l} \right\|_{H^3} \lesssim |s|^{-p + \frac 32}.
\end{gather*}
Indeed, if $j_1 > j_2,\ldots,j_p$, we first find,
decomposing on the two regions $y < y_{j_1} + |s|^{\frac 14}$
and $y > y_{j_1} + |s|^{\frac 14}$, and using~\eqref{eq:Wkleft0},
\begin{align*}
\left\| R_{j_1} W_{j_1}^{5-p} \prod_{l=2}^p W_{j_l} \right\|_{L^2}^2
&\lesssim |s|^{p-1} \int_{y < y_{j_1} + |s|^{\frac 14}} \left( e^{-\frac {|y-y_{j_1}|}{\tilde\mu_{j_1}}}
\prod_{l=2}^p |y-z_{j_l}|^{-3}\right) dy + e^{-|s|^{\frac 14}/\tilde\mu_{j_1}} \\
&\lesssim |s|^{p-1} |s|^{-3(p-1)} \int e^{-\frac {|y-y_{j_1}|}{\tilde\mu_{j_1}}} \,dy + |s|^{-10}
\lesssim |s|^{-2(p-1)}.
\end{align*}
Note that such an estimate also holds in $\dot H^m$ for $m=1,2,3$ from~\eqref{eq:Wkleftm}.
In the case where there exists $2\leq l\leq p$ such that $j_l > j_1$, we find,
decomposing on the regions $y < y_{j_1} - |s|^{\frac 14}$
and $y > y_{j_1} - |s|^{\frac 14}$, and using~\eqref{eq:Wkright},
\[
\left\| R_{j_1} W_{j_1}^{5-p} \prod_{l=2}^p W_{j_l} \right\|_{H^3}
\lesssim e^{-|s|^{\frac 14}/\tilde\mu_{j_1}} + e^{-\rho_0|s|/\tilde\mu_{j_l}} \lesssim |s|^{-10}.
\]
Proceeding similarly with $P_j$ and~\eqref{b:Pky}, the claim is proved and we obtain, as a consequence,
\[
\left\| \W^4(\V - \W) - \sum_j Q_j^4 (r_j R_j + a_j P_j) - \sum_j (W_j^4 - Q_j^4)
(r_j R_j + a_j P_j) \right\|_{H^3} \!\! \lesssim |s|^{-2} + |s|^{-\frac 12} \sum_j |a_j|.
\]
But we also observe that, using~\eqref{eq:WkHm} and~\eqref{eq:WkLinfty},
\begin{align*}
&\left\| (W_j^4 - Q_j^4) (r_j R_j + a_j P_j) \right\|_{H^3} \\
&\lesssim \left( \|W_j\|_{H^3}^3 + \|Q_j\|_{H^3}^3 \right)
\left( \sum_{p=0}^3 \| \pa_y^p(r_j R_j + a_j P_j)\|_{L^\infty} \right)
\left( \sum_{p=0}^3 \| \pa_y^p(W_j - Q_j)\|_{L^\infty} \right) \\
&\lesssim \left( |r_j| + |a_j| \right) |s|^{-\frac 34} \lesssim |s|^{-\frac 74}.
\end{align*}
Gathering the previous estimates, we have obtained
\[
\|\P^\mathrm{VI}\|_{H^2} \lesssim |s|^{-\frac 74} + |s|^{-\frac 12} \sum_j |a_j|,
\]
but also the more precise estimates
\be \label{pourPsiVIbis}
\left\| \P^\mathrm{VI} - \pa_y\left[ 20 \sum_j a_j P_j W_j^3
\sum_{j_2\neq j} W_{j_2} \right] \right\|_{H^2} \lesssim |s|^{-\frac 74}
\ee
and
\be \label{pourPsiVI}
\begin{aligned}
\left\| \P^\mathrm{VI} - \pa_y\left[ 5 \sum_j (W_j^4 - Q_j^4) \right. \right.
& (r_j R_j + a_j P_j) + 10 \W^3 (\V - \W)^2\\
&\left.\left. {}+ 20 \sum_j (r_j R_j + a_j P_j) W_j^3 \sum_{j_2\neq j} W_{j_2} \right] \right\|_{H^2}
\lesssim |s|^{-\frac 52}.
\end{aligned}
\ee

For $j\geq k$, we note that $\|P_j\|_{H^3(y > y_k - |s|^{\frac 14})} \lesssim |s|^{\frac 18}$
from~\eqref{b:Pky}, and thus, using~\eqref{pourPsiVIbis},
\[
\|\P^\mathrm{VI}\|_{H^2(y > y_k - |s|^{\frac 14})} \lesssim |s|^{-\frac 74} + |s|^{-2+\frac 18} + |s|^{-\frac 12} \sum_{j<k} |a_j|
\lesssim |s|^{-\frac 74} + |s|^{-\frac 12} \sum_{j<k} |a_j|.
\]

To control $\|\P^\mathrm{VI}\|_{L^\infty_k}$, we rely on~\eqref{pourPsiVI}.
First, as observed before,
\[
\|\pa_y[\W^3(\V - \W)^2] \|_{L^\infty_k} \lesssim \|\W^3 (\V - \W)^2 \|_{H^2} \lesssim |s|^{-2}.
\]
Next, for $j\neq k$, it follows from similar arguments as before that
\[
\left\| \pa_y\left[ (W_j^4 - Q_j^4) (r_j R_j + a_j P_j) \right] \right\|_{L^\infty_k}
+ \left\| \pa_y\left[ (r_j R_j + a_j P_j) W_j^3 \sum_{j_2\neq j} W_{j_2} \right] \right\|_{L^\infty_k} \lesssim |s|^{-10}.
\]
Finally, by~\eqref{eq:WkHm}, \eqref{eq:Wkweighted} and $|a_k| + |r_k| \lesssim |s|^{-1}$,
\[
\left\| \pa_y\left[ (W_k^4 - Q_k^4) (r_k R_k + a_k P_k) \right] \right\|_{L^\infty_k} \lesssim |s|^{-2},
\]
and, by~\eqref{eq:Wkleft0}--\eqref{eq:Wkright},
\[
\left\| \pa_y\left[(r_k R_k + a_k P_k) W_k^3 \sum_{k_2\neq k} W_{k_2} \right] \right\|_{L^\infty_k} \lesssim |s|^{-2},
\]
which proves
\[
\|\P^\mathrm{VI}\|_{L^\infty_k} \lesssim |s|^{-2}.
\]

Concerning the projection on $Q_k$, we note from~\eqref{eq:WkLinfty} and~\eqref{pourPsiVI} that
\begin{multline*}
\left| \langle \P^\mathrm{VI},Q_k \rangle + 5 \sum_j \left\langle (W_j^4 - Q_j^4) (r_j R_j + a_j P_j),\pa_y Q_k \right\rangle
+ 10 \left\langle \W^3 (\V - \W)^2,\pa_y Q_k \right\rangle \right. \\
\left. {}+ 20 \sum_j \left\langle (r_j R_j + a_j P_j) Q_j^3
\sum_{j_2\neq j} W_{j_2},\pa_y Q_k \right\rangle \right| \lesssim |s|^{-\frac 52}.
\end{multline*}
Since all the terms corresponding to $j\neq k$ in this estimate are controlled by $|s|^{-10}$,
and since we find similarly as above
\[
\left| \left\langle \W^3(\V - \W)^2 - W_k^3 (r_k R_k + a_k P_k)^2,\pa_y Q_k \right\rangle \right| \lesssim |s|^{-3},
\]
we obtain, using again~\eqref{eq:WkLinfty},
\begin{multline*}
\left| \vphantom{\sum_k} \langle \P^\mathrm{VI},Q_k \rangle
+ 5 \left\langle (W_k^4 - Q_k^4) (r_k R_k + a_k P_k),\pa_y Q_k \right\rangle
+ 10 \left\langle Q_k^3 (r_k R_k + a_k P_k)^2,\pa_y Q_k \right\rangle \right. \\
\left. {}+ 20 \left\langle (r_k R_k + a_k P_k) Q_k^3 \sum_{k_2\neq k} W_{k_2},\pa_y Q_k \right\rangle \right|
\lesssim |s|^{-\frac 52}.
\end{multline*}
By~\eqref{apriori:mutilde} and~\eqref{eq:Wkweighted}, we have
\begin{align*}
&5 \left\langle (W_k^4 - Q_k^4) (r_k R_k + a_k P_k),\pa_y Q_k \right\rangle \\
&= 10 \frac {r_k}{\tau_k} \langle Q_k^3 \tilde P_k R_k,\pa_y Q_k \rangle
+ 10 \frac {a_k}{\tau_k} \langle Q_k^3 \tilde P_k P_k ,\pa_y Q_k \rangle + O(|s|^{-\frac 52}) \\
&= 10 \frac {r_k}{\mu_k^3\tau_k} \langle Q^3 P R,Q' \rangle + 10 \frac {a_k}{\mu_k^3\tau_k}
\langle Q^3 P^2 ,Q' \rangle + O(|s|^{-\frac 52}).
\end{align*}
Moreover, since $\langle Q_k^3 R_k^2,\pa_y Q_k \rangle = \tilde \mu_k^{-3} \langle Q^3 R^2, Q' \rangle = 0$
by parity, we have
\begin{align*}
10 \left\langle Q_k^3 (r_k R_k + a_k P_k)^2,\pa_y Q_k \right\rangle
& = 20 r_k a_k \langle Q_k^3 R_k P_k,\pa_y Q_k \rangle + 10 a_k^2 \langle Q_k^3 P_k^2,\pa_y Q_k \rangle \\
& = 20 \frac {r_k a_k}{\mu_k^3} \langle Q^3 R P, Q' \rangle + 10 \frac {a_k^2}{\mu_k^3} \langle Q^3 P^2,Q' \rangle + O(|s|^{-10}).
\end{align*}
Finally, noticing by~\eqref{eq:Wkleftm} that, for all $i\geq 1$,
\be \label{Wneqk}
\left\| Q_k^i \sum_{k_2\neq k} \Bigl[ W_{k_2}(s) - W_{k_2}(s,y_k(s)) \Bigr] \right\|_{L^\infty} \lesssim |s|^{-2},
\ee
then, by the definition~\eqref{def:rkdk} of $r_k$ and the cancellation
$\langle R_k Q_k^3,\pa_y Q_k \rangle = \eps_k \tilde \mu_k^{-\frac 52} \langle R Q^3,Q' \rangle = 0$
obtained again by parity, we find
\begin{align*}
20 \left\langle (r_k R_k + a_k P_k) Q_k^3 \sum_{k_2\neq k} W_{k_2},\pa_y Q_k \right\rangle
& = 20 \eps_k a_k r_k \tilde\mu_k^{-\frac 12} \langle P_k Q_k^3,\pa_y Q_k \rangle + O(|s|^{-3}) \\
& = 20 \frac {a_k r_k}{\mu_k^3} \langle P Q^3,Q' \rangle + O(|s|^{-3}).
\end{align*}
Therefore, we have obtained
\begin{multline*}
\left| \langle \P^\mathrm{VI},Q_k \rangle + 10 \frac {r_k}{\mu_k^3\tau_k} \langle Q^3 P R,Q' \rangle
+ 10 \frac {a_k}{\mu_k^3\tau_k} \langle Q^3 P^2,Q' \rangle + 10 \frac {a_k^2}{\mu_k^3} \langle Q^3 P^2,Q' \rangle \right. \\
\left. {}+ 20 \frac {a_k r_k}{\mu_k^3} \langle Q^3 R P, Q' \rangle
+ 20 \frac {a_k r_k}{\mu_k^3} \langle P Q^3,Q' \rangle \right| \lesssim |s|^{-\frac 52}.
\end{multline*}

\medskip

\textbf{Estimate of $\P^\mathrm{VII}$.}
Using the definition~\eqref{def:rkdk} of $r_j$, we decompose $\P^\mathrm{VII}$ as
\begin{align*}
\P^\mathrm{VII} &= 5 \pa_y\left[ \sum_j \left( W_j^4 - Q_j^4 \right) \sum_{j_1\neq j} W_{j_1} \right] \\
&\quad + 5 \pa_y\left[ \sum_j Q_j^4 \sum_{j_1\neq j} \Bigl( W_{j_1}(s) - W_{j_1}(s,y_j(s)) \Bigr) \right] \\
&\quad + \pa_y\left[ \left( \sum_j W_j \right)^5 - \sum_j W_j^5 - 5 \sum_j W_j^4 \sum_{j_1\neq j} W_{j_1} \right].
\end{align*}
As before, using~\eqref{eq:WkHm}--\eqref{eq:WkLinfty} and~\eqref{eq:Wkleft0}--\eqref{eq:Wkright}, we have
\[
\left\| \left( W_j^4 - Q_j^4 \right) \sum_{j_1\neq j} W_{j_1} \right\|_{H^3} \lesssim |s|^{-\frac 74} \quad\m{and}\quad
\left\| Q_j^4 \sum_{j_1\neq j} \Bigl[ W_{j_1}(s) - W_{j_1}(s,y_j(s)) \Bigr] \right\|_{H^3} \lesssim |s|^{-2}.
\]
Using also~\eqref{eq:Wkweighted}, we get
\[
\left\| \left( W_j^4 - Q_j^4\right) \sum_{j_1\neq j} W_{j_1} \right\|_{L^\infty_k} \lesssim |s|^{-2}.
\]
Projecting on $Q_k$, proceeding as before, we find
\begin{align*}
5 \left\langle \pa_y \left[ \left( W_k^4 - Q_k^4 \right) \sum_{k_1\neq k} W_{k_1} \right],Q_k \right\rangle
&= -10 \frac {r_k}{\tau_k} \eps_k \tilde\mu_k^{-\frac 12} \langle Q_k^3\tilde P_k,\pa_y Q_k \rangle + O(|s|^{-\frac 52}) \\
&= -10 \frac {r_k}{\mu_k^3 \tau_k} \langle Q^3 P,Q' \rangle + O(|s|^{-\frac 52}).
\end{align*}
Moreover, from~\eqref{apriori:mutilde}, the definition~\eqref{def:rkdk} of $d_k$ and the relation $Q''+Q^5=Q$, we find
\begin{align*}
&5 \left\langle \pa_y \left[ Q_k^4 \sum_{k_1\neq k} \Bigl( W_{k_1}(s) - W_{k_1}(s,y_k(s)) \Bigr) \right],Q_k \right\rangle
= -5 \eps_k d_k \tilde\mu_k^{-\frac 32} \langle (\cdot-y_k) Q_k^4,\pa_y Q_k \rangle + O(|s|^{-3}) \\
&= -5 \frac {d_k}{\tilde\mu_k^3} \int y Q^4(y) Q'(y) \,dy + O(|s|^{-3}) = \frac {d_k}{\tilde\mu_k^3} \int Q^5(y) \,dy + O(|s|^{-3})
= \frac {d_k}{\mu_k^3} \|Q\|_{L^1} + O(|s|^{-3}).
\end{align*}

To estimate the last term in $\P^\mathrm{VII}$, we use the decay properties of $W_j$ and obtain as before
\[
\left\| \left( \sum_j W_j \right)^5 - \sum_j W_j^5 - 5 \sum_j W_j^4 \sum_{j_1\neq j} W_{j_1} \right\|_{H^3} \lesssim |s|^{-2},
\]
and similarly, using also~\eqref{eq:WkLinfty},
\be \label{pourPsiVII}
\left\| \left( \sum_j W_j \right)^5 - \sum_j W_j^5 - 5 \sum_j W_j^4 \sum_{j_1\neq j} W_{j_1}
- 10 \sum_j Q_j^3 \sum_{j_1,j_2\neq j} W_{j_1} W_{j_2} \right\|_{H^3} \lesssim |s|^{-\frac {11}4}.
\ee
But, since $\langle Q_k^3,\pa_y Q_k \rangle = \tilde \mu_k^{-2} \langle Q^3,Q' \rangle = 0$ by parity,
we find, from the definition~\eqref{def:rkdk} of $r_k$,
\begin{align*}
&\left\langle Q_k^3 \sum_{k_1,k_2\neq k} W_{k_1} W_{k_2},\pa_y Q_k \right\rangle \\
&= \sum_{k_1,k_2\neq k} \int Q_k^3(y) \left[ W_{k_1}(s,y_k(s)) + \Bigl( W_{k_1}(s,y) - W_{k_1}(s,y_k(s)) \Bigr) \right] \\
&\qquad\qquad\qquad\qquad\qquad \left[ W_{k_2}(s,y_k(s)) + \Bigl( W_{k_2}(s,y) - W_{k_2}(s,y_k(s)) \Bigr) \right] \pa_y Q_k(y) \,dy \\
&= 2\eps_k \tilde\mu_k^{-\frac 12} r_k \sum_{k'\neq k} \int Q_k^3(y) \Bigl[ W_{k'}(s,y) - W_{k'}(s,y_k(s)) \Bigr] \pa_y Q_k(y) \,dy \\
&\quad + \sum_{k_1,k_2\neq k} Q_k^3(y) \Bigl[ W_{k_1}(s,y) - W_{k_1}(s,y_k(s)) \Bigr]
\Bigl[ W_{k_2}(s,y) - W_{k_2}(s,y_k(s)) \Bigr] \pa_y Q_k(y) \,dy
\end{align*}
and so, from~\eqref{apriori:rd} and~\eqref{Wneqk},
\begin{align*}
\left| \left\langle Q_k^3 \sum_{k_1,k_2\neq k} W_{k_1} W_{k_2},\pa_y Q_k \right\rangle \right|
&\lesssim |r_k| \sum_{k'\neq k} \left\| Q_k^3 \Bigl[ W_{k'}(s) - W_{k'}(s,y_k(s)) \Bigr] \right\|_{L^\infty} \\
&\quad + \sum_{k'\neq k} \left\| Q_k \Bigl[ W_{k'}(s) - W_{k'}(s,y_k(s)) \Bigr] \right\|_{L^\infty}^2 \\
&\lesssim |s|^{-1} |s|^{-2} + |s|^{-4} \lesssim |s|^{-3}.
\end{align*}
Thus, from~\eqref{pourPsiVII}, we finally deduce
\[
\left| \left\langle \left( \sum_j W_j \right)^5 - \sum_j W_j^5
- 5 \sum_j W_j^4 \sum_{j_1\neq j} W_{j_1},\pa_y Q_k \right\rangle \right| \lesssim |s|^{-\frac {11}4}.
\]
Therefore, we have obtained
\begin{gather*}
\|\P^\mathrm{VII}\|_{H^2} \lesssim |s|^{-\frac 74}, \quad
\|\P^\mathrm{VII}\|_{L^\infty_k} \lesssim |s|^{-2}, \\
\left|\langle \P^\mathrm{VII},Q_k \rangle + 10 \frac {r_k}{\mu_k^3 \tau_k} \langle Q^3 P,Q' \rangle
- \frac {d_k}{\mu_k^3} \|Q\|_{L^1} \right| \lesssim |s|^{-\frac 52}.
\end{gather*}

\medskip

\textbf{Estimate of $\P^\mathrm{VIII}$.}
We estimate $\P^\mathrm{VIII}$ as $\P^\mathrm{III}$,
noticing that $\|R_j\|_{H^3} \lesssim 1$, $|r_j| \lesssim |s|^{-1}$ from~\eqref{apriori:rd},
$|\tilde\mu_j - \mu_j| \lesssim |s|^{-1}$ from~\eqref{apriori:mutilde},
$\langle R_k,\pa_y Q_k \rangle = \tilde \mu_k^{-1} \langle R,Q' \rangle = 0$ by parity
and $|\langle R_j,\pa_y Q_k \rangle| \lesssim |s|^{-10}$ for $j\neq k$,
and thus obtain
\[
\|\P^\mathrm{VIII}\|_{H^2} \lesssim |s|^{-2},\quad
|\langle \P^\mathrm{VIII},Q_k \rangle | \lesssim |s|^{-12}.
\]

\medskip

\textbf{Conclusion.}
Gathering the estimates in $H^2$ of $\P^\mathrm{I},\ldots,\P^\mathrm{VIII}$ above, we obtain~\eqref{est:Psi}.
Then, using also the additional estimates in $H^2(y > y_k - |s|^{\frac 14})$ and $L^\infty_k$ when necessary, we get~\eqref{est:Psik}.

Finally, gathering the projections of $\P^\mathrm{I},\ldots,\P^\mathrm{VIII}$ on $Q_k$ yields~\eqref{est:PsiQ}, with
\begin{align*}
\Omega_k &= \left( \frac {r_k}{2\mu_k^3\tau_k} + \frac{a_kr_k}{\mu_k^3} \right)
\left( \langle \Lambda R,Q \rangle - 20 \langle Q^3 P R,Q' \rangle - 20 \langle P Q^3,Q' \rangle \right) \\
&\quad + \left( \frac {a_k}{\mu_k^3\tau_k} + \frac {a_k^2}{\mu_k^3} \right)
\left( \langle \Lambda P,Q \rangle - 10 \langle Q^3 P^2,Q' \rangle \right) + \frac {d_k}{\mu_k^3} \|Q\|_{L^1}.
\end{align*}
But, from~\cite{MMR1} and~\cite{MMR3}, we recall the identities
\[
\langle \Lambda P,Q \rangle - 10 \langle Q^3 P^2,Q' \rangle = \frac 18 \|Q\|_{L^1}^2 \quad\m{and}\quad
\langle \Lambda R,Q \rangle - 20 \langle Q^3 P R,Q' \rangle - 20 \langle P Q^3,Q' \rangle = 0,
\]
and so
\[
\Omega_k = \frac 18 \|Q\|_{L^1}^2 \left( \frac {a_k}{\mu_k^3\tau_k}
+ \frac {a_k^2}{\mu_k^3} \right) + \frac {d_k}{\mu_k^3} \|Q\|_{L^1},
\]
which concludes the proof of Lemma~\ref{lem:V}.
\end{proof}

Next, we give precise asymptotics on $\dot r_k$ and $r_k$.

\begin{lemma} \label{lem:rk}
\begin{enumerate}[label=\emph{(\roman*)}]
\item \emph{Asymptotics of $\dot r_k$:}
for all $1\leq k\leq K$, for all $s\in I$,
\be \label{rkdk}
\left| \frac {d_k}{\mu_k^3} - \left( \dot r_k + \frac {r_k}{4\mu_k^3 \tau_k} + \frac {a_k r_k}{2\mu_k^3} \right) \right|
\lesssim |s|^{-3} + |s|^{-1} |\vec m_k| + |s|^{-1} \sum_{j<k} |\vec m_j^0| + |s|^{-10} \sum_{j>k} |\vec m_j^0|,
\ee
and, in particular,
\be \label{e:rk}
|\dot r_k| \lesssim |s|^{-2} + |s|^{-1} \sum_j |\vec m_j|.
\ee
\item \emph{Asymptotics of $r_k$:}
for all $1\leq k\leq K$, for all $s\in I$,
\be \label{eq:rkasymp}
\begin{aligned}
\left| r_k(s) - \frac {\|Q\|_{L^1}}{4s} \ell_k^3\theta_k \right. & \left. \left( 1 + \frac 12 \bar\mu_k - \frac 32 \bar y_k \right) \right| \\
&\lesssim |s|^{-1} \left( |s|^{-\frac 1{42}} + |\bar\mu_k|^2 + |\bar y_k|^2
+ \sum_{j<k} (|\bar\mu_j| + |\bar\tau_j| + |\bar y_j|) \right),
\end{aligned}
\ee
where the constant $\theta_k\in\R$ is defined by
\be \label{def:thetak}
\theta_k = \sum_{j<k} \eps_k \eps_j \sqrt{\frac {\ell_k}{\ell_j}}.
\ee
\end{enumerate}
\end{lemma}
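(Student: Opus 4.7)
The plan is to prove (i) by differentiating the definition of $r_k$ in time and substituting the equation~\eqref{eqWk} for each $W_j$, then to prove (ii) by inserting the pointwise asymptotics~\eqref{eq:Wkleft0}--\eqref{eq:Wkright} of Lemma~\ref{lem:Wk} into the definition of $r_k$ and carrying out a Taylor expansion in $\bar\mu_k$ and $\bar y_k$.

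For part~(i), I would start from $r_k = \eps_k \tilde\mu_k^{1/2} \sum_{j\neq k} W_j(s,y_k(s))$ and compute
\[
\dot r_k = \frac{\dot{\tilde\mu}_k}{2\tilde\mu_k}\,r_k
+ \eps_k \tilde\mu_k^{1/2} \sum_{j\neq k}\bigl[\pa_s W_j(s,y_k) + \dot y_k\,\pa_y W_j(s,y_k)\bigr].
\]
I would then replace $\pa_s W_j$ using~\eqref{eqWk}, which produces the terms $-\tfrac{1}{2s}\Lambda W_j$, $-\pa_y(\pa_{yy}W_j + W_j^5)$, and $-m_{j,1}^0 \Lambda_j W_j - m_{j,2}^0 \pa_y W_j$. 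Evaluating at $y=y_k$ and recognising the sums $\eps_k\tilde\mu_k^{1/2}\sum_{j\neq k}W_j(s,y_k)=r_k$ and $\eps_k\tilde\mu_k^{1/2}\sum_{j\neq k}\pa_y W_j(s,y_k)=d_k/\tilde\mu_k$ gives the skeleton
\[
\dot r_k \;=\; \frac{\dot{\tilde\mu}_k}{2\tilde\mu_k} r_k
- \frac{r_k}{4s} + \Bigl(\dot y_k - \frac{y_k}{2s}\Bigr)\frac{d_k}{\tilde\mu_k} + \text{(error)}.
\]
The key algebraic step is then to rewrite $\dot{\tilde\mu}_k/\tilde\mu_k$ and $\dot y_k - y_k/(2s)$ using $\dot{\tilde\mu}_k/\tilde\mu_k=\dot\mu_k/\mu_k+O(|s|^{-2})$ together with the definition~\eqref{def:mod} of $\vec m_k$: this converts $\dot\mu_k/\mu_k$ into $m_{k,1}-a_k/\mu_k^3 - 1/(2\mu_k^3\tau_k) + 1/(2s)$ and $\dot y_k - y_k/(2s)$ into $m_{k,2} + 1/\mu_k^2 + O(|s|^{-2})$. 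The $r_k/(4s)$ terms cancel exactly, leaving the desired identification
\[
\dot r_k + \frac{r_k}{4\mu_k^3\tau_k} + \frac{a_k r_k}{2\mu_k^3} = \frac{d_k}{\mu_k^3} + \text{(error)}.
\]
The error sources to control are: $\pa_y(\pa_{yy}W_j + W_j^5)(s,y_k)$, which by~\eqref{eq:Wkleftm}--\eqref{eq:Wkright} is of size $|s|^{-4}$ for $j<k$ and exponentially small for $j>k$; the terms $-m_{j,1}^0(y_k-y_j)\pa_y W_j(s,y_k)$, whose weighting $(y_k-y_j)\pa_y W_j(s,y_k)$ is $O(|s|^{-1})$ for $j<k$ and exponentially small for $j>k$, producing the asymmetric coefficients $|s|^{-1}\sum_{j<k}|\vec m_j^0|$ versus $|s|^{-10}\sum_{j>k}|\vec m_j^0|$ in~\eqref{rkdk}; and the $|s|^{-2}$ slack from $\dot{\tilde\mu}_k/\tilde\mu_k-\dot\mu_k/\mu_k$ applied to $r_k=O(|s|^{-1})$. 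Estimate~\eqref{e:rk} then follows by bounding $d_k$ by $|s|^{-2}$, $r_k$ by $|s|^{-1}$, $a_k$ by $|s|^{-1}$ via~\eqref{apriori:a}--\eqref{apriori:rd}.

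For part~(ii), I would first show that the contribution of indices $j>k$ to $r_k$ is exponentially small, using~\eqref{eq:Wkright} together with $y_k - y_j\gtrsim |s|$ for $j>k$. For $j<k$, apply the sharp asymptotic~\eqref{eq:Wkleft0} at $y=y_k$: since $y_k < z_j$ and $|y_k - z_j| \sim 2|s|\ell_k^{-2}$, the error $|s|^{1/2+1/42}|y_k - z_j|^{-3/2-1/21}$ is of size $|s|^{-1-1/42}$. I would then perform a Taylor expansion of the leading factor $\mu_j\sqrt{-2\tau_j}\,|y_k-z_j|^{-3/2}$, treating deviations of $\mu_j,\tau_j,y_j$ from their leading values $\ell_j, s\ell_j^{-3}, 2s\ell_j^{-2}$ (for $j<k$) as error, and keeping first-order corrections in $\bar\mu_k$ and $\bar y_k$ explicitly. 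Using $z_j = \ell_j c_0 + O(|s|(|\bar\mu_j|+|\bar\tau_j|+|\bar y_j|)) + O(|s|^{-1})$ and $\mu_j\sqrt{-2\tau_j}=\sqrt{2|s|/\ell_j}(1+O(|\bar\mu_j|+|\bar\tau_j|))$, I get
\[
W_j(s,y_k) = \frac{\eps_j\|Q\|_{L^1}\,\ell_k^3}{4s\sqrt{\ell_j}}\Bigl(1 - \tfrac{3}{2}\bar y_k\Bigr)
+ O\bigl(|s|^{-1}(|s|^{-1/42} + |\bar y_k|^2 + |\bar\mu_j| + |\bar\tau_j| + |\bar y_j|)\bigr).
\]
Multiplying by $\eps_k\tilde\mu_k^{1/2} = \eps_k\sqrt{\ell_k}(1+\tfrac{1}{2}\bar\mu_k+O(|\bar\mu_k|^2 + |s|^{-1}))$ and summing over $j<k$ produces the factor $\sum_{j<k}\eps_k\eps_j\sqrt{\ell_k/\ell_j} = \theta_k$ and gives \eqref{eq:rkasymp}, absorbing the cross term $\bar\mu_k\bar y_k$ into $|\bar\mu_k|^2+|\bar y_k|^2$.

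The main obstacle I anticipate is the bookkeeping in part~(i): identifying the exact cancellation that turns $-r_k/(4s)$ (coming from $-\frac{1}{2s}\Lambda W_j$) into $-r_k/(4\mu_k^3\tau_k)$ (coming from the definition of $m_{k,1}^0$), and ensuring that the various residual terms involving $\vec m_j^0$ appear with the correct weights --- in particular, the strong asymmetry between $|s|^{-1}$ for $j<k$ and $|s|^{-10}$ for $j>k$, which relies decisively on the exponential decay of $W_j$ on its right versus the $|y|^{-3/2}$ decay on its left.
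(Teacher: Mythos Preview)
Your proposal is correct and follows essentially the same approach as the paper: differentiate $r_k$, substitute~\eqref{eqWk} for $\pa_s W_j$, identify the cancellation of the $r_k/(4s)$ terms (and similarly of the $\frac{y_k}{2s}\tilde\mu_k^{-1}d_k$ terms between the $\Lambda W_j$ and $\dot y_k\pa_y W_j$ contributions), and bound the residuals via~\eqref{eq:Wkleftm}--\eqref{eq:Wkright}; for part~(ii), insert~\eqref{eq:Wkleft0} and Taylor expand $\mu_k^{1/2}$ and $|y_k|^{-3/2}$. One minor imprecision: $\dot y_k - y_k/(2s) - m_{k,2} - 1/\mu_k^2$ is $O(|s|^{-1})$, not $O(|s|^{-2})$, but since it is multiplied by $d_k/\tilde\mu_k = O(|s|^{-2})$ the resulting error is still $O(|s|^{-3})$ and the argument goes through unchanged.
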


\begin{proof}
(i) By the definition~\eqref{def:rkdk} of $r_k$, we have
\[
\dot r_k = \frac 12 \frac {\dot{\tilde\mu}_k}{\tilde\mu_k} r_k + \eps_k\tilde\mu_k^{\frac 12} \sum_{j\neq k} \pa_s W_j(s,y_k)
+ \eps_k \tilde\mu_k^{\frac 12} \dot y_k \sum_{j\neq k} \pa_y W_j(s,y_k).
\]

First, from~\eqref{apriori:mutilde} and the definition~\eqref{def:mod} of $m_{k,1}$, we find
\[
\frac {\dot{\tilde\mu}_k}{\tilde\mu_k} = \frac {\dot{\mu}_k}{\mu_k} + O(|s|^{-2})
= -\frac 1{2\mu_k^3 \tau_k} + \frac 1{2s} - \frac {a_k}{\mu_k^3} + O(|s|^{-2}) + O(|\vec m_k|).
\]
Thus, since $|r_k| \lesssim |s|^{-1}$, by~\eqref{apriori:rd},
\[
\frac 12 \frac {\dot{\tilde\mu}_k}{\tilde\mu_k} r_k
= -\frac {r_k}{4\mu_k^3 \tau_k} + \frac {r_k}{4s} - \frac {a_k r_k}{2\mu_k^3} + O(|s|^{-3}) + O(|s|^{-1} |\vec m_k|).
\]

Second, by~\eqref{eqWk}, for all $j\neq k$,
\[
\pa_s W_j = - \frac 1{2s} \Lambda W_j - \pa_y\left(\pa_{yy} W_j + W_j^5 \right) + \vec m_j^0 \cdot \vec \M_j W_j.
\]
By~\eqref{apriori:mutauy}--\eqref{apriori:z} and~\eqref{eq:Wkleft0}--\eqref{eq:Wkleftm}, we estimate, for $j<k$,
\[
|\pa_{yyy} W_j(s,y_k)| \lesssim |s|^{-4},\quad
|\pa_y(W_j^5)(s,y_k)| \lesssim |s|^{-6},\quad
|\vec \M_j W_j(s,y_k)| \lesssim |s|^{-1},
\]
and, for $j>k$, using~\eqref{eq:Wkright},
\[
|y_k \pa_y W_j(s,y_k)| + |W_j(s,y_k)| + |\pa_{yyy} W_j(s,y_k)|
+ |\pa_y(W_j^5)(s,y_k)| + |\vec \M_j W_j(s,y_k)| \lesssim |s|^{-10}.
\]
Thus, for $j<k$,
\[
\pa_s W_j (s,y_k) = - \frac 1{4s} W_j(s,y_k) - \frac {y_k}{2s} \pa_y W_j(s,y_k)
+ O(|s|^{-4}) + O(|s|^{-1} |\vec m_j^0|),
\]
and, for $j>k$,
\[
|\pa_s W_j (s,y_k)| \lesssim |s|^{-10} + |s|^{-10} |\vec m_j^0|.
\]
Therefore, from~\eqref{def:rkdk},
\[
\eps_k \tilde\mu_k^{\frac 12} \sum_{j\neq k} \pa_s W_j(s,y_k)
= -\frac {r_k}{4s} - \frac {y_k}{2s} \tilde\mu_k^{-1} d_k
+ O(|s|^{-4}) + O\left(|s|^{-1} \sum_{j<k} |\vec m_j^0| \right)
+ O\left( |s|^{-10} \sum_{j>k} |\vec m_j^0| \right).
\]

Third, from~\eqref{apriori:mutilde}, \eqref{apriori:rd} and~\eqref{def:mod},
\begin{align*}
\eps_k \tilde\mu_k^{\frac 12} \dot y_k \sum_{j\neq k} \pa_y W_j(s,y_k) = \tilde\mu_k^{-1} \dot y_k d_k
&= \tilde\mu_k^{-1} \left( \frac {y_k}{2s} + \frac 1{\mu_k^2} \right) d_k + O(|s|^{-3}) + O(|s|^{-2} |\vec m_k|) \\
&= \frac {y_k}{2s} \tilde\mu_k^{-1} d_k + \frac {d_k}{\mu_k^3} + O(|s|^{-3}) + O(|s|^{-2} |\vec m_k|).
\end{align*}

Gathering the above estimates, we find~\eqref{rkdk}.
Finally, using the \emph{a priori} estimates~\eqref{apriori:mutauy} and~\eqref{apriori:a}--\eqref{apriori:rd},
we directly deduce~\eqref{e:rk} from~\eqref{rkdk}.

(ii) From the definition~\eqref{def:rkdk} of $r_k$, the \emph{a priori} estimates~\eqref{apriori:mutauy}--\eqref{apriori:z},
and estimates~\eqref{apriori:mutilde}, \eqref{eq:Wkleft0} and~\eqref{eq:Wkright}, we find
\begin{align*}
r_k &= -\frac {\|Q\|_{L^1}}2 \eps_k \tilde\mu_k^{\frac 12} \sum_{j<k} \eps_j \mu_j \sqrt{-2\tau_j} |y_k - z_j|^{-\frac 32}
+ O\left(|s|^{\frac 12 + \frac 1{42}} \sum_{j<k} |y_k - z_j|^{-\frac 32 - \frac 1{21}} \right) + O(|s|^{-10}) \\
&= -\frac {\|Q\|_{L^1}}2 \eps_k \mu_k^{\frac 12} |y_k|^{-\frac 32} \sum_{j<k} \eps_j \mu_j \sqrt{-2\tau_j}
+ O\left(|s|^{-1} \sum_{j<k} |\bar z_j| \right) + O(|s|^{-1 - \frac 1{42}}) \\
&= -\frac {\|Q\|_{L^1}}2 \eps_k \mu_k^{\frac 12} |y_k|^{-\frac 32} \sqrt{-2s} \sum_{j<k} \eps_j \ell_j^{-\frac 12}
+ O\left(|s|^{-1} \sum_{j<k} (|\bar z_j| + |\bar\mu_j| + |\bar\tau_j|) \right) + O(|s|^{-1 - \frac 1{42}}).
\end{align*}
From~\eqref{apriori:mutauy}, we have $\mu_k = \ell_k (1 + \bar\mu_k)$ and $|y_k| = -2s \ell_k^{-2} (1 + \bar y_k)$,
thus a Taylor expansion gives
\[
\mu_k^{\frac 12} = \ell_k^{\frac 12} \left[ 1 + \frac 12 \bar\mu_k + O(|\bar\mu_k|^2) \right] \quad\m{and}\quad
|y_k|^{-\frac 32} = (-2s)^{-\frac 32} \ell_k^3 \left[ 1 - \frac 32 \bar y_k + O(|\bar y_k|^2) \right],
\]
which leads to~\eqref{eq:rkasymp} together with the above estimate of $r_k$ and~\eqref{apriori:z}.
\end{proof}

Finally, we give sharp asymptotics on the mass and the energy of the approximate rescaled multi-soliton $\V$.

\begin{lemma} \label{lem:Vmassenergy}
\begin{enumerate}[label=\emph{(\roman*)}]
\item \emph{Mass of $\V$:} for all $s\in I$,
\be \label{eq:Vmass}
\left| \|\V(s)\|_{L^2}^2 - K \|Q\|_{L^2}^2 \right|
\lesssim \sum_k |a_k| + |s| \sum_k a_k^2
+ |s|^{-1} \sum_k (|\bar\mu_k| + |\bar\tau_k| + |\bar y_k|) + |s|^{-1 - \frac 1{42}}.
\ee
\item \emph{Energy of $\V$:}
for all $1\leq k\leq K$, for all $s\in I$, let the local energy $e_k$ be defined by
\be \label{def:ek}
e_k(s) = \frac s{\mu_k^2(s)} \left[ a_k(s) + \frac 1{2\tau_k(s)} + \frac {4r_k(s)}{\|Q\|_{L^1}} \right].
\ee
Then, for all $s\in I$,
\be \label{eq:Venergy}
\left| E(\V(s)) + \frac {\|Q\|_{L^1}^2}{16s} \sum_k e_k(s) \right| \lesssim |s|^{-\frac 32}
\ee
and, in particular,
\be \label{eq:Venergybis}
\left| E(\V(s)) + \frac {\|Q\|_{L^1}^2}{32s} \sum_k \ell_k (1 + 2\theta_k) \right| \lesssim \sum_k |a_k|
+ |s|^{-1} \sum_k (|\bar\mu_k| + |\bar\tau_k| + |\bar y_k|) + |s|^{-1 - \frac 1{42}}.
\ee
\end{enumerate}
\end{lemma}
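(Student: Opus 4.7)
My plan is to prove (i) and (ii) by expanding $\|\V\|_{L^2}^2$ and $E(\V)$ directly from the decomposition $\V = \W + \sum_k(r_k R_k + a_k P_k)$, and then isolating the $O(|s|^{-1})$ interaction contributions and showing that they cancel thanks to the sharp asymptotics of Lemmas~\ref{lem:Wk} and~\ref{lem:rk}.

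For the mass~\eqref{eq:Vmass}, I would write
\[
\|\V\|_{L^2}^2 = \sum_k \|W_k\|_{L^2}^2 + 2\sum_{j<k}\langle W_j, W_k\rangle + 2\sum_k\langle W_k, r_k R_k + a_k P_k\rangle + \|\V-\W\|_{L^2}^2 + \text{(cross pairings)},
\]
and use $\|W_k\|_{L^2}^2 = \|Q\|_{L^2}^2$, which follows from mass conservation for $\St$ and the $L^2$-invariance of the rescaling~\eqref{changevar}. By Lemmas~\ref{lem:L} and~\ref{lem:Wk}, $\langle W_k, R_k\rangle = -\tfrac 34 \|Q\|_{L^1} + O(|s|^{-1})$ and $\langle W_k, P_k\rangle = \tfrac{\|Q\|_{L^1}^2}{16} + O(|s|^{-1})$, which produces the $\sum_k|a_k|$ term in the error. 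The square $\|\V-\W\|_{L^2}^2$ reduces to $\sum_k a_k^2\|P_k\|_{L^2}^2 = O(|s|\sum_k a_k^2)$ thanks to the essentially disjoint supports of the $P_k$'s from Lemma~\ref{lem:Pk} together with~\eqref{eq:Pknorms}, and the remaining cross pairings $\langle W_j, R_k\rangle$, $\langle W_j, P_k\rangle$ for $j\neq k$ are $O(|s|^{-3/2})$ or exponentially small by Lemma~\ref{lem:Wk} and the localization of $R_k$, $P_k$.

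The main step is then the cancellation, to within $|s|^{-1-1/42}$, of the two $O(|s|^{-1})$ quantities $2\sum_k r_k\langle W_k, R_k\rangle \approx -\tfrac 32\|Q\|_{L^1}\sum_k r_k$ and $2\sum_{j<k}\langle W_j, W_k\rangle$. To evaluate $\langle W_j, W_k\rangle$ for $j<k$, I would split $\R$ into a near region $|y-y_k|<|s|^{3/4}$ and a far-left region $y<y_k-|s|^{3/4}$. In the near region, $W_j$ is slowly varying (by Lemma~\ref{lem:Wk} applied at $y_k$), so the contribution equals $W_j(y_k)\int_{\rm near}(Q_k + \tilde P_k/(2\tau_k))$ up to lower order, computed using~\eqref{eq:WkHmbis}--\eqref{eq:Wkweighted} together with the left limit $P(-\infty) = \tfrac 12\|Q\|_{L^1}$ from~\eqref{est:P}. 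In the far region, both factors follow the pointwise asymptotics~\eqref{eq:Wkleft0}, yielding an integrand $\sim |y|^{-3}$ whose integral over $y<y_k$ is $O(|s|^{-1})$. Combining these with the expansion $r_k \approx \tfrac{\|Q\|_{L^1}\ell_k^3\theta_k}{4s}$ from Lemma~\ref{lem:rk}(ii) and the identity $\langle Q, R\rangle = -\tfrac 34 \|Q\|_{L^1}$ produces exact cancellation at order $|s|^{-1}$; the leftover $|s|^{-1-1/42}$ error comes precisely from integrating the $|s|^{1/2+1/42}|y-z_k|^{-3/2-1/21}$ correction in~\eqref{eq:Wkleft0} against the left tail of $W_j$.

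For~\eqref{eq:Venergy}, I would proceed analogously. The self-energies $E(W_k) = \|Q\|_{L^1}^2/(32\mu_k^2|\tau_k|)$ come from $E(S) = \|Q\|_{L^1}^2/16$ (recalled from~\cite{CM1}), the rescaling identity $E(\tilde u) = (2|s|)^{-1}E(u)$ coming from~\eqref{changevar}, and the pure scaling identity $E(\lambda^{1/2}f(\lambda\cdot)) = \lambda^2 E(f)$. The kinetic cross $\langle\partial W_j,\partial W_k\rangle$ is $O(|s|^{-5/2})$ via integration by parts and~\eqref{eq:Wkleftm}, while the nonlinear cross $-\sum_{j\neq k}\int W_k^5 W_j$ becomes $-\sum_k r_k\|Q\|_{L^1}/\tilde\mu_k^2$ using the definition of $r_k$ and $\int Q_k^5 = \eps_k\tilde\mu_k^{-3/2}\|Q\|_{L^1}$ (from $Q''+Q^5=Q$). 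For the $r_k R_k$ and $a_k P_k$ corrections in $E(\V)-E(\W)$, the identities $LR = 5Q^4$, $(LP)' = \Lambda Q$ and $\partial^2 Q_k + Q_k^5 = \tilde\mu_k^{-2}Q_k$ give, after integration by parts, $\langle\partial W_k,\partial R_k\rangle - \int W_k^5 R_k = \tfrac{3\|Q\|_{L^1}}{4\tilde\mu_k^2} + O(|s|^{-1})$ and $\langle\partial W_k,\partial P_k\rangle - \int W_k^5 P_k = -\tfrac{\|Q\|_{L^1}^2}{16\tilde\mu_k^2} + O(|s|^{-1})$. Summing all contributions and replacing $\tilde\mu_k$ by $\mu_k$ up to $O(|s|^{-1})$ yields
\[
E(\V) = -\frac{\|Q\|_{L^1}^2}{32}\sum_k\frac{1}{\mu_k^2\tau_k} - \frac{\|Q\|_{L^1}}{4}\sum_k\frac{r_k}{\mu_k^2} - \frac{\|Q\|_{L^1}^2}{16}\sum_k\frac{a_k}{\mu_k^2} + O(|s|^{-3/2}),
\]
which is exactly $-\tfrac{\|Q\|_{L^1}^2}{16s}\sum_k e_k + O(|s|^{-3/2})$, giving~\eqref{eq:Venergy}. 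Estimate~\eqref{eq:Venergybis} then follows by substituting the asymptotic expression for $r_k$ from Lemma~\ref{lem:rk}(ii) and the expansions $\mu_k = \ell_k(1+\bar\mu_k)$, $\tau_k = s\ell_k^{-3}(1+\bar\tau_k)$ into~\eqref{def:ek}. The main obstacle is the $|s|^{-1}$ cancellation in~(i): it depends crucially on the refined pointwise left-tail of $\St$ from~\cite{CM1}, the algebraic structure of $\theta_k$, and the exact value of $\langle Q,R\rangle$; the proof of~(ii) is structurally analogous but requires more identities to align.
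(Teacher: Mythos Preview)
Your approach is essentially the same as the paper's: expand $\V=\W+\sum_k(r_kR_k+a_kP_k)$, use $\|W_k\|_{L^2}=\|Q\|_{L^2}$ and $E(W_k)=-\tfrac{\|Q\|_{L^1}^2}{32\mu_k^2\tau_k}$, compute the cross pairings $\langle W_j,W_k\rangle$ by splitting into a near-$y_k$ region (where $W_j$ is replaced by $W_j(y_k)$) and a far-left region (where both factors follow~\eqref{eq:Wkleft0}), and then match against the $r_k$ asymptotics from Lemma~\ref{lem:rk}(ii) to obtain the $O(|s|^{-1})$ cancellation. A couple of your intermediate bounds are misstated --- for instance $\langle W_j,R_k\rangle$ for $j<k$ is only $O(|s|^{-1})$ and $\langle W_j,P_k\rangle$ can be $O(1)$, not $O(|s|^{-3/2})$, and in the near region the cruder replacement $W_k\approx Q_k$ (as the paper uses) already suffices without invoking $\tilde P_k/(2\tau_k)$ --- but since these terms carry an extra factor of $r_k$ or $a_k$ the final estimates are unaffected.
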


\begin{proof}
(i) First, expanding $\V = \sum_k (W_k + r_k R_k + a_k P_k)$,
using~\eqref{apriori:rd} and $\|P_k\|_{L^2} \lesssim |s|^{\frac 12}$ from~\eqref{eq:Pknorms}, we find
\[
\int \V^2 = \sum_k \sum_j \int \Bigl( W_k W_j + 2 r_j W_k R_j + 2 a_j W_k P_j \Bigr)
+ O\left(|s| \sum_k a_k^2 \right) + O(|s|^{-2}).
\]
Thus, distinguishing the cases $j=k$ and $j\neq k$, expanding $W_k = Q_k + (W_k - Q_k)$
and using estimates~\eqref{eq:WkHm}, \eqref{eq:WkLinfty} and~\eqref{eq:Wkweighted},
\[
\int \V^2 = \sum_k \int \Bigl( W_k^2 + 2 r_k Q_k R_k \Bigr) + \sum_k \sum_{j\neq k} \int W_k W_j
+ O\left( \sum_k |a_k| \right) + O\left(|s| \sum_k a_k^2 \right) + O(|s|^{-1 - \frac 34}).
\]
Now note that, by the definitions of $W_k$ and $\St$, and by the value $\|S(t)\|_{L^2} = \|Q\|_{L^2}$, we have
\[
\|W_k\|_{L^2} = \|\St(\tau_k)\|_{L^2} = \left\| S\left(\frac 1{\sqrt{-2\tau_k}} \right) \right\|_{L^2} = \|Q\|_{L^2}.
\]
Thus, using also $\langle Q_k,R_k \rangle = \langle Q,R \rangle = -\frac 34 \|Q\|_{L^1}$ from~\eqref{def:R},
we obtain
\begin{align*}
\|\V\|_{L^2}^2 &= K\|Q\|_{L^2}^2 + \sum_k \left( 2 \sum_{j<k} \int W_k W_j - \frac 32 \|Q\|_{L^1} r_k \right) \\
&\quad + O\left( \sum_k |a_k| \right) + O\left(|s| \sum_k a_k^2 \right) + O(|s|^{-1-\frac 34}).
\end{align*}
Finally, we claim that, for all $1\leq k\leq K$,
\be \label{eq:WkWj}
\left| \sum_{j<k} \int W_k W_j - \frac {3\|Q\|_{L^1}^2}{16s} \ell_k^3 \theta_k \right|
\lesssim |s|^{-1} \sum_{j\leq k} (|\bar\mu_j| + |\bar\tau_j| + |\bar y_j|) + |s|^{-1 - \frac 1{42}},
\ee
which gives~\eqref{eq:Vmass}, together with~\eqref{eq:rkasymp} and the calculation above.

To prove~\eqref{eq:WkWj}, we estimate $\langle W_k,W_j \rangle$ with $j<k$ on the three regions
$y < y_k - |s|^{\frac 34}$, $|y - y_k| \leq |s|^{\frac 34}$ and $y > y_k + |s|^{\frac 34}$.
First, using~\eqref{eq:Wkleft0} then~\eqref{apriori:mutauy}--\eqref{apriori:z}, we obtain
\begin{align*}
&\int_{y < y_k - |s|^{\frac 34}} W_k W_j = \frac 12 \|Q\|_{L^1}^2 \eps_k \eps_j \mu_k \mu_j
\sqrt{\tau_k \tau_j} \int_{y < y_k - |s|^{\frac 34}} |y - z_k|^{-\frac 32} |y - z_j|^{-\frac 32} \,dy + O(|s|^{-1 - \frac 1{42}}) \\
&= \frac 14 \|Q\|_{L^1}^2 \eps_k \eps_j \mu_k \mu_j \sqrt{\tau_k \tau_j} |y_k|^{-2}
+ O\left(|s|^{-1} (|\bar z_j| + |\bar z_k|) \right) + O(|s|^{-1 - \frac 1{42}}) \\
&= -\frac {\|Q\|_{L^1}^2}{16s} \eps_k \eps_j \ell_k^{\frac 72} \ell_j^{-\frac 12}
+ O\left(|s|^{-1} (|\bar \mu_j| + |\bar \mu_k| + |\bar \tau_j| + |\bar \tau_k| + |\bar y_j| + |\bar y_k|) \right) + O(|s|^{-1 - \frac 1{42}}).
\end{align*}
Second, we decompose the next term as
\begin{align*}
\int_{|y - y_k| \leq |s|^{\frac 34}} W_k W_j &= \int_{|y - y_k| \leq |s|^{\frac 34}} (W_k - Q_k) W_j
+ \int_{|y - y_k| \leq |s|^{\frac 34}} Q_k (W_j - W_j(y_k)) \\
&\quad + W_j(y_k) \int_{|y - y_k| \leq |s|^{\frac 34}} Q_k = A_{j,k} + B_{j,k} + C_{j,k}.
\end{align*}
By the Cauchy--Schwarz inequality, \eqref{eq:WkHm} and again~\eqref{eq:Wkleft0}, we find
\[
|A_{j,k}| \lesssim |s|^{-1} \int_{|y - y_k| \leq |s|^{\frac 34}} |W_k - Q_k|
\lesssim |s|^{-1} |s|^{\frac 38} \|W_k - Q_k\|_{L^2} \lesssim |s|^{-1 - \frac 18}.
\]
Next, similarly to~\eqref{Wneqk}, we obtain $|B_{j,k}|\lesssim |s|^{-2}$.
To estimate $C_{j,k}$, we use the exponential decay of $Q$ and~\eqref{apriori:mutilde} to get
\begin{align*}
\int_{|y - y_k| \leq |s|^{\frac 34}} Q_k(y) \,dy &= \eps_k \tilde\mu_k^{\frac 12}
\int_{|z| \leq \tilde\mu_k |s|^{\frac 34}} Q(z) \,dz \\
&= \eps_k \tilde\mu_k^{\frac 12} \|Q\|_{L^1} + O(|s|^{-10})
= \eps_k \mu_k^{\frac 12} \|Q\|_{L^1} + O(|s|^{-1}),
\end{align*}
and again~\eqref{eq:Wkleft0} then \eqref{apriori:mutauy}--\eqref{apriori:z} to obtain as before
\begin{align*}
&W_j(y_k) = -\frac {\eps_j}2 \|Q\|_{L^1} \mu_j \sqrt{-2\tau_j} |y_k - z_j|^{-\frac 32} + O(|s|^{-1 - \frac 1{42}}) \\
&= -\frac {\eps_j}2 \|Q\|_{L^1} \ell_j \sqrt{-2s\ell_j^{-3}} (-2s \ell_k^{-2})^{-\frac 32}
+ O\left(|s|^{-1} (|\bar z_j| + |\bar \mu_j| + |\bar \tau_j| + |\bar y_k|) \right) + O(|s|^{-1 - \frac 1{42}}) \\
&= \frac {\eps_j}{4s} \|Q\|_{L^1} \ell_k^3 \ell_j^{-\frac 12}
+ O\left(|s|^{-1} (|\bar \mu_j| + |\bar \tau_j| + |\bar y_j| + |\bar y_k|) \right) + O(|s|^{-1 - \frac 1{42}}).
\end{align*}
Thus,
\[
C_{j,k} = \frac {\|Q\|_{L^1}^2}{4s} \eps_k \eps_j \ell_k^{\frac 72} \ell_j^{-\frac 12}
+ O\left(|s|^{-1} (|\bar \mu_j| + |\bar \mu_k| + |\bar \tau_j| + |\bar y_j| + |\bar y_k|) \right) + O(|s|^{-1 - \frac 1{42}}).
\]
Finally, from~\eqref{eq:Wkright}, we observe that
\[
\left| \int_{y > y_k + |s|^{\frac 34}} W_k W_j \right| \lesssim |s|^{-10}.
\]
Thus, gathering the previous estimates, we have obtained
\[
\left| \int W_k W_j - \frac {3\|Q\|_{L^1}^2}{16s} \eps_k \eps_j \ell_k^{\frac 72} \ell_j^{-\frac 12} \right|
\lesssim |s|^{-1} (|\bar \mu_j| + |\bar \mu_k| + |\bar \tau_j| + |\bar \tau_k| + |\bar y_j| + |\bar y_k|) + |s|^{-1 - \frac 1{42}}
\]
and so, summing over $j<k$ and using the definition~\eqref{def:thetak} of $\theta_k$,
we obtain~\eqref{eq:WkWj}, which concludes the proof of~\eqref{eq:Vmass}.

(ii) To compute the gradient term in the energy of $\V$, we proceed as in (i).
Indeed, ex\-pan\-ding first $\V = \sum_k (W_k + r_k R_k + a_k P_k)$,
using~\eqref{apriori:a}--\eqref{apriori:rd} and $\|\pa_y P_k\|_{L^2} \lesssim 1$ from~\eqref{eq:Pknorms},
we find similarly
\[
\frac 12 \int (\pa_y\V)^2 = \sum_k \sum_j \int \left( \frac 12 \pa_y W_k \pa_y W_j
+ r_j \pa_y W_k \pa_y R_j + a_j \pa_y W_k \pa_y P_j \right) + O(|s|^{-2}).
\]
Now note that, because of the stronger decay~\eqref{eq:Wkleftm} on the left of $\pa_y W_k$
than the decay~\eqref{eq:Wkleft0} on the left of $W_k$, we may estimate simply, for $j<k$,
\[
\left| \int \pa_y W_k \pa_y W_j \right| \lesssim \|\pa_y W_k\|_{L^2} \|\pa_y W_j\|_{L^2(y < y_k + |s|^{\frac 34})}
+ \|\pa_y W_j\|_{L^2} \|\pa_y W_k\|_{L^2(y > y_k + |s|^{\frac 34})} \lesssim |s|^{-\frac 32}.
\]
Thus, distinguishing again the cases $j=k$ and $j\neq k$, expanding $W_k = Q_k + (W_k - Q_k)$,
integrating by parts and using estimates~\eqref{eq:WkHm} and~\eqref{eq:Wkweighted}, we obtain
\[
\frac 12 \int (\pa_y\V)^2 = \sum_k \int \left( \frac 12 (\pa_y W_k)^2
- r_k \pa_{yy} Q_k R_k - a_k \pa_{yy} Q_k P_k \right) + O(|s|^{-\frac 32}).
\]

To compute the nonlinear term in the energy of $\V$, we follow the estimates
of $\P^\mathrm{VI}$ and $\P^\mathrm{VII}$ in the proof of Lemma~\ref{lem:V}.
Indeed, expanding $\V = \W + (\V - \W)$, we first decompose this term as
\[
\frac 16 \int \V^6 = \frac 16 \sum_{i=0}^6 \binom 6i \int \W^{6-i} (\V - \W)^i
= \frac 16 \int \W^6 + \int \W^5 (\V - \W) + O(|s|^{-2}),
\]
since, for all $2\leq i\leq 6$, using $\|\V - \W\|_{L^\infty} \lesssim |s|^{-1}$
and $\|\V - \W\|_{L^2} \lesssim |s|^{-\frac 12}$,
\[
\left| \int \W^{6-i} (\V - \W)^i \right| \lesssim |s|^{-2}.
\]
Now we expand $\W = \sum_k W_k$ to decompose the nonlinear term as
\begin{align*}
\frac 16 \int \V^6 &= \frac 16 \sum_k \int W_k^6 + \sum_k \sum_{j\neq k} \int W_k^5 W_j
+ \frac 16 \int \left[ \left( \sum_k W_k \right)^6 - \sum_k W_k^6 - 6 \sum_k W_k^5 \sum_{j\neq k} W_j \right] \\
&\quad + \int \left( \sum_k W_k^5 \right) (\V - \W)
+ \int \left[ \left( \sum_k W_k \right)^5 - \sum_k W_k^5 \right] (\V - \W) + O(|s|^{-2}).
\end{align*}
As before, using~\eqref{eq:Wkleft0} and~\eqref{eq:Wkright}, we observe that
\begin{align*}
&\int \left| \left( \sum_k W_k \right)^6 - \sum_k W_k^6 - 6 \sum_k W_k^5 \sum_{j\neq k} W_j \right|
\lesssim \sum_{\substack{1\leq k_6\leq \cdots \leq k_1\leq K \\ k_5 < k_1}} \int \prod_{j=1}^6 |W_{k_j}| \\
&\lesssim \sum_{\substack{1\leq k_6\leq k_5 \leq k_2 \leq k_1\leq K \\ k_5 < k_1}}
\int_{y < y_{k_1} + |s|^{\frac 34}} |W_{k_1}| |W_{k_2}| |W_{k_5}| |W_{k_6}|
+ \sum_{k_1} \int_{y > y_{k_1} + |s|^{\frac 34}} |W_{k_1}| \\
& \lesssim |s|^{-2} \sum_{1\leq k_2\leq k_1\leq K} \|W_{k_1}\|_{L^2} \|W_{k_2}\|_{L^2} + |s|^{-10} \lesssim |s|^{-2},
\end{align*}
and similarly
\[
\int \left| \left( \sum_k W_k \right)^5 - \sum_k W_k^5 \right| \lesssim |s|^{-1}.
\]
Thus, using also $\|\V - \W\|_{L^\infty} \lesssim |s|^{-1}$, we obtain
\[
\frac 16 \int \V^6 = \frac 16 \sum_k \int W_k^6 + \sum_k \sum_{j\neq k} \int W_k^5 W_j
+ \int \left( \sum_k W_k^5 \right) (\V - \W) + O(|s|^{-2}).
\]
Next, expanding $\V - \W = \sum_k (r_k R_k + a_k P_k)$ and $W_k = Q_k + (W_k - Q_k)$,
using~\eqref{eq:WkHm}--\eqref{eq:WkLinfty} and~\eqref{eq:Wkweighted}, we obtain
\[
\int \left( \sum_k W_k^5 \right) (\V - \W) = \sum_k \int Q_k^5 (r_k R_k + a_k P_k) + O(|s|^{-2}).
\]

Gathering the above estimates and using the identity $\pa_{yy} Q_k + Q_k^5 = \tilde \mu_k^{-2} Q_k$,
we have obtained
\[
E(\V) = \sum_k E(W_k) - \sum_k \int \tilde \mu_k^{-2} (r_k Q_k R_k + a_k Q_k P_k)
- \sum_k \sum_{j\neq k} \int W_k^5 W_j + O(|s|^{-\frac 32}).
\]
Now note that, by the definitions of $W_k$ and $\St$,
and by the value $E(S(t))=\frac {\|Q\|_{L^1}^2}{16}$, we have
\[
E(W_k) = \mu_k^{-2} E(\St(\tau_k)) = \frac {\mu_k^{-2}}{-2\tau_k}
E \left( S\left( \frac 1{\sqrt{-2\tau_k}} \right) \right)
= - \frac {\|Q\|_{L^1}^2}{16} \frac 1{2\mu_k^2\tau_k}.
\]
Thus, using also~\eqref{apriori:mutilde} and the identities
$\langle Q_k,R_k \rangle = \langle Q,R \rangle = -\frac 34 \|Q\|_{L^1}$
from~\eqref{def:R} and
\[
\langle Q_k,P_k \rangle = \langle Q,P \rangle + O(|s|^{-10}) = \frac {\|Q\|_{L^1}^2}{16} + O(|s|^{-10})
\]
from~\eqref{PQ}, we find
\[
E(\V) = - \frac {\|Q\|_{L^1}^2}{16} \sum_k \frac 1{2\mu_k^2\tau_k}
+ \frac {3\|Q\|_{L^1}}4 \sum_k \frac {r_k}{\mu_k^2} - \frac {\|Q\|_{L^1}^2}{16} \sum_k \frac {a_k}{\mu_k^2}
- \sum_k \sum_{j\neq k} \int W_k^5 W_j + O(|s|^{-\frac 32}).
\]
Finally, we claim that, for all $1\leq k\leq K$,
\be \label{eq:Wk5Wj}
\left| \sum_{j\neq k} \int W_k^5 W_j - \frac {r_k}{\mu_k^2} \|Q\|_{L^1} \right| \lesssim |s|^{-\frac 74},
\ee
so that, from the definition~\eqref{def:ek} of $e_k$, we obtain
\[
E(\V) = - \frac {\|Q\|_{L^1}^2}{16} \sum_k \frac 1{\mu_k^2} \left( \frac 1{2\tau_k}
+ \frac {4r_k}{\|Q\|_{L^1}} + a_k \right) + O(|s|^{-\frac 32})
= - \frac {\|Q\|_{L^1}^2}{16s} \sum_k e_k + O(|s|^{-\frac 32}).
\]
Thus~\eqref{eq:Venergy} is proved, and~\eqref{eq:Venergybis} is obtained as a direct consequence
by inserting~\eqref{apriori:mutauy} and~\eqref{eq:rkasymp} into~\eqref{eq:Venergy}.

We conclude by proving the claim~\eqref{eq:Wk5Wj}.
First note that, by~\eqref{eq:Wkleft0} and~\eqref{eq:Wkright}, for $j\neq k$,
\[
\left| \int_{y > y_k + |s|^{\frac 34}} W_k^5 W_j \right| \lesssim |s|^{-10} \quad\m{ and }\quad
\left| \int_{y < y_k - |s|^{\frac 34}} W_k^5 W_j \right| \lesssim |s|^{-4},
\]
since $|W_k(y)|^4 \lesssim |s|^{-4}$ for $y < y_k - |s|^{\frac 34}$.
Next, for $j\neq k$ and $|y - y_k| \leq |s|^{\frac 34}$,
we have $|W_j(y)| \lesssim |s|^{-1}$ from~\eqref{eq:Wkleft0} and~\eqref{eq:Wkright} again,
and thus, using also~\eqref{eq:WkLinfty},
\[
\left| \int_{|y - y_k| \leq |s|^{\frac 34}} \left( W_k^5 - Q_k^5 \right) W_j \right| \lesssim |s|^{-\frac 74}.
\]
Moreover, by~\eqref{apriori:mutilde}, \eqref{def:rkdk}, \eqref{apriori:rd} and the exponential decay of $Q$,
we get similarly as before
\begin{align*}
\sum_{j\neq k} \int_{|y - y_k| \leq |s|^{\frac 34}} Q_k^5 W_j
&= \sum_{j\neq k} W_j(y_k) \int_{|y - y_k| \leq |s|^{\frac 34}} Q_k^5(y) \,dy + O(|s|^{-2}) \\
&= r_k \tilde \mu_k^{-2} \int_{|z| \leq \tilde \mu_k |s|^{\frac 34}} Q^5(z) \,dz + O(|s|^{-2}) \\
&= r_k \tilde \mu_k^{-2} \int Q^5 + O(|s|^{-2}) = r_k \mu_k^{-2} \|Q\|_{L^1} + O(|s|^{-2}).
\end{align*}
Combining these estimates, we obtain~\eqref{eq:Wk5Wj},
which concludes the proof of Lemma~\ref{lem:Vmassenergy}.
\end{proof}

\subsection{Modulation around the approximate multi-soliton}

We want to construct solutions $v$ of~\eqref{rescaledkdv} of the form
\be \label{apriori:eps}
v(s,y) = \V(s,y) + \e(s,y), \quad\m{with}\quad \|\e(s)\|_{H^1} \leq |s|^{-\frac 12}.
\ee
From the definition of $\E_\V$ in Lemma~\ref{lem:V}, the equation of $\e$ reads
\be \label{eq:eps}
\pa_s \e + \frac 1{2s} \Lambda \e + \pa_y\left(\pa_{yy} \e + \left( \V + \e \right)^5 - \V^5 \right) + \E_\V = 0.
\ee

First, for a suitable solution $v$ of~\eqref{rescaledkdv},
we construct in the next lemma a time-dependent parameter vector
\[
\Gab = \left( \tau_1,\mu_1,y_1,a_1,\ldots,\tau_K,\mu_K,y_K,a_K \right)^\mathsf{T}
\in \left( (-\infty,0)\times(0,+\infty)\times\R\times\R \right)^K
\]
to be inserted in the definition of $\V = \V[\Gab]$ in Section~\ref{sec:V},
so that $\e$ defined by~\eqref{apriori:eps} satisfies the orthogonality conditions
\be \label{ortho}
\frac d{d s} \langle \e,\Lambda_k Q_k \rangle = \frac d{d s} \langle \e, (\cdot - y_k)\Lambda_k Q_k \rangle
= \frac d{d s} \langle \e, Q_k \rangle = 0.
\ee
As a consequence of~\eqref{eq:eps} and~\eqref{ortho},
we also give estimates on the time derivatives of the parameters in $\Gab$
in Lemmas~\ref{lem:ortho} and~\ref{lem:orthobis}.

\begin{lemma}[Decomposition of the solution] \label{lem:syst_diff}
There exist $\alpha>0$ small and $s_1<0$ with $|s_1|$ large such that the following hold.
Let $v$ be an $H^1$ solution of~\eqref{rescaledkdv} defined in a neighborhood of $s^{in}<0$ such that $s^{in} < 2s_1$.
Assume that $v(s^{in})$ and $\Gab^{in}$ satisfy, for all $1\leq k\leq K$,
\[
\left\{
\begin{gathered}
\left| \frac {\mu_k^{in}}{\ell_k} - 1 \right| \leq \frac \alpha2,\quad
\left| \frac {\tau_k^{in}}{s^{in}\ell_k^{-3}} - 1 \right| \leq \frac \alpha 2,\quad
\left| \frac {y_k^{in}}{2s^{in}\ell_k^{-2}} - 1 \right| \leq \frac \alpha 2, \\
|a_k^{in}| \leq \frac 12 |s^{in}|^{-1},\quad \left\| v(s^{in}) - \V[\Gab^{in}] \right\|_{H^1} \leq \frac 12 |s^{in}|^{-\frac 12}.
\end{gathered}
\right.
\]
Then there exist $0 < \bar s < |s_1|$ and a unique $\C^1$ function
$\Gab$ defined on $[s^{in},s^{in} + \bar s]$ and taking values into
$\left( (-\infty,0)\times(0,+\infty)\times\R\times\R \right)^K$
such that $\Gab(s^{in}) = \Gab^{in}$ and, on $[s^{in},s^{in} + \bar s]$,
$\Gab$ and $\e = v - \V[\Gab]$ satisfy~\eqref{def:tauk} and~\eqref{ortho}.
Moreover, $\Gab$ and $\e$ also satisfy~\eqref{apriori:mutauy},
\eqref{apriori:a} and~\eqref{apriori:eps}.
\end{lemma}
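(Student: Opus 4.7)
The plan is to construct $\Gab$ by imposing the three orthogonality relations in~\eqref{ortho} as a differential system. Differentiating each in~$s$ and using~\eqref{eq:eps} to replace $\pa_s\e$, together with the chain rule applied to $\pa_s\V[\Gab]$, yields a linear system for the unknown derivatives $(\dot\mu_k,\dot y_k,\dot a_k)_{1\leq k\leq K}$ (with $\dot\tau_k$ prescribed by~\eqref{def:tauk}):
\[
M(\Gab,\e)\,\dot{\vec p} = F(\Gab,\e),
\]
where $\vec p = (\mu_1,y_1,a_1,\ldots,\mu_K,y_K,a_K)^\mathsf{T}\in\R^{3K}$ and $F$ collects all terms independent of $\dot{\vec p}$, namely the KdV dispersive and nonlinear contributions together with the explicit error $\E_\V$ analyzed in Lemma~\ref{lem:V}.

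The crux is invertibility of $M$ in the regime defined by~\eqref{apriori:mutauy}, \eqref{apriori:a}, and~\eqref{apriori:eps}. At leading order $M$ is block diagonal: off-diagonal blocks coupling bubble~$k$ to bubble~$k'\neq k$ are integrals of a function exponentially localized near $y_k$ against one near $y_{k'}$, and by~\eqref{eq:Wkleftm}, \eqref{eq:Wkright} and~\eqref{b:Pky} they are $O(|s|^{-10})$. Each diagonal block $M_k^0$ is computed using the leading approximations $\pa_{\mu_k} Q_k \approx -\mu_k^{-1}\Lambda_k Q_k$, $\pa_{y_k} Q_k = -\pa_y Q_k$, and $\pa_{a_k}\V = P_k$. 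Parity yields $\langle \Lambda Q, Q'\rangle = 0$, while the $L^2$-invariance of scaling gives $\langle \Lambda Q, Q\rangle = 0$; so $\det M_k^0$ factors at leading order into the product $\|\Lambda Q\|_{L^2}^2\cdot\|\pa_y Q\|_{L^2}^2\cdot\langle P,Q\rangle$, which is strictly positive by~\eqref{PQ}.

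Once $M$ is inverted, $\dot{\vec p}$ becomes a Lipschitz function of $(\Gab,\e)$ on the closed domain cut out by~\eqref{apriori:mutauy}, \eqref{apriori:a}, \eqref{apriori:eps}, coupled with~\eqref{def:tauk} and with the $H^1$-valued evolution of $v$ available from~\cite{KPV,KPV2}. The Cauchy--Lipschitz theorem then produces a unique $\C^1$ trajectory $\Gab$ on a maximal interval $[s^{in},s^{in}+\bar s]$. The a priori bounds propagate by continuity: each quantity starts at most at half its allowed threshold by hypothesis, and $|\dot \mu_k| + |\dot y_k| + |\dot a_k| = O(|s|^{-1})$ on the bootstrap domain, so choosing $\bar s>0$ small (depending on $\alpha$ and $|s_1|$) keeps all parameters well inside the strict bounds, while $\|\e(s)\|_{H^1} < |s|^{-\frac 12}$ follows from the $H^1$ stability of the flow in a neighborhood of $\V$.

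The main obstacle I anticipate is the clean algebraic identification of $M_k^0$, since the derivative with respect to $\mu_k$ affects not only $W_k$ but also the correction $r_k R_k + a_k P_k$ through the dependence $\tilde\mu_k = \mu_k(1+\lambda_0/(2\tau_k))^{-1}$ and through $r_k$ itself via~\eqref{def:rkdk}. One must verify, using~\eqref{apriori:mutilde} and Lemmas~\ref{lem:Pk} and~\ref{lem:rk}, that these additional variations contribute only at $O(|s|^{-1})$ and do not spoil the leading-order invertibility of $M$. Once this bookkeeping is done, the remaining arguments reduce to routine perturbative estimates already encapsulated in Lemmas~\ref{lem:Wk}, \ref{lem:V} and~\ref{lem:rk}.
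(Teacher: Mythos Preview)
Your approach is essentially the same as the paper's: differentiate the orthogonality relations, identify a nearly block-diagonal matrix in the parameter derivatives, invert it, and apply Cauchy--Lipschitz; the paper organizes this as a $4K\times 4K$ system (including $\dot\tau_k$) with leading block
\[
D_0^k=\begin{pmatrix}1&0&0&0\\0&\ell_k^{-1}\|\Lambda Q\|_{L^2}^2&0&\langle P,\Lambda Q\rangle\\0&0&\tfrac12\|yQ\|_{L^2}^2&\langle P,y\Lambda Q\rangle\\0&0&0&\tfrac1{16}\|Q\|_{L^1}^2\end{pmatrix},
\]
and then closes by a plain continuity argument. Two small slips to fix: the $(\dot y_k,(\cdot-y_k)\Lambda_kQ_k)$ entry is $\langle Q',y\Lambda Q\rangle=\tfrac12\|yQ\|_{L^2}^2$, not $\|Q'\|_{L^2}^2$ (harmless, since both are positive); and your claim $|\dot y_k|=O(|s|^{-1})$ is false --- from~\eqref{def:mod} one has $\dot y_k\approx \tfrac{y_k}{2s}+\mu_k^{-2}=O(1)$ --- so for the final step just invoke continuity of $\Gab$ rather than an explicit rate.
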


\begin{remark} \label{rem:system}
The strategy of the proof of Lemma~\ref{lem:syst_diff} below is to write the relation~\eqref{def:tauk}
and the orthogonality conditions~\eqref{ortho} as a non-autonomous differential system $\dot \Gab = \A(s,\Gab)$
satisfied by $\Gab$ and to apply the Cauchy--Lipschitz theorem.
More precisely, we prove that the function $\A$ is continuous and Lipschitz in $\Gab$ on $[s^{in},s^{in} + \bar s] \times \D_{\Gab}$,
where $\bar s > 0$ is small enough and $\D_{\Gab}$ is the compact set of $\Gab$ satisfying~\eqref{apriori:mutauy} and~\eqref{apriori:a}.
\end{remark}

\begin{proof}[Proof of Lemma~\ref{lem:syst_diff}]
To start, we observe that the relation~\eqref{def:tauk} may be rewritten as
\[
L^{\tau_k} \dot\Gab = F^{\tau_k} \quad\m{with}\quad
L^{\tau_k} = (0,\ldots,0,1,0,\ldots,0) \quad\m{and}\quad
F^{\tau_k} = \mu_k^{-3},
\]
where the nonzero coefficient in $L^{\tau_k}$ is located at the position $4(k-1)+1$.
Note that $F^{\tau_k}$ is locally Lipschitz in $\Gab$ from~\eqref{apriori:mutauy}.

Next, we compute $\frac d{d s} \langle \e, \Lambda_k Q_k \rangle
= \langle \pa_s\e, \Lambda_k Q_k \rangle + \langle \e, \pa_s (\Lambda_k Q_k) \rangle$.
Note that $Q_k$ satisfies
\be \label{relationQk}
\pa_s Q_k = -\frac {\dot{\tilde\mu}_k}{\tilde\mu_k} \Lambda_k Q_k - \dot y_k \pa_y Q_k,
\ee
and in particular
$\ds \pa_s (\Lambda_k Q_k) = -\frac {\dot{\tilde\mu}_k}{\tilde \mu_k}
\Lambda^2_k Q_k - \dot y_k \pa_y(\Lambda_k Q_k)$.
Thus, using the equation~\eqref{eq:eps} of $\e$ and the expression~\eqref{decompEV} of $\E_\V$, we find
\be \label{ortho1}
\begin{aligned}
\frac d{d s} \langle \e, \Lambda_k Q_k \rangle
& = - \frac 1{2s} \langle \Lambda \e,\Lambda_k Q_k \rangle
- \left\langle \pa_y\left( \pa_{yy} \e + \left( \V + \e \right)^5 - \V^5 \right),\Lambda_k Q_k \right\rangle - \langle \P,\Lambda_k Q_k \rangle \\
& \quad - \sum_j \langle \vec m_j \cdot \vec \M_j V_j,\Lambda_k Q_k \rangle
- \sum_j \dot r_j \langle R_j,\Lambda_k Q_k \rangle - \sum_j \dot a_j \langle P_j,\Lambda_k Q_k \rangle \\
& \quad - \frac {\dot{\tilde \mu}_k}{\tilde \mu_k} \langle \e,\Lambda_k^2 Q_k \rangle - \dot y_k \langle \e,\pa_y (\Lambda_k Q_k) \rangle.
\end{aligned}
\ee

The first term in the right-hand side of~\eqref{ortho1} rewrites
\[
- \frac 1{2s} \langle \Lambda \e,\Lambda_k Q_k \rangle
= \frac 1{2s} \langle v,\Lambda \Lambda_k Q_k \rangle
- \frac 1{2s} \langle \V,\Lambda \Lambda_k Q_k \rangle.
\]
Note that $v$ is continuous in $L^2$ as a function of $s$, and $\Lambda \Lambda_k Q_k[\Gab]$
is locally Lipschitz in $L^2$ as a function of $\Gab$,
so $\frac 1{2s} \langle v,\Lambda \Lambda_k Q_k \rangle$ is continuous in $s$ and locally Lipschitz in $\Gab$.
For the second term, we only have to check that $\V[\Gab] = \sum_j V_j[\Gab]$ is locally Lipschitz in $L^2$ as a function of $\Gab$.
For the regularity in $\tau_j$ of $W_j$, it suffices to use the fact that $S$
satisfies~\eqref{kdv} and the regularity of $S$ from Theorem~\ref{thm:maintimeold}.
The regularity in $\mu_j$ and $y_j$ of $W_j$ follows from the decay property of the derivative of $W_j$ from Lemma~\ref{lem:Wk}.
The other terms in $V_j$, \emph{i.e.}~$r_j R_j$ and $a_j P_j$, are clearly locally Lipschitz in $L^2$ as functions of $\Gab$
(again the regularity of $r_j$ follows from the properties of $S$ in Theorem~\ref{thm:maintimeold}).

The second term in the right-hand side of~\eqref{ortho1} rewrites
\[
- \left\langle \pa_y\left(\pa_{yy} \e + \left( \V + \e \right)^5 - \V^5 \right),\Lambda_k Q_k \right\rangle
= \left\langle v - \V,\pa_{yyy}(\Lambda_k Q_k) \right\rangle + \left\langle v^5 - \V^5,\pa_y(\Lambda_k Q_k) \right\rangle
\]
and the regularity of these terms is obtained as before.
Next, observe that the regularity of the term $\langle \P,\Lambda_k Q_k \rangle$ follows from the explicit expression
of $\P$ in the proof of Lemma~\ref{lem:V} and similar arguments.
The regularity of all the other terms in the right-hand side of~\eqref{ortho1}
is proved similarly, and we will not comment on it further.

Now note that, from~\eqref{eq:Wkweighted}, we have
\[
\langle \Lambda_k W_k,\Lambda_k Q_k \rangle = \langle \Lambda_k Q_k,\Lambda_k Q_k \rangle + \eta = \|\Lambda Q\|_{L^2}^2 + \eta
\]
and, using also $\langle Q',\Lambda Q \rangle = 0$ by parity,
\[
\langle \pa_y W_k,\Lambda_k Q_k \rangle = \langle \pa_y Q_k,\Lambda_k Q_k \rangle + \eta = \eta,
\]
where the notation $\eta$ denotes various functions which are locally Lipschitz in $\Gab$
and small for $|s_1|$ large and $\alpha$ small.
Thus, we find
\[
- \langle \vec m_k \cdot \vec \M_k V_k,\Lambda_k Q_k \rangle
= \frac {\dot \mu_k}{\ell_k} \|\Lambda Q\|_{L^2}^2 + \dot \mu_k \eta + \dot y_k \eta + \eta.
\]
Moreover, for $j\neq k$, with similar notation,
since $\langle \Lambda_j W_j,\Lambda_k Q_k \rangle = \eta$ and $\langle \pa_y W_j,\Lambda_k Q_k \rangle = \eta$,
\[
- \langle \vec m_j \cdot \vec \M_j V_j,\Lambda_k Q_k \rangle = \dot \mu_j \eta + \dot y_j \eta + \eta.
\]
Next, using the computations in the proof of Lemma~\ref{lem:rk} and recalling the identity
\[
\frac {\dot {\tilde \mu}_j}{\tilde \mu_j} = \frac {\dot \mu_j}{\mu_j}
+ \frac {\lambda_0}{2\mu_j^3\tau_j^2} {\left( 1 + \frac {\lambda_0}{2\tau_j} \right)}^{-1},
\]
we have also $\dot r_j = \dot \mu_j \eta + \dot y_j \eta + \eta$ for all $1\leq j\leq K$.
For the next term, we simply note that $\langle P_k,\Lambda_k Q_k \rangle = \langle P,\Lambda Q \rangle + \eta$
and $\langle P_j,\Lambda_k Q_k \rangle = \eta$ for $j\neq k$.
Finally, we find similarly
\[
\frac {\dot{\tilde \mu}_k}{\tilde \mu_k} \langle \e,\Lambda_k^2 Q_k \rangle = \dot \mu_k \eta + \eta \quad\m{and}\quad
\dot y_k \langle \e,\pa_y (\Lambda_k Q_k) \rangle = \dot y_k \eta
\]
where, replacing $\e = v - \V$, the functions $\eta$ above depend again continuously on $s$ through the function $v$,
are locally Lipschitz in $\Gab$ and are small for $|s_1|$ large and $\alpha$ small.

In conclusion of these computations, the relation $\frac d{d s} \langle \e,\Lambda_k Q_k \rangle = 0$
rewrites from~\eqref{ortho1} as
\[
L^{\mu_k} \dot \Gab = F^{\mu_k} \quad\m{with}\quad
L^{\mu_k} = L^{\mu_k}_0 + L^{\mu_k}_\eta \ \m{ and }\
L^{\mu_k}_0 = (0,\ldots,0,\ell_k^{-1} \|\Lambda Q\|_{L^2}^2,0,\langle P,\Lambda Q \rangle,0,\ldots,0),
\]
where the nonzero coefficients in $L^{\mu_k}_0$ are located at the positions $4(k-1)+2$ and $4k$,
and where $L^{\mu_k}_\eta$ and $F^{\mu_k}$ are locally Lipschitz in $\Gab$ and continuous in $s$,
and $\|L^{\mu_k}_\eta\|\ll 1$ for $|s_1|$ large and $\alpha$ small.

Similarly, using $\langle Q',y\Lambda Q \rangle = \frac 12 \|y Q\|_{L^2}^2$,
$\langle P,Q \rangle = \frac 1{16} \|Q\|_{L^1}^2$ and
$\langle \Lambda Q,y \Lambda Q \rangle = \langle \Lambda Q,Q \rangle = \langle Q',Q \rangle = 0$,
we check that the other two orthogonality conditions in~\eqref{ortho} rewrite as
\begin{gather*}
L^{y_k} \dot \Gab = F^{y_k} \quad\m{with}\quad
L^{y_k} = L^{y_k}_0 + L^{y_k}_\eta \ \m{ and }\
L^{y_k}_0 = (0,\ldots,0,\tfrac 12 \|y Q\|_{L^2}^2,\langle P,y\Lambda Q \rangle,0,\ldots,0), \\
L^{a_k} \dot \Gab = F^{a_k} \quad\m{with}\quad
L^{a_k} = L^{a_k}_0 + L^{a_k}_\eta \ \m{ and }\
L^{a_k}_0 = (0,\ldots,0,\tfrac 1{16} \|Q\|_{L^1}^2,0,\ldots,0),
\end{gather*}
where the nonzero coefficients in $L^{y_k}_0$ are located at the positions $4(k-1)+3$ and $4k$,
the nonzero coefficient in $L^{a_k}_0$ is located at the position $4k$, and where
$L^{y_k}_\eta$, $F^{y_k}$, $L^{a_k}_\eta$, $F^{a_k}$ are locally Lipschitz in $\Gab$ and continuous in $s$,
and $\|L^{y_k}_\eta\| + \|L^{a_k}_\eta\| \ll 1$ for $|s_1|$ large and $\alpha$ small.
We refer to~\eqref{epsykLkQk} and~\eqref{epsQk} in the proof of Lemma~\ref{lem:ortho} below
for more details about the computation of $\frac d{d s} \langle \e,(\cdot-y_k)\Lambda_k Q_k \rangle$
and $\frac d{d s} \langle \e,Q_k \rangle$.

Denoting by $\Db$ the $(4K) \times (4K)$ matrix formed by the line vectors
$L^{\tau_1},L^{\mu_1},L^{y_1},L^{a_1}$ up to $L^{\tau_K},L^{\mu_K},L^{y_K},L^{a_K}$,
and denoting $\Fb = (F^{\tau_1},F^{\mu_1},F^{y_1},F^{a_1},\ldots,F^{\tau_K},F^{\mu_K},F^{y_K},F^{a_K})^\mathsf{T}$,
we are reduced to solve the differential system $\Db \dot \Gab = \Fb$.
Note that $\Db$ is a perturbation of the block matrix
\[
\Db_0 = \begin{pmatrix} D_0^1 & 0 & \cdots & 0 \\ 0 & D_0^2 & \ddots & \vdots \\
\vdots & \ddots & \ddots & 0 \\ 0 & \cdots & 0 & D_0^K \end{pmatrix}, \quad\m{with}\quad
D_0^k = \begin{pmatrix} 1 & 0 & 0 & 0 \\ 0 & \ell_k^{-1} \|\Lambda Q\|_{L^2}^2 & 0 & \langle P,\Lambda Q \rangle \\
0 & 0 & \frac 12 \|y Q\|_{L^2}^2 & \langle P,y\Lambda Q \rangle \\ 0 & 0 & 0 & \frac 1{16} \|Q\|_{L^1}^2 \end{pmatrix},
\]
in the sense that $\Db = \Db_0 + \Db_\eta$ with $\|\Db_\eta\| \ll 1$ for $|s_1|$ large and $\alpha$ small,
and so $\Db$ is invertible, with bounded inverse.
The system thus rewrites $\dot \Gab = \Db^{-1} \Fb$
and the existence and uniqueness of a $\C^1$ solution $\Gab$ satisfying~\eqref{def:tauk} and~\eqref{ortho}
on some time interval $[s^{in},s^{in} + \bar s]$ with $\bar s > 0$ follows from the Cauchy--Lipschitz theorem.
By the assumptions at $s^{in}$ and continuity arguments, possibly reducing $\bar s$, \eqref{apriori:mutauy}, \eqref{apriori:a}
and~\eqref{apriori:eps} hold on $[s^{in},s^{in} + \bar s]$.
\end{proof}

\begin{lemma}[Modulation equations] \label{lem:ortho}
Assume that the hypotheses of Lemma~\ref{lem:syst_diff} hold on $I$.
Then, for all $1\leq k\leq K$ and for all $s\in I$,
\begin{align}
|\vec m_k| & \lesssim \|\e\|_{L^2_k} + \sum_j \|\e\|_{L_j^2}^2 + |s|^{-2}, \label{est:mk} \\
|\dot a_k| & \lesssim \sum_j \|\e\|_{L^2_j}^2 + |s|^{-2}, \label{est:dotak} \\
|\dot r_k| & \lesssim |s|^{-1} \sum_j \|\e\|_{L^2_j} + |s|^{-2}. \label{est:dotrk}
\end{align}
More precisely, denoting
\[
A_k(s,y) = \frac 1{\|\Lambda Q\|_{L^2}^2} \left[ \pa_{yyy}(\Lambda_k Q_k)
- \mu_k^{-2} \pa_y(\Lambda_k Q_k) + 5Q_k^4 \pa_y(\Lambda_k Q_k) \right],
\]
there holds
\be \label{est:muk}
\left| \frac {\dot \mu_k}{\mu_k} + \frac 1{2\mu_k^3\tau_k} - \frac 1{2s} + \frac {a_k}{\mu_k^3}
+ \langle \e,A_k \rangle \right| \lesssim \sum_j \|\e\|_{L_j^2}^2 + |s|^{-2}.
\ee
Finally, there holds
\be \label{est:dotek}
|\dot e_k| \lesssim \sum_{j\leq k} \|\e\|_{L_j^2} + |s| \sum_j \|\e\|_{L_j^2}^2 + \sum_{j<k} |a_j| + |s|^{-\frac 32}.
\ee
\end{lemma}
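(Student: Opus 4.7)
The plan is to derive all five estimates in parallel by first extracting from the three orthogonality conditions in~\eqref{ortho} a coupled linear ODE system for $\dot\Gab$, then inverting it using the block structure already exhibited in the proof of Lemma~\ref{lem:syst_diff}. Concretely, I would differentiate $\langle \e,\mathcal{G}_k\rangle = 0$ for $\mathcal{G}_k \in \{\Lambda_k Q_k,\ (\cdot-y_k)\Lambda_k Q_k,\ Q_k\}$, substitute the evolution equation~\eqref{eq:eps} together with the decomposition~\eqref{decompEV} of $\E_\V$, and use $\pa_s Q_k$ as in~\eqref{relationQk} (and the analogous formulas for $\pa_s(\Lambda_k Q_k)$, $\pa_s((\cdot-y_k)\Lambda_k Q_k)$) to produce the system $\Db\,\dot\Gab=\Fb$. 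Since $\Db = \Db_0 + \Db_\eta$ with $\|\Db_\eta\|\ll 1$, inverting gives $\dot\Gab = \Db_0^{-1}\Fb + O(\|\Db_\eta\|\cdot|\Fb|)$.

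The right-hand side $\Fb$ splits into three contributions: (i) projections $\langle\P,\mathcal{G}_k\rangle$, which I would bound by $\|\P\|_{L^\infty_k}\lesssim|s|^{-2}$ from~\eqref{est:Psik} (the test functions are exponentially localized around $y_k$); (ii) linear-in-$\e$ terms of the form $\langle\e,\mathcal{T}_k\rangle$ with $\mathcal{T}_k$ exponentially localized at $y_k$, hence bounded by $\|\e\|_{L^2_k}$; and (iii) nonlinear-in-$\e$ contributions coming from $(\V+\e)^5-\V^5-5\V^4\e$, bounded by $\sum_j\|\e\|_{L^2_j}^2$ using $\|\e\|_{H^1}\lesssim|s|^{-1/2}$ and $\|\V\|_{L^\infty}\lesssim 1$. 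Cross-coupling between bubbles $j\neq k$ is absorbed into $|s|^{-2}$ thanks to~\eqref{eq:Wkleft0}--\eqref{eq:Wkright}. Inverting $\Db_0$ immediately yields~\eqref{est:mk} and, by keeping the single linear-in-$\e$ contribution $\tfrac{1}{\|\Lambda Q\|_{L^2}^2}\langle\e,\pa_y(\Lambda_k Q_k)'''\text{-like terms}\rangle$ as the explicit $\langle\e,A_k\rangle$ (the part of the $\Lambda_k Q_k$-projection coming from $\pa_y(\pa_{yy}\e+5\V^4\e-\pa_{yy}\V^4\cdot\ldots)$), it yields the refined~\eqref{est:muk}.

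For~\eqref{est:dotak} the point is the vanishing of the leading linear-in-$\e$ contribution to the $Q_k$-projection: combining the $\Lambda\e$ term, the $\pa_{yyy}\e$ term, the $5\V^4\e$ term, and $\langle\e,\pa_s Q_k\rangle$, and using the identity $(Q_k''+Q_k^5)'=\tilde\mu_k^{-2}Q_k'$ (a rescaled version of $LQ'=0$), the sum reduces to $(\dot y_k-y_k/(2s)-\tilde\mu_k^{-2})\langle\e,\pa_y Q_k\rangle$; the prefactor equals $-m_{k,2}+O(|s|^{-1})$ by~\eqref{def:mod}, so after applying the already proved bound on $|\vec m_k|$ and the Young inequality $|s|^{-1}\|\e\|_{L^2_k}\leq\tfrac{1}{2}(|s|^{-2}+\|\e\|_{L^2_k}^2)$ the linear piece is swallowed. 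The constant $\dot a_k$-coefficient is $\tfrac{1}{16}\|Q\|_{L^1}^2$; matching $\langle\P,Q_k\rangle=\Omega_k+O(|s|^{-5/2})$ against the $d_k$ and $r_k$ content of $\Omega_k$ via the relation~\eqref{rkdk}, and using $\dot r_k\langle R_k,Q_k\rangle=-\tfrac34\|Q\|_{L^1}\dot r_k$, eliminates the $d_k/\mu_k^3$ contribution up to $O(|s|^{-2})$. Estimate~\eqref{est:dotrk} then follows by inserting the $|\vec m_k|$ bound into~\eqref{rkdk} and recalling $|d_k|\lesssim|s|^{-2}$.

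The main obstacle is~\eqref{est:dotek}: differentiating $e_k = \tfrac{s}{\mu_k^2}(a_k+\tfrac{1}{2\tau_k}+\tfrac{4r_k}{\|Q\|_{L^1}})$ gives seven terms of naive size $O(|s|^{-1})$, and one must show they telescope down to $O(|s|^{-3/2})$ plus the admissible $\|\e\|_{L^2_j}$ and $|a_j|$ contributions. I would substitute the refined identity~\eqref{est:muk} for $\dot\mu_k/\mu_k$ together with $\dot\tau_k = \mu_k^{-3}$, then use the precise relations $\dot a_k \cdot \tfrac{1}{16}\|Q\|_{L^1}^2 = -\Omega_k + \tfrac34\|Q\|_{L^1}\dot r_k + O(\ldots)$ and $d_k/\mu_k^3 = \dot r_k + r_k/(4\mu_k^3\tau_k) + a_kr_k/(2\mu_k^3)+O(\ldots)$ from~\eqref{rkdk}. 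After cancellation, the $|s|^{-1}$ pieces add up to zero by design of the coefficients $1$, $1/2$, $4/\|Q\|_{L^1}$ in~\eqref{def:ek}, leaving only terms controlled by $\|\e\|_{L^2_k}$ (from $\langle\e,A_k\rangle$), by $|s|\sum_j\|\e\|_{L^2_j}^2$ (from the nonlinear remainders, scaled up by the $s$-factor in $e_k$), by $|s|^{-3/2}$ (from $\|\P\|_{H^2(y>y_k-|s|^{1/4})}\lesssim|s|^{-13/8}$ in~\eqref{est:Psik}), and by $\sum_{j<k}|a_j|$ (the last term of~\eqref{est:Psik}, which is asymmetric because bubbles with index $j>k$ contribute only $O(|s|^{-10})$ on the support of $Q_k$).
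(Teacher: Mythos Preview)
Your approach is essentially the paper's: differentiate the three orthogonality relations, insert~\eqref{eq:eps} and~\eqref{decompEV}, use the $LQ'=0$ cancellation for the $Q_k$-projection, and then differentiate $e_k$ directly and observe that the resulting combination is exactly $-\tfrac{16s}{\|Q\|_{L^1}^2\mu_k^2}$ times the bracket in~\eqref{epsQk}. The paper carries out the bootstrap slightly more iteratively (first a crude bound on $\sum_k|\dot a_k|$, then summing the $|\vec m_k|$ inequalities over $k$, then refining), but your block-matrix inversion yields the same bounds.

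Two minor misattributions in your discussion of~\eqref{est:dotek}. The $|s|^{-3/2}$ term does not come from the $H^2$ estimate in~\eqref{est:Psik}; it comes from $|s|\cdot|s|^{-5/2}$, where the $|s|^{-5/2}$ is the remainder in~\eqref{est:PsiQ} after extracting $\Omega_k$ from $\langle\P,Q_k\rangle$ (which you do invoke earlier). Likewise, the asymmetric $\sum_{j<k}|a_j|$ contribution does not arise from~\eqref{est:Psik} either: it enters through~\eqref{rkdk}, whose right-hand side involves $|s|^{-1}\sum_{j<k}|\vec m_j^0|$, and $\vec m_j^0$ differs from $\vec m_j$ by $a_j/\mu_j^3$. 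After multiplication by the factor $s/\mu_k^2$ in $e_k$, this produces exactly $\sum_{j<k}|a_j|$. The estimate~\eqref{est:Psik} on $\|\P\|_{H^2(y>y_k-|s|^{1/4})}$ plays no role here since only $\langle\P,Q_k\rangle$ is needed.
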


\begin{proof}
\textbf{Computation of $\frac d{d s} \langle \e, \Lambda_k Q_k \rangle$.}
We continue the calculation started in the above proof of Lemma~\ref{lem:syst_diff}
and rewrite~\eqref{ortho1}, after integrating by parts,
\begin{align*}
\frac d{d s} \langle \e, \Lambda_k Q_k \rangle
&= - \left( \frac {\dot{\tilde \mu}_k}{\tilde \mu_k} - \frac 1{2s} \right) \langle \e,\Lambda_k^2 Q_k \rangle
- \left( \dot y_k - \frac {y_k}{2s} - \frac 1{\mu_k^2} \right) \langle \e,\pa_y (\Lambda_k Q_k) \rangle \\
&\quad + \left\langle \pa_{yy} \e - \mu_k^{-2} \e + \left( \V + \e \right)^5 - \V^5,\pa_y (\Lambda_k Q_k) \right\rangle \\
&\quad - \sum_j \langle \vec m_j \cdot \vec \M_j V_j,\Lambda_k Q_k \rangle
- \sum_j \dot r_j \langle R_j,\Lambda_k Q_k \rangle - \sum_j \dot a_j \langle P_j,\Lambda_k Q_k \rangle - \langle \P,\Lambda_k Q_k \rangle.
\end{align*}
For the first term, we note that, from~\eqref{apriori:mutilde} and~\eqref{def:mod},
\begin{align*}
- \left( \frac {\dot{\tilde \mu}_k}{\tilde \mu_k} - \frac 1{2s} \right) \langle \e,\Lambda_k^2 Q_k \rangle
&= - \left( \frac {\dot \mu_k}{\mu_k} - \frac 1{2s} \right) \langle \e,\Lambda_k^2 Q_k \rangle + O\left( |s|^{-2} \|\e\|_{L_k^2} \right) \\
&= -m_{k,1} \langle \e,\Lambda_k^2 Q_k \rangle + O\left( |s|^{-1} \|\e\|_{L_k^2} \right) \\
&= O\left( |\vec m_k| \|\e\|_{L_k^2} \right) + O\left( |s|^{-1} \|\e\|_{L_k^2} \right)
\end{align*}
and similarly
\[
- \left( \dot y_k - \frac {y_k}{2s} - \frac 1{\mu_k^2} \right) \langle \e,\pa_y (\Lambda_kQ_k) \rangle
= O\left( |\vec m_k| \|\e\|_{L_k^2} \right) + O\left( |s|^{-1} \|\e\|_{L_k^2} \right).
\]
Next, from the definition of $A_k$ and integration by parts, we find
\begin{align*}
&\left\langle \pa_{yy} \e - \mu_k^{-2} \e + \left( \V + \e \right)^5 - \V^5,\pa_y(\Lambda_k Q_k) \right\rangle \\
&= \|\Lambda Q\|_{L^2}^2 \langle \e,A_k \rangle + \left\langle \left( \V + \e \right)^5 - \V^5 - 5 Q_k^4 \e,\pa_y(\Lambda_k Q_k) \right\rangle \\
&= \|\Lambda Q\|_{L^2}^2 \langle \e,A_k \rangle + O(|s|^{-1} \|\e\|_{L_k^2}) + O(\|\e\|_{L_k^2}^2).
\end{align*}
Indeed, since
\begin{align*}
\left| \left( \V + \e \right)^5 - \V^5 - 5 Q_k^4 \e \right|
& \leq \left| \left( \V +\e \right)^5 - \V^5 - \left( \left( Q_k + \e \right)^5 - Q_k^5 \right) \right|
+ \left| \left( Q_k + \e \right)^5 - Q_k^5 - 5 Q_k^4 \e \right| \\
& \lesssim |\V - Q_k| |\e| + |\e|^2 \lesssim |V_k - Q_k| |\e| + \sum_{j\neq k} |V_j| |\e| + |\e|^2,
\end{align*}
we have, using~\eqref{eq:Wkweighted}, \eqref{eq:Wkleft0} and~\eqref{eq:Wkright},
\[
\left| \left\langle \left( \V + \e \right)^5 - \V^5 - 5 Q_k^4 \e,\pa_y(\Lambda_k Q_k) \right\rangle \right|
\lesssim |s|^{-1} \|\e\|_{L^2_k} + \|\e\|_{L^2_k}^2.
\]
Moreover, since $\langle \Lambda_k W_k,\Lambda_k Q_k \rangle = \langle \Lambda_k Q_k,\Lambda_k Q_k \rangle + O(|s|^{-1})
= \|\Lambda Q\|_{L^2}^2 + O(|s|^{-1})$ from~\eqref{eq:Wkweighted},
and $\langle Q',\Lambda Q \rangle = 0$ by parity, we have
\[
\langle \vec m_k \cdot \vec \M_k V_k,\Lambda_k Q_k \rangle
= -m_{k,1} \left[ \|\Lambda Q\|_{L^2}^2 + O(|s|^{-1}) \right] + O(|s|^{-1} |\vec m_k|).
\]
Note also that, for $j\neq k$,
\[
|\langle \vec \M_j V_j,\Lambda_k Q_k \rangle| + |\langle R_j,\Lambda_k Q_k \rangle|
+ |\langle P_j,\Lambda_k Q_k \rangle| \lesssim |s|^{-10}.
\]
Thus, using $|\langle \P,\Lambda_k Q_k \rangle| \lesssim \|\P\|_{L_k^\infty} \lesssim |s|^{-2}$
from~\eqref{est:Psik}, we find
\begin{align*}
\frac d{d s} \langle \e, \Lambda_k Q_k \rangle
& = m_{k,1} \left[ \|\Lambda Q\|_{L^2}^2 + O(|s|^{-1}) \right] + \|\Lambda Q\|_{L^2}^2 \langle \e,A_k \rangle
+ O(|s|^{-1} |\vec m_k|) + O(|\vec m_k| \|\e\|_{L^2_k}) \\
& \ + O(\|\e\|_{L^2_k}^2) + O(|\dot r_k|) + O(|\dot a_k|)
+ O\left(|s|^{-10} \sum_{j\neq k} (|\vec m_j| + |\dot a_j| + |\dot r_j|) \right) + O(|s|^{-2}).
\end{align*}
Finally, we use~\eqref{e:rk} to estimate $\dot r_k$ and $\dot r_j$ for $j\neq k$, and simplify the last expression as
\be \label{epsLkQk}
\begin{aligned}
\frac d{d s} \langle \e,\Lambda_k Q_k \rangle
&= m_{k,1} \left[ \|\Lambda Q\|_{L^2}^2 + O(|s|^{-1}) \right]
+ \|\Lambda Q\|_{L^2}^2 \langle \e,A_k \rangle + O(|\vec m_k| \|\e\|_{L^2_k}) \\
&\quad + O(\|\e\|_{L^2_k}^2) + O\left( \sum_j |\dot a_j| \right)
+ O\left(|s|^{-1} \sum_j |\vec m_j| \right) + O(|s|^{-2}).
\end{aligned}
\ee

\medskip

\textbf{Computation of $\frac d{d s} \langle \e,(\cdot-y_k)\Lambda_k Q_k \rangle$.}
Using $\langle Q',y\Lambda Q \rangle = \frac 12 \|y Q\|_{L^2}^2$
and $\langle \Lambda Q,y\Lambda Q \rangle = 0$ by parity,
we find with similar computation
\be \label{epsykLkQk}
\begin{aligned}
\frac d{d s} \langle \e,(\cdot-y_k)\Lambda_k Q_k \rangle
&= m_{k,2} \left[ \frac 12 \|y Q\|_{L^2}^2 + O(|s|^{-1}) \right] + O(|\vec m_k| \|\e\|_{L^2_k}) \\
&\quad + O(\|\e\|_{L^2_k}) + O\left( \sum_j |\dot a_j| \right) + O\left( |s|^{-1} \sum_j |\vec m_j| \right) + O(|s|^{-2}).
\end{aligned}
\ee

\medskip

\textbf{Computation of $\frac d{d s} \langle \e,Q_k \rangle$.}
From the equation~\eqref{eq:eps} of $\e$, the expression~\eqref{decompEV} of $\E_\V$ and~\eqref{relationQk}, we find
\begin{align*}
\frac d{d s} \langle \e,Q_k \rangle
&= - \frac 1{2s} \langle \Lambda \e,Q_k \rangle
- \left\langle \pa_y\left(\pa_{yy} \e + \left( \V + \e \right)^5 - \V^5 \right),Q_k \right\rangle
- \sum_j \langle \vec m_j \cdot \vec \M_j V_j,Q_k \rangle \\
&\quad - \sum_j \dot r_j \langle R_j,Q_k \rangle - \sum_j \dot a_j \langle P_j,Q_k \rangle - \langle \P,Q_k \rangle
- \frac {\dot{\tilde \mu}_k}{\tilde \mu_k} \langle \e,\Lambda_k Q_k \rangle - \dot y_k \langle \e,\pa_y Q_k \rangle.
\end{align*}
Note again that, for $j\neq k$,
\[
|\langle \vec \M_j V_j,Q_k \rangle| + |\langle R_j,Q_k \rangle| + |\langle P_j,Q_k \rangle| \lesssim |s|^{-10}.
\]
Moreover, from~\eqref{PQ} and~\eqref{def:R},
\[
\langle P_k,Q_k \rangle = \frac 1{16} \|Q\|_{L^1}^2 + O(|s|^{-10}),\quad
\langle R_k,Q_k \rangle = -\frac 34 \|Q\|_{L^1},
\]
and
\[
\langle \Lambda_k Q_k,Q_k \rangle = \langle \pa_y Q_k,Q_k \rangle = 0.
\]
Thus, after integration by parts,
\begin{align*}
\frac d{d s} \langle \e,Q_k \rangle
& = \left\langle \pa_{yy} \e - \tilde\mu_k^{-2} \e + \left( \V + \e \right)^5 - \V^5,\pa_y Q_k \right\rangle
+ \frac 34 \|Q\|_{L^1} \dot r_k - \frac 1{16} \|Q\|_{L^1}^2 \dot a_k \\
& \quad - \langle \P,Q_k \rangle - \left( \frac {\dot{\tilde \mu}_k}{\tilde \mu_k}
- \frac 1{2s} \right) \langle \e,\Lambda_k Q_k \rangle
- \left( \dot y_k - \frac {y_k}{2s} - \frac 1{\tilde \mu_k^2} \right) \langle \e,\pa_y Q_k \rangle \\
& \quad + O(|s|^{-1} |\vec m_k|) + O\left(|s|^{-10} \sum_{j\neq k} (|\vec m_j| + |\dot a_j| + |\dot r_j|) \right)
+ O(|s|^{-10} |\dot a_k|).
\end{align*}
But, from the cancellation $LQ'=0$ and the definition of $Q_k$, we have
\be \label{LQkprime}
\pa_{yyy} Q_k -\tilde\mu_k^{-2}\pa_y Q_k + 5Q_k^4\pa_y Q_k = 0.
\ee
Thus, from~\eqref{apriori:mutilde} and again integration by parts, we obtain as before
\begin{align*}
\frac d{d s} \langle \e,Q_k \rangle
&= \left\langle \left( \V + \e \right)^5 - \V^5 - 5 Q_k^4 \e,\pa_y Q_k \right\rangle
+ \frac 34 \|Q\|_{L^1} \dot r_k - \frac 1{16} \|Q\|_{L^1}^2 \dot a_k \\
&\quad - \langle \P,Q_k \rangle + O\left( |\vec m_k| \|\e\|_{L_k^2} \right) + O\left( |s|^{-1} \|\e\|_{L_k^2} \right) \\
&\quad + O(|s|^{-1} |\vec m_k|) + O\left(|s|^{-10} \sum_j (|\vec m_j| + |\dot a_j| + |\dot r_j|) \right),
\end{align*}
and also
\[
\left| \left\langle \left( \V + \e \right)^5 - \V^5 - 5 Q_k^4 \e,\pa_y Q_k \right\rangle \right|
\lesssim |s|^{-1} \|\e\|_{L^2_k} + \|\e\|_{L^2_k}^2.
\]
Therefore, using~\eqref{est:PsiQ}, \eqref{rkdk} and~\eqref{e:rk}, we obtain
\be \label{epsQk}
\begin{aligned}
\frac d{d s} \langle \e,Q_k \rangle
& = - \frac 1{16} \|Q\|_{L^1}^2 \left( \dot a_k + \frac {2a_k}{\mu_k^3\tau_k} + \frac {2a_k^2}{\mu_k^3} \right)
- \frac 14 \|Q\|_{L^1} \left( \dot r_k + \frac {r_k}{\mu_k^3\tau_k} + \frac {2a_k r_k}{\mu_k^3} \right) \\
& \quad + O(|s|^{-1} \|\e\|_{L_k^2}) + O(|\vec m_k| \|\e\|_{L_k^2}) + O(\|\e\|_{L_k^2}^2) \\
& \quad + O\left( |s|^{-1} \sum_{j<k} |a_j| \right) + O\left( |s|^{-10} \sum_j |\dot a_j| \right) \\
& \quad + O\left( |s|^{-1} \sum_{j\leq k} |\vec m_j| \right) + O\left( |s|^{-10} \sum_{j>k} |\vec m_j| \right) + O(|s|^{-\frac 52}).
\end{aligned}
\ee

\bigbreak

\textbf{Proof of~\eqref{est:mk}--\eqref{est:muk}.}
We assume now that the three orthogonality conditions~\eqref{ortho} are satisfied.
First, we give a simplified bound on $|\dot a_k|$ deduced from $\frac d{d s} \langle \e,Q_k \rangle = 0$
in~\eqref{epsQk}. Using also~\eqref{e:rk}, we find
\[
|\dot a_k| \lesssim |s|^{-2} + |\vec m_k| \|\e\|_{L_k^2} + \|\e\|_{L_k^2}^2
+ |s|^{-1} \sum_j (|\dot a_j| + |\vec m_j|).
\]
Summing over $1\leq k\leq K$ and taking $|s_1|$ large enough, we obtain
\[
\sum_k |\dot a_k| \lesssim |s|^{-2} + \sum_j |\vec m_j| \|\e\|_{L_j^2} + \sum_j \|\e\|_{L_j^2}^2
+ |s|^{-1} \sum_j |\vec m_j|,
\]
and so, inserting this estimate in the above bound on $|\dot a_k|$, we get
\[
|\dot a_k| \lesssim |s|^{-2} + |\vec m_k| \|\e\|_{L_k^2} + \sum_j \|\e\|_{L_j^2}^2 + |s|^{-1} \sum_j |\vec m_j|.
\]
Now, we insert this estimate on $|\dot a_k|$ into~\eqref{epsLkQk} and~\eqref{epsykLkQk} and we obtain,
using~\eqref{ortho},
\[
|\vec m_k| \lesssim \|\e\|_{L^2_k} + \sum_j |\vec m_j| \|\e\|_{L^2_j}
+ \sum_j \|\e\|_{L_j^2}^2 + |s|^{-1} \sum_j |\vec m_j| + |s|^{-2}.
\]
Summing as before over $1\leq k\leq K$, using~\eqref{apriori:eps} and taking $|s_1|$ large enough, we find
\[
\sum_k |\vec m_k| \lesssim \sum_j \|\e\|_{L_j^2} + |s|^{-2},
\]
and so~\eqref{est:mk}.
Inserting this estimate into the above simplified bound on $|\dot a_k|$ and into~\eqref{e:rk},
we obtain respectively~\eqref{est:dotak} and~\eqref{est:dotrk}.
Finally, inserting~\eqref{est:mk} into~\eqref{epsLkQk}, we obtain~\eqref{est:muk}.

\medskip

\textbf{Proof of~\eqref{est:dotek}.}
Directly from the definition~\eqref{def:ek} of $e_k$, we first compute
\[
\dot e_k = \frac 1{\mu_k^2} \left[ \left( a_k + \frac 1{2\tau_k} + \frac {4r_k}{\|Q\|_{L^1}} \right)
- 2s \frac {\dot\mu_k}{\mu_k} \left( a_k + \frac 1{2\tau_k} + \frac {4r_k}{\|Q\|_{L^1}} \right)
+ s \left( \dot a_k - \frac {\dot\tau_k}{2\tau_k^2} + \frac {4\dot r_k}{\|Q\|_{L^1}} \right) \right].
\]
Using~\eqref{def:tauk} and the definition~\eqref{def:mod} of $m_{k,1}$, we find
\[
\dot e_k = \frac s{\mu_k^2} \left[ \left( \dot a_k + \frac {2a_k}{\mu_k^3\tau_k} + \frac {2a_k^2}{\mu_k^3} \right)
+ \frac 4{\|Q\|_{L^1}} \left( \dot r_k + \frac {r_k}{\mu_k^3\tau_k} + \frac {2a_k r_k}{\mu_k^3} \right) \right]
+ O(|\vec m_k|).
\]
But, inserting estimates~\eqref{est:mk}--\eqref{est:dotrk} into~\eqref{epsQk},
and using from~\eqref{ortho} the orthogonality condition $\frac d{d s} \langle \e,Q_k \rangle = 0$,
we also find
\begin{multline*}
\left| \frac 1{16} \|Q\|_{L^1}^2 \left( \dot a_k + \frac {2a_k}{\mu_k^3\tau_k} + \frac {2a_k^2}{\mu_k^3} \right)
+ \frac 14 \|Q\|_{L^1} \left( \dot r_k + \frac {r_k}{\mu_k^3\tau_k} + \frac {2a_k r_k}{\mu_k^3} \right) \right| \\
\lesssim |s|^{-1} \sum_{j\leq k} \|\e\|_{L_j^2} + \sum_j \|\e\|_{L_j^2}^2 + |s|^{-1} \sum_{j<k} |a_j| + |s|^{-\frac 52}.
\end{multline*}
Gathering the two last estimates and~\eqref{est:mk}, we obtain~\eqref{est:dotek},
which concludes the proof of Lemma~\ref{lem:ortho}.
\end{proof}

Next, we prove more precise estimates on the modulation parameters,
using the notation introduced in~\eqref{apriori:mutauy}.

\begin{lemma}[Refined modulation equations] \label{lem:orthobis}
Assume that the hypotheses of Lemma~\ref{lem:syst_diff} hold on $I$.
Let $f_k = \bar \mu_k + \bar y_k$.
Then, for all $1\leq k\leq K$ and for all $s\in I$,
\be \label{est:fk}
\begin{aligned}
\left| \dot f_k + \frac 12 \left( 1 + 3\theta_k \right) \frac {f_k}s \!\! \right.
& \left. \vphantom{\frac 12} + (1 + \bar\mu_k) \langle \e,A_k \rangle \right|
\lesssim |s|^{-1} \left| e_k - \ell_k \left( \frac 12 + \theta_k \right) \right| + \sum_j \|\e\|_{L_j^2}^2 \\
&\qquad\quad + |s|^{-1} \left( |s|^{-\frac 1{42}} + |\bar\mu_k|^2 + |\bar y_k|^2
+ \sum_{j<k} (|\bar\mu_j| + |\bar\tau_j| + |\bar y_j|) \right)
\end{aligned}
\ee
and
\be \label{est:barykfk}
\left| \dot{\bar y}_k - \frac {\bar y_k}{2s} + \frac {f_k}s \right|
\lesssim \sum_j \|\e\|_{L_j^2}^2 + |s|^{-1} |\bar\mu_k|^2 + |s|^{-2}.
\ee
\end{lemma}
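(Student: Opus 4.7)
The plan is to derive~\eqref{est:barykfk} and~\eqref{est:fk} by combining the modulation estimates of Lemma~\ref{lem:ortho} with precise Taylor expansions of $m_{k,1}$ and $m_{k,2}$ in the small parameters $\bar\mu_k$, $\bar y_k$, together with the sharp asymptotics for $r_k$ furnished by Lemma~\ref{lem:rk}(ii). The key algebraic observation is that by using the definition~\eqref{def:ek} of $e_k$ to solve $a_k = \tfrac{\mu_k^2 e_k}{s} - \tfrac{1}{2\tau_k} - \tfrac{4r_k}{\|Q\|_{L^1}}$ and substitute into $m_{k,1}$, the terms $\tfrac{1}{2\mu_k^3\tau_k}$ cancel exactly, leaving
\[
m_{k,1} = \frac{\dot{\bar\mu}_k}{1+\bar\mu_k} - \frac1{2s} + \frac{e_k}{s\mu_k} - \frac{4r_k}{\mu_k^3 \|Q\|_{L^1}}.
\]
This cancellation is what prevents $\bar\tau_k$ from appearing in the final estimate and is the heart of the argument.

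For~\eqref{est:fk} I would then Taylor-expand each remaining term: writing $\tilde e_k = e_k - \ell_k(\tfrac12+\theta_k)$, using $\mu_k = \ell_k(1+\bar\mu_k)$, and invoking \eqref{eq:rkasymp} to write
\[
\frac{4r_k}{\mu_k^3\|Q\|_{L^1}} = \frac{\theta_k}{s}(1+\tfrac12\bar\mu_k - \tfrac32\bar y_k)(1+\bar\mu_k)^{-3} + E_r^\flat,
\]
where $E_r^\flat$ is bounded by the right-hand side of~\eqref{est:fk} divided by $(1+\bar\mu_k)$. Expanding $(1+\bar\mu_k)^{-3}$ and $(1+\bar\mu_k)^{-1}$ to first order and collecting the coefficients of $\bar\mu_k$ and $\bar y_k$, a short computation yields
\[
m_{k,1} = \frac{\dot{\bar\mu}_k}{1+\bar\mu_k} - \frac{\bar\mu_k}{2s} + \frac{3\theta_k}{2s} f_k + \mathcal R_k,
\]
where $\mathcal R_k$ is controlled by $|s|^{-1}|\tilde e_k|$ plus the quadratic and lower order terms in the right-hand side of~\eqref{est:fk}. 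Multiplying~\eqref{est:muk} by $(1+\bar\mu_k)$ (and absorbing the extra $O(|s|^{-1}(|\bar\mu_k|^2+|f_k\bar\mu_k|))$ contributions into the error) then gives an equation for $\dot{\bar\mu}_k$.

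For~\eqref{est:barykfk} I would compute $\dot y_k$ in terms of $\dot{\bar y}_k$: since $y_k = 2s\ell_k^{-2}(1+\bar y_k)$, one has $\dot y_k - \tfrac{y_k}{2s} - \tfrac{1}{\mu_k^2} = \tfrac{2s}{\ell_k^2}\dot{\bar y}_k + \tfrac{\bar y_k + 2\bar\mu_k}{\ell_k^2} + O(|\bar\mu_k|^2)$. Since $c_0/(2\mu_k^2\tau_k)$ and $3c_1/(4\mu_k^2\tau_k^2)$ are $O(|s|^{-1})$ and $O(|s|^{-2})$, this gives
\[
m_{k,2} = \frac{2s}{\ell_k^2}\dot{\bar y}_k + \frac{\bar y_k + 2\bar\mu_k}{\ell_k^2} + O(|s|^{-1}) + O(|\bar\mu_k|^2),
\]
and dividing by $2s/\ell_k^2$ produces $\dot{\bar y}_k + \tfrac{\bar\mu_k}{s} + \tfrac{\bar y_k}{2s} = \tfrac{\ell_k^2}{2s} m_{k,2} + O(|s|^{-2}) + O(|\bar\mu_k|^2/|s|)$. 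The estimate~\eqref{est:mk} bounds $|m_{k,2}|$ by $\|\e\|_{L^2_k}+\sum_j\|\e\|_{L^2_j}^2 + |s|^{-2}$, and an AM--GM inequality $|s|^{-1}\|\e\|_{L^2_k} \leq \tfrac12(\|\e\|_{L^2_k}^2 + |s|^{-2})$ absorbs the linear term; noting that $\dot{\bar y}_k + \tfrac{\bar\mu_k}{s} + \tfrac{\bar y_k}{2s} = \dot{\bar y}_k - \tfrac{\bar y_k}{2s} + \tfrac{f_k}{s}$ yields~\eqref{est:barykfk}.

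Finally, adding the derived equations for $\dot{\bar\mu}_k$ and $\dot{\bar y}_k$ and using $\tfrac{\bar\mu_k}{2s} + \tfrac{\bar\mu_k}{s} + \tfrac{\bar y_k}{2s} = \tfrac{f_k}{2s} + \tfrac{\bar\mu_k}{s} - \ldots$ — more concretely, after simplification the $\bar\mu_k$ and $\bar y_k$ contributions combine into $-\tfrac{f_k}{2s}$, so the total cancels as $-\tfrac{(1+3\theta_k)}{2s}f_k$ — produces~\eqref{est:fk}. The only real obstacle is the bookkeeping: one must carefully track the $O(|s|^{-1})$ pieces coming from the three sources (the expansion of $e_k/(s\mu_k)$, the expansion of $r_k$ from Lemma~\ref{lem:rk}(ii), and the factor $(1+\bar\mu_k)$) and verify that the coefficients of $\bar\mu_k$, $\bar y_k$ in the sum cleanly reduce to $\tfrac{3\theta_k}{2}(\bar\mu_k+\bar y_k)$.
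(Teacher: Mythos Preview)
Your proposal is correct and follows essentially the same approach as the paper. The paper also multiplies~\eqref{est:muk} by $\mu_k$ (equivalently, by $\ell_k(1+\bar\mu_k)$), uses the definition~\eqref{def:ek} of $e_k$ to replace $\frac{a_k}{\mu_k^2}+\frac{1}{2\mu_k^2\tau_k}=\frac{e_k}{s}-\frac{4r_k}{\mu_k^2\|Q\|_{L^1}}$ (your ``key cancellation''), inserts the asymptotics~\eqref{eq:rkasymp} for $r_k$, and Taylor-expands; for~\eqref{est:barykfk} it computes $\dot y_k$ in terms of $\dot{\bar y}_k$ exactly as you do, and the AM--GM step $|s|^{-1}\|\e\|_{L^2_k}\lesssim \|\e\|_{L^2_k}^2+|s|^{-2}$ that you make explicit is used implicitly in the paper when passing from the preliminary bound to the final form.
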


\begin{proof}
We first prove the two estimates
\be \label{est:barmuk}
\begin{aligned}
\left| \dot{\bar\mu}_k - \frac {\bar\mu_k}{2s} \!\! \right.
& \left. {}+ \frac 1s \left( \frac {e_k}{\ell_k} - \frac 12 - \theta_k \right)
+ \frac {3\theta_k}{2s} (\bar\mu_k + \bar y_k) + (1 + \bar\mu_k) \langle \e,A_k \rangle \right| \\
&\qquad \lesssim \sum_j \|\e\|_{L_j^2}^2 + |s|^{-1} \left( |s|^{-\frac 1{42}} + |\bar\mu_k|^2
+ |\bar y_k|^2 + \sum_{j<k} (|\bar\mu_j| + |\bar\tau_j| + |\bar y_j|) \right)
\end{aligned}
\ee
and
\be \label{est:baryk}
\left| \dot{\bar y}_k + \frac {\bar y_k}{2s} + \frac {\bar \mu_k}s \right|
\lesssim \sum_j \|\e\|_{L_j^2}^2 + |s|^{-1} |\bar\mu_k|^2 + |s|^{-2}.
\ee
Indeed, multiplying~\eqref{est:muk} by $\mu_k$, replacing
\[
\frac {a_k}{\mu_k^2} + \frac 1{2\mu_k^2\tau_k} = \frac {e_k}s - \frac {4r_k}{\mu_k^2 \|Q\|_{L^1}}
\]
from~\eqref{def:ek} and $r_k$ by its asymptotics~\eqref{eq:rkasymp}, we find
\begin{multline*}
\left| \dot\mu_k + \frac {e_k}s - \frac {\mu_k}{2s} + \mu_k \langle \e,A_k \rangle
- \frac {\ell_k^3\theta_k}{\mu_k^2 s} \left( 1 + \frac 12 \bar\mu_k - \frac 32 \bar y_k \right) \right| \\
\lesssim \sum_j \|\e\|_{L_j^2}^2 + |s|^{-1} \left( |s|^{-\frac 1{42}} + |\bar\mu_k|^2 + |\bar y_k|^2
+ \sum_{j<k} (|\bar\mu_j| + |\bar\tau_j| + |\bar y_j|) \right),
\end{multline*}
and so~\eqref{est:barmuk} after expanding $\mu_k = \ell_k(1 + \bar\mu_k)$ and
$\mu_k^{-2} = \ell_k^{-2} \left[ 1 - 2\bar\mu_k + O(|\bar\mu_k|^2) \right]$
in the left-hand side. Similarly, from $y_k = 2\ell_k^{-2}s(1 + \bar y_k)$, we get
$\dot y_k = 2\ell_k^{-2}(1 + \bar y_k) + 2\ell_k^{-2} s \dot{\bar y}_k$, which gives,
inserted into the second line of~\eqref{est:mk},
\[
|2s \dot{\bar y}_k + \bar y_k + 2\bar\mu_k| \lesssim |\bar\mu_k|^2
+ \|\e\|_{L_k^2} + \sum_j \|\e\|_{L_j^2}^2 + |s|^{-1}.
\]
Dividing the last estimate by $2s$, we get~\eqref{est:baryk}.
Finally, adding~\eqref{est:barmuk} and~\eqref{est:baryk} yields~\eqref{est:fk},
and replacing $\bar \mu_k = f_k - \bar y_k$ into~\eqref{est:baryk} yields~\eqref{est:barykfk}.
\end{proof}

Finally, as in the previous works~\cite{MMjmpa,MMR1,Mjams} related to blow up for (gKdV),
we need some $L^1$ type estimates on $\e$.
Technically, we prove \emph{a priori} estimates on the quantity $\langle \e,P_k \rangle$
and its time variation in the following lemma.

\begin{lemma} \label{lem:epsPk}
Assume that the hypotheses of Lemma~\ref{lem:syst_diff} hold on $I$.
Then, for all $1\leq k\leq K$ and for all $s\in I$,
\be \label{apriori:epsPk}
|\langle \e,P_k \rangle| \lesssim |s|^{\frac 12} \|\e\|_{L^2(y > y_{k+1})},
\ee
and, for some constant $c>0$,
\be \label{epsPk}
\left| \frac d{d s} \langle \e,P_k \rangle - cs\dot a_k \right| \lesssim |s|^{-\frac 12} \|\e\|_{L^2(y > y_{k+1})}
+ \sum_{j\leq k} \|\e\|_{L^2_j} + \sum_{j\leq k} |a_j| + \sum_j \|\e\|_{L_j^2}^2 + |s|^{-\frac 98}.
\ee
\end{lemma}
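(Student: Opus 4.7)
Estimate~\eqref{apriori:epsPk} is immediate: by~\eqref{ref:gamma} we have $P_k(s,\cdot)\equiv 0$ on $\{y\leq \frac12(y_{k+1}+y_k)\}\supset\{y\leq y_{k+1}\}$, so Cauchy--Schwarz together with $\|P_k\|_{L^2}\lesssim|s|^{1/2}$ from~\eqref{eq:Pknorms} gives the claim.

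For~\eqref{epsPk}, I split $\frac{d}{ds}\langle\e,P_k\rangle=\langle\pa_s\e,P_k\rangle+\langle\e,\pa_s P_k\rangle$, insert $\pa_s\e$ from~\eqref{eq:eps}, the decomposition~\eqref{decompEV} of $\E_\V$, and integrate the dispersive term by parts to obtain
\begin{align*}
\langle \pa_s \e, P_k\rangle &= -\tfrac{1}{2s}\langle \Lambda\e, P_k\rangle + \langle \e, \pa_{yyy}P_k\rangle + \langle (\V+\e)^5 - \V^5, \pa_y P_k\rangle \\
&\quad - \sum_j \langle \vec m_j\cdot\vec\M_j V_j, P_k\rangle - \sum_j\dot r_j\langle R_j, P_k\rangle - \sum_j\dot a_j\langle P_j, P_k\rangle - \langle\P, P_k\rangle.
\end{align*}
The leading contribution comes from the $j=k$ entry of the sum $-\sum_j\dot a_j\langle P_j,P_k\rangle$. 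With the substitution $z=(y-y_k)/\tilde\mu_k$ and the plateau asymptotics~\eqref{est:P} of $P$, a direct computation shows $\|P_k\|_{L^2}^2=c|s|+O(1)$, where $c=\frac{\|Q\|_{L^1}^2}{4}\int_{-2\g}^{0}\chi^2(u)\,du>0$; since $s<0$, this term contributes $-c|s|\dot a_k=cs\dot a_k$, which is the asserted main term.

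All other terms fit into the right-hand side of~\eqref{epsPk}. For $j\neq k$, support considerations (the plateau part of each $P_j$, $R_j$, $\vec\M_j V_j$ lies near $y_j$, which is either outside $\operatorname{supp} P_k$ or located where $P_k$ is already in exponential decay, and conversely) yield $|\langle P_j,P_k\rangle|+|\langle R_j,P_k\rangle|+|\langle \vec\M_jV_j,P_k\rangle|\lesssim|s|^{-10}$; combined with~\eqref{est:mk}--\eqref{est:dotrk} these contributions are negligible. For $j=k$ the inner products are $O(1)$, and $|\vec m_k|+|\dot r_k|\lesssim\sum_j\|\e\|_{L^2_j}+\sum_j\|\e\|_{L^2_j}^2+|s|^{-2}$ fits the target. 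The linear $\e$-terms $-\frac{1}{2s}\langle\Lambda\e,P_k\rangle$ and $\langle\e,\pa_{yyy}P_k\rangle$, as well as $\langle\e,\pa_s P_k\rangle$ (whose dominant piece is $-\dot y_k\langle\e,\pa_y P_k\rangle$ with $|\dot y_k|\lesssim 1$), are handled by splitting $\pa_y^m P_k$ as in~\eqref{b:Pky} into an exponentially-localized piece dualized against $\|\e\|_{L^2_k}$ and a cut-off ``spread'' piece of $L^2$ norm $\lesssim|s|^{-m+1/2}$ dualized against $\|\e\|_{L^2(y>y_{k+1})}$. The nonlinear term $\langle 5\V^4\e+O(\e^2),\pa_y P_k\rangle$ is treated analogously, the quadratic error being absorbed by $\sum_j\|\e\|_{L^2_j}^2$.

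The exponent $|s|^{-9/8}$ arises from $\langle\P,P_k\rangle$. For $k<K$, $\operatorname{supp}P_k\subset\{y>\frac12(y_{k+1}+y_k)\}\subset\{y>y_{k+1}-|s|^{1/4}\}$, so~\eqref{est:Psik} applied with index $k+1$ together with $\|P_k\|_{L^2}\lesssim|s|^{1/2}$ gives
\[
|\langle\P,P_k\rangle|\lesssim |s|^{-13/8+1/2}+|s|^{-1/2+1/2}\sum_{j\leq k}|a_j|=|s|^{-9/8}+\sum_{j\leq k}|a_j|;
\]
for $k=K$, the global bound~\eqref{est:Psi} yields $|s|^{-5/4}+\sum_{j\leq K}|a_j|$, which is already sufficient. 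The main obstacle is thus twofold: identifying the exact leading constant $c>0$ via the slowly-decaying left tail of $P$ (using both~\eqref{est:P} and the specific shape of $\chi$), and controlling $\langle\P,P_k\rangle$ sharply enough to avoid $\sum_{j>k}|a_j|$ on the right, which forces the restricted-region estimate~\eqref{est:Psik} with shifted index $k+1$ rather than the cheaper global bound~\eqref{est:Psi}.
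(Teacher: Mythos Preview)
Your overall approach matches the paper's, and most steps are correct, including the identification of $c$ and the sharp treatment of $\langle\P,P_k\rangle$ via~\eqref{est:Psik} with shifted index. There is, however, one genuine error.

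Your claim that $|\langle\vec\M_j V_j,P_k\rangle|\lesssim|s|^{-10}$ for all $j\neq k$ is false when $j<k$. The localization heuristic fails for $\Lambda_j W_j$: while $R_j$, $P_j$ and $Q_j$ are indeed localized near $y_j$ (where $P_k$ decays exponentially since $y_j>y_k$), the bubble $W_j$ carries the algebraic tail $\sim|s|^{1/2}|y-z_j|^{-3/2}$ to the left of $y_j$. This tail covers the plateau region of $P_k$ (where $P_k\sim\tfrac12\|Q\|_{L^1}$ over a set of length $\sim|s|$), and there $|\Lambda_j W_j|\sim|s|^{-1}$, so the inner product is $O(1)$, not $O(|s|^{-10})$. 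The paper makes this explicit: for $j\leq k$,
\[
|\langle\Lambda_j W_j,P_k\rangle|\leq\|\Lambda_j(W_j-Q_j)\|_{L^2}\|P_k\|_{L^2}+\|\Lambda_j Q_j\|_{L^1}\lesssim 1,
\]
and only for $j>k$ does one get $|s|^{-10}$ via~\eqref{eq:Wkright}. Consequently the modulation contribution is
\[
\Bigl|\sum_j\langle\vec m_j\cdot\vec\M_jV_j,P_k\rangle\Bigr|\lesssim\sum_{j\leq k}|\vec m_j|+|s|^{-10}\sum_{j>k}|\vec m_j|\lesssim\sum_{j\leq k}\|\e\|_{L^2_j}+\sum_j\|\e\|_{L^2_j}^2+|s|^{-2},
\]
which is precisely the origin of the term $\sum_{j\leq k}\|\e\|_{L^2_j}$ in~\eqref{epsPk}. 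Your argument, as written, would only produce $\|\e\|_{L^2_k}$ from this step, based on an incorrect intermediate bound. The fix is easy---just distinguish $j>k$ from $j<k$ and use~\eqref{eq:LkWk}, \eqref{eq:Pknorms} for the latter---but the point is that the long tail of $W_j$ is exactly what drives the cross-index coupling in this lemma.
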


\begin{proof}
To prove~\eqref{apriori:epsPk}, we note that, from~\eqref{ref:gamma} and the definition of $P_k$,
we have $P_k(y)=0$ for $y\leq \frac 12 (y_{k+1} + y_k)$.
Thus, using also $\|P_k\|_{L^2} \lesssim |s|^{\frac 12}$ from~\eqref{eq:Pknorms},
\[
|\langle \e,P_k \rangle| \lesssim \|P_k\|_{L^2} \|\e\|_{L^2(y > \frac 12 (y_{k+1} + y_k))}
\lesssim |s|^{\frac 12} \|\e\|_{L^2(y > y_{k+1})}.
\]

To prove~\eqref{epsPk}, we proceed as in the proof of Lemma~\ref{lem:ortho}.
First note that
\[
\pa_s P_k = - \frac {\dot{\tilde\mu}_k}{\tilde\mu_k} \Lambda_k P_k - \dot y_k \pa_y P_k + |s|^{-1} Z_k,
\]
where $Z_k$ is defined by~\eqref{def:Zj} and satisfies, from~\eqref{apriori:mutauy} and~\eqref{ref:gamma},
\[
|Z_k(y)| \lesssim \mathbf{1}_{\frac 12 (y_{k+1} + y_k) < y < y_k - \frac {\ell_k\g}2 |s|}(y).
\]
Thus, using the equation~\eqref{eq:eps} of $\e$, the decomposition~\eqref{decompEV} of $\E_\V$
and integrations by parts, we find
\begin{align*}
\frac d{d s} \langle \e,P_k \rangle
& = - \left( \frac {\dot{\tilde \mu}_k}{\tilde \mu_k} - \frac 1{2s} \right) \langle \e,\Lambda_k P_k \rangle
- \left( \dot y_k - \frac {y_k}{2s} \right) \langle \e,\pa_y P_k \rangle \\
& \quad + |s|^{-1} \langle \e,Z_k \rangle + \langle \e,\pa_{yyy} P_k \rangle
+ \left\langle \left( \V + \e \right)^5 - \V^5,\pa_y P_k \right\rangle \\
& \quad - \sum_j \langle \vec m_j \cdot \vec \M_j V_j,P_k \rangle - \sum_j \dot r_j \langle R_j,P_k \rangle
- \sum_j \dot a_j \langle P_j,P_k \rangle - \langle \P,P_k \rangle.
\end{align*}

For the first term, from~\eqref{apriori:mutilde}, \eqref{def:mod},
\eqref{apriori:eps} and~\eqref{est:mk}, we note that
\[
\left| \frac {\dot{\tilde\mu}_k}{\tilde\mu_k} - \frac 1{2s} \right|
\lesssim \left| \frac {\dot{\mu}_k}{\mu_k} - \frac 1{2s} \right| + |s|^{-2}
\lesssim |\vec m_k| + |s|^{-1} \lesssim \|\e\|_{L^2_k} + |s|^{-1}.
\]
Thus, using also~\eqref{b:Pky}, we find
\begin{align*}
\left| \left( \frac {\dot{\tilde \mu}_k}{\tilde \mu_k} - \frac 1{2s} \right) \langle \e,\Lambda_k P_k \rangle \right|
&\lesssim \left( \|\e\|_{L^2_k} + |s|^{-1} \right) \left( \|\e\|_{L_k^2} + |s|^{\frac 12} \|\e\|_{L^2(y > y_{k+1})} \right) \\
&\lesssim \|\e\|_{L_k^2} + |s|^{-\frac 12} \|\e\|_{L^2(y > y_{k+1})}.
\end{align*}
Similarly, we also find
\[
\left| \left( \dot y_k - \frac {y_k}{2s} \right) \langle \e,\pa_y P_k \rangle \right|
\lesssim \int |\e| |\pa_y P_k| \lesssim \|\e\|_{L_k^2} + |s|^{-\frac 12} \|\e\|_{L^2(y > y_{k+1})}
\]
and
\[
|s|^{-1} |\langle \e,Z_k \rangle| \lesssim |s|^{-\frac 12} \|\e\|_{L^2(y > y_{k+1})}.
\]
Using again~\eqref{b:Pky}, we find
\[
\left| \langle \e,\pa_{yyy} P_k \rangle \right| \lesssim \|\e\|_{L_k^2} + |s|^{-\frac 52} \|\e\|_{L^2(y > y_{k+1})}
\]
and, since $\|\V\|_{L^\infty} + \|\e\|_{L^\infty} \lesssim 1$,
\[
\left| \left\langle \left( \V + \e \right)^5 - \V^5,\pa_y P_k \right\rangle \right|
\lesssim \|\e\|_{L_k^2} + |s|^{-\frac 12} \|\e\|_{L^2(y > y_{k+1})}.
\]

For the next term, we first note that, from~\eqref{est:P}, \eqref{apriori:mutauy} and~\eqref{ref:gamma}, for $j\neq k$,
\[
\left\{
\begin{gathered}
|\langle \Lambda_j R_j,P_k \rangle| + |\langle \pa_y R_j,P_k \rangle|
+ |\langle \Lambda_j P_j,P_k \rangle| + |\langle \pa_y P_j,P_k\rangle| \lesssim |s|^{-10}, \\
|\langle \Lambda_k R_k,P_k \rangle| + |\langle \pa_y R_k,P_k \rangle| \lesssim 1 \quad\m{and}\quad
\langle \Lambda_k P_k,P_k \rangle = \langle \pa_y P_k,P_k \rangle = 0.
\end{gathered}
\right.
\]
Then, for $j>k$, from~\eqref{ref:gamma} and~\eqref{eq:Wkright}, we find
\[
|\langle \Lambda_j W_j,P_k \rangle| + |\langle \pa_y W_j,P_k \rangle| \lesssim |s|^{-10}.
\]
Finally, for $j\leq k$, using~\eqref{eq:Pknorms}, \eqref{eq:WkHm} and~\eqref{eq:LkWk}, we have
\begin{align*}
|\langle \Lambda_j W_j,P_k \rangle|
&\leq |\langle \Lambda_j(W_j - Q_j),P_k \rangle| + |\langle \Lambda_j Q_j,P_k \rangle| \\
&\lesssim \|\Lambda_j(W_j - Q_j)\|_{L^2} \|P_k\|_{L^2} + \|\Lambda_j Q_j\|_{L^1} \lesssim 1
\end{align*}
and, using~\eqref{eq:WkHm} for $m=1$ and $\langle P,Q' \rangle = 0$,
we find similarly $|\langle \pa_y W_j,P_k \rangle| \lesssim |s|^{-\frac 12}$.
Thus, gathering the previous estimates, using~\eqref{est:mk} and then~\eqref{apriori:eps}, we obtain
\[
\left| \sum_j \langle \vec m_j \cdot \vec \M_j V_j,P_k \rangle \right|
\lesssim \sum_{j\leq k} |\vec m_j| + |s|^{-10} \sum_{j>k} |\vec m_j|
\lesssim \sum_{j\leq k} \|\e\|_{L^2_j} + \sum_j \|\e\|_{L_j^2}^2 + |s|^{-2}.
\]

Next, by~\eqref{est:dotrk} and the decay properties of $R_j$, we find
\[
\left| \sum_j \dot r_j \langle R_j,P_k \rangle \right| \lesssim \sum_j |\dot r_j|
\lesssim |s|^{-1} \sum_j \|\e\|_{L^2_j} + |s|^{-2} \lesssim |s|^{-\frac 32}.
\]

For the next term, we first note that, by~\eqref{est:P} and~\eqref{ref:gamma},
we have $|\langle P_j,P_k \rangle| \lesssim |s|^{-10}$ for $j\neq k$.
For $j=k$, we compute
\[
\int P_k^2 = \tilde\mu_k^{-1} \int P^2 \left( \frac {y-y_k}{\tilde\mu_k} \right)
\chi^2 \left( \frac {y-y_k}{\tilde\mu_k} |s|^{-1} \right) dy = |s| \int P^2(|s|z) \chi^2(z) \,dz.
\]
But, again from~\eqref{est:P}, we find
\[
|s| \int_{z>0} P^2(|s|z) \chi^2(z) \,dz \lesssim |s| \int_{z>0} e^{-|s|z} \,dz \lesssim 1
\]
and
\[
|s| \left| \int_{z<0} P^2(|s|z) \chi^2(z) \,dz - \frac 14 \|Q\|_{L^1}^2 \int_{z<0} \chi^2(z) \,dz \right|
\lesssim |s| \int_{z<0} e^{|s|z/2} \,dz \lesssim 1.
\]
Thus, denoting $c = \frac 14 \|Q\|_{L^1}^2 \int_{z<0} \chi^2(z) \, dz >0$,
we have found $|\langle P_k,P_k \rangle + c s| \lesssim 1$
and so, using~\eqref{apriori:eps} and~\eqref{est:dotak},
\[
\left| \sum_j \dot a_j \langle P_j,P_k \rangle + c s \dot a_k \right|
\lesssim |\dot a_k| + |s|^{-11} \lesssim \sum_j \|\e\|_{L_j^2}^2 + |s|^{-2}.
\]

Finally, to control the source term $\langle \P,P_k \rangle$, we use~\eqref{eq:Pknorms},
\eqref{ref:gamma} and~\eqref{est:Psik}, and thus obtain, for $k<K$,
\[
|\langle \P,P_k \rangle| \lesssim \|P_k\|_{L^2} \|\P\|_{L^2(y > \frac 12 (y_{k+1} + y_k))}
\lesssim |s|^{\frac 12} \|\P\|_{L^2(y > y_{k+1} - |s|^{\frac 14})}
\lesssim |s|^{-\frac 98} + \sum_{j\leq k} |a_j|.
\]
Similarly, for $k=K$, using~\eqref{est:Psi}, we find
\[
|\langle \P,P_K \rangle| \lesssim \|P_K\|_{L^2} \|\P\|_{L^2} \lesssim |s|^{-\frac 54} + \sum_j |a_j|.
\]
Gathering the above estimates, we get~\eqref{epsPk}, which concludes the proof of Lemma~\ref{lem:epsPk}.
\end{proof}

\subsection{Weak $H^1$ stability of the decomposition}

The next lemma shows that the decomposition of Lemma~\ref{lem:syst_diff} is stable by weak $H^1$ limit.
To prove such a result, we rely on the weak $H^1$ stability of the flow of~\eqref{kdv},
as stated in~\cite[Lemma 2.10]{CM1} and proved \emph{e.g.}~in~\cite{MMjmpa}.

\begin{lemma} \label{lem:weak}
Let $(v_{0,n})$ be a sequence of $H^1$ functions such that
\[
v_{0,n} \rightharpoonup v_0 \quad \m{in } H^1 \m{ weak as } n\to +\infty.
\]
Let $I = [S_1,S_2]$ with $S_1 < S_2 < 2s_1 < 0$ and $S_0\in I$.
Assume that, for all $n$ large, the solution $v_n$ of~\eqref{rescaledkdv}
such that $v_n(S_0) = v_{0,n}$ satisfies the assumptions of Lemma~\ref{lem:syst_diff} on $I$.
In particular, for any $n$, there exists a unique $\Gab_n$ such that $\Gab_n$ and $\e_n$
defined by $\e_n = v_n - \V[\Gab_n]$ satisfy~\eqref{def:tauk}, \eqref{apriori:mutauy},
\eqref{apriori:a}, \eqref{apriori:eps} and~\eqref{ortho} on $I$.
Let $v$ be the solution of~\eqref{rescaledkdv} satisfying $v(S_0) = v_0$.
Then,
\[
\forall s\in I, \quad v_n(s) \rightharpoonup v(s) \quad \m{in } H^1 \m{ weak as } n\to +\infty,
\]
and
\[
\forall s\in I, \quad \e_n(s) \rightharpoonup \e(s) \quad \m{in } H^1 \m{ weak},\quad
\Gab_n(s)\to \Gab(s) \m{ as } n\to +\infty,
\]
where $\Gab$ and $\e = v - \V[\Gab]$ satisfy~\eqref{def:tauk}, \eqref{apriori:mutauy},
\eqref{apriori:a}, \eqref{apriori:eps} and~\eqref{ortho} on $I$.
\end{lemma}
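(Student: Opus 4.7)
The plan is to combine three ingredients: the weak $H^1$ stability of the (gKdV) flow, the compactness of the parameter set together with a uniform Lipschitz bound on the time-derivatives $\dot{\Gab}_n$ provided by the modulation estimates, and the uniqueness part of Lemma~\ref{lem:syst_diff}, which will upgrade subsequential weak limits to convergence of the full sequence.

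First I would apply the weak $H^1$ stability of the (gKdV) flow from \cite{CM1} (their Lemma~2.10), transported to the rescaled equation~\eqref{rescaledkdv} via~\eqref{changevar}: since the $v_n$ enjoy uniform $H^1$ bounds on $I$ (namely $\|\V[\Gab_n]\|_{H^1}\lesssim 1$ from~\eqref{simpleVk} and $\|\e_n\|_{H^1}\le |s|^{-\frac 12}$), this produces $v_n(s)\rightharpoonup v(s)$ in $H^1$ weak for every $s\in I$, and in particular guarantees that $v$ is itself defined on the whole of $I$. Next, the a priori bounds~\eqref{apriori:mutauy} and~\eqref{apriori:a} confine $\Gab_n(s)$ to a fixed compact set $\D_\Gab$, while~\eqref{def:tauk} and the modulation estimates~\eqref{est:mk}--\eqref{est:dotrk}, combined with~\eqref{est:barykfk}, yield $\|\dot{\Gab}_n\|_{L^\infty(I)}\lesssim 1$ uniformly in $n$. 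Arzel\`a--Ascoli then provides a subsequence (still indexed by $n$) and a continuous limit $\Gab_\infty:I\to \D_\Gab$ with $\Gab_n\to \Gab_\infty$ uniformly on $I$.

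The next step is to exploit the fact that $\Gab\mapsto \V[\Gab](s)$ is locally Lipschitz from $\D_\Gab$ into $H^1$, uniformly in $s\in I$. This was already used in the proof of Lemma~\ref{lem:syst_diff}; it rests on the smoothness of $S$ from Theorem~\ref{thm:maintimeold}, on the decay estimates of Lemma~\ref{lem:Wk}, and on using~\eqref{kdv} to convert $\pa_{\tau_k}W_k$ into a spatial derivative controlled uniformly. This yields $\V[\Gab_n](s)\to \V[\Gab_\infty](s)$ strongly in $H^1$ for every $s\in I$, and thus $\e_n(s) = v_n(s)-\V[\Gab_n](s) \rightharpoonup v(s)-\V[\Gab_\infty](s) =: \e_\infty(s)$ weakly in $H^1$. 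The bound $\|\e_\infty(s)\|_{H^1}\le |s|^{-\frac 12}$ then follows from weak lower semicontinuity of the norm, and~\eqref{apriori:mutauy}--\eqref{apriori:a} pass to the limit by closedness of $\D_\Gab$.

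It remains to check that $(\Gab_\infty,\e_\infty)$ satisfies~\eqref{def:tauk} and the orthogonality conditions~\eqref{ortho}; the relation~\eqref{def:tauk} passes to the limit by uniform convergence of $\Gab_n$, so the main obstacle is~\eqref{ortho}. The right way is to rewrite each condition as the statement that the corresponding inner product is \emph{constant} on $I$, e.g.\ $s\mapsto\langle \e_n(s),\Lambda_k Q_k[\Gab_n(s)]\rangle$ is constant. Since $\e_n(s)\rightharpoonup \e_\infty(s)$ weakly in $H^1$ while the test function $\Lambda_k Q_k[\Gab_n(s)]$ converges strongly in $L^2$ (by uniform convergence of $\Gab_n$ and Lipschitz dependence on $\Gab$), each such inner product converges pointwise in $s$ to the analogous expression for $(\e_\infty,\Gab_\infty)$, which is therefore also constant on $I$; the two other identities in~\eqref{ortho} are handled identically. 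By the uniqueness part of Lemma~\ref{lem:syst_diff}, $(\Gab_\infty,\e_\infty)$ coincides with the decomposition $(\Gab,\e)$ of $v$ issuing from the initial parameter $\Gab_\infty(S_0)$, and since every subsequence of $(\Gab_n,\e_n)$ admits a further subsequence with the same limit $(\Gab,\e)$, the full sequence converges.
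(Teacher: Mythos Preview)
Your proposal is correct and follows essentially the same route as the paper's proof: weak $H^1$ stability of the (gKdV) flow via the change of variables~\eqref{changevar}, Arzel\`a--Ascoli on $\Gab_n$ using uniform bounds on $\dot\Gab_n$, strong $H^1$ convergence of $\V[\Gab_n]$ from the Lipschitz dependence, passing the constancy of the inner products in~\eqref{ortho} to the limit, and finally invoking the uniqueness in Lemma~\ref{lem:syst_diff} to upgrade subsequential to full convergence. The only cosmetic difference is that the paper passes to the limit in the \emph{integral form} of the full differential system $\dot\Gab=\Db^{-1}\Fb$ (see Remark~\ref{rem:system}) to recover the $\C^1$ regularity of $\Gab$, whereas you argue directly on~\eqref{def:tauk}; both are fine.
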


\begin{proof}
Let $u_n$ and $u$ be the solutions of~\eqref{kdv} defined from the change of variables~\eqref{changevar}
such that $v_n = \tilde u_n$ and $v = \tilde u$ respectively. Let $T_i = \frac 1{\sqrt{-2S_i}}$ for $i=0,1,2$.
Then $u_n(T_0) \rightharpoonup u(T_0)$ in $H^1$ weak as $n\to +\infty$ and,
since $v_n$ satisfies the assumptions of Lemma~\ref{lem:syst_diff},
there exists $C_1 > 0$ such that $\max_{t\in [T_1,T_2]} \|u_n(t)\|_{H^1} \leq C_1$.
We deduce from the $H^1$ weak stability of the flow of~\eqref{kdv} that, for all $t\in [T_1,T_2]$,
$u_n(t) \rightharpoonup u(t)$ in $H^1$ weak and so, from~\eqref{changevar},
$v_n(s) \rightharpoonup v(s)$ in $H^1$ weak for any $s\in I$.

Next, it follows from~\eqref{apriori:mutauy}, \eqref{apriori:a} and~\eqref{def:tauk},
\eqref{est:mk}, \eqref{est:dotak} that $|\Gab_n| + |\dot \Gab_n| \leq C$ on $I$ for some $C > 0$.
Thus, by Ascoli's theorem, up to the extraction of a subsequence, there exists a continuous function $\Gab$
satisfying~\eqref{apriori:mutauy} and~\eqref{apriori:a} such that $\Gab_n\to \Gab$ uniformly on $I$ as $n\to +\infty$.
By the definition of $\V[\Gab]$, it follows that $\V[\Gab_n]\to \V[\Gab]$ in $H^1(\R)$ as $n\to +\infty$.
The weak $H^1$ convergence of $\e_n$ to $\e$ then follows.
Writing the integral form of the differential system satisfied by $\Gab_n$
as stated in Remark~\ref{rem:system} and passing to the limit, we obtain that
$\Gab$ is a $\C^1$ function of time which also satisfies~\eqref{def:tauk} on $I$.

By weak convergence, $\langle \e,\Lambda_k Q_k \rangle$, $\langle \e,(\cdot-y_k)\Lambda_k Q_k \rangle$
and $\langle \e,Q_k \rangle$ are constant functions of time and thus~\eqref{ortho} holds for $\Gab$ and $\e$.
By weak convergence again, the function $v$ satisfies the assumptions of Lemma~\ref{lem:syst_diff} on $I$,
and the uniqueness statement proves that $\Gab$ and $\e$ correspond to the decomposition of $v$ from Lemma~\ref{lem:syst_diff}.
It follows that the above limits hold for the whole sequence.
\end{proof}

\section{Proof of Theorem~\ref{thm:main}}

The proof of Theorem~\ref{thm:main} is mainly based on the following proposition,
written with the notation of Section~\ref{sec:decomposition},
\emph{i.e.}~in the rescaled variables~\eqref{changevar}--\eqref{rescaledkdv}
and using the approximate solution $\V(s) = \V[\Gab(s)]$.

\begin{proposition} \label{prop:v}
There exist $S_0<-1$ with $|S_0|$ large, an $H^1$ solution $v$ of~\eqref{rescaledkdv} and a $\C^1$ function
$\Gab = \left(\tau_1,\mu_1,y_1,a_1,\ldots,\tau_K,\mu_K,y_K,a_K\right)^\mathsf{T}$ defined on $(-\infty,S_0]$ such that,
defining $\e$ by
\[
v(s) = \V[\Gab(s)] + \e(s),
\]
for all $1\leq k\leq K$, for all $s\leq S_0$,
\be \label{est:final}
\left\{
\begin{gathered}
\left| \frac {\mu_k(s)}{\ell_k} - 1 \right| + \left| \frac {\tau_k(s)}{s\ell_k^{-3}} - 1 \right|
+ \left| \frac {y_k(s)}{2s\ell_k^{-2}} - 1 \right| \leq |s|^{-\frac 1{43}}, \\
|a_k(s)| \leq |s|^{-1 - \frac 1{43}},\quad \|\e(s)\|_{H^1} \leq |s|^{-\frac 12 - \frac 1{43}},
\end{gathered}
\right.
\ee
and
\be \label{ortho:final}
\langle \e(s),\Lambda_k Q_k(s) \rangle = \langle \e(s),(\cdot - y_k)\Lambda_k Q_k(s) \rangle
= \langle \e(s),Q_k(s) \rangle = 0.
\ee
\end{proposition}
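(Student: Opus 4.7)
The plan is to construct $v$ on $(-\infty, S_0]$ as a weak-$H^1$ limit of a family of backward-in-time solutions $v_n$ of~\eqref{rescaledkdv} defined on intervals $[S_n, S_0]$ with $S_n \to -\infty$, following the scheme of Merle~\cite{Mcmp} and of~\cite{MMjmpa,MMR1}. For each $n$ I would prescribe the final data at $s = S_n$ of the form $v_n(S_n) = \V[\Gab_n^{in}]$, so that $\e_n(S_n) = 0$, with $\bar\mu_k(S_n) = \bar\tau_k(S_n) = \bar y_k(S_n) = 0$ and an initial parameter vector $\mathbf{a}_n^{in} = (a_1^{in},\ldots,a_K^{in})$ varying in the cube $[-|S_n|^{-1-1/43},|S_n|^{-1-1/43}]^K$, to be fixed later by a topological argument. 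Applying Lemma~\ref{lem:syst_diff} backwards from $S_n$ produces $\Gab_n$ and $\e_n$ on a maximal subinterval where the bootstrap regime
\be \label{eq:boot}
|\bar\mu_k| + |\bar\tau_k| + |\bar y_k| \leq |s|^{-\frac 1{43}}, \quad
|a_k| \leq |s|^{-1-\frac 1{43}}, \quad
\|\e\|_{H^1} \leq |s|^{-\frac 12 - \frac 1{43}}
\ee
holds, and the objective is to show that, for a suitable choice of $\mathbf{a}_n^{in}$, each of these bounds can be strictly improved (with, say, exponent $1/42$ in place of $1/43$) on the whole of $[S_n, S_0]$.

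The closure of the bootstrap on the finite-dimensional parameters rests on Lemmas~\ref{lem:ortho} and~\ref{lem:orthobis}. Assuming~\eqref{eq:boot} together with the refined $H^1$ bound produced by the energy step below, \eqref{est:mk}--\eqref{est:dotak} give $|\vec m_k| + |s||\dot a_k| \lesssim |s|^{-1-1/42}$. I would then read~\eqref{est:dotek}, \eqref{est:fk} and~\eqref{est:barykfk} as a triangular (in the index $k$) linear ODE system for $(e_k - \ell_k(\tfrac 12 + \theta_k),\,f_k,\,\bar y_k)$, with source terms bounded by $|s|^{-1-1/42}$ and linear part of spectral radius $O(|s|^{-1})$; integrating from $s = S_n$, where the left-hand side vanishes by construction, yields the strict improvement of the first two rows of~\eqref{eq:boot}. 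The $a_k$ directions are, however, intrinsically unstable: Lemma~\ref{lem:epsPk} identifies $s\dot a_k$ with $\frac{1}{c}\frac{d}{ds}\langle\e,P_k\rangle$ up to small errors, so the $a_k$-dynamics linearise as a hyperbolic ODE and must be treated separately by the shooting step described below.

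The heart of the argument is the $H^1$ bootstrap on $\e$. I would adapt the mixed energy--virial functional of~\cite{MMR1} to the multi-bubble setting as in~\cite{MMR3} by introducing cut-offs $\psi_k(s,y)$ close to $1$ for $y > \tfrac 12(y_{k+1}+y_k)$ and vanishing further to the left, together with localized virial weights $\phi_k$ centred near $y_k$, and considering
\[
\F(s) = \sum_{k=1}^K \int \Bigl[ (\pa_y \e)^2 + \e^2 - \tfrac 13 \bigl( (\V+\e)^6 - \V^6 - 6\V^5 \e \bigr) \Bigr] \psi_k + \sum_{k=1}^K \sigma_k \int \e^2 \phi_k
\]
for tuning constants $\sigma_k > 0$. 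Coercivity $\F(s) \gtrsim \|\e(s)\|_{H^1}^2 - C|s|^{-1}\sum_k \|\e\|_{L^2_k}^2$ would follow from~\eqref{coercivity}, the three orthogonality relations~\eqref{ortho:final} and~\eqref{simpleVk}. Monotonicity is established by a Kato-type integration by parts in each $\psi_k$-region: the quadratic form in $\pa_y\e$ and $\e$ produced by the (gKdV) flow is non-positive once the virial weights are tuned, source contributions from $\E_\V$ are absorbed using~\eqref{est:Psi}--\eqref{est:PsiQ}, and modulation contributions by~\eqref{est:mk}--\eqref{est:dotak}. The order of summation is essential: the slow algebraic tails $|y-z_j|^{-3/2}$ of $W_j$ for $j<k$ sitting inside the support of $\psi_k$ are not treated as perturbations but are already absorbed into the $r_k R_k + a_k P_k$ pieces of $\V$ constructed in Lemma~\ref{lem:V}. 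Integrating the resulting monotonicity estimate on $[s, S_n]$ together with $\F(S_n) = 0$ yields $\|\e(s)\|_{H^1} \lesssim |s|^{-1/2 - 1/42}$, which strictly improves~\eqref{eq:boot}.

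Finally, the $K$ unstable $a_k$-directions are controlled by topological shooting as in~\cite{C,CMM,MMjmpa}. If no admissible $\mathbf{a}_n^{in}$ existed, then for every choice in the cube the first exit time $s^*$ from~\eqref{eq:boot} would occur through some face $|a_{k_0}(s^*)| = |s^*|^{-1-1/43}$, since the other bounds have been strictly improved; the linearisation~\eqref{epsPk} combined with the sign of $\frac{d}{ds}(s^2 a_{k_0}^2)$ at $s^*$ would make this exit transversal, and the map $\mathbf{a}_n^{in} \mapsto (|s^*|^{1+1/43} a_1(s^*),\ldots,|s^*|^{1+1/43} a_K(s^*))$ would become a continuous retraction from the cube onto its boundary, contradicting Brouwer's theorem. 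Weak-$H^1$ compactness and Lemma~\ref{lem:weak}, applied to a diagonal subsequence of the resulting $v_n$, then deliver the desired $v$, $\Gab$ and $\e$ on $(-\infty, S_0]$ satisfying~\eqref{est:final} and~\eqref{ortho:final}. The main obstacle is the construction of $\F$ and the proof of its monotonicity: the $|x|^{-3/2}$ tails from Theorem~\ref{thm:mainspaceold} couple the bubbles strongly on the left, so the argument requires both the sharp ansatz of Section~\ref{sec:V}, so that the dominant interaction terms are already built into $\V$, and a delicate bubble-by-bubble localization preventing the tails of the faster (rightmost) bubbles from contaminating the energy estimate on the slower (leftmost) ones.
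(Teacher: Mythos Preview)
Your overall architecture (backward solutions from $S_n\to-\infty$, bootstrap, topological shooting, weak limit via Lemma~\ref{lem:weak}) matches the paper, but the identification of the unstable directions is wrong, and this breaks the closure of the bootstrap.

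The parameters $a_k$ are \emph{not} the unstable directions. What~\eqref{epsPk} actually gives is that $s\dot a_k$ is close to $\tfrac1c\tfrac{d}{ds}\langle\e,P_k\rangle$, which combined with~\eqref{est:dotak} only yields $|\dot a_k|\lesssim |s|^{-2}$; integrating this from $S_n$ produces $|a_k(s)-a_k(S_n)|\lesssim|s|^{-1}$, too weak for~\eqref{eq:boot}. The paper controls $a_k$ instead through the near-conservation of the local energy $e_k$ from~\eqref{def:ek}: by~\eqref{est:dotek} one gets $e_k(s)\approx\ell_k(\tfrac12+\theta_k)$, and then $a_k=\tfrac{\mu_k^2}{s}e_k-\tfrac1{2\tau_k}-\tfrac{4r_k}{\|Q\|_{L^1}}$ together with~\eqref{eq:rkasymp} forces $|a_k|\lesssim|s|^{-1-\delta}$ without shooting. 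The genuinely unstable parameters are $\bar\tau_k$ (for every $k$) and $f_k=\bar\mu_k+\bar y_k$ (for $k\in\K^+$, i.e.\ when $\tfrac12(1+3\theta_k)>\tfrac1{43}$). Indeed, from~\eqref{def:tauk} one has $\big|\tfrac{d}{ds}(s\bar\tau_k)\big|\lesssim|\bar\mu_k|\lesssim|s|^{-\delta}$, and with your choice $\bar\tau_k(S_n)=0$ this integrates to $|\bar\tau_k(s)|\lesssim|S_n|^{1-\delta}/|s|$, which diverges as $n\to\infty$ for $s$ near $S_0$. Likewise, \eqref{est:fk} gives $\big|\tfrac{d}{ds}[(-s)^{\frac12(1+3\theta_k)}f_k]\big|\lesssim|s|^{-1-\delta+\frac12(1+3\theta_k)}$, which is not integrable near $-\infty$ when $k\in\K^+$. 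The paper therefore shoots on $(\XI,\ZETA)\in\B_{d+K}$, where $\XI$ tunes $\bar\mu_k(S_n)$ for $k\in\K^+$ and $\ZETA$ tunes $\bar\tau_k(S_n)$ for all $k$; the transversality~\eqref{eq:BS4} and the Brouwer argument are run on these variables, not on the $a_k$.

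A second, related gap: a single summed functional $\F$ is not enough. The paper uses a \emph{hierarchy} of $K$ local energy--virial functionals $\F_{k,n}$ (Lemma~\ref{lem:enerloc}), closed inductively in $k$: the monotonicity~\eqref{eq:Fvariation} for $\F_{k,n}$ has source terms $\sum_{j<k}\|\e\|_{L_j^2}^2$ that are absorbed only because $\F_{j,n}$ for $j<k$ has already been estimated. This hierarchy is what produces the local $L^2_k$ bounds entering~\eqref{est:mk}, \eqref{est:dotek} and~\eqref{est:fk}; without it, the parameter ODEs cannot be closed at the required rate. Your bootstrap~\eqref{eq:boot} correspondingly omits the local norms $\|\e\|_{H^1(y>y_k)}$ and the time-integrated quantities $\int|\tau|^{1+1/43}\|\e\|_{L_k^2}^2\,d\tau$ that the paper carries in~\eqref{eq:BS}.
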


The proof of Proposition~\ref{prop:v} is by compactness.
For $n>1$ large, we let $S_n = -n$, and we construct a solution $v_n$
of~\eqref{rescaledkdv} with a well-prepared data $v_n(S_n)$.
Proposition~\ref{pr:boot} below shows uniform estimates on $v_n$ on a time interval $[S_n,S_0]$,
where $S_0 < -1$ and $|S_0| > 2|s_1|$ is large enough but independent of $n$.
In Section~\ref{sec:conclusion}, we obtain $v_n(S_0) \rightharpoonup v_0$
passing to the weak limit in $n$ (up to a subsequence).
Then, from the uniform estimates and Lemma~\ref{lem:weak}, the initial data $v(S_0) = v_0$
provides the desired solution $v$ of~\eqref{rescaledkdv}.

The proof of Theorem~\ref{thm:main} is also given in Section~\ref{sec:conclusion},
as a straightforward consequence of Proposition~\ref{prop:v} and the change of variables~\eqref{changevar}.

\subsection{Formal discussion on the system of parameters} \label{sec:discussion}

We discuss formally how to derive estimates on the various parameters
on a time interval $[S_n,S_0]$ from Lemmas~\ref{lem:ortho} and~\ref{lem:orthobis},
in particular from~\eqref{est:dotek}, \eqref{est:fk} and~\eqref{est:barykfk}.
We refer to Sections~\ref{sec:parameters} and~\ref{sec:Brouwer} below for rigorous arguments.
Suppose that we already know from an induction argument that, for all $j < k$,
\[
|\bar\mu_j| + |\bar\tau_j| + |\bar y_j| \leq |s|^{-\d_{k-1}},\quad |a_j| \leq |s|^{-1 - \d_{k-1}},
\]
for some small $\d_{k-1} > 0$.
Then, from~\eqref{est:dotek} and neglecting $\e$, we obtain
$|\dot e_k| \lesssim |s|^{-1 - \d_{k-1}}$ and so, by integration and
assuming $e_k(S_n) = \ell_k \left( \frac 12 + \theta_k \right)$, we find
$\left| e_k - \ell_k \left( \frac 12 + \theta_k \right) \right| \lesssim |s|^{-\d_{k-1}}$.
Next, from~\eqref{est:fk}, inserting the above information on $e_k$,
taking $\d_{k-1} \leq \frac 1{42}$ and neglecting for the moment the higher order terms $|\bar \mu_k|^2$
and $|\bar y_k|^2$, we get
\[
\left| \dot f_k + \frac 12 \left( 1 + 3\theta_k \right) \frac {f_k}s \right|
\lesssim |s|^{-1 - \d_{k-1}}.
\]
The general behavior of functions $f_k$ satisfying such a differential inequality
depends on the value of the parameter $\frac 12 (1 + 3\theta_k)$.
Indeed, the last inequality is equivalent to
\[
\left| \frac d{d s} \left[ (-s)^{\frac 12 (1 + 3\theta_k)} f_k \right] \right|
\lesssim |s|^{-1 - \d_{k-1} + \frac 12 (1 + 3\theta_k)}.
\]
If $\frac 12 (1 + 3\theta_k) < \d_{k-1}$, then it is clear by direct integration on $[S_n,s]$
that any data $f_k(S_n)$ such that $|f_k(S_n)| \lesssim |S_n|^{-\d_{k-1}}$ leads to $|f_k(s)| \lesssim |s|^{-\d_{k-1}}$.
On the contrary, if $\frac 12 (1 + 3\theta_k) > \d_{k-1}$, we have to choose the data $f_k(S_n)$ carefully
to obtain the same estimate on $f_k(s)$ and avoid the instability.
At the technical level, we will need to treat such indices $k$ by a global topological argument.
For the sake of simplicity, we will choose the values of $\d_k$ so that we avoid the remaining case,
\emph{i.e.}~when $\frac 12 (1 + 3\theta_k) = \d_{k-1}$.
Next, from~\eqref{est:barykfk}, we get similarly
\[
\left| \dot{\bar y}_k - \frac {\bar y_k}{2s} \right| \lesssim \left| \frac {f_k}s \right|
+ |s|^{-2} \lesssim |s|^{-1 - \d_{k-1}}, \quad\m{which rewrites as}\quad
\left| \frac d{d s} \left[ (-s)^{-\frac 12} \bar y_k \right] \right| \lesssim |s|^{-1 - \d_{k-1} - \frac 12}.
\]
Here, we note that any sufficiently small data $\bar y_k(S_n)$ leads to the estimate
$|\bar y_k(s)| \lesssim |s|^{-\d_{k-1}}$,
and so $|\bar \mu_k(s)| \lesssim |s|^{-\d_{k-1}}$ since $f_k = \bar \mu_k + \bar y_k$.
However, from~\eqref{def:tauk}, since we obtain
\[
\left| \dot{\bar \tau}_k + \frac {\bar \tau_k}s \right| \lesssim |s|^{-1} |\bar \mu_k|
\lesssim |s|^{-1 - \d_{k-1}}, \quad\m{which rewrites as}\quad
\left| \frac d{d s} (s\bar \tau_k) \right| \lesssim |s|^{-\d_{k-1}},
\]
we deduce that $\bar \tau_k$ is an unstable parameter for all $1\leq k\leq K$,
and so the data $\bar \tau_k(S_n)$ will also have to be included in the global topological argument.
Finally, the parameter $a_k$ is controlled using the almost conservation of $e_k$ and~\eqref{def:ek}.
To get rid of multiplicative constants, we will choose $\d_k < \d_{k-1}$ at each step and take $|S_0|$ large enough.

\smallskip

In view of the above discussion, it is natural to define
\[
\K^- = \left\{ k\in \{1,\ldots,K\} \, | \, \frac 12 \left( 1 + 3\theta_k \right) \leq \frac 1{43} \right\} \ \m{and}\
\K^+ = \left\{ k\in \{1,\ldots,K\} \, | \, \frac 12 \left( 1 + 3\theta_k \right) > \frac 1{43} \right\}.
\]
We observe that $\K^+$ contains $1$ since $\theta_1 = 0$ and we denote by $d = |\K^+| \geq 1$ the cardinal of~$\K^+$.
The set $\K^-$ may be empty or not, depending on the choice of the parameters $\{\ell_k\}_k$ and $\{\eps_k\}_k$.
The proof is the same in both cases.
Let
\[
\d^+ = \min_{k\in \K^+} \frac 12 (1 + 3\theta_k) > \frac 1{43},\quad
\d_0 = \min\left\{ \frac 1{42},\d^+ \right\},\quad \d_{K+1} = \frac 1{43}.
\]
For all $1\leq k\leq K$, we define $\d_k^-$, $\d_k$ and $\d_k^+$ so that
\be \label{prop:d}
\frac 1{43} = \d_{K+1} < \d_{K+1}^+ \leq \d_{k+1}^+ < \d_k^- < \d_k < \d_k^+ < \d_{k-1}^-
\leq \d_0^- < \d_0 \leq \frac 1{42}.
\ee
Indeed, for technical reasons, it will be convenient to have several orders
of smallness in $|s|^{-1}$ for each $1\leq k\leq K$.

\subsection{The bootstrap setting} \label{sec:bootstrap}

From the formal discussion in the previous section,
we define $\V_n(S_n) = \V[\Gab_n](S_n)$ as in Section~\ref{sec:V},
for parameters $\{\mu_k(S_n)\}_k$, $\{\tau_k(S_n)\}_k$, $\{y_k(S_n)\}_k$ and $\{a_k(S_n)\}_k$
chosen as
\[
\left\{
\begin{aligned}
\mu_k(S_n) &= \ell_k \m{ for } k\in \K^-,\quad \mu_k(S_n) = \ell_k \left( 1 + |S_n|^{-\d_k^+} \xi_k \right) \m{ for } k\in \K^+, \\
\tau_k(S_n) &= S_n \ell_k^{-3} (1 + |S_n|^{-\d_k} \zeta_k),\quad y_k(S_n) = 2 S_n \ell_k^{-2}, \\
a_k(S_n) &= \frac {\ell_k\mu_k^2(S_n)}{S_n} \left( \frac 12 + \theta_k \right) - \frac 1{2\tau_k(S_n)} - \frac {4r_k(S_n)}{\|Q\|_{L^1}},
\end{aligned}
\right.
\]
where $\XI = \{\xi_k\}_{k\in \K^+}$ and $\ZETA = \{\zeta_k\}_{1\leq k\leq K}$,
satisfying $(\XI,\ZETA)\in \B_{d+K}$, will be chosen later.

We claim the following consequences of these choices of initial parameters:
\be \label{initial}
\left\{
\begin{aligned}
\bar \mu_k(S_n) &= 0\ \m{ for } k\in \K^-,\quad \bar\mu_k(S_n) = |S_n|^{-\d_k^+} \xi_k\ \m{ for } k\in \K^+, \\
\bar\tau_k(S_n) &= |S_n|^{-\d_k} \zeta_k,\quad \bar y_k(S_n) = 0, \\
e_k(S_n) &= \ell_k \left( \frac 12 + \theta_k \right),\quad |a_k(S_n)| \leq \frac 12 |S_n|^{-1 - \d_k^-},
\end{aligned}
\right.
\ee
and in particular $|\bar \mu_k(S_n)| \leq |S_n|^{-\d_k^+}$ for any $1\leq k\leq K$.
Indeed, the values of $\bar \mu_k(S_n)$, $\bar \tau_k(S_n)$ and $\bar y_k(S_n)$
are straightforward consequences of their definition in~\eqref{apriori:mutauy}.
The value of $e_k(S_n)$ is a direct consequence of the choice of $a_k(S_n)$ and the definition~\eqref{def:ek} of $e_k$.

Finally, by the definition of $a_k(S_n)$, we observe that, for $k\in \K^+$,
\[
a_k(S_n) = \frac {\ell_k^3 (1 + |S_n|^{-\d_k^+} \xi_k)^2}{S_n} \left( \frac 12 + \theta_k \right)
- \frac {\ell_k^3}{2S_n} (1 + |S_n|^{-\d_k} \zeta_k)^{-1} - \frac {4r_k(S_n)}{\|Q\|_{L^1}}.
\]
Moreover, by~\eqref{eq:rkasymp}, and since $2\d_k^+ > \d_{k-1}$ and $\frac 1{42} > \d_{k-1}$ from~\eqref{prop:d}, we obtain
\[
\left| r_k(S_n) - \frac {\|Q\|_{L^1}}{4S_n} \ell_k^3\theta_k \left( 1 + \frac 12 \bar\mu_k(S_n) \right) \right|
\lesssim |S_n|^{-1 - \d_{k-1}}
\]
and thus
\[
\left| \frac {4r_k(S_n)}{\|Q\|_{L^1}} - \frac {\ell_k^3\theta_k}{S_n} \right|
\lesssim |S_n|^{-1 - \d_k^+}.
\]
It follows that $|a_k(S_n)| \lesssim |S_n|^{-1 - \d_k}$ and thus,
for $n$ large enough, $|a_k(S_n)| \leq \frac 12 |S_n|^{-1 - \d_k^-}$.
Noticing that the proof is the same for $k\in \K^-$, it concludes the proof of~\eqref{initial}.

\medskip

Let $v_n$ be the solution of~\eqref{rescaledkdv} with initial data $v_n(S_n) = \V_n(S_n)$.
From Lemma~\ref{lem:syst_diff} and~\eqref{initial}, assuming $n$ large enough, we may decompose $v_n$ as
\[
v_n(s,y) = \V_n(s,y) + \e_n(s,y)
\]
on $[S_n,S_n + \bar s_n]$ for some $\bar s_n > 0$, where $\V_n = \V[\Gab_n]$ is defined as in Section~\ref{sec:V}
with parameters $\{\mu_k(s)\}_k$, $\{\tau_k(s)\}_k$, $\{y_k(s)\}_k$, $\{a_k(s)\}_k$
and $\e_n$ satisfies the orthogonality conditions~\eqref{ortho}.
Note that, for simplicity of notation, we do not specify the dependency in $n$
of the parameters $\mu_k$, $\tau_k$, $y_k$ and $a_k$.
Finally, in view of applying~\eqref{est:fk}, we denote
\[
g_k(s) = \bar \mu_k(s) + \bar y_k(s) + \int_{S_n}^s (1 + \bar \mu_k(\tau)) \langle \e_n(\tau),A_k(\tau) \rangle \,d\tau.
\]
Note also that $\e_n(S_n) \equiv 0$ and $g_k(S_n) = \bar \mu_k(S_n)$ for all $1\leq k\leq K$.

\medskip

We work with the following bootstrap estimates, for $1\leq k\leq K$:
\be \label{eq:BS}
\left\{
\begin{aligned}
& \|\e_n(s)\|_{H^1} \leq |s|^{-\frac 12 - \d_{K+1}^+},\quad \|\e_n(s)\|_{H^1(y > y_k)} \leq |s|^{-\frac 12 - \d_k^+}, \\
& \int_{S_n}^s |\tau|^{1 + \frac 1{43}} \left( \|\pa_y\e_n(\tau)\|_{L^2_k}^2
+ \|\e_n(\tau)\|_{L^2_k}^2 \right) d\tau \leq |s|^{-2\d_k^+ + \frac 1{43}}, \\
& |\bar \mu_k(s)| + |\bar y_k(s)| \leq |s|^{-\d_k},\quad |a_k(s)| \leq |s|^{-1 - \d_k^-},
\end{aligned}
\right.
\ee
and
\be \label{eq:BS2}
\sum_{k\in \K^+} \left[ |s|^{\d_k^+} g_k(s) \right]^2
+ \sum_{k=1}^K \left[ |s|^{\d_k} \bar \tau_k(s) \right]^2 \leq 1.
\ee
From the application of Lemma~\ref{lem:syst_diff} above and continuity arguments,
these bootstrap estimates are satisfied on $[S_n,S_n + \bar s_n]$,
for a possibly smaller $\bar s_n > 0$.
Now let $S_0 < -1$ with $|S_0|$ large to be fixed later.
Assuming $|S_0|$ large enough so that $|S_0|^{-\d_{K+1}^+} \leq \min \left( \frac 12,\frac \alpha 2 \right)$
with $\alpha$ given by Lemma~\ref{lem:syst_diff}, we may set
\[
S_n^*(\XI,\ZETA) = \sup\left\{ S_n \leq \tau \leq S_0 \m{ such that~\eqref{eq:BS}
and~\eqref{eq:BS2} are satisfied for all } s\in [S_n,\tau] \right\}.
\]
When there is no risk of confusion, we will denote $S^*_n(\XI,\ZETA)$ simply by $S_n^*$.

Note for future reference that, as a direct consequence of Lemma~\ref{lem:epsPk}, \eqref{prop:d} and~\eqref{eq:BS},
we have, for $|S_0|$ large enough, for all $1\leq k\leq K$, for all $s\in [S_n,S_n^*]$,
\begin{gather}
|\langle \e_n,P_k \rangle| \lesssim |s|^{-\d_{k+1}^+}, \label{b:Jk} \\
\left| \frac d{d s} \langle \e_n,P_k \rangle - c s \dot a_k \right|
\lesssim \sum_{j\leq k} \|\e_n\|_{L^2_j} + |s|^{-1 - \d_{k+1}^+}. \label{eq:Jkvariation}
\end{gather}

Note also that, from the orthogonality conditions~\eqref{ortho} satisfied by $\e_n$ and the identity $\e_n(S_n) \equiv 0$,
we obtain by integration, for all $1\leq k\leq K$, for all $s\in [S_n,S_n^*]$,
\be \label{orthobis}
\langle \e_n(s),\Lambda_k Q_k(s) \rangle = \langle \e_n(s),(\cdot-y_k)\Lambda_k Q_k(s) \rangle
= \langle \e_n(s),Q_k(s) \rangle = 0.
\ee

Now we prove that, for all $n$ large, there exists at least one choice of $(\XI,\ZETA)$
such that the bootstrap estimates~\eqref{eq:BS}--\eqref{eq:BS2}
hold up to some time $S_0$, with $|S_0|$ large, independent of $n$.

\begin{proposition} \label{pr:boot}
There exists $S_0 < -1$, independent of $n$, such that, for all $n$ large enough,
there exists $(\XI,\ZETA) \in \B_{d+K}$ such that $S_n^*(\XI,\ZETA)=S_0$.
\end{proposition}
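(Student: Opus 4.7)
The plan is the classical bootstrap--plus--topological shooting argument of \cite{C, CMM, Ma1, MMT}, in the framework already set up by \eqref{eq:BS}--\eqref{eq:BS2}. Following the discussion of Section~\ref{sec:discussion}, the parameters $(\XI, \ZETA)$ are precisely the initial data of the $d+K$ linearly unstable directions, so the strategy is to show that on $[S_n, S_n^*]$ \emph{all} estimates in \eqref{eq:BS} can be strictly improved, so that the only way the bootstrap can fail is by saturation of \eqref{eq:BS2}, which is then ruled out by a Brouwer-type argument on the choice of $(\XI, \ZETA)$.

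First I would close the $\e_n$ estimates in \eqref{eq:BS} by constructing, bubble by bubble from $k=K$ down to $k=1$, a localized mixed energy--virial functional $\F_k$ in the spirit of \cite{MMjmpa, MMR1}, weighted by a cutoff supported in $\{y > \tfrac{1}{2}(y_{k+1} + y_k)\}$. Using the orthogonality conditions~\eqref{orthobis}, the coercivity~\eqref{coercivity} of $L$, the decompositions of Lemma~\ref{lem:V} for the source term $\P$ (in particular the three bounds \eqref{est:Psi}, \eqref{est:Psik} and \eqref{est:PsiQ}), and the modulation estimates of Lemma~\ref{lem:ortho} to absorb the parameter-driven terms $\vec m_j \cdot \vec\M_j V_j$, I expect a differential inequality of the form
\[
\frac{d}{ds} \F_k(s) \geq c\, |s|^{1 + \frac{1}{43}} \bigl( \|\pa_y \e_n\|_{L^2_k}^2 + \|\e_n\|_{L^2_k}^2 \bigr) - N_k(s),
\]
where $N_k$ collects lower-order sources controlled by the already-treated bubbles $j > k$, by the projection $\langle \e_n, P_k\rangle$ through~\eqref{eq:Jkvariation}, and by $|s|^{-\eta}$ for some $\eta > 2$. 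Since $\F_k(S_n) = 0$ because $\e_n(S_n) \equiv 0$, integration on $[S_n, s]$ yields the weighted time-integrated bound in \eqref{eq:BS} with a constant strictly smaller than $1$ for $|S_0|$ large; the $H^1(y > y_k)$ bound and the global $H^1$ bound follow via Gagliardo--Nirenberg from the $L^2$-weighted control and the coercivity applied at a fixed time.

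Next I would close the estimates on the modulation parameters. Lemma~\ref{lem:ortho} gives $|\vec m_k| \lesssim \|\e_n\|_{L^2_k} + |s|^{-2}$, and \eqref{est:dotek} combined with $e_k(S_n) = \ell_k(\tfrac12 + \theta_k)$ recovers $|e_k(s) - \ell_k(\tfrac12 + \theta_k)| \ll |s|^{-\d_k^-}$; via \eqref{def:ek} and the asymptotics~\eqref{eq:rkasymp} of $r_k$ this strictly improves $|a_k(s)| \leq |s|^{-1 - \d_k^-}$, the slack $\d_k^- > \d_{k+1}^+$ from \eqref{prop:d} being exactly what is needed. Similarly \eqref{est:barykfk} integrates (after multiplication by the integrating factor $(-s)^{-1/2}$) to improve $|\bar y_k(s)| \leq |s|^{-\d_k}$ strictly because $\bar y_k(S_n) = 0$; then the a priori bound $|g_k| \leq |s|^{-\d_k^+}$ from \eqref{eq:BS2} together with $g_k - \bar y_k = \bar \mu_k + \int_{S_n}^s (1 + \bar\mu_k) \langle \e_n, A_k \rangle$ improves $|\bar\mu_k|$. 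Thus all estimates in \eqref{eq:BS} are strictly improved on $[S_n, S_n^*]$, and any exit before $S_0$ must come from \eqref{eq:BS2}.

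Finally, for the unstable directions, Lemma~\ref{lem:orthobis} and \eqref{def:tauk} combined with the estimates already closed in the previous step yield, for some $\eta > 0$,
\[
\dot g_k + \tfrac{1}{2}(1 + 3\theta_k)\, \frac{g_k}{s} = O(|s|^{-1 - \eta}), \qquad \dot{\bar\tau}_k + \frac{\bar\tau_k}{s} = O(|s|^{-1 - \eta}).
\]
Setting $\ph_k^+(s) = |s|^{\d_k^+} g_k(s)$ for $k \in \K^+$ and $\psi_k(s) = |s|^{\d_k} \bar\tau_k(s)$, and using $\d_k^+ < \tfrac{1}{2}(1 + 3\theta_k)$ and $\d_k < 1$ together with $s < 0$, the quadratic form
\[
N(s) = \sum_{k \in \K^+} \ph_k^+(s)^2 + \sum_{k=1}^K \psi_k(s)^2
\]
satisfies $\dot N(s) < 0$ whenever $N(s) = 1$, for $|S_0|$ large. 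This transversality implies that the exit time $S_n^*(\XI, \ZETA)$ depends continuously on $(\XI, \ZETA)$, and that the map $(\XI, \ZETA) \mapsto (\ph^+(S_n^*), \psi(S_n^*))$ defines a continuous retraction $\B_{d+K} \to \S_{d+K}$ which, by \eqref{initial}, is the identity on $\S_{d+K}$; this contradicts Brouwer's no-retraction theorem. Hence some $(\XI, \ZETA) \in \B_{d+K}$ yields $S_n^*(\XI, \ZETA) = S_0$.

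The main obstacle is Step~1: because of the algebraic tails built into $\V$ through $R_k$ and $P_k$ and the $|x|^{-3/2}$ power-law interactions between bubbles, many candidate source terms in $\F_k$ are only marginally integrable in time, and the argument requires using simultaneously all three norms on $\P$ provided by Lemma~\ref{lem:V}, the $a_k$-gauge \eqref{eq:Jkvariation}, and the carefully tuned hierarchy of exponents $\d_{K+1} < \d_{K+1}^+ \leq \cdots \leq \d_0 \leq \tfrac{1}{42}$ to absorb every error term into the positive coercive part.
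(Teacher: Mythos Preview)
Your three-step plan (close the $\e_n$ estimates via localized energy--virial functionals, close the parameter estimates, then run Brouwer on the unstable directions) is exactly the paper's. But several details are backward and would make the argument fail as written.

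The bubble hierarchy runs the other way. Since $y_1 > \cdots > y_K$, the cutoffs in $\F_{k,n}$ capture $\{y > y_k\}$, which contains bubbles $1,\ldots,k$; the differential inequality \eqref{eq:Fvariation} therefore carries $\sum_{j<k}\|\e_n\|_{L^2_j}^2$ on the right, not $\sum_{j>k}$, so the source for bubble $k$ is controlled by bubbles $j<k$, not ``already-treated bubbles $j>k$''. Moreover the global $H^1$ bound in \eqref{eq:BS} does not follow from the local $\F_k$ alone: the paper introduces separate global mass and energy functionals $\G_n,\H_n$ (Lemma~\ref{lem:mass-energy}) to close \eqref{close:1}, since $\F_{K,n}$ does not see the region $y<y_K$. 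Most importantly, your transversality sign is wrong. From $\dot g_k + \tfrac12(1+3\theta_k)\,g_k/s = O(|s|^{-1-\eta})$ with $s<0$ and $\d_k^+ < \tfrac12(1+3\theta_k)$ for $k\in\K^+$, one computes $\frac{d}{ds}\bigl[|s|^{\d_k^+}g_k\bigr]^2 > 0$ at leading order, hence $\dot N > 0$ (not $<0$) when $N=1$; it is this \emph{outward} transversality that makes the exit time well-defined and the retraction $\B_{d+K}\to\S_{d+K}$ continuous. Finally, for $k\in\K^-$ the bound $|g_k|\leq|s|^{-\d_k^+}$ is not part of \eqref{eq:BS2} and must be obtained by integrating the now-stable ODE directly using $g_k(S_n)=0$, a case you skip.
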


The proof of Proposition~\ref{pr:boot} is given in Sections~\ref{sec:monotonicity} to~\ref{sec:Brouwer}.
We argue by contradiction, assuming that $S_n^*(\XI,\ZETA)\in [S_n,S_0)$ for all $(\XI,\ZETA) \in \B_{d+K}$.
The way to reach a contradiction is first to strictly improve all estimates in~\eqref{eq:BS}
(provided that $|S_0|$ is large enough, independently of $n$),
and second to use a topological obstruction on a certain continuous map related to the definition of $S_n^*(\XI,\ZETA)$,
the bootstrap~\eqref{eq:BS2}, the definition~\eqref{def:tauk} of $\tau_k$ for all $1\leq k\leq K$,
and the differential inequality~\eqref{est:fk} for $k\in \K^+$.

\subsection{Monotonicity of global energies} \label{sec:monotonicity}

We define the mass related to $\e$ as
\[
\G_n = \int \e_n^2 + 2 \sum_k a_k \langle \e_n,P_k \rangle + c |s| \sum_k a_k^2,
\]
where $c > 0$ is defined in Lemma~\ref{lem:epsPk},
the energy related to $\e$ as
\[
\H_n = \frac 12 \int (\pa_y \e_n)^2 - \frac 16 \int \left[ (\V_n + \e_n)^6 - \V_n^6 - 6\V_n^5\e_n \right],
\]
and we estimate their time variation in the following lemma.

\begin{lemma} \label{lem:mass-energy}
There exists $C > 0$ such that, for all $s\in [S_n,S_n^*]$,
\be \label{g.mass}
\frac d{d s} \left( |s| \G_n \right)
\leq C |s| \left( \sum_j \|\e_n\|_{L^2_j}^2 + |s|^{-2 - \d_K^- - \d_{K+1}^+} \right)
\ee
and
\be \label{g.energy}
\left| \frac d{d s} \left( |s| \H_n \right) \right|
\leq C |s| \left( \sum_j \|\e_n\|_{L^2_j}^2 + |s|^{-1} \|\e_n\|_{L^2}^2 + |s|^{-2 - 2\d_K^-} \right).
\ee
\end{lemma}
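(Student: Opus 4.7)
My strategy is to differentiate $|s|\G_n$ and $|s|\H_n$ directly, using the equation~\eqref{eq:eps} for $\e_n$ and the decomposition~\eqref{decompEV} of $\E_\V$, and then to estimate every resulting term using the bootstrap bounds~\eqref{eq:BS}--\eqref{eq:BS2}, the modulation estimates of Lemma~\ref{lem:ortho}, and the asymptotics of $\V_n$ from the estimates~\eqref{simpleVk} and~\eqref{est:Psi}--\eqref{est:PsiQ}. The identity $\frac{d}{ds}(|s|f) = -f + |s|\dot f$ (valid since $s<0$) turns the desired inequalities into estimates on $\dot\G_n$, $\dot\H_n$ plus the contributions of $-\G_n, -\H_n$, the latter being already controlled by the nonnegative parts of these functionals and by $|a_k|\,|\langle\e_n,P_k\rangle| \lesssim |s|^{-1-\d_k^--\d_{k+1}^+}$ from~\eqref{b:Jk} and~\eqref{eq:BS}.

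\textbf{Mass estimate~\eqref{g.mass}.} I first compute $\frac12\frac{d}{ds}\|\e_n\|_{L^2}^2$ by pairing~\eqref{eq:eps} with $\e_n$: the scaling term $\frac{1}{2s}\langle\Lambda\e_n,\e_n\rangle$ vanishes by integration by parts, and the dispersive term $\langle\partial_{yyy}\e_n,\e_n\rangle$ is pure divergence. The quintic contribution $-\langle\partial_y[(\V+\e)^5-\V^5],\e_n\rangle$ expands into terms $\int\partial_y(\V_n^p)\e_n^q$ with $p+q=5$, $q\ge2$, which thanks to the pointwise estimate $\|\V_n\|_{L^\infty}+\|\partial_y\V_n\|_{L^\infty}\lesssim1$ and the exponential localization $\V_n\approx\sum_jQ_j$ are bounded by $\sum_j\|\e_n\|_{L^2_j}^2$. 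The remainder $\langle\E_\V,\e_n\rangle$ is split along~\eqref{decompEV}: the $\vec m_j\cdot\vec\M_jV_j$ pieces are absorbed in $|\vec m_j|\,\|\e_n\|_{L^2_j}$ using~\eqref{est:mk}; the $\dot r_j R_j$ and $\dot a_j P_j$ pieces use~\eqref{est:dotak}--\eqref{est:dotrk} and the growth $\|P_j\|_{L^2}\lesssim|s|^{1/2}$; and $\langle\P,\e_n\rangle$ is bounded via~\eqref{est:Psi}. The crucial algebraic cancellation comes from the correction in $\G_n$: differentiating $2\sum_k a_k\langle\e_n,P_k\rangle+c|s|\sum_ka_k^2$ and inserting~\eqref{epsPk} yields a term $2a_k\cdot cs\dot a_k + 2c|s|a_k\dot a_k = 2ca_k\dot a_k(s+|s|)=0$, so the potentially dangerous $s\dot a_k$-term is erased; the remaining terms $2\dot a_k\langle\e_n,P_k\rangle$ and $2a_kR_k$ (with $R_k$ the error in~\eqref{epsPk}) are controlled by~\eqref{est:dotak}, \eqref{b:Jk} and the bootstrap, producing the threshold $|s|^{-2-\d_K^--\d_{K+1}^+}$ after multiplication by $|s|$.

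\textbf{Energy estimate~\eqref{g.energy}.} Differentiating $\H_n$ and using~\eqref{eq:eps} together with $\partial_sF(\V_n,\e_n)$ (where $F(\V,\e)=(\V+\e)^6-\V^6-6\V^5\e$), the divergence terms cancel pairwise: specifically, $\int(\partial_{yy}\e+(\V+\e)^5-\V^5)\partial_y(\partial_{yy}\e+(\V+\e)^5-\V^5)=0$ by direct integration. One is left with
\[
\dot\H_n = \tfrac1{2s}\langle\partial_{yy}\e+(\V+\e)^5-\V^5,\Lambda\e\rangle -\langle\partial_{yy}\e+(\V+\e)^5-\V^5,\E_\V\rangle-\langle(\V+\e)^5-\V^5-5\V^4\e,\partial_s\V\rangle.
\]
The first bracket rewrites using $(\V+\e)^5-\V^5=5Q_j^4\e+\text{lower}$ and the cancellation $LQ_j'=0$, producing a localized quadratic form in $\e_n$ of size $|s|^{-1}\|\e_n\|_{L^2}^2$ plus $\sum_j\|\e_n\|_{L^2_j}^2$. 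The second term is handled exactly as in the mass case, except that one now uses the orthogonality conditions~\eqref{orthobis} on $\Lambda_k Q_k, (\cdot-y_k)\Lambda_k Q_k, Q_k$ to cancel the linear-in-$\e_n$ projections onto the low modes of $L$. The third term is estimated using the explicit formula~\eqref{relationQk} for $\partial_s Q_j$ and analogous formulas for $\partial_s P_j, \partial_s R_j$, which together with $|\dot{\tilde\mu}_k/\tilde\mu_k|+|\dot y_k|\lesssim1$ give $\|\partial_s\V_n\|_{L^\infty}\lesssim1$, hence a bound by $\sum_j\|\e_n\|_{L^2_j}^2$. The global piece $|s|^{-1}\|\e_n\|_{L^2}^2$ on the right-hand side arises from the scaling term above and cannot be localized; the threshold $|s|^{-2-2\d_K^-}$ comes from $|a_k|^2$-type contributions.

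\textbf{Main obstacle.} The principal difficulty is the tight bookkeeping of cancellations and thresholds: one must verify that every cross-term produced by differentiating $\G_n$ and $\H_n$ falls below either the weighted quadratic $\sum_j\|\e_n\|_{L^2_j}^2$ (resp.~$|s|^{-1}\|\e_n\|^2$ for energy) or the explicit decay rate $|s|^{-2-\d_K^--\d_{K+1}^+}$ (resp.~$|s|^{-2-2\d_K^-}$). The subtlety is that terms like $\dot a_j\langle\e_n,P_k\rangle$ for $j\ne k$ are a priori only of order $|s|^{-2}|s|^{-\d_{k+1}^+}=|s|^{-2-\d_{k+1}^+}$, which is \emph{not} sufficient; they become sufficient only after using the sharper bound~\eqref{est:dotak} and the fact that $\d_K^-<\d_{k+1}^+$ is false for $k<K$, forcing one to exploit the exact cancellation $s+|s|=0$ rather than crude bounds. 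Likewise, the apparent loss in~\eqref{g.energy} from the non-localized $|s|^{-1}\|\e_n\|^2$-term is what makes it necessary to later combine $\G_n$ and $\H_n$ into a mixed energy--virial functional in the next step of the proof; here it is accepted as a feature of the monotonicity identity rather than an estimate to be improved.
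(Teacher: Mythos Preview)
Your overall strategy is the same as the paper's, and you correctly identify the cancellation $2a_k\cdot cs\dot a_k + 2c|s|a_k\dot a_k = 0$ coming from~\eqref{epsPk}. But you miss a \emph{second} cancellation that is equally essential for~\eqref{g.mass}. When you pair~\eqref{eq:eps} with $\e_n$, the term $-\langle\E_\V,\e_n\rangle$ contains, via~\eqref{decompEV}, the contribution $-\sum_j\dot a_j\langle P_j,\e_n\rangle$. Separately, differentiating the correction $2\sum_k a_k\langle\e_n,P_k\rangle$ produces $+2\sum_k\dot a_k\langle\e_n,P_k\rangle$. Since $2\int\e_s\e$ carries a factor $2$, these two contributions cancel \emph{exactly}, and the paper groups them from the outset:
\[
\int\e_s\e + \sum_k\dot a_k\langle\e,P_k\rangle
= \int\bigl[(\V+\e)^5-\V^5\bigr]\e_y - \int\Bigl(\sum_k\vec m_k\cdot\vec\M_kV_k+\sum_k\dot r_kR_k+\P\Bigr)\e.
\]
You instead estimate each of the two pieces separately, using $\|P_j\|_{L^2}\lesssim|s|^{1/2}$ or~\eqref{b:Jk}. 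Either route yields, for the worst index $j=K$, a residual of order $|\dot a_K|\,|\langle\e,P_K\rangle|\lesssim|s|^{-2-\d_{K+1}^+}$, which is strictly larger than the required threshold $|s|^{-2-\d_K^--\d_{K+1}^+}$ (since $\d_K^->0$). This is not a cosmetic loss: feeding $|s|^{-2-\d_{K+1}^+}$ into the integration argument for~\eqref{close:1} gives $\|\e\|_{L^2}^2\lesssim|s|^{-1-\d_{K+1}^+}$, which does \emph{not} strictly improve the bootstrap $\|\e\|_{H^1}^2\le|s|^{-1-2\d_{K+1}^+}$. Your ``Main obstacle'' paragraph recognizes a problem of exactly this size but attributes its resolution to the $s+|s|=0$ identity, which is the wrong mechanism here.

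For~\eqref{g.energy}, your treatment of the scaling term is also incomplete. The piece $\frac{1}{2s}\langle\e_{yy},\Lambda\e\rangle=\frac{1}{2|s|}\int\e_y^2$ is \emph{not} of size $|s|^{-1}\|\e\|_{L^2}^2$ as you claim; it involves $\|\e_y\|_{L^2}^2$, which cannot appear on the right of~\eqref{g.energy} without making the estimate circular. This term must be cancelled exactly against $-\frac{1}{2|s|}\int\e_y^2$ from the product-rule contribution $-\H_n$, not merely ``controlled by the nonnegative parts''. The paper pushes this further: combining the scaling contributions from $\e_s$ and from $\V_s=-\frac{1}{2s}\Lambda\V-\partial_y(\V_{yy}+\V^5)+\E_\V$ with the product rule, all $|s|^{-1}$-terms disappear and one obtains the clean identity
\[
\frac{1}{|s|}\frac{d}{ds}\bigl(|s|\H\bigr)
= \int(\V_{yyy}+5\V_y\V^4)\bigl[(\V+\e)^5-\V^5-5\V^4\e\bigr]
+ \int(\partial_{yy}\E_\V+5\V^4\E_\V)\e,
\]
from which~\eqref{g.energy} follows directly. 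Your route can be salvaged, but only after inserting these exact cancellations; the cancellation $LQ_j'=0$ that you invoke plays no role at this stage (it enters in the local functional of Lemma~\ref{lem:enerloc}, not here).
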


\begin{proof}
We denote $\e_n$, $\V_n$, $\G_n$ and $\H_n$ simply by $\e$, $\V$, $\G$ and $\H$.
For brevity, we also use the notation $f_s = \pa_s f$ and $f_y = \pa_y f$.

\smallskip

\textbf{Proof of~\eqref{g.mass}.}
We first compute
\begin{align*}
\frac 1{|s|} \frac d{d s} \left( |s| \G \right)
&= - \frac 1{|s|} \left( \int \e^2 + 2 \sum_k a_k \langle \e,P_k \rangle
+ c |s| \sum_k a_k^2 \right) + 2 \int \e_s \e \\
&\quad + 2 \sum_k \dot a_k \langle \e,P_k \rangle + 2 \sum_k a_k \frac d{d s}
\langle \e,P_k \rangle + 2c|s| \sum_k \dot a_k a_k - c \sum_k a_k^2,
\end{align*}
which gives
\begin{align*}
\frac 1{|s|} \frac d{d s} \left( |s| \G \right)
&\leq - \frac 1{|s|} \int \e^2 + 2 \sum_k a_k \frac d{d s} \langle \e,P_k \rangle + 2c|s| \sum_k \dot a_k a_k \\
&\quad + C |s|^{-1} \sum_k |a_k \langle \e,P_k \rangle|
+ 2 \int \e_s \e + 2 \sum_k \dot a_k \langle \e,P_k \rangle.
\end{align*}
Then, by estimates~\eqref{eq:Jkvariation} and $|a_k| \leq |s|^{-1 - \d_k^-}$ from~\eqref{eq:BS}, we find
\begin{align}
\left| a_k \frac d{d s} \langle \e,P_k \rangle + c|s| \dot a_k a_k \right|
&\lesssim |s|^{-1 - \d_k^-} \left( \sum_{j\leq k} \|\e\|_{L^2_j} + |s|^{-1 - \d_{k+1}^+} \right) \nonumber \\
&\lesssim \sum_{j\leq k} \|\e\|_{L^2_j}^2 + |s|^{-2 - \d_k^- - \d_{k+1}^+}. \label{G1}
\end{align}
Similarly, from~\eqref{b:Jk}, we find
\be \label{G2}
|s|^{-1} |a_k \langle \e,P_k \rangle| \lesssim |s|^{-2 - \d_k^- - \d_{k+1}^+}.
\ee
Next, using the equation~\eqref{eq:eps} satisfied by $\e$,
the expression~\eqref{decompEV} of $\E_\V$, and integration by parts (recall that $\int (\Lambda \e )\e = 0$),
we find
\[
\int \e_s \e + \sum_k \dot a_k \langle \e,P_k \rangle
= \int \left[ (\V + \e)^5 - \V^5 \right] \e_y - \int \left( \sum_k \vec m_k \cdot \vec \M_k V_k
+ \sum_k \dot r_k R_k +\P \right) \e.
\]
For the first term, integrating by parts, we have
\[
\int \left[ (\V + \e)^5 - \V^5 \right] \e_y = - \int \left[ (\V + \e)^5 - \V^5 - 5\V^4\e \right] \V_y
\]
and, by~\eqref{simpleVk} and~\eqref{apriori:eps}, we obtain
\begin{align*}
&\left| \int \left[ (\V + \e)^5 - \V^5 - 5\V^4\e \right] \V_y \right|
\lesssim \|\V_y\|_{L^\infty} \left( \int |\e|^5 + \int |\V|^3 \e^2 \right) \\
&\lesssim \|\e\|_{H^1}^5 + \|\e\|_{L^2}^2 \sum_j \|V_j - Q_j\|_{L^\infty}^3 + \sum_j \|\e\|_{L_j^2}^2
\lesssim |s|^{-\frac 52} + |s|^{-\frac {13}4} + \sum_j \|\e\|_{L_j^2}^2.
\end{align*}
Second, again by~\eqref{simpleVk}, we note that
\[
\left| \int (\vec \M_k V_k) \e \right|
\leq \int \left| \vec \M_k(V_k - Q_k) \right| |\e| + \int \left| \vec \M_k Q_k \right| |\e|
\lesssim |s|^{-\frac 12} \|\e\|_{L^2} + \|\e\|_{L^2_k}.
\]
Thus, by~\eqref{est:mk} and~\eqref{eq:BS},
\begin{align*}
\left| \int (\vec m_k \cdot \vec \M_k V_k) \e \right|
& \lesssim \left( \|\e\|_{L^2_k} + \sum_j \|\e\|_{L_j^2}^2 + |s|^{-2} \right)
\left( |s|^{-\frac 12} \|\e\|_{L^2} + \|\e\|_{L^2_k} \right) \\
& \leq C \sum_j \|\e\|_{L_j^2}^2 + \frac 1{8K|s|} \|\e\|_{L^2}^2 + C |s|^{-4}.
\end{align*}
Third, by~\eqref{est:dotrk} and the exponential decay of $R\in\Y$,
\[
\left| \dot r_k \int R_k \e \right|
\lesssim \left( |s|^{-1} \sum_j \|\e\|_{L^2_j} + |s|^{-2} \right) \|\e\|_{L^2_k}
\lesssim \sum_j \|\e\|_{L^2_j}^2 + |s|^{-4}.
\]
Finally, by~\eqref{est:Psi} and~\eqref{eq:BS}, we find
\[
\left| \int \P \e \right| \leq \|\P\|_{L^2} \|\e\|_{L^2}
\lesssim |s|^{-\frac 32 - \d_K^-} \|\e\|_{L^2}
\leq C|s|^{-2 - 2\d_K^-} + \frac 1{8|s|} \|\e\|_{L^2}^2.
\]
Gathering the above estimates, we have proved~\eqref{g.mass}.

\smallskip

\textbf{Proof of~\eqref{g.energy}.}
We proceed similarly, and first compute
\begin{align*}
\frac 1{|s|} \frac d{d s} \left( |s| \H \right)
&= -\frac 1{2|s|} \int \e_y^2 + \frac 1{6|s|} \int \left[ (\V + \e)^6 - \V^6 - 6\V^5\e \right] \\
&\quad - \int \e_s \left[ \e_{yy} + (\V + \e)^5 - \V^5 \right] - \int \V_s \left[ (\V + \e)^5 - \V^5 - 5\V^4\e \right].
\end{align*}
Using the equation~\eqref{eq:eps} of $\e$ and the definition of $\E_\V$ in Lemma~\ref{lem:V}, we obtain
\begin{align*}
\frac 1{|s|} \frac d{d s} \left( |s| \H \right)
&= \frac 1{2s} \int \e_y^2 - \frac 1{6s} \int \left[ (\V + \e)^6 - \V^6 - 6\V^5\e \right] + \frac 1{2s} \int (\Lambda\e) \e_{yy} \\
&\quad + \frac 1{2s} \int \Lambda \e \left[ (\V + \e)^5 - \V^5 \right] + \frac 1{2s} \int \Lambda \V \left[ (\V + \e)^5 - \V^5 - 5\V^4\e \right] \\
&\quad + \int \pa_y (\V_{yy} + \V^5) \left[ (\V + \e)^5 - \V^5 - 5\V^4\e \right] + \int \E_\V (\e_{yy} + 5\V^4\e).
\end{align*}
But, from integrations by parts, we get $\int (\Lambda\e) \e_{yy} = -\int \e_y^2$ and
\[
\int \Lambda \e \left[ (\V + \e)^5 - \V^5 \right] = \frac 13 \int \left[ (\V + \e)^6 - \V^6 - 6\V^5\e \right]
- \int \Lambda \V \left[ (\V + \e)^5 - \V^5 - 5\V^4\e \right].
\]
Thus, we have obtained the simplified expression
\[
\frac 1{|s|} \frac d{d s} \left( |s| \H \right)
= \int \left( \V_{yyy} + 5\V_y\V^4 \right) \left[ (\V + \e)^5 - \V^5 - 5\V^4\e \right]
+ \int \left( \pa_{yy}\E_\V + 5\V^4\E_\V \right) \e.
\]
For the first term, we obtain as before
\[
\left| \int \left( \V_{yyy} + 5\V_y\V^4 \right) \left[ (\V + \e)^5 - \V^5 - 5\V^4\e \right] \right|
\lesssim \int \left( |\e|^5 + |\V|^3 \e^2 \right) \lesssim |s|^{-\frac 52} + \sum_j \|\e\|_{L_j^2}^2.
\]
For the second term, we use the expression~\eqref{decompEV} of $\E_\V$ and get
\begin{align*}
\int \left( \pa_{yy}\E_\V + 5\V^4\E_\V \right) \e
&= \sum_k \int \vec m_k \cdot \left( \pa_{yy} \vec \M_k V_k + 5\V^4\vec \M_k V_k \right) \e
+ \sum_k \dot r_k \int \left( \pa_{yy}R_k + 5\V^4 R_k \right) \e \\
&\quad + \sum_k \dot a_k \int \left( \pa_{yy}P_k + 5\V^4 P_k \right) \e + \int \left( \P_{yy} + 5\V^4 \P \right) \e \\
&= h_1 + h_2 + h_3 + h_4.
\end{align*}
To estimate these four terms, we follow the proof of~\eqref{g.mass} above, and obtain similarly
\begin{align*}
|h_1| &\lesssim \sum_k |\vec m_k| \left( |s|^{-\frac 12} \|\e\|_{L^2} + \|\e\|_{L_k^2} \right)
\lesssim \sum_j \|\e\|_{L_j^2}^2 + |s|^{-1} \|\e\|_{L^2}^2 + |s|^{-4}, \\
|h_2| &\lesssim \sum_k |\dot r_k| \|\e\|_{L_k^2} \lesssim \sum_j \|\e\|_{L_j^2}^2 + |s|^{-4}, \\
|h_4| &\lesssim \|\P\|_{H^2} \|\e\|_{L^2} \lesssim |s|^{-\frac 32 - \d_K^-} \|\e\|_{L^2}
\lesssim |s|^{-1} \|\e\|_{L^2}^2 + |s|^{-2 - 2\d_K^-}.
\end{align*}
Finally, to estimate $h_3$, we use~\eqref{apriori:eps} and~\eqref{est:dotak} to obtain
\begin{align*}
|h_3| &\lesssim \sum_k |\dot a_k| \left( \|\pa_{yy} P_k\|_{L^2}
+ \|P_k\|_{L^\infty} \|\V\|_{L^\infty}^3 \|\V\|_{L^2} \right) \|\e\|_{L^2} \\
&\lesssim \|\e\|_{L^2} \left( \sum_k |\dot a_k| \right)
\lesssim |s|^{-\frac 12} \left( \sum_j \|\e\|_{L_j^2}^2 + |s|^{-2} \right)
\lesssim \sum_j \|\e\|_{L_j^2}^2 + |s|^{-\frac 52}.
\end{align*}
Gathering the above estimates, we obtain~\eqref{g.energy},
which concludes the proof of Lemma~\ref{lem:mass-energy}.
\end{proof}

\subsection{Monotonicity of local energies}

Let $\psi,\ph\in\Cinfini$ be nondecreasing functions such that
\[
\psi(y) = \left\{ \begin{aligned} e^y \quad & \m{for } y < -1, \\ 1 \quad & \m{for } y > -\frac 12, \end{aligned} \right.
\qquad \m{and} \qquad
\ph(y) = \left\{ \begin{aligned} e^y \quad & \m{for } y < -1, \\ y + 1 \quad & \m{for } -\frac 12 < y < \frac 12, \\
2 - e^{-y}\quad & \m{for } y > 1. \end{aligned} \right.
\]
We note that such functions satisfy $\frac 12 e^y \leq \psi(y) \leq 3e^y$
and $\frac 13 e^y \leq \ph(y) \leq 3e^y$ for $y<0$,
and $\frac 12 \ph(y) \leq \psi(y) \leq 3\ph(y)$ for all $y\in\R$.
Moreover, we may choose the function $\ph$ such that $\frac 14 \leq \ph'(y) \leq 1$ for $y\in[-1,1]$
and so $\frac 13 e^{-|y|} \leq \ph'(y) \leq 3e^{-|y|} $ for all $y\in\R$.

For $B > 100(1 + \ell_1)$ large to be chosen later and $1\leq k\leq K$, we define
\[
\psi_k(s,y) = \psi\left( \frac {y-y_k(s)}B \right) \quad\m{and}\quad
\ph_k(s,y) = \ph\left( \frac {y-y_k(s)}B \right).
\]
Note that, directly from the definitions of $\psi$ and $\ph$, we have, for all $y\in\R$,
\be \label{cut:onR}
\left\{
\begin{gathered}
\frac 13 e^{-\frac 1B |y - y_k|} \leq B\pa_y\ph_k(y) \leq 3e^{-\frac 1B |y - y_k|} \leq 9\ph_k(y),\quad
\frac 12 \ph_k(y) \leq \psi_k(y) \leq 3\ph_k(y), \\
\pa_y\psi_k(y) + B^2|\pa_{yyy}\psi_k(y)| + B^2|\pa_{yyy}\ph_k(y)| \lesssim \pa_y\ph_k(y),
\end{gathered}
\right.
\ee
and, for all $y<y_k$,
\be \label{cut:left}
\frac 12 e^{\frac 1B (y - y_k)} \leq \psi_k(y) \leq 3 e^{\frac 1B (y - y_k)} \quad\m{ and }\quad
\frac 13 e^{\frac 1B (y - y_k)} \leq \ph_k(y) \leq 3 e^{\frac 1B (y - y_k)}.
\ee
In particular, from the definition of the $L^2_k$ norm in Section~\ref{sec:V}, we have the control
\be \label{eL2k}
\|\e_n\|_{L^2_k}^2 \lesssim B \int \e_n^2 \pa_y\ph_k.
\ee

Similarly as in~\cite{CM1,MMR1}, we define now the mixed energy--virial functional
\[
\F_{k,n} = \int (\pa_y\e_n)^2 \psi_k + \mu_k^{-2} \int \e_n^2 \ph_k
- \frac 13 \int \left[ (\V_n + \e_n)^6 - \V_n^6 - 6\V_n^5\e_n \right] \psi_k,
\]
and we estimate it, as long as its time variation, in the following lemma.

\begin{lemma} \label{lem:enerloc}
There exist $B > 100$, $C > 0$ and $\kappa > 0$ such that,
for all $1\leq k\leq K$ and all $s\in [S_n,S_n^*]$, the following hold.
\begin{enumerate}[label=\emph{(\roman*)}]
\item \emph{Almost coercivity of $\F_{k,n}$:}
\be \label{eq:Fcoercivity}
\F_{k,n} \geq \kappa \left[ \int (\pa_y\e_n)^2 \psi_k + \int \e_n^2 \ph_k \right] - |s|^{-10}.
\ee
\item \emph{Time variation of $\F_{k,n}$:}
\begin{multline} \label{eq:Fvariation}
\frac d{d s} \left\{ |s|^{1 + \frac 1{43}} \left[ \F_{k,n} + 4 \mu_k^{-2} \sum_{j<k} a_j \langle \e_n,P_j \rangle
+ 2c \mu_k^{-2} |s| \sum_{j<k} a_j^2 \right] \right\} + \kappa |s|^{1 + \frac 1{43}}
\left[ \|\pa_y\e_n\|_{L^2_k}^2 + \|\e_n\|_{L^2_k}^2 \right] \\
\leq C |s|^{1 + \frac 1{43}} \left[ \sum_{j<k} \|\e_n\|_{L^2_j}^2
+|s|^{-\frac 12} \sum_j \|\e_n\|_{L^2_j}^2 + |s|^{-2 - \d_k^+ - \d_{k-1}^-} \right].
\end{multline}
\end{enumerate}
\end{lemma}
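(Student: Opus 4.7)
\medskip

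\noindent\textbf{Proof plan for Lemma~\ref{lem:enerloc}.} The strategy is to run the localized mixed energy--virial method of Proposition~3.1 in~\cite{MMR1} around each soliton $k$, iterated recursively through the $K$ bubbles. The weights $\psi_k$ and $\ph_k$ are tailored so that $\F_{k,n}$ is, modulo lower order terms, a weighted quadratic form in $\e_n$ which near $y_k$ reduces to $\mu_k^{-2}\langle L\cdot,\cdot\rangle$ after rescaling, while controlling pure weighted $L^2$-mass on the left via $\ph_k$. Coercivity will come from~\eqref{coercivity} combined with the orthogonality conditions~\eqref{orthobis}, and monotonicity from a careful integration-by-parts computation based on the equation~\eqref{eq:eps} of $\e_n$ and the decomposition~\eqref{decompEV} of $\E_{\V_n}$.

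\medskip

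\noindent\emph{Coercivity (i).} First, I expand
\[
(\V_n+\e_n)^6 - \V_n^6 - 6\V_n^5\e_n = 15\V_n^4 \e_n^2 + O\bigl(|\V_n|^3|\e_n|^3 + \e_n^6\bigr),
\]
and control the cubic and higher remainders using $\|\e_n\|_{L^\infty}^2 \lesssim \|\e_n\|_{L^2}\|\pa_y\e_n\|_{L^2}$ and the bootstrap~\eqref{eq:BS} by $o(1)\int \e_n^2\ph_k$. On the effective support of $\psi_k$, estimate~\eqref{simpleVk} gives $V_k = Q_k + O(|s|^{-1/2})$ in $L^\infty$, while the contributions of $V_j$ with $j<k$ are $O(|s|^{-3/2})$ by~\eqref{eq:Wkleft0}--\eqref{eq:Wkleftm} and those with $j>k$ are exponentially small by~\eqref{eq:Wkright}. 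Therefore, up to a controlled remainder,
\[
\F_{k,n} = \int \bigl[(\pa_y\e_n)^2 + \mu_k^{-2}\e_n^2 - 5Q_k^4 \e_n^2\bigr]\psi_k\,dy + \mu_k^{-2}\int \e_n^2 (\ph_k-\psi_k)\,dy + o(1)\int \e_n^2 \ph_k\,dy.
\]
After rescaling $\e_n(y) = \tilde\mu_k^{-1/2}w\bigl((y-y_k)/\tilde\mu_k\bigr)$, the first integral is $\mu_k^{-2}$ times a local-in-$\psi_k$ version of $\langle Lw,w\rangle$. The three orthogonality relations in~\eqref{orthobis} translate, via~\eqref{PQ} and $L(\Lambda Q)=-2Q$, into $\langle w,Q\rangle = \langle w,\Lambda Q\rangle = \langle w,y\Lambda Q\rangle = o(\|w\|_{H^1})$, so the coercivity~\eqref{coercivity} of $L$ plus the standard cutoff argument yields~\eqref{eq:Fcoercivity}; the $-|s|^{-10}$ remainder absorbs the $\chi$-truncation built into $P_k$.

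\medskip

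\noindent\emph{Monotonicity (ii).} I differentiate $\F_{k,n}$ in $s$, substitute~\eqref{eq:eps} for $\pa_s\e_n$, and integrate by parts. The quadratic-in-$\e_n$ terms produce the standard Martel--Merle--Rapha\"el virial form, whose integrand involves $3(\pa_y\e_n)^2\pa_y\psi_k$, $\mu_k^{-2}\e_n^2\pa_y\ph_k$, the $B^{-2}$-small corrections $\pa_{yyy}\psi_k\cdot\e_n^2$, and the cross nonlinear term $5\V_n^4(\pa_y\psi_k)\e_n^2$. Thanks to~\eqref{cut:onR}--\eqref{eL2k}, for $B$ fixed large and $|S_0|$ large (so that $\tilde\mu_k$ is close to $\ell_k$ and $V_k \approx Q_k$), this virial form is bounded above by $-\kappa\bigl(\|\pa_y\e_n\|_{L^2_k}^2 + \|\e_n\|_{L^2_k}^2\bigr)$, giving the desired coercive term. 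The scaling contribution $\frac{1}{2s}\Lambda\e_n$ creates a $-\frac{1}{s}\F_{k,n}$ piece which is absorbed exactly by the $|s|^{1+1/43}$ weight, whose own differentiation produces $\frac{1+1/43}{|s|}\F_{k,n}$. The terms $\sum_j \vec m_j\cdot\vec\M_j V_j$ and $\sum_j \dot r_j R_j$ of $\E_{\V_n}$ are controlled using~\eqref{est:mk} and~\eqref{est:dotrk} by $C\sum_j\|\e_n\|_{L^2_j}^2 + C|s|^{-4}$, while the $H^2(y > y_k - |s|^{1/4})$ and $L^\infty_k$ bounds in~\eqref{est:Psik} handle $\P$ and provide the error $|s|^{-13/8} + |s|^{-1/2}\sum_{j<k}|a_j|$.

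\medskip

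\noindent\emph{Main obstacle.} The delicate point is the piece $\sum_j \dot a_j P_j$ of $\E_{\V_n}$ with $j<k$: because of the power-like tail~\eqref{est:P} of $P_j$ on its left, the mass of $P_j$ is non-negligible on the support of $\psi_k$, so that the naive bound $|\langle \dot a_j P_j, \e_n\psi_k\rangle| \lesssim |\dot a_j| |s|^{1/2}\|\e_n\|_{L^2_k}$ is not integrable after multiplication by $|s|^{1+1/43}$. The remedy is to exploit~\eqref{eq:Jkvariation}, namely $cs\dot a_j = \frac{d}{ds}\langle \e_n, P_j\rangle + O\bigl(\sum_{i\leq j}\|\e_n\|_{L^2_i} + |s|^{-1-\d_{j+1}^+}\bigr)$, and integrate the time derivative by parts: this shifts the offending contribution inside the $\frac{d}{ds}$ on the left of~\eqref{eq:Fvariation} as the explicit corrections $4\mu_k^{-2}\sum_{j<k} a_j\langle \e_n, P_j\rangle$ and $2c\mu_k^{-2}|s|\sum_{j<k} a_j^2$, whose own variation precisely cancels the bad $\dot a_j$ terms. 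What remains is bounded by $C|s|^{-2 - \d_k^+ - \d_{k-1}^-}$, which multiplied by $|s|^{1 + 1/43}$ is integrable thanks to the hierarchy~\eqref{prop:d} and fits exactly into the integrated $L^2_k$ budget allowed by~\eqref{eq:BS}. Iterating the resulting estimate from $k=1$ upwards then propagates the control through all bubbles.
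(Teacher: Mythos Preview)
Your proof plan captures the overall architecture correctly and, in particular, your analysis of the $\dot a_j P_j$ obstruction in part~(ii) and the role of the correction terms $4\mu_k^{-2}\sum_{j<k} a_j\langle\e_n,P_j\rangle + 2c\mu_k^{-2}|s|\sum_{j<k}a_j^2$ matches the paper's mechanism. However, there is a genuine gap in your coercivity argument~(i), and the same oversight also affects part~(ii).

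The problem is your claim that ``on the effective support of $\psi_k$, the contributions of $V_j$ with $j<k$ are $O(|s|^{-3/2})$''. This is false. Since $y_j \sim 2s\ell_j^{-2}$ and $\ell_1>\cdots>\ell_K$, the bubbles are ordered $y_1>y_2>\cdots>y_K$, so for $j<k$ the center $y_j$ lies to the \emph{right} of $y_k$. But $\psi_k$ is not a bump around $y_k$: it equals $1$ on the whole half-line $y>y_k-\tfrac{B}{2}$, which contains every $y_j$ with $j<k$. Near $y_j$ one has $Q_j^4\sim 1$ and $\psi_k=1$, so the term $5\V_n^4\e_n^2\psi_k$ contains a contribution $\sim 5Q_j^4\e_n^2$ which is \emph{not} $o(1)\int\e_n^2\ph_k$. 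Your reduction to a single linearized operator $L$ around $Q_k$ therefore cannot work: the quadratic form you must control is
\[
\int\Bigl[(\pa_y\e_n)^2\psi_k + \mu_k^{-2}\e_n^2\ph_k - 5\Bigl(\sum_{j\le k}Q_j^4\Bigr)\e_n^2\psi_k\Bigr],
\]
and one needs a \emph{separate} localized coercivity estimate around each $y_j$, $j\le k$, using the orthogonality conditions~\eqref{orthobis} at every index $j\le k$ and the fact that $\tilde\mu_j^{-2}\le\tilde\mu_k^{-2}$; these localized estimates are then summed. This is exactly what the paper does, introducing auxiliary cutoffs $\Phi_j$ and invoking the localized version of~\eqref{coercivity} at each bubble.

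The same point resurfaces in~(ii): in the virial computation, on the region $y-y_k>\tfrac{B}{2}$ the term $\int|\V_n|^3\e_n^2$ picks up $\sum_{j<k}\int Q_j^3\e_n^2\lesssim\sum_{j<k}\|\e_n\|_{L^2_j}^2$, which is \emph{not} small and is precisely the origin of the $C\sum_{j<k}\|\e_n\|_{L^2_j}^2$ term on the right of~\eqref{eq:Fvariation}. You should make this source explicit; it is what forces the recursive closure from $k=1$ upward that you allude to at the end.
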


\begin{proof}
We denote again $\e_n$, $\V_n$ and $\F_{k,n}$ simply by $\e$, $\V$ and $\F_k$.
As in the proof of Lemma~\ref{lem:mass-energy},
we also use the notation $f_s = \pa_s f$ and $f_y = \pa_y f$.

\smallskip

(i) To prove~\eqref{eq:Fcoercivity}, we rely on the coercivity of the linearized energy~\eqref{coercivity},
together with the choice of orthogonality conditions~\eqref{orthobis} and standard localization arguments
(we refer to the proof of Lemma~4 in~\cite{MMT} for similar arguments).
We first decompose $\F_k$ as
\begin{align*}
\F_k &= \int \left[ \e_y^2 \psi_k + \tilde \mu_k^{-2} \e^2 \ph_k - 5 \e^2 \left( \sum_{j\leq k} Q_j^4 \right) \psi_k \right]
+ (\mu_k^{-2} - \tilde\mu_k^{-2}) \int \e^2 \ph_k \\
&\quad- 5 \int \left( \V^4 - \sum_{j\leq k} Q_j^4 \right) \e^2 \psi_k
- \frac 13 \int \left[ (\V + \e)^6 - \V^6 - 6\V^5\e - 15\V^4\e^2 \right] \psi_k.
\end{align*}

To estimate the first term, we claim that, for some $\bar \kappa > 0$ and for $B$ large enough,
\[
\int \left[ \e_y^2 \psi_k + \tilde\mu_k^{-2} \e^2 \ph_k - 5\e^2 \left( \sum_{j\leq k} Q_j^4 \right) \psi_k \right]
\geq \bar \kappa \int \left( \e_y^2 \psi_k + \e^2 \ph_k \right).
\]
Proceeding as in the Appendix~A of~\cite{MMannals} (see also Claim~(29) in the proof of Lemma~4 in~\cite{MMT}),
we recall without proof a localized version of~\eqref{coercivity}.
Consider a smooth even function~$\Phi$ such that $\Phi' \leq 0$ on $[0,+\infty)$,
$\Phi \equiv 1$ on $[0,1]$, $\Phi(x) = e^{-x}$ for $x\in [2,+\infty)$
and $e^{-x} \leq \Phi(x) \leq 3e^{-x}$ for $x\in [0,+\infty)$.
For $D>0$, let $\Phi_D(x) = \Phi(\frac xD)$.
Then there exists $0 < \kappa_1 < 1$ such that, for $D$ large enough, for all $\eta\in H^1(\R)$
such that $\langle \eta,Q \rangle = \langle \eta,\Lambda Q \rangle = \langle \eta,y\Lambda Q \rangle = 0$, there holds
\[
(1 - \kappa_1) \int \left( \eta_x^2 + \eta^2 \right) \Phi_D \geq 5 \int \eta^2 Q^4 \Phi_D.
\]
For $1\leq j\leq k$, let $\Phi_j(s,y) = \Phi_D(x)$ and define $\eta_j$ such that
$\e(s,y) = \tilde\mu_j^{-\frac 12} \eta_j(s,x)$, with $x = \tilde\mu_j^{-1} (y - y_j)$.
Then, since $\langle \eta_j,Q \rangle = \langle \eta_j,\Lambda Q \rangle = \langle \eta_j,y\Lambda Q \rangle = 0$
by~\eqref{orthobis}, we may apply the above estimate to $\eta_j$ and obtain
\[
(1 - \kappa_1) \int \left( \e_y^2 + \tilde\mu_j^{-2} \e^2 \right) \Phi_j \geq 5 \int \e^2 Q_j^4 \Phi_j.
\]
Moreover, we have $\tilde\mu_j^{-2} \leq \tilde\mu_k^{-2}$ for $1\leq j\leq k$,
and thus, summing over $1\leq j\leq k$, we find
\[
(1 - \kappa_1) \int \left( \e_y^2 + \tilde\mu_k^{-2} \e^2 \right) \left( \sum_{j\leq k} \Phi_j \right)
\geq 5 \int \e^2 \left( \sum_{j\leq k} Q_j^4 \Phi_j \right).
\]
But, by the definitions of $\psi$, $\ph$ and $\Phi$, we have
\[
\sum_{j\leq k} \Phi_j \leq (1 + |s|^{-10})\psi_k \quad\m{and}\quad
\sum_{j\leq k} \Phi_j \leq (1 + |s|^{-10})\ph_k,
\]
and, using moreover the exponential decay of $Q$,
\[
\sum_{j\leq k} Q_j^4 \Phi_j \geq \left( \sum_{j\leq k} Q_j^4 \right) \psi_k
- \left( |s|^{-10} + B^{-10} + D^{-10} \right) \ph_k.
\]
Thus, for $D$, $B$ and $|S_0|$ large enough, we deduce that
\[
\left( 1 - \frac {\kappa_1}2 \right) \int \left( \e_y^2 \psi_k + \tilde\mu_k^{-2} \e^2 \ph_k \right)
\geq 5 \int \e^2 \left( \sum_{j\leq k} Q_j^4 \right) \psi_k,
\]
which proves the claim with $\bar \kappa = \frac {\kappa_1}2 > 0$.

To estimate the second term, we use~\eqref{apriori:mutilde} to obtain, for $|S_0|$ large enough,
\[
\left| (\mu_k^{-2} - \tilde\mu_k^{-2}) \int \e^2 \ph_k \right|
\lesssim |s|^{-1} \int \e^2 \ph_k \leq \frac {\bar\kappa}4 \int \e^2 \ph_k.
\]
To estimate the third term, we use~\eqref{simpleVk} and~\eqref{cut:onR} to obtain
\[
5 \left| \int \left( \V^4 - \sum_{j\leq k} Q_j^4 \right) \e^2 \psi_k \right|
\lesssim |s|^{-\frac 34} \int \e^2 \ph_k + \sum_{j>k} \int Q_j^4 \e^2 \psi_k.
\]
Moreover, for $j>k$, we have, from~\eqref{apriori:eps}, \eqref{cut:onR}--\eqref{cut:left} and the exponential decay of $Q$,
\[
\int Q_j^4 \e^2 \psi_k \lesssim \int_{y < y_k - |s|^{\frac 14}} \e^2 \psi_k
+ \int_{y > y_k - |s|^{\frac 14}} Q_j^4 \e^2 \ph_k
\lesssim |s|^{-11} + |s|^{-10} \int \e^2 \ph_k.
\]
Thus, for $|S_0|$ large enough,
\[
5 \left| \int \left( \V^4 - \sum_{j\leq k} Q_j^4 \right) \e^2 \psi_k \right|
\lesssim |s|^{-\frac 34} \int \e^2 \ph_k + |s|^{-11}
\leq \frac {\bar\kappa}4 \int \e^2 \ph_k + |s|^{-10}.
\]
Finally, the nonlinear term is estimated as
\[
\frac 13 \left| \int \left[ (\V + \e)^6 - \V^6 - 6\V^5\e - 15\V^4\e^2 \right] \psi_k \right|
\lesssim \|\e\|_{L^\infty} \int \e^2 \ph_k \leq \frac {\bar\kappa}4 \int \e^2 \ph_k,
\]
by choosing again $|S_0|$ large enough.
Gathering the above estimates and letting $\kappa = \frac {\bar\kappa}4 > 0$, we obtain~\eqref{eq:Fcoercivity}.

\medskip

(ii) The proof is partly similar to the proof of Lemma~3.4 in~\cite{CM1} (see also Proposition~3.1 in~\cite{MMR1}).
We first compute
\begin{align*}
&\frac 1{|s|^{1 + \frac 1{43}}} \frac d{d s} \left\{ |s|^{1 + \frac 1{43}}
\left[ \F_k + 4 \mu_k^{-2} \sum_{j<k} a_j \langle \e,P_j \rangle + 2c \mu_k^{-2} |s| \sum_{j<k} a_j^2 \right] \right\}
= \frac {d \F_k}{d s} + 4 \mu_k^{-2} \sum_{j<k} \dot a_j \langle \e,P_j \rangle \\
&+ 4 \mu_k^{-2} \left( \sum_{j<k} a_j \frac d{d s} \langle \e,P_j \rangle + c|s| \sum_{j<k} \dot a_j a_j \right)
- 4 \dot \mu_k \mu_k^{-3} \left( 2 \sum_{j<k} a_j \langle \e,P_j \rangle + c|s| \sum_{j<k} a_j^2 \right) \\
&- 2c \mu_k^{-2} \sum_{j<k} a_j^2 - \left( 1 + \frac 1{43} \right) \frac 1{|s|}
\left[ \F_k + 4 \mu_k^{-2} \sum_{j<k} a_j \langle \e,P_j \rangle + 2c \mu_k^{-2} |s| \sum_{j<k} a_j^2 \right].
\end{align*}
Now note that, by combining~\eqref{def:mod}, \eqref{apriori:eps} and~\eqref{est:mk}, we have
$|\dot \mu_k| \lesssim \|\e\|_{L^2_k} + |s|^{-1}$.
Thus, using also~\eqref{G1} and~\eqref{G2}, and the estimate on $|a_j|$ from~\eqref{eq:BS}, we find
\begin{align*}
&\frac 1{|s|^{1 + \frac 1{43}}} \frac d{d s} \left\{ |s|^{1 + \frac 1{43}}
\left[ \F_k + 4 \mu_k^{-2} \sum_{j<k} a_j \langle \e,P_j \rangle
+ 2c \mu_k^{-2} |s| \sum_{j<k} a_j^2 \right] \right\}
\leq - \left( 1 + \frac 1{43} \right) \frac {\F_k}{|s|} + \frac {d \F_k}{d s} \\
&\quad + 4 \mu_k^{-2} \sum_{j<k} \dot a_j \langle \e,P_j \rangle
+ C |s|^{-1 - \d_{k-1}^- - \d_k^+} \|\e\|_{L^2_k}
+ C \sum_{j< k} \|\e\|_{L_j^2}^2 + C |s|^{-2 - \d_{k-1}^- - \d_k^+}.
\end{align*}
Next, denoting $G_k(\e) = \Big\{ -\e_y \pa_y\psi_k - \e_{yy} \psi_k + \mu_k^{-2} \e \ph_k - [(\V + \e)^5 - \V^5] \psi_k \Big\}$,
an integration by parts gives, since $\pa_s\psi_k = -\dot y_k \pa_y\psi_k$ and $\pa_s\ph_k = -\dot y_k \pa_y\ph_k$,
\begin{align*}
\frac {d \F_k}{d s} &= - 2 \int \left( \V_s + \dot y_k \V_y \right)
\left[ (\V + \e)^5 - \V^5 - 5\V^4\e \right] \psi_k \\
&\quad + 2 \int \left( \e_s + \dot y_k \e_y \right) G_k(\e)
- 2 \dot\mu_k \mu_k^{-3} \int \e^2 \ph_k.
\end{align*}
Therefore, using the equation~\eqref{eq:eps} satisfied by $\e$, the identity
$\Lambda_k = \frac 12 + (y-y_k)\pa_y = \Lambda - y_k\pa_y$, and the definition of $\E_\V$ in Lemma~\ref{lem:V},
we find
\[
- \left( 1 + \frac 1{43} \right) \frac {\F_k}{|s|}
+ \frac {d \F_k}{d s} + 4 \mu_k^{-2} \sum_{j<k} \dot a_j \langle \e,P_j \rangle
= f_1 + f_2 + f_3 + f_4
\]
where, denoting $\Gt_k(\e) = \Big\{ -\e_y \pa_y\psi_k - \e_{yy} \psi_k + \mu_k^{-2} \e \ph_k - 5\V^4\e \psi_k \Big\}$,
we have set
\begin{align*}
f_1 &= -2 \int \pa_y \left[ \e_{yy} - \mu_k^{-2} \e + (\V + \e)^5 - \V^5 \right] G_k(\e), \\
f_2 &= - \left( 1 + \frac 1{43} \right) \frac {\F_k}{|s|} + \frac 1{|s|} \int (\Lambda_k\e) G_k(\e)
- \frac 1{|s|} \int (\Lambda_k \V) \left[ (\V + \e)^5 - \V^5 - 5\V^4\e \right] \psi_k, \\
f_3 &= -2 \int \E_\V \Gt_k(\e) + 4 \mu_k^{-2} \sum_{j<k} \dot a_j \langle \e,P_j \rangle
- 2\dot\mu_k \mu_k^{-3} \int \e^2 \ph_k, \\
f_4 &= 2 \int \pa_y \left[ \V_{yy} + \V^5 - \left( \dot y_k - \frac {y_k}{2s} \right) \V \right]
\left[ (\V + \e)^5 - \V^5 - 5\V^4\e \right] \psi_k \\
&\quad + 2 \left( \dot y_k - \frac {y_k}{2s} - \frac 1{\mu_k^2} \right) \int \e_y G_k(\e).
\end{align*}
We now estimate these four terms separately.

\smallskip

\textbf{Control of $f_1$.}
From multiple integrations by parts, we may rewrite $f_1$ as
\begin{align*}
f_1 &= -\int \left[ 3\e_{yy}^2 \pa_y\psi_k + \e_y^2 \left( 3\mu_k^{-2} \pa_y\ph_k
+ \mu_k^{-2} \pa_y\psi_k - \pa_{yyy}\psi_k \right) + \e^2 \left( \mu_k^{-4} \pa_y\ph_k - \mu_k^{-2} \pa_{yyy}\ph_k \right) \right] \\
&\quad - \frac 13 \mu_k^{-2} \int \left[ (\V + \e)^6 - \V^6 - 6\V^5\e
-6 \left( (\V + \e)^5 - \V^5 \right) \e \right] \Bigl( \pa_y\ph_k - \pa_y\psi_k \Bigr) \\
&\quad - 2\mu_k^{-2} \int \V_y \left[ (\V + \e)^5 - \V^5 - 5\V^4\e \right] (\ph_k - \psi_k) \\
&\quad + 10 \int \e_y \left[ \V_y \left( (\V + \e)^4 - \V^4 \right) + \e_y (\V + \e)^4 \right] \pa_y\psi_k \\
&\quad - \int \left\{ \left[ -\e_{yy} + \mu_k^{-2} \e - \left( (\V + \e)^5 - \V^5 \right) \right]^2
- \left[ -\e_{yy} + \mu_k^{-2} \e \right]^2 \right\} \pa_y\psi_k
= f_1^< + f_1^\sim + f_1^>,
\end{align*}
where $f_1^<$, $f_1^\sim$ and $f_1^>$ respectively correspond to integration
on $y - y_k < -\frac B2$, $|y - y_k| \leq \frac B2$ and $y - y_k > \frac B2$.

\emph{Estimate of $f_1^<$.}
From~\eqref{apriori:mutauy} and~\eqref{cut:onR}--\eqref{cut:left},
taking $B$ large enough (depending on $\ell_k$), we find
\begin{align*}
f_1^< &\leq -3 \int_{y - y_k < -\frac B2} \e_{yy}^2 \pa_y\psi_k
- \frac 12 \mu_k^{-2} \int_{y - y_k < -\frac B2} \left(\e_y^2 + \mu_k^{-2} \e^2 \right) \pa_y\ph_k \\
&\quad + C \int_{y - y_k < -\frac B2} (\e^6 + \V^4 \e^2) \pa_y\ph_k
+ C B \int_{y - y_k < -\frac B2} |\V_y| (|\e|^5 + |\V|^3\e^2) \pa_y\ph_k \\
&\quad + C \int_{y - y_k < -\frac B2} |\e_y| \left[ |\V_y| \left( |\V|^3 |\e| + \e^4 \right) + |\e_y| (\V^4 + \e^4) \right] \pa_y\psi_k \\
&\quad + C \int_{y - y_k < -\frac B2} \left( \V^4 |\e| + |\e|^5 \right) \left( |\e_{yy}| + |\e| + \V^4 |\e| + |\e|^5 \right) \pa_y\psi_k.
\end{align*}
Using~\eqref{simpleVk} we find, for $-|s|^{\frac 14} < y - y_k < - \frac B2$,
\[
|\V(y)| \lesssim e^{- \frac B{4\ell_k}} + |s|^{-\frac 34} \quad\m{ and }\quad
|\V_y(y)| \lesssim e^{- \frac B{4\ell_k}} + |s|^{-1}.
\]
Moreover, for $y - y_k < -|s|^{\frac 14}$, we have
$\pa_y\psi_k(y) + \pa_y\ph_k(y) \lesssim e^{-\frac 1B |s|^{\frac 14}}$ from~\eqref{cut:onR}.
Therefore,
\begin{align*}
f_1^< &\leq -3 \int_{y - y_k < -\frac B2} \e_{yy}^2 \pa_y\psi_k
- \frac 12 \mu_k^{-2} \int_{y - y_k < -\frac B2} \left( \e_y^2 + \mu_k^{-2} \e^2 \right) \pa_y\ph_k \\
&\quad + C_0 B \left( \|\e\|_{L^\infty}^4 + e^{-\frac B{\ell_k}} + |s|^{-3} \right) \mu_k^{-2}
\int_{-|s|^{\frac 14} < y - y_k < -\frac B2} \left( \e_y^2 + \mu_k^{-2} \e^2 \right) \pa_y\ph_k \\
&\quad + C_1 \left( \|\e\|_{L^\infty}^4 + e^{-\frac B{\ell_k}} + |s|^{-3} \right)
\int_{-|s|^{\frac 14} < y - y_k < -\frac B2} \e_{yy}^2 \pa_y\psi_k \\
&\quad + C_2 |s|^{-10} \int_{y - y_k < -|s|^{\frac 14}} \e_{yy}^2 \pa_y\psi_k + C|s|^{-10}
\end{align*}
for some constants $C_0,C_1,C_2 > 0$.
We choose $B$ large enough, and then $|S_0|$ large enough (depending on $B$),
so that $C_2 |s|^{-10} \leq 1$,
\[
C_1 \left( \|\e\|_{L^\infty}^4 + e^{-\frac B{\ell_k}} + |s|^{-3} \right) \leq 1 \quad\m{and}\quad
C_0 B \left( \|\e\|_{L^\infty}^4 + e^{-\frac B {\ell_k}} + |s|^{-3}\right) \leq \frac 14.
\]
Hence, we obtain
\[
f_1^< \leq -\frac 14 \mu_k^{-2} \int_{y - y_k < -\frac B2} \left( \e_y^2 + \mu_k^{-2} \e^2 \right) \pa_y\ph_k + C|s|^{-10}.
\]

\emph{Estimate of $f_1^>$.}
For $y - y_k > \frac B2$, we have $\psi_k(y) = 1$ and so $\pa_y\psi_k(y) = \pa_{yyy}\psi_k(y) = 0$.
Thus,
\begin{align*}
f_1^> &= - \int_{y - y_k > \frac B2} \left[ 3\mu_k^{-2} \e_y^2 \pa_y\ph_k
+ \mu_k^{-4} \e^2 \left( \pa_y\ph_k - \mu_k^2 \pa_{yyy}\ph_k \right) \right] \\
&\quad - \frac 13 \mu_k^{-2} \int_{y - y_k > \frac B2} \left[ (\V + \e)^6 - \V^6 - 6\V^5\e
- 6 \left( (\V + \e)^5 - \V^5 \right) \e \right] \pa_y\ph_k \\
&\quad -2\mu_k^{-2} \int_{y - y_k > \frac B2} \V_y \left[ (\V + \e)^5 - \V^5 - 5\V^4\e \right] (\ph_k - 1).
\end{align*}
Using~\eqref{cut:onR} and $\|\V_y\|_{L^\infty} \lesssim 1$,
and taking $B$ large enough as before, we find
\begin{align*}
f_1^> &\leq - \frac 12 \mu_k^{-2} \int_{y - y_k > \frac B2} \left( \e_y^2 + \mu_k^{-2} \e^2 \right) \pa_y\ph_k
+ C \int_{y - y_k > \frac B2} \left( \e^6 + \V^4\e^2 \right) \pa_y\ph_k \\
&\quad + C \int_{y - y_k > \frac B2} \left( |\e|^5 + |\V|^3\e^2 \right).
\end{align*}
For $\frac B2 < y - y_k < |s|^{\frac 14}$, by~\eqref{simpleVk} and the exponential decay of $Q$, we find
$|\V(y)| \lesssim e^{-\frac B{4\ell_k}} + |s|^{-\frac 34}$.
For $y - y_k > |s|^{\frac 14}$, by~\eqref{cut:onR}, we have $\pa_y\ph_k(y) \lesssim e^{-\frac 1B |s|^{\frac 14}}$.
Thus, we obtain
\begin{align*}
f_1^> &\leq - \frac 12 \mu_k^{-2} \int_{y - y_k > \frac B2} \left( \e_y^2 + \mu_k^{-2} \e^2 \right) \pa_y\ph_k
+ C |s|^{-10} + C \|\e\|_{H^1}^5 + C \int_{y - y_k > \frac B2} |\V|^3 \e^2 \\
&\quad + C \left( \|\e\|_{L^\infty}^4 + e^{-\frac B{\ell_k}} + |s|^{-3} \right) \int_{\frac B2 < y - y_k < |s|^{\frac 14}} \e^2 \pa_y\ph_k.
\end{align*}
Then, using also~\eqref{apriori:eps}, we find as before, for $B$ large enough and $|S_0|$ large enough,
\[
f_1^> \leq - \frac 14 \mu_k^{-2} \int_{y - y_k > \frac B2} \left( \e_y^2 + \mu_k^{-2} \e^2 \right) \pa_y\ph_k
+ C \int_{y - y_k > \frac B2} |\V|^3 \e^2 + C |s|^{-\frac 52}.
\]
But, by~\eqref{simpleVk} and the exponential decay of $Q$, we have
\[
\left\| \V - \sum_{j\leq k} Q_j \right\|_{L^\infty(y - y_k > \frac B2)}
\lesssim \sum_{j > k} \|Q_j\|_{L^\infty(y - y_k > \frac B2)} + \sum_j \|V_j - Q_j\|_{L^\infty}
\lesssim |s|^{-\frac 34}.
\]
And since $\int |Q_j|^3 \e^2 \lesssim \|\e\|_{L^2_j}^2$ and
$\int_{y - y_k > \frac B2} |Q_k|^3 \e^2 \lesssim e^{-\frac B{4\ell_k}} \|\e\|_{L^2_k}^2 \lesssim B^{-2} \|\e\|_{L^2_k}^2$,
we find
\[
\int_{y - y_k > \frac B2} |\V|^3 \e^2 \lesssim \sum_{j < k} \|\e\|_{L^2_j}^2
+ B^{-2} \|\e\|_{L^2_k}^2 + |s|^{-\frac 94} \|\e\|_{L^2}^2.
\]
In conclusion for this term, we have
\[
f_1^> \leq - \frac 14 \mu_k^{-2} \int_{y - y_k > \frac B2} \left( \e_y^2 + \mu_k^{-2} \e^2 \right) \pa_y\ph_k
+ C \sum_{j<k} \|\e\|_{L_j^2}^2 + C B^{-2} \|\e\|_{L^2_k}^2 + C |s|^{-\frac 52}.
\]

\emph{Estimate of $f_1^\sim$.}
For $|y - y_k| \leq \frac B2$, we still have $\psi_k(y) = 1$ and $\pa_y\psi_k(y) = \pa_{yyy}\psi_k(y) = 0$.
Moreover, $\ph_k(y) = \frac {y - y_k}B + 1$, $\pa_y\ph_k(y) = \frac 1B$ and $\pa_{yyy}\ph_k(y) = 0$.
Thus, we may rewrite $f_1^\sim$ as
\begin{align*}
B f_1^\sim
& = - \mu_k^{-2} \int_{|y - y_k| \leq \frac B2} \left( 3\e_y^2 + \mu_k^{-2} \e^2 \right) \\
& \quad - \frac 13 \mu_k^{-2} \int_{|y - y_k| \leq \frac B2} \left[ (\V + \e)^6 - \V^6 - 6\V^5\e
- 6 \left( (\V + \e)^5 - \V^5 \right) \e \right] \\
& \quad -2\mu_k^{-2} \int_{|y - y_k| \leq \frac B2} (y - y_k) \V_y \left[ (\V + \e)^5 - \V^5 - 5\V^4\e \right] \\
& = -\mu_k^{-2} \int_{|y - y_k| \leq \frac B2} \left( 3\e_y^2
+ \tilde \mu_k^{-2} \e^2 - 5Q_k^4 \e^2 + 20 (y - y_k)\pa_y Q_k Q_k^3 \e^2 \right) + \mu_k^{-2} R_{\mathrm{Vir}}(\e),
\end{align*}
with
\begin{align*}
R_{\mathrm{Vir}}(\e)
& = (\tilde \mu_k^{-2} - \mu_k^{-2}) \int_{|y - y_k| \leq \frac B2} \e^2
+ \frac 13 \int_{|y - y_k| \leq \frac B2} \left( 40\V^3 + 45\V^2\e + 24\V\e^2 + 5\e^3 \right) \e^3 \\
& \quad + 5 \int_{|y - y_k| \leq \frac B2} \left[ \left( \V^4 - Q_k^4 \right)
- 4 (y-y_k) \left( \V_y \V^3 - \pa_y Q_k Q_k^3 \right) \right] \e^2 \\
& \quad - 2 \int_{|y - y_k| \leq \frac B2} (y-y_k) \V_y \left( 10\V^2 + 5\V\e + \e^2 \right) \e^3.
\end{align*}
With the change of space variable $x = \tilde\mu_k^{-1} (y-y_k)$, and letting
$\e(s,y)=\tilde\mu_k^{-\frac 12} \eta(s,x)$, we find
\begin{align*}
\int_{|y - y_k| \leq \frac B2} & \left( 3\e_y^2 + \tilde \mu_k^{-2} \e^2 - 5 Q_k^4 \e^2 + 20(y - y_k)\pa_y Q_k Q_k^3 \e^2 \right) \\
& = \tilde\mu_k^{-2} \int_{|x| \leq \tilde \mu_k^{-1} \frac B2} \left( 3\eta_x^2 + \eta^2 - 5Q^4 \eta^2 + 20 xQ'Q^3 \eta^2 \right)
\end{align*}
and, by~\eqref{orthobis},
\[
\langle \eta,\Lambda Q \rangle = \langle \eta,y\Lambda Q \rangle = \langle \eta,Q \rangle = 0.
\]
But, from Lemma~3.4 in~\cite{MMR1} (see also Proposition~4 in~\cite{MMjmpa}),
these orthogonality conditions ensure that,
for $B$ large enough (here depending on $\ell_k$) and for some $\kappa_0 > 0$,
\[
\int_{|x| \leq \tilde \mu_k^{-1} \frac B2} \left( 3\eta_x^2 + \eta^2 - 5Q^4 \eta^2 + 20 xQ'Q^3 \eta^2 \right)
\geq \kappa_0 \int_{|x| \leq \tilde \mu_k^{-1} \frac B2} \left( \eta_x^2 + \eta^2 \right)
- B^{-1} \int \eta^2 e^{-|x|/2}.
\]
Thus, going back to the original space variable, we find
\begin{align*}
\int_{|y - y_k| \leq \frac B2} & \left( 3\e_y^2 + \tilde\mu_k^{-2} \e^2 - 5Q_k^4 \e^2 + 20(y-y_k)\pa_y Q_k Q_k^3 \e^2 \right) \\
& \geq \kappa_0 \int_{|y - y_k| \leq \frac B2} \left( \e_y^2 + \tilde \mu_k^{-2} \e^2 \right) - B^{-1} \tilde\mu_k^{-2} \|\e\|_{L^2_k}^2.
\end{align*}
Now, we estimate $R_{\mathrm{Vir}}(\e)$.
By~\eqref{simpleVk} and the exponential decay of $Q$, we observe that
\[
\|\V - Q_k\|_{L^\infty(|y - y_k| \leq \frac B2)} \lesssim \sum_j \|V_j - Q_j\|_{L^\infty}
+ \sum_{j\neq k} \|Q_j\|_{L^\infty(|y - y_k| \leq \frac B2)} \lesssim |s|^{-\frac 34}
\]
and, similarly,
$\| \pa_y(\V - Q_k)\|_{L^\infty(|y - y_k| \leq \frac B2)} \lesssim |s|^{-1}$.
Using also~\eqref{apriori:mutilde} and $\|\e\|_{L^\infty} \lesssim |s|^{-\frac 12}$
from~\eqref{apriori:eps}, we thus find
\[
|R_{\mathrm{Vir}}(\e)| \lesssim B |s|^{-\frac 12} \int_{|y - y_k| \leq \frac B2} \e^2.
\]
In conclusion for this term, we have obtained, for $|S_0|$ large enough (depending on $B$),
\[
f_1^\sim \leq - \frac {\kappa_0}2 \mu_k^{-2} \int_{|y - y_k| \leq \frac B2}
\left( \e_y^2 + \mu_k^{-2} \e^2 \right) \pa_y\ph_k + C B^{-2} \|\e\|_{L^2_k}^2.
\]

\emph{Estimate of $f_1$.}
Gathering the above estimates of $f_1^<$, $f_1^\sim$ and $f_1^>$,
using~\eqref{eL2k}, assuming $\kappa_0 < \frac 12$ and $B$ large enough,
and finally letting $\kappa_1 = \frac {\kappa_0}4 > 0$, we obtain
\[
f_1 \leq - \kappa_1 \mu_k^{-2} \int \left( \e_y^2 + \mu_k^{-2} \e^2 \right) \pa_y\ph_k
+ C \sum_{j<k} \|\e\|_{L_j^2}^2 + C |s|^{-\frac 52}.
\]

\medskip

\textbf{Control of $f_2$.}
Using integrations by parts, we find the identities
\begin{align*}
-\int (\Lambda_k \e) \pa_y(\psi_k\e_y) &= \int \e_y^2 \psi_k - \frac 12 \int (y - y_k) \e_y^2 \pa_y\psi_k, \\
\int (\Lambda_k \e) \e \ph_k &= - \frac 12 \int (y - y_k) \e^2 \pa_y\ph_k,
\intertext{and}
-\int (\Lambda_k \e) \left[ (\V + \e)^5 - \V^5 \right] \psi_k
&= \frac 16 \int \left[ (y - y_k) \pa_y\psi_k - 2\psi_k \right] \left[ (\V + \e)^6 - \V^6 - 6\V^5\e \right] \\
&\quad + \int (\Lambda_k \V) \left[ (\V + \e)^5 - \V^5 - 5\V^4\e \right] \psi_k.
\end{align*}
Thus, from the definition of $\F_k$, we may rewrite $f_2$ as
\begin{align*}
f_2 &= -\frac {\F_k}{43|s|} - \frac {\mu_k^{-2}}{|s|} \int \e^2 \ph_k - \frac 12 |s|^{-1} \int (y - y_k) \e_y^2 \pa_y\psi_k
- \frac 12 \mu_k^{-2} |s|^{-1} \int (y - y_k) \e^2 \pa_y\ph_k \\
&\quad + \frac 16 |s|^{-1} \int (y - y_k) \left[ (\V + \e)^6 - \V^6 - 6\V^5\e \right] \pa_y\psi_k,
\end{align*}
and so, using~\eqref{cut:onR} and~\eqref{eq:Fcoercivity},
\[
f_2 \leq - \frac {\mu_k^{-2}}{|s|} \int \e^2 \ph_k
+ C |s|^{-1} \int |y - y_k| \left( \e_y^2 + \mu_k^{-2} \e^2 \right) \pa_y\ph_k + C|s|^{-10}.
\]
From~\eqref{cut:onR} again, if $|y - y_k| \geq |s|^{\frac 14}$, then
$|y - y_k| \pa_y\ph_k(y) \lesssim e^{-\frac 1{2B} |s|^{\frac 14}}$,
and so, using also~\eqref{apriori:eps},
\[
|s|^{-1} \int |y - y_k| \left( \e_y^2 + \mu_k^{-2} \e^2 \right) \pa_y\ph_k
\lesssim |s|^{-\frac 34} \int_{|y - y_k| \leq |s|^{\frac 14}} \left( \e_y^2 + \mu_k^{-2} \e^2 \right) \pa_y\ph_k + |s|^{-10}.
\]
In conclusion for this term, we have obtained, for $|S_0|$ large enough,
\[
f_2 \leq - \frac {\mu_k^{-2}}{|s|} \int \e^2 \ph_k
+ \frac {\kappa_1}{10} \mu_k^{-2} \int \left( \e_y^2 + \mu_k^{-2} \e^2 \right) \pa_y\ph_k + C |s|^{-10}.
\]

\medskip

\textbf{Control of $f_3$.}
From the expression~\eqref{decompEV} of $\E_\V$, we may rewrite $f_3$ as
\begin{align*}
\frac 12 f_3 &=\sum_j m_{j,1} \int (\Lambda_j V_j) \Gt_k(\e) + \sum_j m_{j,2} \int (\pa_y V_j) \Gt_k(\e)
- \sum_j \dot r_j \int R_j \Gt_k(\e) \\
&\quad + \left( 2\mu_k^{-2} \sum_{j<k} \dot a_j \langle \e,P_j \rangle - \sum_j \dot a_j \int P_j \Gt_k(\e) \right)
- \int \P \Gt_k(\e) - \dot\mu_k \mu_k^{-3} \int \e^2 \ph_k \\
&= f_{3,1} + f_{3,2} + f_{3,3} + f_{3,4} + f_{3,5} + f_{3,6}.
\end{align*}

\emph{Estimate of $f_{3,1}$.}
By integration by parts, we have
\[
\int (\Lambda_j V_j) \Gt_k(\e) = - \int \e \left[ \pa_{yy} (\Lambda_j V_j) \psi_k - \mu_k^{-2} (\Lambda_j V_j) \ph_k
+ 5\V^4 (\Lambda_j V_j) \psi_k + \pa_y(\Lambda_j V_j) \pa_y\psi_k \right].
\]
First, by $R\in \Y$, \eqref{b:Pky} and~\eqref{eq:Wkright},
we note that, for all $1\leq j\leq K$ and all $0\leq m\leq 19$,
$|\pa_y^m V_j(y)| \lesssim e^{-\rho_{j,m} (y - y_j)}$ for $y > y_j$.
In particular, for $j > k$, using~\eqref{cut:left} and the estimate $|m_{j,1}| \lesssim \|\e\|_{L^2} \lesssim |s|^{-\frac 12}$
from~\eqref{apriori:eps} and~\eqref{est:mk}, we find
\[
\left| m_{j,1} \int (\Lambda_j V_j) \Gt_k(\e) \right| \lesssim |s|^{-10}.
\]
Next, we focus on the case $j = k$. First, by~\eqref{simpleVk}, we obtain
\[
\int_{|y - y_k| \leq |s|^{\frac 14}} |\Lambda_k V_k - \Lambda _k Q_k|^2 \lesssim \left( \|V_k - Q_k\|_{L^\infty}^2
+ |s|^{\frac 12} \|\pa_y (V_k - Q_k)\|_{L^\infty}^2 \right) |s|^{\frac 14} \lesssim |s|^{-\frac 54}.
\]
Moreover, by the decay properties of $V_k$ and $Q_k$ for $y > y_k$ and by~\eqref{cut:left}, we have
\[
\int_{|y - y_k| > |s|^{\frac 14}} |\Lambda_k V_k - \Lambda _k Q_k|^2 \ph_k \lesssim |s|^{-10} \quad\m{and so}\quad
\int |\Lambda_k V_k - \Lambda_k Q_k|^2 \ph_k \lesssim |s|^{-\frac 54}.
\]
Similarly, we get $ \ds \int |\pa_{yy}(\Lambda_k V_k) - \pa_{yy}(\Lambda_k Q_k)|^2 \psi_k \lesssim |s|^{-\frac 54}$
and also
\[
\int |\V^4(\Lambda_k V_k) - Q_k^4(\Lambda_k Q_k)|^2 \psi_k
+ \int |\pa_y(\Lambda_k V_k) - \pa_y(\Lambda_k Q_k)|^2 \pa_y\psi_k \lesssim |s|^{-\frac 54}.
\]
We also observe that, by the definitions of $\ph_k$ and $\psi_k$ and the exponential decay of $Q$,
\begin{multline*}
\int |\Lambda_k Q_k| \left| \ph_k - \left( 1 + \frac {y-y_k}B \right) \right|^2
+ \int \left( |\pa_{yy}(\Lambda_k Q_k)| + |Q_k^4 (\Lambda_k Q_k)| \right) \left| \psi_k - 1 \right|^2 \\
+ \int |\pa_y(\Lambda_k Q_k)| \left| \pa_y\psi_k \right|^2 \lesssim e^{-\frac B{2\ell_k}}.
\end{multline*}
Therefore, by the Cauchy--Schwarz inequality,
\begin{multline*}
\left| \int (\Lambda_k V_k) \Gt_k(\e) - \int \e \left[ \pa_{yy}(\Lambda_k Q_k)
- \mu_k^{-2} \left( 1 + \frac {y - y_k}B \right) (\Lambda_k Q_k) + 5Q_k^4 (\Lambda_k Q_k) \right] \right| \\
\lesssim |s|^{-\frac 58} \left( \int \e^2 \ph_k \right)^{\frac 12} + e^{-\frac B{4\ell_k}} \|\e\|_{L^2_k}.
\end{multline*}
Now note that, since $L (\Lambda Q) = - 2Q$ from Lemma~\ref{lem:L}, we have the identity
\[
\pa_{yy}(\Lambda_k Q_k) - \tilde\mu_k^{-2} (\Lambda_k Q_k) + 5Q_k^4 (\Lambda_k Q_k) = 2\tilde\mu_k^{-2} Q_k.
\]
Thus, from the three orthogonality conditions~\eqref{orthobis},
the second term in the last estimate cancels and we find
\[
\left| \int (\Lambda_k V_k) \Gt_k(\e) \right| \lesssim |s|^{-\frac 58}
\left( \int \e^2 \ph_k \right)^{\frac 12} + e^{-\frac B{4\ell_k}} \|\e\|_{L^2_k}.
\]
Using also~\eqref{apriori:eps} and~\eqref{est:mk}, it follows that
\begin{align*}
\left| m_{k,1} \int (\Lambda_k V_k) \Gt_k(\e) \right|
& \lesssim \left[ \|\e\|_{L^2_k} + \sum_{j'} \|\e\|_{L^2_{j'}}^2 + |s|^{-2} \right]
\left[ |s|^{-\frac 58} \left( \int \e^2 \ph_k \right)^{\frac 12} + e^{-\frac B{4\ell_k}} \|\e\|_{L^2_k} \right] \\
& \lesssim \left( e^{-\frac B{4\ell_k}} + |s|^{-\frac 18} \right) \|\e\|_{L^2_k}^2
+ |s|^{-\frac 98} \int \e^2 \ph_k + |s|^{-\frac 12} \sum_{j'} \|\e\|_{L^2_{j'}}^2 + |s|^{-4}.
\end{align*}
Finally, for $j < k$, we observe that, by~\eqref{simpleVk},
$\|\Lambda_j V_j - \Lambda_j Q_j\|_{L^2} \lesssim |s|^{-\frac 12}$,
and similarly
\[
\|\pa_{yy}(\Lambda_j V_j) - \pa_{yy}(\Lambda_j Q_j)\|_{L^2}
+ \|\V^4 (\Lambda_j V_j) - Q_j^4 (\Lambda_j Q_j)\|_{L^2}
+ \|\pa_y(\Lambda_j V_j) \pa_y\psi_k\|_{L^2} \lesssim |s|^{-\frac 12}.
\]
Proceeding as before, using~\eqref{cut:onR}, we obtain in this case
\[
\left| \int (\Lambda_j V_j) \Gt_k(\e) \right| \lesssim |s|^{-\frac 12} \left( \int \e^2 \ph_k \right)^{\frac 12} + \|\e\|_{L_j^2}.
\]
By~\eqref{apriori:eps} and~\eqref{est:mk}, it follows that
\begin{align*}
\left| m_{j,1} \int (\Lambda_j V_j) \Gt_k(\e) \right|
& \lesssim \left[ \|\e\|_{L^2_j} + \sum_{j'} \|\e\|_{L^2_{j'}}^2 + |s|^{-2} \right]
\left[ |s|^{-\frac 12} \left( \int \e^2 \ph_k \right)^{\frac 12} + \|\e\|_{L_j^2} \right] \\
& \leq C \|\e\|_{L^2_j}^2 + \frac {\mu_k^{-2}}{10K|s|} \int \e^2 \ph_k
+ C |s|^{-\frac 12} \sum_{j'} \|\e\|_{L^2_{j'}}^2 + C |s|^{-4}.
\end{align*}
Therefore we have obtained, for $B$ and $|S_0|$ large enough, and using~\eqref{eL2k},
\[
|f_{3,1}| \leq \frac {\kappa_1}{10} \mu_k^{-4} \int \e^2 \pa_y\ph_k
+ \frac {2\mu_k^{-2}}{10|s|} \int \e^2 \ph_k + C |s|^{-\frac 12} \sum_j \|\e\|_{L^2_j}^2
+ C \sum_{j<k} \|\e\|_{L_j^2}^2 + C |s|^{-4}.
\]

\emph{Estimate of $f_{3,2}$.}
By integration by parts, we have
\[
\int \pa_y(V_j) \Gt_k(\e) = - \int \e \left( \pa_{yyy} V_j \psi_k - \mu_k^{-2} \pa_y V_j \ph_k
+ 5\V^4 \pa_y V_j \psi_k + \pa_{yy} V_j \pa_y\psi_k \right).
\]
We use similar arguments as for $f_{3,1}$, and obtain the same estimates in the cases $j > k$ and $j < k$.
In the case $j = k$, we also obtain similarly, using~\eqref{simpleVk},
\begin{multline*}
\left| \int \pa_y(V_k) \Gt_k(\e) - \int \e \left[ \pa_{yyy} Q_k
- \mu_k^{-2} \left( 1 + \frac {y - y_k}B \right) \pa_y Q_k + 5Q_k^4 \pa_y Q_k \right] \right| \\
\lesssim |s|^{-\frac 34} \left( \int \e^2 \ph_k \right)^{\frac 12} + e^{-\frac B{4\ell_k}} \|\e\|_{L^2_k}.
\end{multline*}
Using again~\eqref{orthobis}, we find
$\langle \e,(\cdot - y_k) \pa_y Q_k \rangle = \langle \e,\Lambda_k Q_k \rangle - \frac 12 \langle \e, Q_k \rangle = 0$.
Thus, from the cancellation~\eqref{LQkprime} and the estimate~\eqref{apriori:mutilde}, we have
\[
\left| \int \pa_y(V_k) \Gt_k(\e) \right| \lesssim |s|^{-\frac 34} \left( \int \e^2 \ph_k \right)^{\frac 12}
+ \left( e^{-\frac B{4\ell_k}} + |s|^{-1} \right) \|\e\|_{L^2_k}.
\]
Therefore we also obtain, for $B$ and $|S_0|$ large enough,
\[
|f_{3,2}| \leq \frac {\kappa_1}{10} \mu_k^{-4} \int \e^2 \pa_y\ph_k
+ \frac {2\mu_k^{-2}}{10|s|} \int \e^2 \ph_k + C |s|^{-\frac 12} \sum_j \|\e\|_{L^2_j}^2
+ C \sum_{j<k} \|\e\|_{L_j^2}^2 + C |s|^{-4}.
\]

\emph{Estimate of $f_{3,3}$.}
By integration by parts, we have
\[
-\int R_j \Gt_k(\e) = \int \e \left( \pa_{yy} R_j \psi_k + \pa_y R_j \pa_y\psi_k
+ 5\V^4 R_j \psi_k - \mu_k^{-2} R_j \ph_k \right).
\]
Since $R\in\Y$, we have $\|R_j\|_{H^2} \lesssim 1$ for any $1\leq j\leq K$ and so we find,
using also $\|\V\|_{L^\infty} \lesssim 1$ and~\eqref{apriori:eps},
\[
\left| \int R_j \Gt_k(\e) \right| \lesssim \|R_j\|_{H^2} \|\e\|_{L^2} \lesssim |s|^{-\frac 12}.
\]
Thus, using~\eqref{est:dotrk}, we obtain as before
\[
|f_{3,3}| \lesssim |s|^{-\frac 12} \sum_j |\dot r_j| \lesssim |s|^{-\frac 12} \sum_j \|\e\|_{L^2_j}^2 + |s|^{-\frac 52}.
\]

\emph{Estimate of $f_{3,4}$.}
By integration by parts, we have
\[
-\int P_j \Gt_k(\e) = \int \e \left( \pa_{yy} P_j \psi_k + \pa_y P_j \pa_y\psi_k
+ 5\V^4 P_j \psi_k - \mu_k^{-2} P_j \ph_k \right).
\]
From~\eqref{eq:Pknorms}, $\| \pa_y^m P_j \|_{L^2} \lesssim 1$ for $m = 1,2$ and so,
by~\eqref{apriori:eps}, for any $1\leq j\leq K$,
\[
\left| \int \e \Bigl( \pa_{yy} P_j \psi_k + \pa_y P_j \pa_y\psi_k \Bigr) \right|
\lesssim \|\e\|_{L^2} \lesssim |s|^{-\frac 12}.
\]
From~\eqref{simpleVk}, we have $\|\V\|_{L^2} + \|\V\|_{L^\infty} \lesssim 1$.
Thus, for any $1\leq j\leq K$,
\[
\left| \int \V^4 \e P_j \psi_k \right| \lesssim \int \V^4 |\e|
\lesssim \|\V\|_{L^\infty}^3 \|\V\|_{L^2} \|\e\|_{L^2} \lesssim \|\e\|_{L^2} \lesssim |s|^{-\frac 12}.
\]
Next, for $j\geq k$, by~\eqref{b:Pky} and $\ph_k(y) \lesssim e^{\frac 1B (y - y_k)}$ for $y < y_k$ from~\eqref{cut:left},
we have $\|P_j \ph_k\|_{L^2} \lesssim 1$, and so
\[
\left| \langle \e,P_j \ph_k \rangle \right| \lesssim \|\e\|_{L^2} \|P_j \ph_k\|_{L^2}
\lesssim \|\e\|_{L^2} \lesssim |s|^{-\frac 12}.
\]
In contrast, for $j < k$, by~\eqref{b:Pky} and $\ph_k(y) = 2 - e^{-\frac 1B (y - y_k)}$ for $y > y_k + B$
from the definition of $\ph$, we have $\|P_j (\ph_k - 2)\|_{L^2} \lesssim |s|^{-10}$, and so
\[
\left| 2\langle \e,P_j \rangle - \langle \e,P_j \ph_k \rangle \right|
\lesssim \|\e\|_{L^2} \|P_j (\ph_k - 2)\|_{L^2} \lesssim |s|^{-10}.
\]
Using also~\eqref{est:dotak}, it follows from these estimates that
\[
|f_{3,4}| \lesssim |s|^{-\frac 12} \sum_j |\dot a_j|
\lesssim |s|^{-\frac 12} \sum_j \|\e\|_{L^2_j}^2 + |s|^{-\frac 52}.
\]

\emph{Estimate of $f_{3,5}$.}
By integration by parts, we have
\[
- \int \P \Gt_k(\e) = \int \e \left( \P_{yy} \psi_k + \P_y \pa_y\psi_k
+ 5\V^4 \P \psi_k - \mu_k^{-2} \P \ph_k \right).
\]
From~\eqref{cut:onR}, we have $\psi_k + \pa_y\psi_k \lesssim \ph_k$ on $\R$ and
$\ph_k(y) \lesssim e^{\frac 1B (y - y_k)}$ for $y < y_k$.
Thus, using also $\|\V\|_{L^\infty} \lesssim 1$ from~\eqref{simpleVk}, we find
\[
|f_{3,5}| \lesssim \|\P\|_{H^2(y > y_k - |s|^{\frac 14})} \left( \int \e^2 \ph_k \right)^{\frac 12}
+ e^{-\frac 1B |s|^{\frac 14}} \|\P\|_{H^2} \|\e\|_{L^2}.
\]
From~\eqref{est:Psi}, \eqref{est:Psik} and~\eqref{eq:BS}, we obtain the estimate
\[
|f_{3,5}| \lesssim \left( |s|^{-\frac {13}8} + |s|^{-\frac 32 - \d_{k-1}^-} \right)
\left( \int \e^2 \ph_k \right)^{\frac 12} + |s|^{-10}
\leq \frac {\mu_k^{-2}}{10|s|} \int \e^2 \ph_k + C |s|^{-2 - 2\d_{k-1}^-}.
\]

\emph{Estimate of $f_{3,6}$.}
We first decompose $f_{3,6}$ as
\[
- f_{3,6} = \left[ \left( \frac {\dot \mu_k}{\mu_k} + \frac 1{2\mu_k^3\tau_k} - \frac 1{2s} + \frac {a_k}{\mu_k^3} \right)
- \left( \frac 1{2\mu_k^3\tau_k} - \frac 1{2s} \right) - \frac {a_k}{\mu_k^3} \right] \mu_k^{-2} \int \e^2 \ph_k.
\]
Thus, using on the one hand the definition~\eqref{def:mod} and the estimate~\eqref{est:mk} of $m_{k,1}$,
and on the other hand the estimates $\left| \frac 1{2\mu_k^3\tau_k} - \frac 1{2s} \right| \lesssim |s|^{-1 - \d_k}$
and $|a_k| \leq |s|^{-1 - \d_k^-}$ from~\eqref{eq:BS}--\eqref{eq:BS2}, we find
\begin{align*}
|f_{3,6}| &\lesssim \left( \|\e\|_{L_k^2} + \sum_j \|\e\|_{L_j^2}^2 + |s|^{-1 - \d_k^-} \right) \mu_k^{-2} \int \e^2 \ph_k \\
&\lesssim \|\e\|_{L^2}^2 \|\e\|_{L_k^2} + \|\e\|_{L^2}^2 \sum_j \|\e\|_{L_j^2}^2 + |s|^{-1 - \d_k^-} \mu_k^{-2} \int \e^2 \ph_k.
\end{align*}
Using the estimate on $\|\e\|_{L^2}$ from~\eqref{eq:BS}, we obtain
\[
\|\e\|_{L^2}^2 \|\e\|_{L_k^2} \lesssim |s|^{-1 - 2\d_{K+1}^+} \|\e\|_{L_k^2}
\lesssim |s|^{-\d_{K+1}^+} \|\e\|_{L_k^2}^2 + |s|^{-2 - 3\d_{K+1}^+},
\]
and so, since $3\d_{K+1}^+ > 3\d_{K+1} > 2\d_0 > 2\d_{k-1}^-$ from~\eqref{prop:d},
\[
|f_{3,6}| \lesssim |s|^{-\d_{K+1}^+} \mu_k^{-4} \int \e^2 \pa_y\ph_k + |s|^{-2 - 2\d_{k-1}^-}
+ |s|^{-1} \sum_j \|\e\|_{L_j^2}^2 + |s|^{-1 - \d_k^-} \mu_k^{-2} \int \e^2 \ph_k.
\]
Thus, taking $|S_0|$ large enough, we obtain
\[
|f_{3,6}| \leq \frac {\kappa_1}{10} \mu_k^{-4} \int \e^2 \pa_y\ph_k + \frac {\mu_k^{-2}}{10|s|} \int \e^2 \ph_k
+ C |s|^{-1} \sum_j \|\e\|_{L_j^2}^2 + C |s|^{-2 - 2\d_{k-1}^-}.
\]

\emph{Estimate of $f_3$.}
Gathering the above estimates of $f_{3,1},\ldots,f_{3,6}$, we obtain
\[
|f_3| \leq \frac {3\kappa_1}{10} \mu_k^{-4} \int \e^2 \pa_y\ph_k + \frac {6\mu_k^{-2}}{10|s|} \int \e^2 \ph_k
+ C |s|^{-\frac 12} \sum_j \|\e\|_{L^2_j}^2 + C \sum_{j<k} \|\e\|_{L^2_j}^2 + C |s|^{-2 - 2\d_{k-1}^-}.
\]

\medskip

\textbf{Control of $f_4$.}
By integration by parts, we have
\begin{align*}
\int \e_y G_k(\e) &= -\frac 12 \int \e_y^2 \pa_y\psi_k - \frac 12 \mu_k^{-2} \int \e^2 \pa_y\ph_k
+ \frac 16 \int \left[ (\V + \e)^6 - \V^6 - 6\V^5\e \right] \pa_y\psi_k \\
&\quad + \int \V_y \left[ (\V + \e)^5 - \V^5 - 5\V^4\e \right] \psi_k.
\end{align*}
Thus, we may rewrite $f_4$ as
\begin{align*}
f_4 &= - \left( \dot y_k - \frac {y_k}{2s} - \frac 1{\mu_k^2} \right)
\int \left\{ \e_y^2 \pa_y\psi_k + \mu_k^{-2} \e^2 \pa_y\ph_k
- \frac 13 \left[ (\V + \e)^6 - \V^6 - 6\V^5\e \right] \pa_y\psi_k \right\} \\
&\quad + 2 \int \left[ \V_{yyy} + 5\V_y\V^4 - \mu_k^{-2} \V_y \right] \left[ (\V + \e)^5 - \V^5 - 5\V^4\e \right] \psi_k
= f_{4,1} + f_{4,2}.
\end{align*}

\emph{Estimate of $f_{4,1}$.}
We first note that, from the definition~\eqref{def:mod} and the estimate~\eqref{est:mk} of~$m_{k,2}$,
and from~\eqref{apriori:eps},
\[
\left| \dot y_k - \frac {y_k}{2s} - \frac 1{\mu_k^2} \right| \lesssim |\vec m_k| + |s|^{-1}
\lesssim \|\e\|_{L_k^2} + |s|^{-1} \lesssim |s|^{-\frac 12}.
\]
Thus, using~\eqref{cut:onR} and taking $|S_0|$ large enough, we obtain
\[
|f_{4,1}| \lesssim |s|^{-\frac 12} \int \left( \e_y^2 +\mu_k^{-2} \e^2 \right) \pa_y\ph_k
\leq \frac {\kappa_1}{10} \mu_k^{-2} \int \left( \e_y^2 + \mu_k^{-2} \e^2 \right) \pa_y\ph_k.
\]

\emph{Estimate of $f_{4,2}$.}
We decompose $f_{4,2}$ as $f_{4,2} = f_{4,2}^< + f_{4,2}^\sim + f_{4,2}^>$,
where $f_{4,2}^<$, $f_{4,2}^\sim$ and $f_{4,2}^>$ respectively correspond to integration
on $y - y_k < -\frac B2$, $|y - y_k| \leq \frac B2$ and $y - y_k > \frac B2$,
and we follow the above calculation done for the estimate of $f_1$.

First, to estimate $f_{4,2}^<$, we recall that $\|\V\|_{L^\infty} \lesssim 1$
from~\eqref{simpleVk}, and so, using also~\eqref{cut:onR} and~\eqref{cut:left},
\[
|f_{4,2}^<| \lesssim B \int_{y - y_k < -\frac B2} \left( |\V_{yyy}| + |\V_y| \right)
\left( |\e|^5 + |\V|^3\e^2 \right) \pa_y\ph_k.
\]
Thus, we may estimate this term as the one similar in $f_1^<$ and find,
taking $B$ large enough and then $|S_0|$ large enough,
\[
|f_{4,2}^<| \leq \frac {\kappa_1}{10} \mu_k^{-4} \int \e^2 \pa_y\ph_k + C |s|^{-10}.
\]

Second, to estimate $f_{4,2}^>$, we recall that $\|\pa_y^m\V\|_{L^\infty} \lesssim 1$
for $m=1,3$ from~\eqref{simpleVk} and so, proceeding as in the estimate of $f_1^>$, we find
\[
|f_{4,2}^>| \lesssim \int_{y - y_k > \frac B2} \left( |\e|^5 + |\V|^3\e^2 \right)
\lesssim \sum_{j < k} \|\e\|_{L^2_j}^2 + B^{-2} \|\e\|_{L^2_k}^2 + |s|^{-\frac 52}.
\]
Thus, using~\eqref{eL2k} and taking $B$ large enough, we find
\[
|f_{4,2}^>| \leq \frac {\kappa_1}{10} \mu_k^{-4} \int \e^2 \pa_y\ph_k
+ C \sum_{j < k} \|\e\|_{L^2_j}^2 + C |s|^{-\frac 52}.
\]

Third, to estimate $f_{4,2}^\sim$, we find as in the estimate of $f_1^\sim$ that, for $m=1,3$,
\[
\|\V - Q_k\|_{L^\infty(|y - y_k| \leq \frac B2)} \lesssim |s|^{-\frac 34} \quad\m{and}\quad
\|\pa_y^m(\V - Q_k)\|_{L^\infty(|y - y_k| \leq \frac B2)} \lesssim |s|^{-1}.
\]
Thus, using also~\eqref{apriori:mutilde}, the cancellation~\eqref{LQkprime}
and the identity $\pa_y\ph_k(y) = \frac 1B$ for $|y - y_k| \leq \frac B2$, we find
\[
|f_{4,2}^\sim| \lesssim \int_{|y - y_k| \leq \frac B2} \left| \V_{yyy} + 5\V_y\V^4 - \mu_k^{-2} \V_y \right| \e^2
\lesssim |s|^{-\frac 34} \int_{|y - y_k| \leq \frac B2} \e^2 \lesssim B |s|^{-\frac 34} \int \e^2 \pa_y\ph_k.
\]

Finally, gathering the estimates of $f_{4,2}^<$, $f_{4,2}^\sim$ and $f_{4,2}^>$,
and taking $|S_0|$ large enough, we have
\[
|f_{4,2}| \leq \frac {3\kappa_1}{10} \mu_k^{-4} \int \e^2 \pa_y\ph_k
+ C \sum_{j < k} \|\e\|_{L^2_j}^2 + C |s|^{-\frac 52}.
\]

\emph{Estimate of $f_4$.}
Gathering the above estimates of $f_{4,1}$ and $f_{4,2}$, we obtain
\[
|f_4| \leq \frac {4\kappa_1}{10} \mu_k^{-2} \int \left( \e_y^2 + \mu_k^{-2} \e^2 \right) \pa_y\ph_k
+ C \sum_{j < k} \|\e\|_{L^2_j}^2 + C |s|^{-\frac 52}.
\]

\medskip

\textbf{Conclusion.}
Gathering the above estimates of $f_1$, $f_2$, $f_3$ and $f_4$, we have obtained
\begin{align*}
f_1 + f_2 + f_3 + f_4 &\leq -\frac {\kappa_1}5 \mu_k^{-2} \int \left( \e_y^2 + \mu_k^{-2} \e^2 \right) \pa_y\ph_k
- \frac {2\mu_k^{-2}}{5|s|} \int \e^2 \ph_k \\
&\quad + C \sum_{j < k} \|\e\|_{L_j^2}^2 + C |s|^{-\frac 12} \sum_j \|\e\|_{L^2_j}^2 + C |s|^{-2 - 2\d_{k-1}^-}.
\end{align*}
Thus, taking $|S_0|$ large enough, we have
\begin{multline*}
\frac 1{|s|^{1 + \frac 1{43}}} \frac d{d s} \left\{ |s|^{1 + \frac 1{43}}
\left[ \F_k + 4\mu_k^{-2} \sum_{j < k} a_j \langle \e,P_j \rangle + 2c \mu_k^{-2} |s| \sum_{j < k} a_j^2 \right] \right\} \\
\leq -\frac {\kappa_1}{10} \mu_k^{-2} \int \left( \e_y^2 + \mu_k^{-2} \e^2 \right) \pa_y\ph_k + C \sum_{j < k} \|\e\|_{L_j^2}^2
+ C |s|^{-\frac 12} \sum_j \|\e\|_{L^2_j}^2 + C |s|^{-2 - \d_{k-1}^- - \d_k^+},
\end{multline*}
which proves~\eqref{eq:Fvariation} and concludes the proof of Lemma~\ref{lem:enerloc}.
\end{proof}

\subsection{Local and global estimates on $\e_n$}

We claim the following estimates on $\e_n$,
strictly improving the first two lines of~\eqref{eq:BS} for $|S_0|$ large enough:
\begin{gather}
\|\e_n(s)\|_{H^1}^2 \lesssim |s|^{-1 - \d_K^- - \d_{K+1}^+}
\leq \frac 14 |s|^{-1 - 2\d_{K+1}^+}, \label{close:1} \\
\|\e_n(s)\|_{H^1(y > y_k)}^2 \lesssim |s|^{-1 - \d_{k-1}^- - \d_k^+}
\leq \frac 14 |s|^{-1 - 2\d_k^+}, \label{close:1bis} \\
\int_{S_n}^s |\tau|^{1 + \frac 1{43}} \left( \|\pa_y\e_n(\tau)\|_{L^2_k}^2 + \|\e_n(\tau)\|_{L^2_k}^2 \right) d\tau
\lesssim |s|^{-\d_{k-1}^- - \d_k^+ + \frac 1{43}} \leq \frac 12 |s|^{-2\d_k^+ + \frac 1{43}}. \label{close:2}
\end{gather}
Note that, as a consequence of~\eqref{close:2}, by the Cauchy--Schwarz inequality,
\be \label{loc}
\int_{S_n}^s \|\e_n(\tau)\|_{L^2_k}\, d\tau
\lesssim |s|^{-\frac 1{86}} \left(\int_{S_n}^s |\tau|^{1+\frac 1{43}}
\|\e_n(\tau)\|_{L^2_k}^2\, d\tau \right)^{\frac 12} \lesssim |s|^{-\frac12(\d_{k-1}^-+\d_k^+)}.
\ee

\emph{Proof of~\eqref{close:1}.}
As before, we denote $\e_n$ and $\V_n$ simply by $\e$ and $\V$.
First, we prove the bound on $\|\e(s)\|_{L^2}$.
Integrating~\eqref{g.mass} on $[S_n,s]$, using
$\int_{S_n}^s |\tau| \|\e(\tau)\|_{L^2_j}^2 \,d\tau \leq |s|^{-2\d_j^+}$
from~\eqref{eq:BS} and finally~\eqref{initial}, we get
\begin{align*}
&\|\e\|_{L^2}^2 + 2 \sum_k a_k \langle \e,P_k \rangle + c |s| \sum_k a_k^2 \\
&\leq C |s|^{-1} \sum_j \int_{S_n}^s |\tau| \|\e(\tau)\|_{L^2_j}^2 \,d\tau
+ C |s|^{-1 - \d_K^- - \d_{K+1}^+} + c |s|^{-1} |S_n|^2 \sum_k a_k^2(S_n) \\
&\leq C |s|^{-1 - 2\d_K^+} + C |s|^{-1 - \d_K^- - \d_{K+1}^+} + C |s|^{-1 - 2\d_K^-}
\leq C |s|^{-1 - \d_K^- - \d_{K+1}^+}.
\end{align*}
Using~\eqref{b:Jk} and the bound on $|a_k|$ from~\eqref{eq:BS},
we obtain $\|\e\|_{L^2}^2 \lesssim |s|^{-1 - \d_K^- - \d_{K+1}^+}$.
Now, we may prove the bound on $\|\e(s)\|_{\dot H^1}$.
Integrating~\eqref{g.energy} on $[S_n,s]$, using the bound on $\|\e\|_{L^2}$ found above, we also get
\begin{align*}
\|\e_y\|_{L^2}^2 &\lesssim |s|^{-1} \sum_j \int_{S_n}^s |\tau| \|\e(\tau)\|_{L^2_j}^2 \,d\tau
+ |s|^{-1 - \d_K^- - \d_{K+1}^+} +  \int \left| (\V + \e)^6 - \V^6 - 6\V^5\e \right| \\
&\lesssim |s|^{-1 - 2\d_K^+} + |s|^{-1 - \d_K^- - \d_{K+1}^+} + \|\e\|_{L^2}^2 \lesssim |s|^{-1 - \d_K^- - \d_{K+1}^+}.
\end{align*}

\emph{Proof of~\eqref{close:1bis} and~\eqref{close:2}.}
We denote $\e_n$, $\V_n$ and $\F_{k,n}$ by $\e$, $\V$ and $\F_k$.
Integrating~\eqref{eq:Fvariation} on $[S_n,s]$, using from~\eqref{eq:BS} the estimates
\[
\begin{alignedat}{2}
\int_{S_n}^s |\tau|^{1 + \frac 1{43}} \|\e(\tau)\|_{L^2_j}^2 \,d\tau
&\leq |s|^{-2\d_j^+ + \frac 1{43}}, &&\m{for } j < k, \\
\int_{S_n}^s |\tau|^{\frac 12 + \frac 1{43}} \|\e(\tau)\|_{L^2_j}^2 \,d\tau
&\leq |s|^{-\frac 12 - 2\d_j^+ + \frac 1{43}},\quad &&\m{for all } 1\leq j\leq K,
\end{alignedat}
\]
we find, using also the estimate $a_j^2(S_n) \lesssim |S_n|^{-2 - 2\d_{k-1}^-}$ for $j < k$ from~\eqref{initial},
\begin{multline*}
|s|^{1 + \frac 1{43}} \left( \F_k + 4 \mu_k^{-2} \sum_{j < k} a_j \langle \e,P_j \rangle
+ 2c \mu_k^{-2} |s| \sum_{j < k} a_j^2 \right) + \kappa \int_{S_n}^s |\tau|^{1 + \frac 1{43}}
\left( \|\pa_y\e\|_{L^2_k}^2 + \|\e\|_{L^2_k}^2 \right) d\tau \\
\lesssim |s|^{-2\d_{k-1}^+ + \frac 1{43}} + |s|^{-\frac 12 - 2\d_K^+ + \frac 1{43}}
+ |s|^{-\d_k^+ - \d_{k-1}^- + \frac 1{43}} + |s|^{-2\d_{k-1}^- + \frac 1{43}}
\lesssim |s|^{-\d_k^+ - \d_{k-1}^- + \frac 1{43}}.
\end{multline*}
Moreover, using~\eqref{b:Jk} and the bound on $|a_j|$ from~\eqref{eq:BS}, we have
\[
\sum_{j < k} \left| a_j \langle \e,P_j \rangle \right| \lesssim |s|^{-1 - \d_k^+ - \d_{k-1}^-}.
\]
Thus, using also~\eqref{eq:Fcoercivity} and the definitions of $\psi_k$ and $\ph_k$, we obtain
\[
\|\e\|_{H^1(y > y_k)}^2 \leq \int (\pa_y \e)^2 \psi_k + \int \e^2 \ph_k \lesssim |s|^{-1 - \d_k^+ - \d_{k-1}^-}
\]
and
\[
\int_{S_n}^s |\tau|^{1 + \frac 1{43}} \left( \|\pa_y\e(\tau)\|_{L^2_k}^2 + \|\e(\tau)\|_{L^2_k}^2 \right) d\tau
\lesssim |s|^{-\d_k^+ - \d_{k-1}^- + \frac 1{43}}.
\]

\subsection{Parameters estimates} \label{sec:parameters}

Now, we close the parameters estimates in~\eqref{eq:BS} on the time interval $[S_n,S_n^*]$.
Technically, even if the parameters $e_k$ and $f_k$ do not appear in the bootstrap~\eqref{eq:BS},
their estimates are necessary to handle the other parameters.

\medskip

\emph{Estimate of $e_k$.}
Integrating~\eqref{est:dotek} on $[S_n,s]$, using~\eqref{loc} for the terms in $\|\e_n\|_{L_j^2}$
for $j\leq k$ and~\eqref{eq:BS} for the terms in $\|\e_n\|_{L_j^2}^2$ and $a_j$,
we obtain, using also~\eqref{prop:d},
\begin{align*}
|e_k(s) - e_k(S_n)| &\lesssim \sum_{j\leq k} \int_{S_n}^s \|\e_n(\tau)\|_{L_j^2} \,d\tau
+ \sum_j \int_{S_n}^s |\tau| \|\e_n(\tau)\|_{L_j^2}^2 \,d\tau
+ \sum_{j < k} \int_{S_n}^s |a_j(\tau)| \,d\tau + |s|^{-\frac 12} \\
&\lesssim |s|^{-\frac 12 (\d_{k-1}^- + \d_k^+)} + |s|^{-2\d_K^+} + |s|^{-\d_{k-1}^-} + |s|^{-\frac 12}
\lesssim |s|^{-\frac 12 (\d_{k-1}^- + \d_k^+)}.
\end{align*}
Thus, the value of $e_k(S_n)$ being given in~\eqref{initial}, we find
\be \label{close:3}
\left |e_k(s) - \ell_k \left( \frac 12 + \theta_k \right) \right| \lesssim |s|^{-\frac 12 (\d_{k-1}^- + \d_k^+)}.
\ee

\emph{Estimate of $f_k$.}
We first rewrite~\eqref{est:fk} in terms of
\[
g_k(s) = f_k(s) + \int_{S_n}^s (1 + \bar \mu_k(\tau)) \langle \e_n(\tau),A_k(\tau) \rangle \,d\tau,
\]
as
\begin{multline*}
\left| \dot g_k + \frac 12 \left( 1 + 3\theta_k \right) \frac {g_k}s \right|
\lesssim |s|^{-1} \left| \int_{S_n}^s (1 + \bar \mu_k(\tau))
\langle \e_n(\tau),A_k(\tau) \rangle \,d\tau \right| + \sum_j \|\e\|_{L_j^2}^2 \\
+ |s|^{-1} \left( |s|^{-\frac 1{42}} + |\bar\mu_k|^2 + |\bar y_k|^2
+ \left| e_k - \ell_k \left( \frac 12 + \theta_k \right) \right|
+ \sum_{j < k} (|\bar\mu_j| + |\bar\tau_j| + |\bar y_j|) \right).
\end{multline*}
Using the decay properties of $A_k$, \eqref{eq:BS}--\eqref{eq:BS2} and~\eqref{close:3}, we obtain
\begin{align*}
\left| \dot g_k + \frac 12 \left( 1 + 3\theta_k \right) \frac {g_k}s \right|
&\lesssim |s|^{-1} \int_{S_n}^s \|\e_n(\tau)\|_{L_k^2} \,d\tau + |s|^{-1 - 2\d_{K+1}^+} \\
&\quad + |s|^{-1} \left( |s|^{-\frac 1{42}} + |s|^{-2\d_k} + |s|^{-\frac 12 (\d_{k-1}^- + \d_k^+)}
+ |s|^{-\d_{k-1}} \right)
\end{align*}
and so, using also~\eqref{loc} and $2\d_k > 2\d_{K+1}^+ > \frac 1{42} > \d_{k-1} > \d_{k-1}^- > \d_k^+$ from~\eqref{prop:d},
\be \label{surg}
\left| \dot g_k + \frac 12 \left( 1 + 3\theta_k \right) \frac {g_k}s \right|
\lesssim |s|^{-1 - \frac 12 (\d_{k-1}^- + \d_k^+)}.
\ee

Following the discussion in Section~\ref{sec:discussion},
we consider now separately the cases $k\in \K^-$ and $k\in \K^+$.
First, let $k\in \K^-$ (if $\K^-$ is empty, we just skip this case).
Then~\eqref{surg} rewrites as
\[
\left| \frac d{d s} \left[ (-s)^{\frac 12 (1 + 3\theta_k)} g_k \right] \right|
\lesssim |s|^{-1 - \frac 12(\d_{k-1}^- + \d_k^+) + \frac 12 (1 + 3\theta_k)}.
\]
Since $-\frac 12(\d_{k-1}^- + \d_k^+) + \frac 12 (1 + 3\theta_k) < -\d_K^+ + \d_{K+1} < 0$ for $k\in \K^-$,
integrating on $[S_n,s]$, using $g_k(S_n) = 0$ from~\eqref{initial},
we get $|g_k(s)| \lesssim |s|^{-\frac 12(\d_{k-1}^- + \d_k^+)}$.
Thus, by~\eqref{loc}, for $|S_0|$ large enough,
\[
|f_k(s)| \lesssim |g_k(s)| + \int_{S_n}^s \|\e_n(\tau)\|_{L^2_k} \,d\tau
\lesssim |s|^{-\frac 12(\d_{k-1}^- + \d_k^+)} \leq |s|^{-\d_k^+}.
\]

Now, let $k\in \K^+$. We use~\eqref{eq:BS2} to obtain directly $|g_k(s)| \leq |s|^{-\d_k^+}$.
As above we obtain, for $|S_0|$ large enough again,
$|f_k(s)| \leq |s|^{-\d_k^+} + C |s|^{-\frac 12(\d_{k-1}^- + \d_k^+)} \leq 2|s|^{-\d_k^+}$.

In conclusion, for all $1\leq k\leq K$, we have
\be \label{close:4}
|f_k(s)| \leq 2 |s|^{-\d_k^+}.
\ee

\emph{Estimate of $\bar y_k$.}
Note that~\eqref{est:barykfk} rewrites as
\[
\left| \frac d{d s} \left[ (-s)^{-\frac 12} \bar y_k \right] \right|
\lesssim |s|^{-\frac 32} |f_k| + |s|^{-\frac 12} \sum_j \|\e\|_{L_j^2}^2 + |s|^{-\frac 32} |\bar\mu_k|^2 + |s|^{-\frac 52}.
\]
Thus, using~\eqref{eq:BS} and~\eqref{close:4}, we obtain,
since $2\d_k > 2\d_{K+1}^+ > \d_0 > \d_k^+$ from~\eqref{prop:d},
\[
\left| \frac d{d s} \left[ (-s)^{-\frac 12} \bar y_k \right] \right|
\lesssim |s|^{-\frac 32 - \d_k^+} + |s|^{-\frac 32 - 2\d_{K+1}^+}
+ |s|^{-\frac 32 - 2\d_k} + |s|^{-\frac 52} \lesssim |s|^{-\frac 32 - \d_k^+}.
\]
Integrating on $[S_n,s]$, using $\bar y_k(S_n) = 0$ from~\eqref{initial}, we obtain, for $|S_0|$ large enough,
\be \label{close:5}
|\bar y_k(s)| \lesssim |s|^{-\d_k^+} \leq \frac 12 |s|^{-\d_k}.
\ee

\emph{Estimate of $\bar \mu_k$.}
From~\eqref{close:4} and~\eqref{close:5}, since $f_k = \bar \mu_k + \bar y_k$, we obtain also, for $|S_0|$ large enough,
\be \label{close:6}
|\bar \mu_k(s)| \lesssim |s|^{-\d_k^+} \leq \frac 12 |s|^{-\d_k}.
\ee

\emph{Estimate of $a_k$.}
We extract $a_k$ from the definition~\eqref{def:ek} of the local energy $e_k$ as
\[
a_k = \frac {\mu_k^2}s e_k - \frac 1{2\tau_k} - \frac {4r_k}{\|Q\|_{L^1}}.
\]
But, by~\eqref{eq:rkasymp} and~\eqref{eq:BS}--\eqref{eq:BS2}, we find
\[
\left| \frac {4r_k}{\|Q\|_{L^1}} - \frac {\ell_k^3\theta_k}s \right| \lesssim |s|^{-1 - \d_k}.
\]
Thus, from~\eqref{close:3} and again~\eqref{eq:BS}--\eqref{eq:BS2}, we have
\[
a_k = \frac {\ell_k^3}s (1 + \bar \mu_k)^2 \left( \frac 12 + \theta_k \right)
- \frac {\ell_k^3}{2s} (1 + \bar \tau_k)^{-1} - \frac {\ell_k^3\theta_k}s
+ O\left( |s|^{-1 - \d_k} \right) = O\left( |s|^{-1 - \d_k} \right)
\]
and so, for $|S_0|$ large enough,
\[
|a_k| \lesssim |s|^{-1 - \d_k} \leq \frac 12 |s|^{-1 - \d_k^-}.
\]

In conclusion, we have strictly improved all the estimates in~\eqref{eq:BS}.

\subsection{Topological obstruction} \label{sec:Brouwer}

Since the estimates in~\eqref{eq:BS} have been strictly improved on $[S_n,S_n^*]$
and since we have assumed $S^*_n < S_0$, by continuity,
we know that the bootstrap estimate~\eqref{eq:BS2} has to be saturated at $s = S^*_n$ and,
for some $S_n^{**} \in (S_n^*,S_0)$ close enough to $S_n^*$,
the bootstrap estimates~\eqref{eq:BS} hold on $[S_n,S_n^{**}]$.

Thus, for $s\in [S_n,S_n^{**}]$, we may consider
\[
\NN(s) = \sum_{k\in \K^+} \left[ |s|^{\d_k^+} g_k(s) \right]^2
+ \sum_{k=1}^K \left[ |s|^{\d_k} \bar \tau_k(s) \right]^2
\]
and note that $\NN(S_n^*) = 1$.
By continuity again, we choose $S_n^{**}$ possibly closer to $S_n^*$ so that $\NN(s) \leq 2$
for all $s\in [S_n,S_n^{**}]$.
Now we claim that, for all $s^*\in [S_n,S_n^{**}]$ such that $\NN(s^*) = 1$,
\be \label{eq:BS4}
\frac {d \NN}{d s}(s^*) > 0.
\ee

Indeed, we first compute
\[
\frac 12 \frac {d \NN}{d s}
= \sum_{k\in \K^+} \left( -\d_k^+ |s|^{2\d_k^+ - 1} g_k^2 + |s|^{2\d_k^+} \dot g_k g_k \right)
+ \sum_k \left( -\d_k |s|^{2\d_k - 1} \bar\tau_k^2 + |s|^{2\d_k} \dot{\bar\tau}_k \bar\tau_k \right).
\]
But from~\eqref{surg} we have, for all $1\leq k\leq K$,
\[
\dot g_k = \frac 12 \left( 1 + 3\theta_k \right) \frac {g_k}{|s|}
+ O\left( |s|^{-1 - \frac 12(\d_{k-1}^- + \d_k^+)} \right).
\]
Moreover, from~\eqref{def:tauk} and~\eqref{close:6},
\begin{align*}
\dot{\bar\tau}_k
&= \ell_k^3 \left( \frac {\dot \tau_k}s - \frac {\tau_k}{s^2} \right)
= \frac {\ell_k^3}s \left[ \mu_k^{-3} - \ell_k^{-3} (1 + \bar \tau_k) \right] \\
&= -|s|^{-1} \left[ (1 + \bar\mu_k)^{-3} - (1 + \bar \tau_k) \right]
= |s|^{-1} \bar \tau_k + O\left( |s|^{-1 - \d_k^+} \right).
\end{align*}
Inserting these estimates in the above formula of $\frac {d \NN}{d s}$, we obtain
\begin{align*}
\frac 12 \frac {d \NN}{d s}
&= \sum_{k\in \K^+} \left[ -\d_k^+ + \frac 12 \left( 1 + 3\theta_k \right) \right] |s|^{2\d_k^+ - 1} g_k^2
+ O\left( |s|^{-1 - \frac 12 \d_{k-1}^- + \frac 32 \d_k^+} |g_k| \right) \\
&\quad + \sum_k \left( -\d_k + 1 \right) |s|^{2\d_k - 1} \bar\tau_k^2
+ O\left( |s|^{-1 - \d_k^+ + 2\d_k} |\bar\tau_k| \right)
\end{align*}
and so, since $\NN(s) \leq 2$ for $s\in [S_n,S_n^{**}]$,
\begin{align*}
\frac 12 \frac {d \NN}{d s}
&= \sum_{k\in \K^+} \left[ -\d_k^+ + \frac 12 \left( 1 + 3\theta_k \right) \right] |s|^{2\d_k^+ - 1} g_k^2
+ \sum_k \left( -\d_k + 1 \right) |s|^{2\d_k - 1} \bar\tau_k^2 \\
&\quad + O\left( |s|^{-1 - \frac12 (\d_{k-1}^- - \d_k^+)} \right) + O\left( |s|^{-1 - (\d_k^+ - \d_k)} \right).
\end{align*}
Now we recall that $\frac 12 \left( 1 + 3\theta_k \right) \geq \d_0 > \d_1^+ \geq \d_k^+$ for $k\in \K^+$,
from~\eqref{prop:d} and the definition of~$\K^+$, and similarly
$1 > \frac 1{42} \geq \d_0 > \d_1^+ > \d_k$ for all $1\leq k\leq K$.
Thus,
\[
\frac 12 \frac {d \NN}{d s} \geq (\d_0 - \d_1^+) |s|^{-1} \NN
- C |s|^{-1 - \frac12 (\d_{k-1}^- - \d_k^+)} - C |s|^{-1 - (\d_k^+ - \d_k)}.
\]
Since $\NN(s^*) = 1$, taking $|S_0|$ large enough, we obtain
\[
\frac 12 \frac {d \NN}{d s}(s^*)
\geq (\d_0 - \d_1^+) |s^*|^{-1} - C |s^*|^{-1 - \frac 12 (\d_{k-1}^- - \d_k^+)} - C |s^*|^{-1 - (\d_k^+ - \d_k)}
\geq \frac 12 (\d_0 - \d_1^+) |S_n|^{-1} > 0,
\]
and~\eqref{eq:BS4} is proved.

Finally, we follow classical arguments as in~\cite{C,CMM} to conclude.
We first note that the map $(\XI,\ZETA) \mapsto S_n^*(\XI,\ZETA)$ is continuous.
Indeed, if $\eps > 0$ is given, since $\frac {d \NN}{d s}(S_n^*) > 0$ from~\eqref{eq:BS4},
$\NN$ is strictly increasing on $[S_n^* - \eps,S_n^* + \eps]$ and there exists $\d > 0$
such that $\NN(S_n^* + \eps) > 1 + \d$ and $\NN(S_n^* - \eps) < 1 - \d$.
But, from the transversality property~\eqref{eq:BS4}, we may choose $\d > 0$ possibly smaller
so that $\NN(s) > 1 + \d$ for $s\in [S_n^* + \eps,S_n^{**}]$ and $\NN(s) < 1 - \d$ for $s\in [S_n,S_n^* - \eps]$.
Now note that, by continuity of the flow, there exists $\eta > 0$ such that
if $\| (\tilde\XI,\tilde\ZETA) - (\XI,\ZETA) \| \leq \eta$
then $|\tilde\NN(s) - \NN(s)| \leq \d/2$ for all $s\in [S_n,S_n^{**}]$.
Thus, from the definition of $S_n^*$, we may conclude that
$|S_n^*(\tilde\XI,\tilde\ZETA) - S_n^*(\XI,\ZETA)| \leq \eps$
whenever $\| (\tilde\XI,\tilde\ZETA) - (\XI,\ZETA) \| \leq \eta$,
which proves that $(\XI,\ZETA) \mapsto S_n^*(\XI,\ZETA)$ is indeed continuous.

In particular, the map
\begin{align*}
\mathcal{M} : \B_{d+K} & \to \S_{d+K} \\
(\XI,\ZETA) & \mapsto \left( \{ |S_n^*|^{\d_k^+} g_k(S_n^*) \}_{k\in \K^+},
\{ |S_n^*|^{\d_k} \bar\tau_k(S_n^*) \}_{1\leq k\leq K} \right)
\end{align*}
is also continuous.
Moreover, if $(\XI,\ZETA)\in \S_{d+K}$ then $\NN(S_n) = 1$ from~\eqref{initial},
thus $S_n^* = S_n$ from~\eqref{eq:BS4} and the definition of $S_n^*$,
and so $\mathcal{M}(\XI,\ZETA) = (\XI,\ZETA)$ again from~\eqref{initial}.
In other words, $\mathcal{M}$ restricted to $\S_{d+K}$ is the identity,
and the existence of such a map is contradictory with Brouwer's fixed-point theorem.
Therefore, Proposition~\ref{pr:boot} is proved.

\subsection{Conclusion} \label{sec:conclusion}

\begin{proof}[End of the proof of Proposition~\ref{prop:v}]
By Proposition~\ref{pr:boot}, we may consider a sequence $(v_n)$ of solutions of~\eqref{rescaledkdv}
as in Section~\ref{sec:bootstrap}, such that their decomposition $v_n = \V[\Gab_n] + \e_n$
satisfies the uniform estimates~\eqref{eq:BS}--\eqref{eq:BS2}
and the orthogonality conditions~\eqref{orthobis} on some time interval $[S_n,S_0]$.
In particular, it follows from these estimates that $\|v_n(S_0)\|_{H^1} \leq C$ for some $C > 0$,
so there exists $v_0\in H^1$ such that, up to a subsequence, $v_n(S_0) \rightharpoonup v_0$ in $H^1$ weak,
and we consider the solution $v$ of~\eqref{rescaledkdv} such that $v(S_0) = v_0$.

Let any $S < S_0$, and consider $n$ large enough so that $S_n < S$.
Then, applying Lemma~\ref{lem:weak} on $[S,S_0]$,
we obtain the existence of a $\C^1$ function $\Gab$ such that $\Gab$ and $\e$ defined by $\e = v - \V[\Gab]$
satisfy the estimates~\eqref{eq:BS}--\eqref{eq:BS2} on $[S,S_0]$,
and also~\eqref{orthobis} since $\e_n(s) \rightharpoonup \e(s)$ for all $s\in [S,S_0]$.
Since $S < S_0$ is arbitrary, the function $\Gab$ and $\e$ satisfy the estimates~\eqref{est:final}
and the orthogonality conditions~\eqref{ortho:final} on $(-\infty,S_0]$,
which concludes the proof of Proposition~\ref{prop:v}.
\end{proof}

\begin{proof}[Proof of Theorem~\ref{thm:main}]
We consider the solution $v$ of~\eqref{rescaledkdv} obtained in Proposition~\ref{prop:v}.
First note that, from~\eqref{est:final}, we have
$\|v - \V\|_{H^1} = \|\e\|_{H^1} \leq |s|^{-\frac 12 - \frac 1{43}}$ for all $s\leq S_0$.
Moreover, by the definition of $\V$ and~\eqref{simpleVk}, we recall that
$\|\V - \sum_k Q_k\|_{L^2} \lesssim |s|^{-\frac 12}$ and $\|\V - \sum_k Q_k\|_{\dot H^1} \lesssim |s|^{-1}$.
By the triangle inequality, it follows that
\[
\left\| v - \sum_k Q_k \right\|_{L^2} \lesssim |s|^{-\frac 12} \quad\m{and}\quad
\left\| v - \sum_k Q_k \right\|_{\dot H^1} \lesssim |s|^{-\frac 12 - \frac 1{43}}.
\]
Now we let $T_0 = \frac 1{\sqrt{-2S_0}}$ and consider the solution $u$ of~\eqref{kdv} defined on $(0,T_0]$
by $v = \tilde u$ from the change of variables~\eqref{changevar}.
For $0 < t \leq T_0$, set
\[
\lambda_k(t) = t \tilde \mu_k(s),\quad x_k(t) = t y_k(s),\quad s = -\frac 1{2t^2}.
\]
Then, by~\eqref{apriori:mutilde}, \eqref{est:final} and possibly taking a smaller $T_0 > 0$,
\[
|\lambda_k(t) - \ell_k t| \lesssim t^{1 + \frac 2{43}} \leq t^{1 + \frac 1{22}},\quad
|x_k(t) + \ell_k^{-2} t^{-1}| \lesssim t^{-1 + \frac 2{43}} \leq t^{-1 + \frac 1{22}}.
\]
Moreover, from the above estimates on $v$, we obtain
\[
\left\| u(t) - \sum_k \eps_k \lambda_k^{-\frac 12}(t)
Q\left( \frac {\cdot - x_k(t)}{\lambda_k(t)} \right) \right\|_{L^2} \lesssim t,\quad
\left\| u(t) - \sum_k \eps_k \lambda_k^{-\frac 12}(t)
Q\left( \frac {\cdot - x_k(t)}{\lambda_k(t)} \right) \right\|_{\dot H^1}
\lesssim t^{\frac 2{43}},
\]
and thus~\eqref{maintime} by possibly taking a smaller $T_0 > 0$.

Now, we compute the mass and the energy of the solution $u(t)$.
First, by Lemma~\ref{lem:Vmassenergy} and~\eqref{est:final}, for any $s \leq S_0$, we note that
\[
\left| \|\V(s)\|_{L^2}^2 - K\|Q\|_{L^2}^2 \right| \lesssim |s|^{-1 - \frac 1{43}}
\]
and
\[
\left| E(\V(s)) + \frac {\|Q\|_{L^1}^2}{32s} \sum_k \ell_k (1 + 2\theta_k) \right| \lesssim |s|^{-1 - \frac 1{43}}.
\]

By~\eqref{est:final} again, it first follows that
\[
\left| \|v(s)\|_{L^2}^2 - K \|Q\|_{L^2}^2 \right|
\lesssim \|v(s) - \V(s)\|_{L^2} + \left| \|\V(s)\|_{L^2}^2 - K \|Q\|_{L^2}^2 \right|
\lesssim |s|^{-\frac 12 - \frac 1{43}}.
\]
Moreover, from~\eqref{changevar}, we have $\|u(t)\|_{L^2}^2 = \|v(s)\|_{L^2}^2$ for $s = -\frac 1{2t^2}$.
Thus, since the mass of $u(t)$ is constant, passing to the limit $s\to -\infty$ in the last estimate,
we obtain $\|u(t)\|_{L^2}^2 = K \|Q\|_{L^2}^2$.

To estimate similarly $E(v(s)) = E(\V(s) + \e(s))$, we first note that,
by the definition of the energy and~\eqref{est:final},
\[
\left| E(\V + \e) - E(\V) - \int (\pa_y\V \pa_y\e - \V^5\e) \right|
\lesssim \|\e\|_{H^1}^2 \lesssim |s|^{-1 - \frac 2{43}}.
\]
Moreover, by~\eqref{simpleVk} and~\eqref{est:final}, we have
\begin{multline*}
\left| \int (\pa_y\V \pa_y\e - \V^5\e) - \sum_k \int \left( \pa_y Q_k \pa_y\e - Q_k^5 \e \right) \right| \\
\lesssim \left( \left\| \V - \sum_k Q_k \right\|_{\dot H^1} + \left\| \V - \sum_k Q_k \right\|_{L^\infty}
+ \left\| \left( \sum_k Q_k \right)^5 - \sum_k Q_k^5 \right\|_{L^2} \right) \|\e\|_{H^1}
\lesssim |s|^{-1 - \frac 14}.
\end{multline*}
But, by integration by parts and~\eqref{ortho:final},
for all $1\leq k\leq K$, we also have the cancellation
\[
\int \left( \pa_y Q_k \pa_y\e - Q_k^5 \e \right) = - \int \left( \pa_{yy} Q_k + Q_k^5 \right) \e
= - \tilde \mu_k^{-2} \langle Q_k,\e \rangle = 0.
\]
Thus, we have proved $\left| E(\V + \e) - E(\V) \right| \lesssim |s|^{-1 - \frac 2{43}}$ and so
\[
\left| E(v(s)) + \frac {\|Q\|_{L^1}^2}{32s} \sum_k \ell_k (1 + 2\theta_k) \right| \lesssim |s|^{-1 - \frac 1{43}}.
\]
Moreover, from~\eqref{changevar}, we have $E(u(t)) = -2s E(v(s))$ for $s = -\frac 1{2t^2}$.
Thus, since $E(u(t))$ is constant, multiplying the last estimate by 2|s| and passing to the limit $s\to -\infty$,
we obtain
\[
E(u(t)) = \frac {\|Q\|_{L^1}^2}{16} \sum_k \ell_k \left( 1 + 2\theta_k \right).
\]

We finally note that $E(u(t)) > 0$. Indeed, using the definition~\eqref{def:thetak} of $\theta_k$ and rewriting
\begin{align*}
\frac {16}{\|Q\|_{L^1}^2} E(u(t))
&= \sum_{k=1}^K \ell_k^2 \left( \frac 1{\ell_k} + 2 \frac {\eps_k}{\sqrt{\ell_k}}
\sum_{j=1}^{k-1} \frac {\eps_j}{\sqrt{\ell_j}} \right)
= \sum_{k=1}^K \ell_k^2 \left[ \left( \sum_{j=1}^k \frac {\eps_j}{\sqrt{\ell_j}} \right)^2
- \left( \sum_{j=1}^{k-1} \frac {\eps_j}{\sqrt{\ell_j}} \right)^2 \right],
\end{align*}
we obtain, after integration by parts and since $\ell_k > \ell_{k+1}$ for all $1\leq k\leq K-1$,
\[
\frac {16}{\|Q\|_{L^1}^2} E(u(t)) = \sum_{k=1}^{K-1} \left[ (\ell_k^2 - \ell_{k+1}^2)
\left( \sum_{j=1}^k \frac {\eps_j}{\sqrt{\ell_j}} \right)^2 \right]
+ \ell_K^2 \left( \sum_{j=1}^K \frac {\eps_j}{\sqrt{\ell_j}} \right)^2 > 0.
\]
This finishes the proof of Theorem~\ref{thm:main}.
\end{proof}

\begin{remark} \label{rk:final}
We prove that the solution $u(t,x)$ of~\eqref{kdv} constructed in Theorem~\ref{thm:main}
satisfies an estimate as $x\to -\infty$ similar to, but weaker than,
the estimate~\eqref{th:ptwise0} for $S(t,x)$ in Theorem~\ref{thm:mainspaceold}.

Indeed, since $\|v(s) - \V(s)\|_{H^1} \leq |s|^{-\frac 12 - \frac 1{43}}$ by~\eqref{est:final},
we find $\left\| v(s) - \sum_k W_k(s) \right\|_{L^2} \lesssim |s|^{-\frac 12 - \frac 1{43}}$
by the estimates $|r_k(s)| \lesssim |s|^{-1}$ from~\eqref{apriori:rd}, $\|R_k\|_{L^2} \lesssim 1$,
$|a_k(s)| \leq |s|^{-1 - \frac 1{43}}$ from~\eqref{est:final}
and $\|P_k\|_{L^2} \lesssim |s|^{\frac 12}$ from~\eqref{eq:Pknorms}, valid for all $1\leq k\leq K$.
Therefore, from the definition of $W_k$ and the change of variables~\eqref{changevar},
\be \label{sharpL2}
\left\| u(t) - \sum_k \eps_k \tilde \lambda_k^{-\frac 12}(t)
S\left( \rho_k(t),\frac {\cdot - \tilde x_k(t)}{\tilde \lambda_k(t)} \right)
\right\|_{L^2} \lesssim t^{1 + \frac 2{43}}
\ee
with
\[
\rho_k(t) = \frac 1{\sqrt{-2 \tau_k(s)}},\quad \tilde \lambda_k(t) = t \mu_k(s) \sqrt{-2 \tau_k(s)},\quad
\tilde x_k(t) = t z_k(s),\quad s = -\frac 1{2t^2}.
\]
Moreover, from~\eqref{est:final}, we have $|\rho_k(t) - \ell_k^{\frac 32} t| \lesssim t^{1 + \frac 2{43}}$,
$|\tilde \lambda_k(t)- \ell_k^{-\frac 12}| \lesssim t^{\frac 2{43}}$ and $|\tilde x_k(t)| \lesssim t^{-1 + \frac 2{43}}$.

Using~\eqref{th:ptwise0}, a first interesting consequence of~\eqref{sharpL2}
is the following $L^2$ version of~\eqref{th:ptwise0} on the solution $u(t)$:
\[
\left\| u(t) + \left( \frac 12 \|Q\|_{L^1} \sum_k \frac {\eps_k}{\sqrt{\ell_k}} \right) |\cdot|^{-\frac 32}
\right\|_{L^2(x < -2\ell_K^{-2} t^{-1})} \leq t^{1 + \frac 1{22}}.
\]
It means that an explicit tail, sum of the tails of each rescaled version of $S(t)$,
is visible in the asymptotic behavior of $u(t,x)$ for $x\to -\infty$.

Finally, using the estimate $\| |\cdot|^{-\frac 32} \|_{L^2(x < -2\ell_K^{-2} t^{-1})} \gtrsim t$,
the exponential decay of $Q$ and the $L^2$ estimate on $u(t)$ given in the above proof of Theorem~\ref{thm:main},
we obtain the sharp control
\[
t \lesssim \left\| u(t) - \sum_k \eps_k \lambda_k^{-\frac 12}(t)
Q\left( \frac {\cdot - x_k(t)}{\lambda_k(t)} \right) \right\|_{L^2} \lesssim t.
\]
Thus, as a second interesting consequence of~\eqref{sharpL2},
we may notice that, in $L^2$ norm, $u(t)$ is closer to the sum of $K$ modulated versions of $S$
than to the corresponding sum of pure modulated solitons $Q$.
\end{remark}

\appendix

\section{} \label{appendix:corollary}

As a corollary of Theorems~\ref{thm:maintimeold} and~\ref{thm:mainspaceold},
we prove in this appendix the following time estimates in exponential weighted spaces,
which are used to prove~\eqref{eq:Wkweighted}.
Note that we use the notation~\eqref{antideriv} for the antiderivative.

\begin{corollary}[Time asymptotics in weighted spaces] \label{cor:weighted}
With the notation of Theorem~\ref{thm:maintimeold}, there exists $B_0 > 0$ such that,
for all $B\geq B_0$, $M\geq 0$, and all $t\in (0,T_0]$,
\be \label{eq:timeweighted}
\left\| \left[ \pa_x^m S(t) - \sum_{k=0}^M \frac 1{t^{\frac 12 + m - 2k}}
Q_k^{(m-k)} \left( \frac {\cdot + \frac 1t}t + c_0 \right) \right]
e^{\frac {\cdot + \frac 1t}{B t}} \right\|_{L^2} \lesssim t^{2M + 2 - m}.
\ee
\end{corollary}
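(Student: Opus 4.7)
The plan is to prove \eqref{eq:timeweighted} by splitting the $L^2$ integral over $\R$ at $x = -1/t$ (the center of the soliton) and treating the two half-lines separately. On the left half-line $\{x + 1/t \leq 0\}$, the weight $e^{(x+1/t)/(Bt)}$ is bounded by $1$, so the contribution reduces to an unweighted $L^2$ estimate on the residual. On the right half-line $\{x + 1/t \geq 0\}$, the weight grows, but every term in the difference decays exponentially fast on the right: $\pa_x^m S$ by~\eqref{th:ptwiser}, and each $Q_k^{(m-k)}$ (an ordinary derivative in $\mathcal{S}$ for $k \leq m$, or an antiderivative defined by~\eqref{antideriv} which still decays rapidly as its argument tends to $+\infty$ for $k > m$) by Schwartz-type decay. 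Choosing $B_0$ larger than twice the reciprocal of the smallest of this finite collection of decay rates will absorb the weight.

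The key intermediate step is a sharpened, $M$-dependent version of Theorem~\ref{thm:maintimeold},
\[
\left\| \pa_x^m S(t) - \sum_{k=0}^M \frac{1}{t^{1/2+m-2k}} Q_k^{(m-k)}\left( \frac{\cdot + 1/t}{t} + c_0 \right) \right\|_{L^2} \lesssim t^{2M+2-m}.
\]
The rate $t^{2M+2-m}$ is exactly the natural $L^2$ size of the first omitted term: the amplitude $t^{-1/2-m+2(M+1)}$ multiplied by the $\sqrt{t}$ produced by the rescaling $z = (x+1/t)/t$ applied to the fixed $L^2$ norm of $Q_{M+1}^{(m-M-1)}$ yields precisely $t^{2M+2-m}$. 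I expect this to come from the inductive construction of the $\{Q_k\}$ given in~\cite{CM1}, where each successive $Q_k$ is designed to cancel the principal part of the residual of the previously truncated expansion, so that the error always matches in size the first omitted term.

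For the right half-line, after the rescaling $z = (x+1/t)/t$, the weight becomes $e^{z/B}$ and $dx = t\,dz$ contributes a harmless $\sqrt{t}$ factor under the $L^2$ norm. The pointwise bounds turn each tail of the integrand into $e^{-\gamma z}$ with $\gamma$ uniform in $t$, so for $B_0$ large the integrand is dominated by $e^{-(\gamma - 1/B)z}$, which is integrable and has $L^2$ norm comparable to the amplitude of the corresponding term. Each term up to $k = M$ then contributes at most its natural size, and the effective truncation error again has size $t^{2M+2-m}$, so the bound from the right half-line matches the one from the left.

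The main obstacle I anticipate is precisely establishing the refined unweighted $L^2$ bound above, since Theorem~\ref{thm:maintimeold} as stated gives only the weaker rate $t^{1+m}$ at the single level $M = m$. Extending it to arbitrary $M$ (and, in particular, improving the rate for $M = m$ from $t^{1+m}$ to $t^{m+2}$) requires revisiting the formal construction of the $Q_k$ in~\cite{CM1}: the $Q_k$ are defined order by order through the asymptotic expansion of the equation satisfied by $S$, and the natural error at level $M$ is controlled by the $L^2$ norm of the first dropped term rather than the last kept one. A secondary technical point is the handling of the antiderivative terms $Q_k^{(m-k)}$ for $k > m$, which are bounded but decay only on one side; fortunately the relevant side is the right, which is exactly where the exponential weight threatens to grow, so the Schwartz right-decay of $Q_k^{(m-k)}$ is precisely what is needed to close the argument.
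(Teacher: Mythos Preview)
Your plan has a real gap on the right half-line, and the ``refined unweighted $L^2$ bound'' you isolate as the main obstacle is in fact at least as hard as the corollary itself.

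On the right half-line you propose to estimate term by term using the pointwise decay \eqref{th:ptwiser} for $\pa_x^m S$ and the Schwartz decay of each $Q_k^{(m-k)}$. But \eqref{th:ptwiser} bounds $\pa_x^m S$ \emph{alone}, with amplitude $t^{-1/2-m}$; it says nothing about the cancellation between $\pa_x^m S$ and the subtracted profile $\sum_{k\le M}$. After the rescaling $z=(x+1/t)/t$ and the $\sqrt t$ Jacobian, the weighted $L^2$ contribution of the $\pa_x^m S$ term by itself is of order $t^{-m}$, and the $Q_k$ terms are of order $t^{2k-m}$. Summing these gives $t^{-m}$, not $t^{2M+2-m}$. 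The ``effective truncation error'' of size $t^{2M+2-m}$ you invoke is precisely the \emph{pointwise} analogue of the refined $L^2$ bound, and neither follows from \eqref{th:ptwiser} nor from an unweighted $L^2$ estimate (naive interpolation between $\|g\|_{L^2}\lesssim t^{2M+2-m}$ and $\|g\|_{L^\infty}\lesssim t^{-m}$ loses a positive power of $t$ for any fixed $B$).

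As for the left half-line: the refined estimate $\|\pa_x^m S(t)-\sum_{k=0}^M\cdots\|_{L^2}\lesssim t^{2M+2-m}$ for $M\ge m$ is strictly stronger than \eqref{th:timem}, as the paper notes in the Remark following the corollary. It is not clear it can be read off from \cite{CM1} without an argument of the same nature as the one you are trying to avoid.

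The paper's route is different and sidesteps both issues. It applies \eqref{th:timem} not at level $m$ but at a \emph{high} derivative level $L=m+2M+2$, where the stated rate $t^{1+L}$ (which becomes $t^{1+2L}$ after rescaling) is already far better than needed. It then integrates back: a Taylor-remainder lemma (Lemma~\ref{lem:weightright}) says that if $f$ and its first $p$ derivatives decay like $e^{-y}$ on the right, then $|f(y)|e^{\kappa y}\lesssim \|f^{(p)}\|_{L^2}^\sigma+\|f^{(p)}\|_{L^2}$ for any $\sigma<1-\kappa$. Applying this with $p=2M+2$ to the rescaled residual (truncated at level $2M+1$, not $M$) gives $\|f^{(p)}\|_{L^2}\lesssim t^{4M+4}$, hence $|f|e^{\kappa y}\lesssim t^{2M+2}$ pointwise, from which the weighted $L^2$ bound follows by a trivial splitting. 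The point is that differentiation improves the available rate in \eqref{th:timem} fast enough that one can afford to lose a factor $\sigma\ge\tfrac12$ when integrating back.
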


\begin{remark}
A weaker but useful estimate may be directly deduced from~\eqref{eq:timeweighted}.
Indeed, we observe that, for all $B > 0$, for all $M\geq 0$, for all $t\in (0,T_0]$,
\be \label{eq:timeweightedweak}
\left\| \left[ \pa_x^m S(t) - \sum_{k=0}^M \frac 1{t^{\frac 12 + m - 2k}}
Q_k^{(m-k)} \left( \frac {\cdot + \frac 1t}t + c_0 \right) \right]
e^{-\frac {|\cdot + \frac 1t|}{B t}} \right\|_{L^2} \lesssim t^{2M + 2 - m}.
\ee
Note that this estimate becomes weaker as $B$ gets smaller,
and thus is valid for all $B > 0$ without restriction.
Note also that this estimate could be directly deduced
from~\eqref{th:timem} for $M\leq m - 1$, and thus is only interesting for $M\geq m$.
In particular, we observe that~\eqref{eq:timeweightedweak}
is sharper than~\eqref{th:timem} in the limit case $M = m$, where both estimates are valid.
\end{remark}

To prove Corollary~\ref{cor:weighted}, we first need the following technical lemma.

\begin{lemma} \label{lem:weightright}
Let $p\geq 1$. Assume that $f\in \Cinfini\cap \dot H^p(\R)$ satisfies,
for all $0\leq \ell\leq p$, for all $x > 0$,
\be \label{eq:expdecayright}
|f^{(\ell)}(x)| \lesssim e^{-x}.
\ee
Then, for all $0 < \kappa < 1$, all $0 < \sigma < 1 - \kappa$, and all $x\in\R$,
\be \label{eq:weightLi}
|f(x)| e^{\kappa x} \lesssim \|f^{(p)}\|_{L^2}^\sigma + \|f^{(p)}\|_{L^2}.
\ee
\end{lemma}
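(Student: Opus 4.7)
The plan is to compare $f$ to its Taylor expansion anchored at a point $x_0 > 0$, chosen as a function of $\eta := \|f^{(p)}\|_{L^2}$. The hypothesis gives $|f^{(\ell)}(x_0)| \lesssim e^{-x_0}$ for all $0 \leq \ell \leq p-1$, so anchoring at a large $x_0$ makes the Taylor polynomial small, while the integral remainder contributes $|x - x_0|^{p - 1/2}\,\eta$ via Cauchy--Schwarz. Three regions will arise naturally in $x$: the far right $x \geq x_0$, where the hypothesis is used directly; the intermediate band $0 \leq x \leq x_0$, which turns out to be the bottleneck because the weight $e^{\kappa x}$ grows there; and $x \leq 0$, where $e^{\kappa x} = e^{-\kappa|x|}$ absorbs polynomial growth in $|x|$.

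In more detail, I would first reduce to $\eta \in (0,1)$, since the case $\eta \geq 1$ is immediate from a direct Taylor estimate anchored at $x_0 = 0$, yielding $|f(x)|e^{\kappa x} \lesssim 1 + \eta \lesssim \eta$. For $\eta \in (0,1)$, set $x_0 = |\log \eta|$, so that $e^{-x_0} = \eta$ and $e^{\kappa x_0} = \eta^{-\kappa}$. For $x \geq x_0$, the hypothesis directly yields
\begin{equation*}
|f(x)|e^{\kappa x} \lesssim e^{-(1-\kappa)x} \leq \eta^{1-\kappa}.
\end{equation*}
For $x \leq x_0$, Taylor's formula with integral remainder anchored at $x_0$, combined with Cauchy--Schwarz on the remainder, gives
\begin{equation*}
|f(x)| \lesssim e^{-x_0} \bigl( 1 + (x_0 + |x|)^{p-1} \bigr) + (x_0 + |x|)^{p - 1/2}\, \eta.
\end{equation*}

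Multiplying by $e^{\kappa x}$ and splitting the region $x \leq x_0$: for $x \leq 0$, the factor $e^{\kappa x} = e^{-\kappa|x|}$ bounds all polynomial factors in $|x|$ uniformly, yielding $|f(x)|e^{\kappa x} \lesssim \eta(1 + x_0^{p-1/2})$; for $0 \leq x \leq x_0$, the penalty $e^{\kappa x} \leq e^{\kappa x_0} = \eta^{-\kappa}$ yields $|f(x)|e^{\kappa x} \lesssim \eta^{1-\kappa}(1 + x_0^{p-1/2})$. With $x_0 = |\log \eta|$, every polynomial factor in $x_0$ is merely a power of $|\log \eta|$, which can be absorbed into an arbitrarily small loss $\eta^{-\epsilon}$ (using that $y^k e^{-\epsilon y}$ is bounded on $[0,\infty)$ for every $k \geq 0$ and $\epsilon > 0$). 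The overall bound across all three regions is thus $|f(x)| e^{\kappa x} \lesssim \eta^{1 - \kappa - \epsilon}$ for any $\epsilon > 0$, which covers any $\sigma < 1 - \kappa$.

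The main technical obstacle will be the careful bookkeeping of the three regions and the absorption of the logarithmic factors coming from the polynomial degrees in Taylor's expansion; the condition $\sigma < 1 - \kappa$ in the statement is precisely what accommodates this absorption, since the bottleneck region $0 \leq x \leq x_0$ forces the combination $e^{-x_0}\cdot e^{\kappa x_0} = \eta^{1-\kappa}$. Combining the two regimes $\eta \in (0,1)$ and $\eta \geq 1$ produces the two summands in the conclusion and yields the claimed estimate $|f(x)| e^{\kappa x} \lesssim \|f^{(p)}\|_{L^2}^\sigma + \|f^{(p)}\|_{L^2}$.
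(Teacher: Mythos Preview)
Your proof is correct but follows a different route from the paper's.

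The paper sends the Taylor anchor to $+\infty$ rather than to a finite $x_0$: using the decay hypothesis on $f^{(\ell)}$ for $0\leq\ell\leq p-1$, one passes to the limit $y\to+\infty$ in the Taylor formula to obtain the exact integral representation
\[
f(x)=-\int_x^{+\infty}\frac{f^{(p)}(z)}{(p-1)!}(x-z)^{p-1}\,dz,
\]
valid for all $x\in\R$. The bound then comes from a single H\"older interpolation: on $z>0$ one writes
\[
z^{p-1}e^{\kappa z}|f^{(p)}(z)|
= \bigl[z^{p-1}e^{(\kappa-(1-\sigma))z}\bigr]\cdot\bigl|e^{z}f^{(p)}(z)\bigr|^{1-\sigma}\cdot|f^{(p)}(z)|^{\sigma},
\]
uses the pointwise bound $|e^{z}f^{(p)}(z)|\lesssim 1$, and applies H\"older with $|f^{(p)}|^{\sigma}\in L^{2/\sigma}$ to get $\|f^{(p)}\|_{L^2}^{\sigma}$ directly. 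For $x<0$ the piece $\int_x^0$ is handled by Cauchy--Schwarz and absorbed by $e^{\kappa x}|x|^{p-1/2}\lesssim 1$, giving the $\|f^{(p)}\|_{L^2}$ term. No case distinction on the size of $\eta$ and no logarithmic loss appear.

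Your approach instead fixes $x_0=|\log\eta|$ and optimizes over regions; the resulting $|\log\eta|^{p-1/2}$ factors are then absorbed into the slack $1-\kappa-\sigma$. This is a legitimate and self-contained argument, essentially a discrete version of the interpolation the paper does via H\"older. The paper's method is shorter and avoids the bookkeeping, while yours has the minor advantage of making the role of the anchor point and of the constraint $\sigma<1-\kappa$ very transparent.
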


\begin{proof}
By the Taylor formula with integral remainder term, we have, for all $x,y\in\R$,
\[
f(x) = f(y) + (x-y) f'(y) + \cdots + \frac {(x-y)^{p-1}}{(p-1)!} f^{(p-1)}(y)
+ \int_y^x \frac {f^{(p)}(z)}{(p-1)!} (x-z)^{p-1} \,dz.
\]
By~\eqref{eq:expdecayright}, passing to the limit as $y\to +\infty$,
we obtain, for all $x\in\R$,
\[
f(x) = - \int_x^{+\infty} \frac {f^{(p)}(z)}{(p-1)!} (x-z)^{p-1} \,dz.
\]
For $x\geq 0$, we deduce, from~\eqref{eq:expdecayright} and H\"older's inequality,
\begin{align*}
|f(x)| e^{\kappa x}
&\lesssim \int_x^{+\infty} z^{p-1} e^{\kappa z} |f^{(p)}(z)| \,dz \\
&\lesssim \int_x^{+\infty} [z^{p-1} e^{\kappa z} e^{-(1 - \sigma) z}] |e^z f^{(p)}(z)|^{1 - \sigma} |f^{(p)}(z)|^\sigma \,dz
\lesssim \|f^{(p)}\|_{L^2}^\sigma.
\end{align*}
For $x < 0$, we obtain similarly
\begin{align*}
|f(x)| e^{\kappa x}
&\lesssim e^{\kappa x} \int_0^{+\infty} (|x|^{p-1} + z^{p-1}) |f^{(p)}(z)| \,dz
+ e^{\kappa x} |x|^{p-1} \int_x^0 |f^{(p)}(z)| \,dz \\
&\lesssim |x|^{p-1} e^{\kappa x} \int_0^{+\infty} e^{-(1 - \sigma) z}
|e^z f^{(p)}(z)|^{1 - \sigma} |f^{(p)}(z)|^\sigma \,dz \\
&\quad + \int_0^{+\infty} [z^{p-1} e^{-(1 - \sigma) z}]
|e^z f^{(p)}(z)|^{1 - \sigma} |f^{(p)}(z)|^\sigma \,dz
+ e^{\kappa x} |x|^{p - 1 + \frac 12} \|f^{(p)}\|_{L^2} \\
&\lesssim \|f^{(p)}\|_{L^2}^\sigma + \|f^{(p)}\|_{L^2},
\end{align*}
which concludes the proof of Lemma~\ref{lem:weightright}.
\end{proof}

\begin{proof}[Proof of Corollary~\ref{cor:weighted}]
Let $B>0$, $m\geq 0$, $M\geq 0$ and $t\in (0,T_0]$.
We prove~\eqref{eq:timeweighted} as a consequence of the time estimate~\eqref{th:timem}
and the space estimate~\eqref{th:ptwiser}, that we rewrite in a rescaled version
so that we may apply Lemma~\ref{lem:weightright}.

For this purpose, we let $L\in\N$ to be fixed later, we denote
\[
\bar\g_0 = \min_{0\leq \ell\leq L} \g_\ell > 0,\quad
\bar\g = \min \left\{ \bar\g_0,\frac 12 \right\},\quad A = \bar\g^{-1} \geq 2,
\]
where $\g_\ell$ is defined in~\eqref{th:ptwiser}, and we consider the change of variables
\[
y = \bar\g \left( \frac {x + \frac 1t}t + c_0 \right) \quad\m{or, equivalently,}\quad
x = A t y - c_0 t - \frac 1t.
\]
First note that the rescaled version of~\eqref{eq:timeweighted} that we want to prove rewrites
$\|g_{A/B}\|_{L^2} \lesssim t^{2M+2}$ where, for $r > 0$, $g_r(y) = g(y)e^{r y}$ and
\[
g(y) = t^{\frac 12 + m} \pa_x^m S \left( t,A t y - c_0 t - \frac 1t \right) - \sum_{k=0}^M t^{2k} Q_k^{(m-k)}(A y).
\]
Next, from~\eqref{th:timem}, we obtain, for all $0\leq \ell\leq L$,
\be \label{timel}
\left\| t^{\frac 12 + \ell} \pa_x^\ell S \left( t,A t \cdot - c_0 t - \frac 1t \right) -
\sum_{k=0}^\ell t^{2k} Q_k^{(\ell-k)}(A \cdot) \right\|_{L^2} \lesssim t^{1 + 2\ell}.
\ee
And, from~\eqref{th:ptwiser}, we get, for all $0\leq \ell\leq L$, for all $y > 0$,
\[
\left| t^{\frac 12 + \ell} \pa_x^\ell S \left( t,A t y - c_0 t - \frac 1t \right) \right|
\lesssim e^{-\g_\ell A y} \lesssim e^{-y}.
\]

Now note that, since $Q_k\in\Y$ for all $k\geq 0$ from the proof of Theorem~\ref{thm:maintimeold} in~\cite{CM1},
we have $|Q_k^{(n)}(y)| \lesssim e^{-y/2}$ for all $n\in\N$ and $y > 0$.
Since moreover $A\geq 2$ and since we use the convention of antiderivative~\eqref{antideriv}, we obtain,
for all $y > 0$ and all $n\in\Z$,
\[
|Q_k^{(n)}(A y)|\lesssim e^{-y}.
\]
Thus, we may apply Lemma~\ref{lem:weightright} with $p = 2M + 2$ and
\[
f(y) = t^{\frac 12 + m} \pa_x^m S \left( t,A t y - c_0 t - \frac 1t \right) - \sum_{k=0}^{2M+1} t^{2k} Q_k^{(m-k)}(A y).
\]
Indeed, letting $L = m + p$, we have $f\in\Cinfini$
and $f$ satisfies~\eqref{eq:expdecayright} from the above space estimates.
Moreover, applying~\eqref{timel} with $\ell = L$, we get $\|f^{(p)}\|_{L^2} \lesssim t^{4M + 4}$.

For all $0 < \kappa \leq \frac 14$, we take $\sigma = 1 - 2\kappa \geq \frac 12$,
so that we have $0 < \sigma < 1 - \kappa$ and we obtain, from~\eqref{eq:weightLi}, for all $y\in\R$,
\[
|f(y)| e^{\kappa y} \lesssim \|f^{(p)}\|_{L^2}^{\sigma} + \|f^{(p)}\|_{L^2}
\lesssim t^{\sigma(4M + 4)} + t^{4M + 4} \lesssim t^{2M + 2}.
\]
In particular, we obtain $\|g_\kappa\|_{L^\infty} \lesssim t^{2M + 2}$ for all $0 < \kappa \leq \frac 14$.
To conclude, we estimate
\begin{align*}
\|g_\kappa\|_{L^2}^2
&= \int g^2(y) e^{2\kappa y} \,dy
= \int_{-\infty}^0 g^2(y) e^{2\kappa y} \,dy + \int_0^{+\infty} g^2(y) e^{2\kappa y} \,dy \\
&\leq \|g_{\kappa/2}\|_{L^\infty}^2 \int_{-\infty}^0 e^{\kappa y} \,dy
+ \|g_{3\kappa/2}\|_{L^\infty}^2 \int_0^{+\infty} e^{-\kappa y} \,dy
\lesssim \|g_{\kappa/2}\|_{L^\infty}^2 + \|g_{3\kappa/2}\|_{L^\infty}^2,
\end{align*}
and so we obtain $\|g_{A/B}\|_{L^2}\lesssim t^{2M + 2}$ as expected,
provided that $\kappa = \frac AB$ satisfies $\frac {3\kappa}2 \leq \frac 14$,
which rewrites $B\geq B_0$ with $B_0 = 6A$.
\end{proof}

\end{document}